\documentclass[11pt,reqno]{amsart}
\usepackage[T1]{fontenc}
\usepackage{lmodern}
\usepackage{textcomp}
\usepackage{amsmath,amssymb,mathtools}
\usepackage[margin=1.08in]{geometry}
\usepackage{microtype}
\usepackage{enumitem}
\usepackage{tikz-cd}
\usepackage{xcolor}
\usepackage[backend=biber,style=numeric,sorting=nty]{biblatex}
\usepackage[colorlinks=true,linkcolor=blue!40!black,citecolor=blue!40!black,urlcolor=blue!40!black]{hyperref}
\hypersetup{pdftitle={The generalized Dixmier conjecture and the two-dimensional Jacobian conjecture},pdfauthor={}}
\DeclareFieldFormat{labelnumberwidth}{#1\adddot}
\DeclareNameAlias{default}{given-family}
\DeclareFieldFormat{titlecase}{#1}
\DeclareFieldFormat[article,incollection,misc,online,thesis]{title}{\mkbibemph{#1}}
\DeclareFieldFormat{journaltitle}{#1}
\DeclareFieldFormat[article]{volume}{\mkbibbold{#1}}
\DeclareFieldFormat[article]{number}{no.\addnbspace#1}
\DeclareFieldFormat[article]{pages}{#1}
\DeclareFieldFormat[incollection]{volume}{vol.\addnbspace#1}
\DeclareFieldFormat[incollection]{pages}{pp.\addnbspace#1}
\DeclareFieldFormat{usera}{\href{https://mathscinet.ams.org/mathscinet/article?mr=#1}{MR\,#1}}
\DeclareSourcemap{
  \maps[datatype=bibtex]{
    \map{\step[fieldsource=mrnumber,fieldtarget=usera]}
  }
}
\newbibmacro*{mr:links}{%
  \iffieldundef{doi}
    {\iffieldundef{eprint}{\printfield{url}}{\usebibmacro{eprint}}}
    {\printfield{doi}}%
  \newunit\printfield{usera}}
\DeclareBibliographyDriver{article}{%
  \usebibmacro{bibindex}\usebibmacro{begentry}%
  \printnames{author}\setunit{\addcomma\space}%
  \printfield{title}\setunit{\addcomma\space}%
  \printfield{journaltitle}\setunit*{\addspace}%
  \printfield{volume}\setunit{\addspace}%
  \printtext[parens]{\printfield{year}}\setunit{\addcomma\space}%
  \printfield{number}\setunit{\addcomma\space}%
  \printfield{pages}\newunit%
  \printfield{note}\newunit%
  \usebibmacro{mr:links}\usebibmacro{finentry}}
\DeclareBibliographyDriver{incollection}{%
  \usebibmacro{bibindex}\usebibmacro{begentry}%
  \printnames{author}\setunit{\addcomma\space}%
  \printfield{title}\setunit{\addcomma\space}%
  \printfield[default]{booktitle}\setunit{\addcomma\space}%
  \printfield{series}\setunit{\addcomma\space}%
  \printfield{volume}\setunit{\addcomma\space}%
  \printlist{publisher}\setunit{\addcomma\space}%
  \printlist{location}\setunit{\addcomma\space}%
  \printfield{year}\setunit{\addcomma\space}%
  \printfield{pages}\newunit%
  \usebibmacro{mr:links}\usebibmacro{finentry}}
\DeclareBibliographyDriver{misc}{%
  \usebibmacro{bibindex}\usebibmacro{begentry}%
  \printnames{author}\setunit{\addcomma\space}%
  \printfield{title}\setunit{\addcomma\space}%
  \printfield{note}\setunit{\addcomma\space}%
  \printfield{year}\newunit%
  \usebibmacro{mr:links}\usebibmacro{finentry}}
\DeclareBibliographyAlias{online}{misc}
\DeclareBibliographyDriver{thesis}{%
  \usebibmacro{bibindex}\usebibmacro{begentry}%
  \printnames{author}\setunit{\addcomma\space}%
  \printfield{title}\setunit{\addcomma\space}%
  \printfield{type}\setunit{\addcomma\space}%
  \printlist{institution}\setunit{\addcomma\space}%
  \printfield{year}\newunit%
  \usebibmacro{mr:links}\usebibmacro{finentry}}

\newtheorem{theorem}{Theorem}[section]
\newtheorem*{theorem*}{Theorem}
\newtheorem{proposition}[theorem]{Proposition}
\newtheorem{lemma}[theorem]{Lemma}
\newtheorem{corollary}[theorem]{Corollary}
\theoremstyle{definition}
\newtheorem{definition}[theorem]{Definition}
\newtheorem{conjecture}[theorem]{Conjecture}

\theoremstyle{remark}
\newtheorem{remark}[theorem]{Remark}
\newtheorem{example}[theorem]{Example}
\newcommand{\C}{\mathbb C}
\newcommand{\Z}{\mathbb Z}
\newcommand{\Q}{\mathbb Q}
\newcommand{\F}{\mathbb F}
\newcommand{\A}{\mathbb A}
\newcommand{\UU}{\mathcal U}
\newcommand{\op}{\mathrm{op}}
\DeclareMathOperator{\GK}{GKdim}
\DeclareMathOperator{\Spec}{Spec}
\DeclareMathOperator{\Supp}{Supp}
\DeclareMathOperator{\End}{End}
\DeclareMathOperator{\Aut}{Aut}
\DeclareMathOperator{\Hom}{Hom}
\DeclareMathOperator{\Mat}{Mat}
\DeclareMathOperator{\Frac}{Frac}
\DeclareMathOperator{\rank}{rank}
\DeclareMathOperator{\gr}{gr}
\DeclareMathOperator{\id}{id}
\DeclareMathOperator{\Pic}{Pic}
\DeclareMathOperator{\Br}{Br}
\DeclareMathOperator{\Ext}{Ext}

\DeclareMathOperator{\Nrd}{Nrd}
\DeclareMathOperator{\ord}{ord}
\setlist[enumerate]{label=\textup{(\roman*)},leftmargin=2.2em,itemsep=3pt}
\allowdisplaybreaks[1]
\title[The generalized $\operatorname{DC}_1$ and $\operatorname{JC}_2$]{The generalized Dixmier conjecture and the two-dimensional Jacobian conjecture}
\author{Alexey Kanel-Belov}
\address{(Alexey Kanel-Belov) Bar–Ilan University, Building 216, 5290002 Ramat-Gan, Israel, MIPT}
\author{Timofey Fedorov}
\address{(Timofey Fedorov) National Research University Higher School of Economics, Moscow, \mbox{Russia}.}
\email{tsfedorov@edu.hse.ru}
\subjclass[2020]{Primary 16S32, 14R15; Secondary 14F10, 16D90, 13P10}
\keywords{Weyl algebra, holonomic $\mathcal D$-modules, arithmetic support, Jacobian conjecture, Dixmier conjecture}
\begin{document}
\begin{abstract}
We prove that the two-dimensional Jacobian conjecture $\mathrm{JC}_2$ implies the generalized Dixmier conjecture
$\mathrm{DC}'_1$ for the first Weyl algebra $A_1$, which states, in one of
several equivalent formulations, that every endomorphism of the two-by-two
matrix ring $\operatorname{Mat}_2(A_1)$ is invertible.
For the converse, an endomorphism of $\mathbb A^2_{\mathbb C}$ with Jacobian one gives rise to a family of holonomic $A_2$-modules over finite fields. One needs to lift this family to an $A_2$-module in characteristic zero with the prescribed arithmetic support. The existence of such a
lift is a special case of Kontsevich's conjecture on holonomic $\mathcal D$-modules with prescribed arithmetic support.
We prove that in this special situation such a lift is unique whenever it exists. If these lifts exists
for every endomorphism with Jacobian one, then $\mathrm{DC}'_1$ implies $\mathrm{JC}_2$.
For every $n\ge 3$, we construct a Lagrangian subvariety of $T^*\A^n$ that provides a counterexample to Kontsevich's
conjecture.
The counterexamples arise from the graph of a noninvertible symplectic endomorphism of $\mathbb A^4$ constructed by Long. We also present a
different strategy for lifting $\mathcal D$-modules associated with a Jacobian-one endomorphism of $\mathbb A^2_{\mathbb C}$ to characteristic
zero via ultraproducts and show that the existence of a characteristic-zero
lift in this setting is equivalent to the uniform boundedness of the
Berest--Wilson invariants of these $A_2$-modules, regarded as left
$A_1$-modules. Finally, we study two other conjectures of Kontsevich on arithmetic support, giving a counterexample to one and partial results on the other.
\end{abstract}
\maketitle
\tableofcontents

\section{Introduction}\label{sec:intro}

\subsection{The classical conjectures and the remaining cases}\label{subsec:intro-classical}
Let
\[
 \omega_n=\sum_{i=1}^n dx_i\wedge dy_i
\]
be the standard symplectic form on $\A^{2n}$. For a field
$k$, write
\[
 A_n(k)=k\langle x_1,\ldots,x_n,y_1,\ldots,y_n\rangle/
 \bigl([y_i,x_j]-\delta_{ij},[x_i,x_j],[y_i,y_j]\bigr)
\]
for the $n$th Weyl algebra over $k$, and abbreviate $A_n(\C)$ to $A_n$.
Throughout the paper, algebra homomorphisms are unital and linear over the
specified ground field, and Morita equivalences are linear over that field.

We begin by recalling three classical families of conjectures whose
higher-dimensional cases have now largely been disproved.
\begin{conjecture}\label{conj:classical}
For each positive integer $n$, consider the following assertions:
\begin{enumerate}[leftmargin=5.4em,labelwidth=4.9em,labelsep=.5em]
\item[$\mathrm{JC}_n$:] every endomorphism
$f:\A^n_\C\to\A^n_\C$ with Jacobian one is invertible;
\item[$\mathrm{DC}_n$:] every endomorphism of $A_n$ is an automorphism;
\item[$\mathrm{Symp}_{2n}$:] every morphism
$f:\A^{2n}_\C\to\A^{2n}_\C$ satisfying $f^*\omega_n=\omega_n$ is invertible.
\end{enumerate}
\end{conjecture}

Alp\"oge's counterexample to $\mathrm{JC}_3$ can be written, after
normalizing its Jacobian determinant to one, as follows
\cite{UlamJacobian}. With $h=1+xy$, set
\[
\begin{aligned}
 F_1&=-\tfrac12\bigl(h^3z+y^2h(4+3xy)\bigr),\\
 F_2&=y+3xh^2z+3xy^2(4+3xy),\\
 F_3&=2x-3x^2y-x^3z.
\end{aligned}
\]
The map $F=(F_1,F_2,F_3):\A^3_\C\to\A^3_\C$ has Jacobian determinant
one, but
\[
 F(0,0,-1/4)=F(1,-3/2,13/2)=(1/8,0,0).
\]
Adjoining identity coordinates gives counterexamples to $\mathrm{JC}_n$
for every $n\geq3$. Lifting the coordinate derivations along these maps
also produces counterexamples to $\mathrm{DC}_n$ for every $n\geq3$
\cite{ChanWeyl}.

Long subsequently constructed a noninvertible symplectic endomorphism
$f=(R,T,D,S):\A^4_\C\to\A^4_\C$ \cite{Long}.
In coordinates $(x,q,p,z)$, its components satisfy
\[
 dR\wedge dD+dT\wedge dS=dx\wedge dp+dq\wedge dz,
 \qquad R=x(2-3xq).
\]
The factorization of $R$ already excludes invertibility: a coordinate of
an automorphism generates a prime ideal. The full formulas are
recalled in Section~\ref{subsec:long-map}. This example, extended by
identity maps on additional canonical pairs, disproves
$\mathrm{Symp}_{2n}$ for every $n\geq2$.
Surprisingly, although Long's paper appeared three months ago, no counterexamples for $\mathrm{DC}_2$ have been announced yet.
Related developments in positive characteristic include the work of
Borisov, Gabber, and Vasiu \cite[Theorem~28.6]{BGV}. In every positive
characteristic, they construct noninvertible \'etale endomorphisms of the
affine plane whose generic degrees are prime to the characteristic.
These are counterexamples to a positive-characteristic analogue of the
Jacobian conjecture.

The remaining questions considered here concern $\mathrm{JC}_2$,
$\mathrm{DC}_1$, and $\mathrm{DC}_2$; in the plane,
$\mathrm{Symp}_2$ is simply $\mathrm{JC}_2$.
Zheglov's recent article \cite{Zheglov} on $\mathrm{DC}_1$ contains deep insights. We have not been able
to fully understand the proof. We hope that these methods may lead to a proof of $\mathrm{DC}_1'$.

\subsection{The generalized Dixmier conjecture}\label{subsec:intro-generalized}
One may still expect a suitable generalization of $\mathrm{DC}_1$ to
imply $\mathrm{JC}_2$.

For $n=1,2$, let $\mathcal D_n$ be the class of complex domains Morita
equivalent to $A_n$.
\begin{conjecture}[The generalized Dixmier conjecture]\label{conj:generalized}\label{conj:matrix}
For $n=1,2$, consider the following assertions:
\begin{enumerate}[leftmargin=6.4em,labelwidth=5.9em,labelsep=.5em]
\item[$\mathrm{DC}'_n$:] every homomorphism $A_n\to D$, with
$D\in\mathcal D_n$, is an isomorphism;
\item[$\mathrm{DC}''_n$:] every homomorphism $D\to E$, with
$D,E\in\mathcal D_n$, is an isomorphism;
\item[$\mathrm{MC}_{n,r}$:] every endomorphism of $\Mat_r(A_n)$ is an
automorphism, for a fixed integer $r\geq2$;
\item[$\mathrm{MC}_{n,\mathrm{all}}$:] every endomorphism of
$\Mat_r(A_n)$ is an automorphism for every integer $r\geq1$.
\end{enumerate}
\end{conjecture}
The assertion $\mathrm{DC}''_1$ was formulated by Berest, Eshmatov, and
Eshmatov in \cite[Conjecture~1]{BEE}.

We prove the following (Theorem~\ref{thm:matrix-morita-equivalence} below).
\begin{theorem*}
For $n=1,2$ and every fixed $r\geq2$,
\[
 \mathrm{DC}'_n\Longleftrightarrow\mathrm{DC}''_n
 \Longleftrightarrow\mathrm{MC}_{n,r}
 \Longleftrightarrow\mathrm{MC}_{n,\mathrm{all}}.
\]
These equivalences are proved entirely in characteristic zero.
\end{theorem*}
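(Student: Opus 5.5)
The plan is to prove the cycle
\[
\mathrm{DC}''_n\Rightarrow\mathrm{DC}'_n\Rightarrow\mathrm{MC}_{n,\mathrm{all}}\Rightarrow\mathrm{MC}_{n,r}\Rightarrow\mathrm{DC}''_n .
\]
Two of these arrows are immediate. Since $A_n\in\mathcal D_n$, the implication $\mathrm{DC}''_n\Rightarrow\mathrm{DC}'_n$ is trivial, and $\mathrm{MC}_{n,\mathrm{all}}\Rightarrow\mathrm{MC}_{n,r}$ is a specialization. The work lies in two structural inputs about $\mathcal D_n$.

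\emph{Input (a).} Every $D\in\mathcal D_n$ is isomorphic to $\End_{A_n}(P)$ for a finitely generated projective right ideal $P$, that is, a projective of Goldie rank one. Morita theory gives $D\cong\End_{A_n}(P)$ for some progenerator $P$. Then $\Mat_m(D)\cong\End(P^m)$ has Goldie rank $m\cdot\rank P$. Since $D$ is a domain it has Goldie rank $1$, which forces $\rank P=1$. Conversely, $\End$ of a rank-one projective embeds in the quotient division ring, so it is a domain.

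\emph{Input (b).} This is Stafford's theorem: every finitely generated projective $A_n$-module of rank $\ge2$ is free. This uses $K_0(A_n)=\Z$ together with cancellation, which comes from the stable rank of $A_n$ being at most $2$. Consequently $P^r\cong A_n^r$ for every rank-one projective $P$ and every $r\ge2$. Hence $\Mat_r(D)\cong\Mat_r(A_n)$ for all $D\in\mathcal D_n$ and $r\ge2$, and these isomorphisms are $\C$-linear. I expect the main obstacle to be stating and checking (b) carefully for $n=1,2$, in particular the cancellation step. Everything else is formal Morita bookkeeping.

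\emph{$\mathrm{MC}_{n,r}\Rightarrow\mathrm{DC}''_n$.} Let $\varphi:D\to E$ with $D,E\in\mathcal D_n$. Choose isomorphisms $\Mat_r(D)\cong\Mat_r(A_n)\cong\Mat_r(E)$ by (b). Under these, $\Mat_r(\varphi)$ becomes an endomorphism of $\Mat_r(A_n)$, which is bijective by $\mathrm{MC}_{n,r}$. Hence $\Mat_r(\varphi)$ is bijective. Since it preserves the standard matrix units, restricting to the corner $e_{11}(\cdot)e_{11}$ shows that $\varphi$ is bijective.

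\emph{$\mathrm{DC}'_n\Rightarrow\mathrm{MC}_{n,\mathrm{all}}$.} The case $r=1$ is $\mathrm{DC}_n$, which is a special case of $\mathrm{DC}'_n$. For $r\ge2$, let $\psi$ be an endomorphism of $R=\Mat_r(A_n)$, and put $f_{ij}=\psi(e_{ij})$.
\begin{enumerate}
\item The $f_{ij}$ form a full set of matrix units in $R$, so $R\cong\Mat_r(E)$ with $E=f_{11}Rf_{11}$.
\item Let $P=f_{11}A_n^r$. Then $E=\End_{A_n}(P)$ and $P^r\cong A_n^r$. So $P$ is a progenerator of rank one, and $E\in\mathcal D_n$ by (a).
\item The map $\psi$ sends the centralizer $A_n$ of $\{e_{ij}\}$ into the centralizer of $\{f_{ij}\}$. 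This gives a homomorphism $\theta:A_n\to E$, namely $a\mapsto f_{11}\psi(a)f_{11}$.
\item By $\mathrm{DC}'_n$, $\theta$ is an isomorphism. Under the identification $R\cong\Mat_r(E)$ determined by the $f_{ij}$, $\psi$ becomes $\Mat_r(\theta)$, so $\psi$ is surjective.
\item Injectivity is automatic, because $R$ is simple.
\end{enumerate}

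This closes the cycle. Finally, I will remark that all arguments (Morita theory, Goldie rank, Stafford's theorem) are over $\C$ and never reduce modulo $p$, which gives the claim that the equivalences are proved entirely in characteristic zero.
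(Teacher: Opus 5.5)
Your proposal is correct and follows essentially the same route as the paper. The corner algebra $f_{11}\Mat_r(A_n)f_{11}\simeq\End_{A_n}(f_{11}A_n^r)$, together with the identification of $\psi$ with $\Mat_r(\theta)$ (Proposition~\ref{prop:matrix-corner}), gives $\mathrm{DC}'_n\Rightarrow\mathrm{MC}_{n,\mathrm{all}}$, and Stafford's theorem gives $\Mat_r(D)\simeq\Mat_r(A_n)$ (Lemma~\ref{lem:morita-rank}) for $\mathrm{MC}_{n,r}\Rightarrow\mathrm{DC}''_n$. The only minor difference is how rank one of $P$ is obtained: you use Goldie rank, while the paper embeds $P$ into $A_n$ by observing that a nonzero kernel of a map $P\to A_n$ would produce a nonzero square-zero endomorphism of the domain $\End_{A_n}(P)$.
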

For an endomorphism $f\colon \Mat_r(A_n)\to\Mat_r(A_n)$,
Proposition~\ref{prop:matrix-corner} shows that the corner algebra
\[
f(e_{11})\Mat_r(A_n)f(e_{11})
\]
is the endomorphism ring of a rank-one projective module. The implication $\mathrm{DC}'_n\Rightarrow\mathrm{MC}_{n,r}$ follows. Conversely, Lemma~\ref{lem:morita-rank}, using Stafford's theorem on
projective modules, identifies $\Mat_r(D)$ with $\Mat_r(A_n)$ for
$D\in\mathcal D_n$ and $r\geq2$. Thus a noninvertible homomorphism between
domains in $\mathcal D_n$ would give a noninvertible endomorphism of
$\Mat_2(A_n)$; see Section~\ref{sec:matrix-morita}.

The implications relevant to this paper are
\begin{equation}\label{eq:known}
\begin{gathered}
\begin{tikzcd}[row sep=1.2em,column sep=2.2em]
 \mathrm{DC}_2\arrow[dr,Rightarrow,double distance=1.2pt] & &
 \mathrm{DC}'_2\arrow[ll,Rightarrow,double distance=1.2pt] \\
 & \mathrm{JC}_2\arrow[dr,Rightarrow,double distance=1.2pt] & \\
 \mathrm{DC}_1 & & \mathrm{DC}'_1\arrow[ll,Rightarrow,double distance=1.2pt]
\end{tikzcd}
\end{gathered}
\end{equation}
The two horizontal arrows follow by taking $D=A_n$ in the definition of
$\mathrm{DC}'_n$.

The implication $\mathrm{DC}_2\Rightarrow\mathrm{JC}_2$ has a direct
characteristic-zero proof. Let $f=(f_1,f_2)$ have Jacobian one.
The coordinate vector fields on the target lift along $f$ to commuting
regular vector fields $\delta_1,\delta_2$ on the source, with
$\delta_i(f_j)=\delta_{ij}$. The assignments
\[
 x_i\longmapsto f_i,\qquad y_i\longmapsto\delta_i\quad(i=1,2)
\]
therefore define an endomorphism of $A_2$. This homomorphism preserves the order of differential operators. Hence, if
$\mathrm{DC}_2$ holds, then the map $f^*:\C[x_1,x_2]\to\C[x_1,x_2]$ is
an automorphism and $f$ is an automorphism.

We prove $\mathrm{JC}_2\Rightarrow\mathrm{DC}'_1$ in
Proposition~\ref{prop:matrix-implications} by adapting the reduction-to-positive-characteristic argument of
\cite{T05,BKK}.

The converse $\mathrm{DC}'_1\Rightarrow\mathrm{JC}_2$ is the main lifting
problem studied below. We give two conditional proofs: one assumes the
existence of modules with prescribed graph arithmetic support, and the
other assumes boundedness of the corresponding modules in positive
characteristic.

Stafford \cite[Corollary~3.2]{Stafford1987} reduced the classification
of domains in $\mathcal D_1$ to the classification of
$\Aut_{\C}(A_1)$-orbits on the set of isomorphism classes of nonzero
right ideals of $A_1$. The work of Berest--Wilson and Kouakou
\cite{BWaut,BW,Kouakou} determines these orbits, yielding a
classification of the domains in $\mathcal D_1$ by nonnegative
integers. Their isomorphism classes are distinguished by the
Letzter--Makar-Limanov invariant \cite{LM,Berest}. We use the
abbreviation \emph{LM-invariant} throughout for this algebra invariant.

\subsection{Lifting Lagrangian subvarieties to $\mathcal D$-modules}\label{subsec:intro-lifting}
Let $M$ be a holonomic $\mathcal D_X$-module on a smooth complex algebraic variety $X$. Choose a finitely generated integral $\Z$-algebra $R_0\subset\C$, a smooth $R_0$-scheme $X_{R_0}$, and a coherent $\mathcal D_{X_{R_0}/R_0}$-module $M_{R_0}$, together with isomorphisms $X_{R_0}\times_{R_0}\C\simeq X$ and $M_{R_0}\otimes_{R_0}\C\simeq M$. These data are called a model of $(X,M)$ over $R_0$.

At a positive-characteristic point $s\in\Spec R_0$ with residue field $k(s)$, write $M_s=M_{R_0}\otimes_{R_0}k(s)$ and $\mathcal D_{X_s}=\mathcal D_{X_{R_0}/R_0}\otimes_{R_0}k(s)$. The center of this crystalline differential-operator algebra is the sheaf of functions on $(T^*X_s)^{(1)}$, and the algebra is Azumaya over its center. The closed subscheme defined by the central annihilator of $M_s$ is called its \emph{$p$-support}, denoted by $\operatorname{Supp}_p(M_s)$. Bitoun proves that these supports, with their reduced scheme structures, are Lagrangian for points in a dense open subset of the arithmetic base \cite{Bitoun}; alternative proofs of Lagrangianity were given by Van den Bergh \cite{VdB} and Etingof \cite{Etingof}. The family of these supports, equipped with normalized multiplicities, is called the arithmetic support.

The module $M$ is said to have \emph{constant arithmetic support} if there are a Lagrangian subvariety $L\subset T^*X$ and a finitely generated integral $\Z$-algebra $R'\subset\C$ containing $R_0$, over which $M$ and $L$ are defined, such that
\[
 \operatorname{Supp}_p(M_s)=L_s^{(1)}
\]
as closed subschemes for all points $s$ of positive residue characteristic in a dense open subset of $\Spec R'$. See Section~\ref{sec:arithmetic} for the precise definitions.

A Lagrangian $L\subset T^*X$ is \emph{exact} if the restriction
of the tautological one-form to $L$ is $dh$ for a regular function $h$
on $L$. For a symplectic endomorphism $f:\A^{2n}\to\A^{2n}$, we
regard its graph $L_f=\Gamma_f$ as a Lagrangian in $T^*\A^{2n}$ by
the identification
\[
 (x,y,x',y')\longmapsto(x,x';y,-y')
\]
from the product with symplectic form $\omega_n\oplus(-\omega_n)$.
In positive characteristic, a \emph{graph splitting $\mathcal D$-module}
means a splitting bundle for the Azumaya algebra restricted to
$L_f^{(1)}$, viewed as a module over the algebra of crystalline
differential operators; the construction is given in
Section~\ref{subsec:graph-p}.

We record the original conjecture to specify precisely the statement
disproved below.
\begin{conjecture}[Kontsevich, {\cite[Conjecture~5, \S4.4]{Kont}}]\label{conj:kont}
Let $X$ be a smooth complex variety and let $L\subset T^*X$ be a smooth closed
connected Lagrangian subvariety with $H_1(L(\C),\Z)=0$. There exists a unique
holonomic $\mathcal D_X$-module $M_L$ with constant arithmetic support $L$ of
multiplicity one. Moreover, $\Ext^1(M_L,M_L)=0$.
\end{conjecture}

\begin{theorem}\label{thm:counterexamples}
Conjecture~\ref{conj:kont} is false.
\begin{enumerate}
\item Let $f:\A^4\to\A^4$ be Long's symplectic endomorphism. There is no holonomic
$\mathcal D_{\A^4}$-module in characteristic zero with constant arithmetic
support $L_f=\Gamma_f$ of multiplicity one.

Moreover, for every $r\geq1$, $f$ does not lift to an endomorphism
$u:\Mat_r(A_2)\to\Mat_r(A_2)$. More precisely, there does not exist an endomorphism $u$ with the following property: for some
integral finitely generated $\Z[1/30]$-subalgebra $R\subset\C$ over which $u$ has a model
\[
 u_R:\Mat_r(A_2(R))\longrightarrow\Mat_r(A_2(R)),
\]
the equality
\begin{equation}\label{eq:intro-matrix-lift}
 u_s\circ\iota_s=\iota_s\circ(f_s^{(1)})^*
 \qquad(s\in\Sigma)
\end{equation}
holds on a Zariski-dense set $\Sigma$ of closed points of $\Spec R$.
Here $f_s$ is the reduction of $f$, $p_s=\operatorname{char}k(s)$, and
\[
 \iota_s:k(s)[Z_1,Z_2,Z_3,Z_4]\xrightarrow{\sim}
 Z\bigl(\Mat_r(A_2(k(s)))\bigr),\qquad
 Z_j\longmapsto z_j^{p_s}I_r,
\]
where $(z_1,z_2,z_3,z_4)$ are the Weyl generators corresponding to
$(x,q,p,z)$, $I_r$ is the identity matrix, and the polynomial ring on the left is the coordinate ring
of $(\A^4_{k(s)})^{(1)}$. Thus \eqref{eq:intro-matrix-lift} specifies the
entire action of the reduction of $u$ on the center: it must equal the
pullback by the Frobenius twist of $f_s$.
\item For every integer $m\geq3$, there is a smooth closed exact Lagrangian
$L_m\simeq\A^m\subset T^*\A^m$ for which no holonomic
$\mathcal D_{\A^m}$-module in characteristic zero has constant arithmetic
support $L_m$ of multiplicity one. One may take
\[
 L_m=L_3\times T^*_{\{0\}}\A^{m-3}
 \subset T^*\A^3\times T^*\A^{m-3},
\]
where $L_3$ is constructed in \eqref{eq:L3-parametrization} and
$T^*_{\{0\}}\A^{m-3}$ is the conormal bundle of the origin.
\end{enumerate}
\end{theorem}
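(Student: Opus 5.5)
The plan for part (i) is to convert a hypothetical characteristic-zero module with arithmetic support $\Gamma_f$ into an algebra homomorphism, and then show that such a homomorphism cannot induce Long's map on centers. Assume $M$ is a holonomic $\mathcal D_{\A^4}$-module with constant arithmetic support $L_f$ of multiplicity one. Under the identification $(x,y,x',y')\mapsto(x,x';y,-y')$ we have $\mathcal D_{\A^4}=A_4\simeq A_2\otimes A_2^{\op}$, so $M$ becomes an $A_2$-bimodule. The steps, in order, are as follows.

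\textbf{Step 1: $M$ is a rank-one projective left $A_2$-module.} At a dense set of points $s$, multiplicity one and Bitoun's Lagrangianity make $M_s$ a splitting bundle of the Azumaya algebra restricted to $L_f^{(1)}$. Hence $M_s$ is locally free of rank $p^4$ over $\mathcal O_{L_f^{(1)}}$. Since $L_f$ is a graph over the first factor $\A^4_{(x,y)}$, $M_s$ is projective of rank $p^2$ over the center of the left copy $A_2(k(s))$, and hence projective of rank one over that copy. I expect to spread this out (flatness over a dense open of $\Spec R$, plus semicontinuity) to show that $M$ is finitely generated and projective of rank one as a left $A_2$-module. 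Then $D=\End_{A_2}(M)$ lies in $\mathcal D_2$, and the right action gives a homomorphism $A_2\to D$.

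\textbf{Step 2: pass to a matrix endomorphism.} Lemma~\ref{lem:morita-rank} identifies $\Mat_r(D)$ with $\Mat_r(A_2)$ for $r\ge2$, so the homomorphism $A_2\to D$ yields an endomorphism $u$ of $\Mat_r(A_2)$. By construction, at the points $s$ the reduction $u_s$ acts on the $p$-centers as the pullback along the second projection of $\Gamma_f$, which is $(f^{(1)}_s)^*$. This is exactly \eqref{eq:intro-matrix-lift}. The nonexistence of $M$ therefore reduces to the second assertion of (i), the nonexistence of such a $u$.

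\textbf{Step 3: no such $u$ exists.} This is the main obstacle. Let $a=u(z_1 I_r)$. Condition \eqref{eq:intro-matrix-lift} gives, for $s\in\Sigma$,
\[
a_s^{p_s}=R^{(1)}=Z_1(2-3Z_1Z_2),
\]
which is a product of two coprime central polynomials. I would first try a factorization argument. Since $u$ is injective, $u_s$ makes $\Mat_r(A_2(k(s)))$ a module over $(f_s^{(1)})^*$ of the center. A double-centralizer count over $K=\Frac Z$ shows that the image together with the center generates the whole matrix algebra. Passing to the commutative subalgebra $u(\C[z_1,z_2])$, I expect to deduce that the characteristic-zero element $a$ must itself factor as $a=bc$, with commuting $b,c$ satisfying $b_s^p=Z_1$ and $c_s^p=2-3Z_1Z_2$. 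From this, $b$ would reduce on the centers to a coordinate function $Z_1$ for dense $s$.

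The intended contradiction comes from applying the same reasoning to all four generators. It should force $f$ to be equal, on the centers, to a map whose first component is $b$ with $b^{(1)}$ an irreducible coordinate. On the other hand, reductions of $R$ must equal $x(2-3xq)$, and the factor $2-3xq$ is a nonunit that does not come from the Weyl generators. If this direct route stalls, the fallback is to use that $f$ has generic degree $d\ge2$ uniformly in $s$. Comparing the ranks of $\Mat_r(A_2(k(s)))$ over the source and target centers then gives an index $d$ that is incompatible with $u$ being defined over $R$ with a characteristic-zero Gelfand--Kirillov/Bernstein filtration. The key fact here is that a filtered injective endomorphism of $\Mat_r(A_2)$ cannot multiply Bernstein degree in the ratio forced by $d$ at every $p$.

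\textbf{Part (ii).} Here Kashiwara's equivalence does the reduction. A module with arithmetic support $L_3\times T^*_{\{0\}}\A^{m-3}$ is, after restricting to and pushing forward from $\A^3\times\{0\}$, a module on $\A^3$ with support $L_3$ of multiplicity one, so it suffices to treat $m=3$. For $m=3$, the plan is to take $L_3$ from \eqref{eq:L3-parametrization}, which I expect to be a symplectic reduction of $\Gamma_f$ along one Weyl pair. One would then show that a module with support $L_3$ induces, by external tensor product with the appropriate rank-one module and the inverse of the reduction, a module on $\A^4$ with support $\Gamma_f$. That contradicts part (i).

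Finally, the properties claimed for $L_m$ — smooth, closed, exact, isomorphic to $\A^m$, hence with $H_1=0$ — are checked directly from the parametrization. Together with the nonexistence statement, this shows $L_m$ violates Conjecture~\ref{conj:kont}.
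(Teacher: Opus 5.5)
Your Step~3 carries all of part (i), and through it part (ii), but it contains no working argument.

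\emph{The factorization route.} It cannot detect anything from $u(z_1I_r)$ alone. The element $a=R(x,q)I_r$ already satisfies $a_s^{p_s}=\iota_s(R_s^{[p_s]})$, because $x$ and $q$ commute. It also factors as $x\cdot(2-3xq)$ with $x^{p_s}=\iota_s(Z_1)$. So even granting $a=bc$, no contradiction follows. The observation that $R$ is not a coordinate only says that $u$ would not be an automorphism, and nothing in the hypotheses requires $u$ to be one.

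\emph{The fallback.} The claimed key fact is not a valid principle. For $n\ge3$, the counterexamples to $\mathrm{DC}_n$ built from Alp\"oge's map are injective endomorphisms of $A_n$ whose center maps have generic degree $\ge2$. For $\Mat_r(A_2)$, a degree principle of this kind, fed into the reduction argument of Proposition~\ref{prop:matrix-implications}, would essentially prove $\mathrm{MC}_{2,r}$, hence $\mathrm{DC}_2$ and $\mathrm{JC}_2$. Any proof must therefore use the specific shape of Long's map, not just its degree.

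\emph{What is missing.} The paper works with the bimodule rather than the matrix algebra, so it reduces in the opposite direction to your Step~2. A matrix lift $u$ gives $M=e_{11}\Mat_r(A_2)\,u(e_{11})$, which is finite projective of left rank one for every $r\ge1$. The argument then runs as follows.
\begin{enumerate}
\item Since $R$ involves only $x$ and $q$, the first center equation says that right multiplication by $x$ minus left multiplication by $R(x,q)$ is nilpotent on every fiber. Lemma~\ref{lem:generic-detection} then makes this operator $G$ locally nilpotent on the generic fiber.
\item In rational source coordinates $r=R$, $v=x^{-1}$, one has $[G,\partial_r]=1$, and $G$ commutes with the subring generated by $F(r,v)$ and $\partial_v$. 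A finite Taylor expansion therefore makes the localized module free on one vector $e\in\ker G$, and $\ker G$ is free on $e$ over that subring.
\item The right actions of $z$ and $q$ become scalar operators $\widehat S,\widehat T$ in the single derivation $\delta=v^{-1}\partial_v$, with $[\widehat S,\widehat T]=1$.
\item The exact $p$th-power center identities force $\widehat S$ and $\widehat T$ to have orders $2$ and $3$, with leading coefficients $-3r$ and $r/2$.
\item After normalizing, this yields a rational solution of $W''+3W^2=\lambda t+\mu$ with $\lambda\ne0$ on the quadratic cover $t=v^2/2$. Lemma~\ref{lem:painleve} excludes this by a Laurent-series argument.
\end{enumerate}
None of these ingredients appears in your proposal: the transverse nilpotent operator, the reduction to one-variable operators, and the Painlev\'e-type obstruction.

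\emph{Part (ii).} Your outline is structurally the paper's. The base change $w=X-R(x,q)$ splits $\Gamma_f$ as $L_3\times T^*_{\{0\}}\A^1_w$, and one tensors with $A_1/A_1w$. For $m>3$, generic detection makes the $t_i$ locally nilpotent, then Kashiwara's equivalence applies, followed by a check of the $p$-support and multiplicity of the kernel module. But this only transfers the missing obstruction from (i).

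\emph{A minor slip.} In Step~1 the rank of $M_s$ over the left center is $p^4$, not $p^2$. The paper spreads out projectivity via the filtered Hilbert polynomial and Lemma~\ref{lem:projectivity}, not semicontinuity alone.
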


The proof is given in Section~\ref{sec:nonliftability}. A holonomic $A_4$-module with constant arithmetic support equal to $\Gamma_f$ would
be a rank-one projective left $A_2$-module with a commuting right $A_2$-action.
An arithmetic support equation produces a locally nilpotent transverse
operator. A finite Taylor expansion then reduces the right actions to scalar
ordinary differential operators of orders two and three. Their commutator
would give a rational solution of a Painlev\'e-I-type equation on a quadratic
cover. A Laurent-series argument excludes such a solution. The
three-dimensional example is obtained by separating a conormal factor from
the graph using a change of base coordinates. Taking its product
with the conormal bundle of the origin gives the examples in all higher
dimensions; the proof that these products have no lifts uses Kashiwara's
equivalence.

Theorem~1.7 and Corollary~1.9 of the celebrated preprint \cite{Dodd} by Christopher Dodd assert the existence of lifts to $\mathcal D$-modules for Lagrangian subvarieties satisfying $H_1(L(\C),\Z)=0$ and $\operatorname{Pic}(L) = 0$. The examples in Theorem~\ref{thm:counterexamples} satisfy these conditions. Thus Dodd's theorem needs additional assumptions to hold. His approach, using $\lambda$-connections, Witt-vector lifting, and Grothendieck's existence theorem to algebraize formal $\lambda$-connections, remains promising. Dodd has recently continued this program in \cite{DoddMonodromy},
which he presents as the first in a planned series revisiting the
proof of the existence theorem. We believe that a suitable version can be established in base dimension two. Proposition~\ref{prop:dodd-boundary} gives a counterexample to the
boundary local-freeness assertion claimed in Dodd's Theorem~3.25, and
explains why that example itself still has a lift to a characteristic-zero $\mathcal D$-module.

In positive characteristic, the construction of $\mathcal D_X$-modules with prescribed support in $(T^*X)^{(1)}$ is substantially easier. The restriction of an Azumaya algebra admits a splitting bundle
exactly when its Brauer class vanishes, and the isomorphism classes of such
bundles form a torsor under the Picard group. For reductions of graphs of symplectic endomorphisms in sufficiently large characteristic, the
restricted Brauer classes vanish \cite[\S8]{BKKAut}; since each graph is affine space,
the splitting module is unique. Section~\ref{sec:arithmetic} gives this
construction and its relation to \cite[\S2]{Dodd} and \cite{T08}.

For the remaining plane problem we use only the following existence
assertion, which is not affected by the examples above.
\begin{conjecture}\label{conj:plane-graphs}
For every endomorphism $f:\A^2_\C\to\A^2_\C$ with Jacobian one,
there exists a holonomic $A_2$-module with constant arithmetic support
$L_f=\Gamma_f\subset T^*\A^2$ of multiplicity one.
\end{conjecture}
Proposition~\ref{prop:graph-uniqueness} proves that such a lift, if it exists,
is unique. Its proof uses bounded-degree isomorphisms of rank-one ideals.

\begin{theorem}\label{thm:plane-equivalence}
\begin{enumerate}
\item $\mathrm{JC}_2$ implies $\mathrm{DC}'_1$.
\item Assuming Conjecture~\ref{conj:plane-graphs}, $\mathrm{DC}'_1$
implies $\mathrm{JC}_2$.
\end{enumerate}
\end{theorem}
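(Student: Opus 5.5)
For part (i), I will use the matrix reformulation from the theorem in the introduction, $\mathrm{DC}'_1\Leftrightarrow\mathrm{MC}_{1,2}$. It then suffices to show that $\mathrm{JC}_2$ forces every endomorphism $\varphi$ of $\Mat_2(A_1)$ to be an automorphism. I will follow the reduction-to-positive-characteristic argument of \cite{T05,BKK}.

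First, choose a finitely generated $\Z$-algebra $R\subset\C$ over which $\varphi$ has a model $\varphi_R$. For closed points $s$ of large residue characteristic $p$, the reduction $\varphi_s$ is an endomorphism of $\Mat_2(A_1(k(s)))$. It sends the center $k(s)[x^p,y^p]I_2$ into itself, because the center is the commutant of the whole algebra and $\varphi_s$ is unital. Its restriction to the center is an endomorphism $g_s$ of $\A^2_{k(s)}$. Since $\Mat_2(A_1(k(s)))$ is Azumaya of rank $4p^2$ over its center and $\varphi_s$ is injective, $g_s$ is étale. More precisely, the Poisson bracket on the center, induced from $[\,,]/p$ in a lift, is preserved, so $g_s$ has Jacobian one. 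The key point, as in \cite{BKK}, is that the degrees of $g_s$ are bounded by $p$ times a constant depending only on $\deg\varphi$. After taking $p$-th roots of coordinates and passing to an ultraproduct over $s$, the maps $g_s$ assemble into a Jacobian-one endomorphism $g$ of $\A^2$ over an ultraproduct field $\mathcal K\simeq\C$.

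By $\mathrm{JC}_2$, $g$ is invertible with polynomial inverse of bounded degree. Hence almost all $g_s$ are automorphisms. Consequently $\varphi_s$ is a homomorphism of Azumaya algebras of equal rank over the same center, compatible with an isomorphism of centers, and is therefore an isomorphism. The inverses $\varphi_s^{-1}$ have degree bounded uniformly in $s$. This follows from Gabber's degree bound for inverses of automorphisms, applied to the underlying filtered algebras. The uniform bound lets us solve for $\varphi^{-1}$ as a linear system with coefficients in $R$. Since the system is solvable modulo a Zariski-dense set of $s$, it is solvable over $\Frac R\subset\C$. The main obstacle is making the identification of $g$ rigorous: the ultraproduct of the Frobenius-twisted centers must be shown to give a genuine Jacobian-one polynomial map in characteristic zero. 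I would follow the symplectomorphism construction of \cite{BKK} (which handles $A_n$) verbatim, since the corner $e_{11}$ and matrix units only add the finite Azumaya rank factor $4$.

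For part (ii), I assume Conjecture~\ref{conj:plane-graphs}. Let $M$ be the holonomic $A_2$-module with constant arithmetic support $\Gamma_f$ of multiplicity one. I will regard $M$ as an $A_1$-bimodule via $A_2=A_1\otimes A_1^{\op}$, using the sign convention for $L_f$. Reading the support over the source projection, $M$ is a left $A_1$-module whose $p$-support over $\A^2$ has generic rank one. From this I will argue that $M$ is a rank-one projective left $A_1$-module, so $D:=\End_{A_1}(M)\in\mathcal D_1$. The right action gives a homomorphism $A_1\to D^{\op}$, and $D^{\op}\in\mathcal D_1$ as well. By $\mathrm{DC}'_1$ this map is an isomorphism. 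Reducing mod $p$, the restriction to the centers is exactly $(f_s^{(1)})^*$, since the support is $\Gamma_f$. It is therefore an automorphism for almost all $s$, with uniformly bounded inverse degree. The same spreading-out argument as above then shows that $f^*$ is surjective, so $f$ is invertible. The delicate step here is proving projectivity of $M$ over $A_1$. I would try to use finiteness of $\Gamma_f$ over the first factor together with the multiplicity-one condition, checking it on reductions where $M_s$ is a splitting bundle.
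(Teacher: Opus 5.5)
The gap is in part (i), at the claim that the inverses $\varphi_s^{-1}$ have degree bounded uniformly in $s$. Part (ii) follows the paper's route. The paper proves projectivity of $M$ in characteristic zero, via finite left generation from the graph equations (Lemma~\ref{lem:restriction}), a Hilbert-polynomial rank count, and Lemma~\ref{lem:projectivity}, rather than on reductions. It then uses $\mathrm{DC}'_1$ to make $M$ invertible and descends the invertibility data so that the center isomorphism of $M_s$ is $(f_s^{(1)})^*$ (Lemma~\ref{lem:inverse-graph}).

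Gabber's bound concerns commutative polynomial automorphisms, so it controls only the inverse of the center map $g_s$. In the setting of \cite{BKK}, i.e.\ $A_n$ itself, this transfers to the noncommutative inverse because $\gr A_n$ is a domain: $\varphi_s^{-1}(x_i)^p=\varphi_s^{-1}(x_i^p)$ is central of degree at most $pD$, hence $\deg\varphi_s^{-1}(x_i)\le D$. For $\Mat_2(A_1)$ this breaks down. The ring $\gr\Mat_2(A_1)=\Mat_2(k[\xi,\eta])$ has nilpotents, so $\deg(w^p)$ does not bound $\deg w$. Moreover, the preimages $\varphi_s^{-1}(e_{ij})$ are idempotents or nilpotents about which the center map says nothing, since $e_{ij}^p$ is $0$ or $e_{ii}$. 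The matrix units are therefore exactly where a verbatim transcription of \cite{BKK} fails; they are not a harmless factor of $4$. Without a uniform bound, your linear system for $\varphi^{-1}$ has no fixed size, and the density argument cannot start.

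The paper never bounds $\varphi_s^{-1}$ (Step~5 of Proposition~\ref{prop:matrix-implications}). It regards $C_R=\Mat_2(A_1(R))$ as a bimodule via $a\cdot b\cdot c=(aI_2)\,b\,u(cI_2)$. The inverse center polynomials, of degree at most $d$, give $L_{x^p}=g_{s,1}(R_{x^p},R_{y^p})$ and $L_{y^p}=g_{s,2}(R_{x^p},R_{y^p})$ on the fibers. The finiteness criterion (Lemma~\ref{lem:restriction} with sides exchanged) then makes $C$ finite under the twisted right action after one localization. Hence $C_R/u(C_R)$ is a finite right module whose closed fibers vanish, so it vanishes generically. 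This gives surjectivity, and simplicity gives injectivity. Another possible repair is to pass to the corner domain of Proposition~\ref{prop:matrix-corner}, realize it over $R$ inside the Weyl division ring, and bound $\deg g_s^{-1}$ through reduced norms using $\deg_{X,Y}\Nrd(q)=\deg q$. Either way an idea beyond your proposal is needed.

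A smaller point: your reason for $\varphi_s(Z)\subset Z$ is wrong as stated. $\varphi_s(z)$ commutes only with the image of $\varphi_s$, not with the whole algebra. One needs, for instance, the following argument. The pullback of a $2p$-dimensional simple module at a closed central point is again simple, since all finite-dimensional simples have dimension $2p$. Schur's lemma then makes $\varphi_s(z)$ act by a scalar there. Finally, freeness over the reduced Jacobson center kills the commutators.
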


We prove this result in Sections~\ref{subsec:conditional-plane},~\ref{subsec:matrix}

\subsection{LM-invariants and BW-invariants}\label{subsec:intro-invariants}
For graph splitting $\mathcal D$-modules associated with endomorphisms of the plane, there is a second approach to lifting, based on restricted ultraproducts. Let $f=(f_1,f_2)$ be an endomorphism of $\A^2_k$ with Jacobian one, where $k$ is perfect of characteristic $p$ sufficiently large compared with $\deg f$. Put $Z=k[X,Y]=Z(A_1(k))$, where $X=x^p$ and $Y=y^p$, and let $Z'$ be the center of a second copy $A'_1(k)$. If $f^{[p]}$ denotes coefficientwise Frobenius, define
\begin{equation}\label{eq:pullback-definition}
 B_{f,k}=Z\otimes_{Z',\,(f^{(1)})^*}A'_1(k),\qquad
 (f^{(1)})^*(X',Y')=(f_1^{[p]}(X,Y),f_2^{[p]}(X,Y)).
\end{equation}
The unique graph splitting $\mathcal D$-module $M_{f,k}$ is a finitely generated rank-one projective left $A_1(k)$-module and a generator for the category of left $A_1(k)$-modules (Proposition~\ref{prop:graphsplit}), and
\begin{equation}\label{eq:pullback-end}
 B_{f,k}\simeq\End_{A_1(k)}(M_{f,k})^{\op}.
\end{equation}
Thus $B_{f,k}$ is a domain Morita equivalent to $A_1(k)$.

For rank-one modules we use the invariant introduced in \cite{BW} (see also \cite{KouakouBW}), which we call the \emph{Berest--Wilson invariant}, or \emph{BW-invariant} for short. The definitions of the BW- and LM-invariants use the Bernstein filtration and apply as follows over any field. Realize a finitely generated rank-one torsion-free $A_1(k)$-module $P$ as a left ideal $I$, and put $S=k[\xi,\eta]=\gr A_1(k)$. With $g_I$ the homogeneous greatest common divisor of $\gr I$, set
\begin{equation}\label{eq:nu-definition}
 J_P=g_I^{-1}\gr I,\qquad \nu(P)=\dim_k(S/J_P).
\end{equation}
For projective $P$, give $E=\End(P)^{\op}$ the degree filtration induced from the Weyl division ring and set
\begin{equation}\label{eq:lambda-definition}
 \lambda(E)=\dim_k(S/\gr E).
\end{equation}
The second quotient is a quotient of graded vector spaces. These definitions are independent of the ideal realization, and
\begin{equation}\label{eq:lambda-nu}
 \lambda(E)=2\nu(P).
\end{equation}
Over $\C$, these are the BW- and LM-invariants of \cite{BW,LM,Berest}. We use the same definitions in positive characteristic. Section~\ref{subsec:invariants} proves that they are well defined and, following the Hilbert-polynomial argument of \cite{Berest}, establishes \eqref{eq:lambda-nu} in this setting. For $B_{f,k}$ the specified center coordinates determine the filtration intrinsically by
\begin{equation}\label{eq:norm-filtration}
 F_tB_{f,k}=\{0\}\cup
 \{b\ne0:\deg_{X,Y}\Nrd(b)\leq t\}.
\end{equation}
Here $\Nrd$ is the reduced norm of the central simple algebra
$B_{f,k}\otimes_Z\Frac Z$ over $\Frac Z$, restricted to $B_{f,k}$;
its values lie in the specified center $Z=k[X,Y]$. We keep this
center and filtration as part of the data in positive characteristic.

Let $f$ be an endomorphism of $\A^2_\C$ with Jacobian one, defined over a finitely generated integral $\Z$-algebra $R_0\subset\C$. Assume Conjecture~\ref{conj:plane-graphs}, and let $M$ be the corresponding $\mathcal D$-module with constant arithmetic support $L_f$. After enlarging and localizing $R_0$ if necessary, its fibers $M_s$ are isomorphic to the graph splitting $\mathcal D$-modules $M_{f,s}:=M_{f_s,k(s)}$, where $f_s$ is the reduction of $f$, and satisfy $\nu(M_s)=\nu(M)$ at all positive-characteristic points of a dense open subset of $\Spec R_0$. Conversely, a uniform bound on the BW-invariants $\nu(M_{f,s})$ suffices to lift these graph splitting $\mathcal D$-modules to characteristic zero along a faithful arithmetic family, as specified below.

\subsection{Lifting via ultraproducts and boundedness}\label{subsec:intro-realizations}
For every $i\geq1$, fix a map $s_i:R_0\to k_i$ to a finite field such that $p_i=\operatorname{char}k_i\to\infty$. Fix a nonprincipal ultrafilter $\UU$ for which the induced map
\(R_0\to K=\prod_\UU k_i\)
is injective. Such a choice will be called a \emph{faithful arithmetic family}. Write $f_{k_i}=f\otimes_{s_i}k_i$ and $M_i=M_{f_{k_i},k_i}$. For $I\in\UU$ and a family $(\alpha_i)_{i\in I}$ with $\alpha_i\in k_i$,
we write $[\alpha_i]_{\UU}$ for the corresponding element of the ultraproduct $K$.

\begin{definition}\label{def:realization}
An arithmetic realization of this family consists of an integral finitely generated $R_0$-subalgebra $R\subset K$, maps $t_i:R\to k_i$ extending $s_i$ for $\UU$-almost all $i$, and a finitely generated $A_2(R)$-module $M_R$ such that
\[
 [t_i(r)]_{\mathcal U}=r\quad\forall r\in R,\qquad
 M_R\otimes_{R,t_i}k_i\simeq M_i
 \quad\text{for $\UU$-almost all $i$},
\]
and $M_R\otimes_R\Frac R$ is holonomic.
\end{definition}

For a scheme $T$ of finite type over $\Z$, put $X_T=\A^2_T$ and write $\mathcal D_{X_T/T}$ for the sheaf of relative crystalline differential operators (see Section~\ref{subsec:crystalline}). For a coherent $\mathcal D$-module $N$ on an affine space over a field, $\dim N$ denotes the dimension of its characteristic variety. A \emph{family of $\mathcal D$-modules of dimension two over $T$} will mean a coherent $\mathcal D_{X_T/T}$-module $M_T$ that is flat over $T$ and satisfies $\dim M_{T,\bar t}=2$ for every geometric point $\bar t\to T$. Its characteristic-zero fibers are holonomic. Here, for a morphism $u:\Spec\ell\to T$, the notation $M_{T,u}$ means pullback along $X_\ell\to X_T$.

\begin{theorem}\label{thm:equivalence}
For a faithful arithmetic family, the following are equivalent:
\begin{enumerate}
\item $\nu(M_i)$ is bounded on a set belonging to $\UU$;
\item $\lambda(B_{f_{k_i},k_i})$ is bounded on a set belonging to $\UU$;
\item the family has an arithmetic realization in the sense of Definition~\ref{def:realization};
\item there exist a scheme $T$ of finite type over $\Spec R_0$ and a family $M_T$ of $\mathcal D$-modules of dimension two over $T$ such that, for $\UU$-almost every $i$, there is an $R_0$-morphism $u_i:\Spec k_i\to T$ with
\[
 M_{T,u_i}\simeq M_i
\]
as $\mathcal D_{\A^2_{k_i}/k_i}$-modules.
\end{enumerate}
Under these equivalent conditions, the characteristic-zero fiber of an arithmetic realization can be chosen to be a projective rank-one left $A_1(\Frac R)$-module with an $A_1(\Frac R)$-bimodule structure. After an embedding of $\Frac R$ into $\C$ extending $R_0\subset\C$, it gives a holonomic $\mathcal D_{\A^2_\C}$-module.
\end{theorem}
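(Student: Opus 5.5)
The cycle we aim for is (i)$\Leftrightarrow$(ii)$\Rightarrow$(iii)$\Rightarrow$(iv)$\Rightarrow$(i). The equivalence (i)$\Leftrightarrow$(ii) is immediate from \eqref{eq:pullback-end} and \eqref{eq:lambda-nu}. Indeed, $B_{f_{k_i},k_i}\simeq\End(M_i)^{\op}$, and the identity $\lambda(E)=2\nu(P)$ holds over each $k_i$ (Section~\ref{subsec:invariants}). We only need to check that the filtration on $B_{f_{k_i},k_i}$ used in $\lambda$ is the one induced from the Weyl division ring. The norm description \eqref{eq:norm-filtration} is designed for exactly this: $\deg\Nrd(b)$ is $p$ times the Bernstein degree of $b$ in the embedding into $\Frac A_1(k_i)$. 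We would verify this by passing to leading terms and using $\gr\Nrd=\Nrd\gr$ on the commutative associated graded ring.

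For (i)$\Rightarrow$(iii), assume $\nu(M_i)\le N$ on a set in $\UU$. Realize $M_i$ as a left ideal $I_i\subset A_1(k_i)$. The bound on $\nu$ should let us choose the ideal in a normalized position, as in the Berest--Wilson/Kouakou analysis, with generators of Bernstein degree bounded by a function of $N$. Concretely, $\gr I_i=g_{I_i}J_i$ with $\dim S/J_i\le N$, so after multiplying by a suitable element we may assume $J_i\supseteq(\xi,\eta)^{N}$ and that $I_i$ contains elements of degree $\le N$ generating it. We do this via a bounded-degree version of the Cannings--Holland/Wilson Grassmannian description, which is uniform in $p$ once $p\gg N$.

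The right $A_1$-action is given by the map $A'_1(k_i)\to\End(M_i)^{\op}=B_{f_{k_i},k_i}$. The images of the generators $x',y'$ have Bernstein degree controlled by $\deg f$ and $N$, because their reduced norms are determined by $f^{[p]}$ through \eqref{eq:pullback-definition}. All structure constants therefore lie in a finite-dimensional space of bounded dimension. Taking $\UU$-limits of the coefficient tuples gives elements of $K$, and we let $R$ be the $R_0$-algebra they generate. This yields $M_R$ with $t_i$ the coordinate maps and $[t_i(r)]_\UU=r$, by Łoś's theorem applied to the finitely many polynomial identities: the Weyl relations for the bimodule action and the ideal membership. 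Holonomicity of $M_R\otimes\Frac R$ also follows from Łoś. The characteristic-variety dimension is bounded by $2$, since $\gr$ of a bounded-degree presentation is defined by first-order conditions on boundedly many coefficients.

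For (iii)$\Rightarrow$(iv), take $T=\Spec R$ after localizing so that $M_R$ is flat (generic flatness) and the fiber dimension is $2$ on an open set (semicontinuity of characteristic-variety dimension for a good filtration). Since $R\to K$ is injective, $\UU$-almost every $t_i$ factors through this open set; set $u_i=t_i$.

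For (iv)$\Rightarrow$(i), we use constructibility. Along $T$, the function $t\mapsto\nu(M_{T,\bar t})$ should be constructible, since the left $A_1$-structure of a flat family of rank-one modules has fibrewise Hilbert data for $\gr$ that is bounded on each stratum of a finite stratification. Hence $\nu$ is bounded on $T$, and in particular at the points $u_i$.

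The main obstacle is the bounded-degree normal form in (i)$\Rightarrow$(iii), that is, showing that bounded $\nu$ forces presentations of bounded complexity uniformly in characteristic. We would first try to adapt the argument of Proposition~\ref{prop:graph-uniqueness} on bounded-degree isomorphisms of rank-one ideals: $J_P$ of colength $\le N$ lets one choose $I$ with $\gr I\supseteq g\,(\xi,\eta)^N$.

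The final claim then follows from the construction. The generic fiber of $M_R$ is a rank-one projective left $A_1(\Frac R)$-module, because projectivity and rank are first-order properties holding at almost all $i$. It carries the $A'_1$ right action, and base change to $\C$ gives the holonomic $\mathcal D_{\A^2_\C}$-module.
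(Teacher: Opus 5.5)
\textbf{Main gap: (iv)$\Rightarrow$(i).} Your cycle breaks at this step.

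The invariant $\nu$ is attached to the \emph{left $A_1(k_i)$-structure} of $M_i$. By contrast, $M_T$ is only a family of $A_2$-modules. Its fibers need not be finitely generated over the left Weyl factor, let alone of rank one. The Hilbert data that a stratification (Lemma~\ref{lem:family-stratification}) makes constant are those of the $A_2$-Bernstein filtration, and these do not determine $\nu$.

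Even at the points $u_i$, converting a bounded $A_2$-presentation into a left $A_1$-presentation via $u^p=h_1$, $v^p=h_2$ multiplies the number of generators by $p^2$. It also lets the degrees grow linearly in $p$ (compare Step~2 of Proposition~\ref{prop:geometric-boundedness}). So no constructibility argument on $T$ alone gives a bound uniform as $p_i\to\infty$.

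What is missing is a characteristic-zero point at which the left structure is finite. The paper obtains it in two steps.
\begin{enumerate}
\item It proves (iv)$\Rightarrow$(iii): restrict to an affine open $\Spec B$ hit by $\UU$-almost all $u_i$, set $\mathfrak q=\ker(B\to K)$, and put $R=B/\mathfrak q$.
\item In (iii)$\Rightarrow$(i), it uses the graph $p$-support at a single fiber, through the weighted filtration of Lemma~\ref{lem:restriction}(i) (see Remark~\ref{rem:ultraproduct-finiteness}), to show that $M_{\Frac R}$ is finitely generated on the left. One left presentation over a localization of $R$ (Lemma~\ref{lem:restriction}(ii)) then specializes to bounded presentations of $M_i$ for $\UU$-almost all $i$. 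Lemma~\ref{lem:bounded-ideals} converts this into a bound on $\nu(M_i)$.
\end{enumerate}

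\textbf{Second gap: the normal form in (i)$\Rightarrow$(iii) is false.} A left ideal generated by elements of degree at most one is principal or all of $A_1$, because two generators with independent linear parts have a nonzero scalar commutator lying in the ideal. Hence no realization of a module with $\nu=1$ is generated in degree $\leq N=1$. Likewise, for $\nu=1$ one cannot arrange $g_I=1$, since $\gr I=(\xi,\eta)$ forces $I=A_1$.

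Your observation $J_P\supseteq(\xi,\eta)^{\nu}$ is the right input, and no Cannings--Holland description is needed (none is available uniformly in characteristic $p$ anyway). Instead, proceed as follows.
\begin{enumerate}
\item Shift the induced filtration of the abstract module $P$ by $\deg g_I$, so that $\gr P\simeq J_P$.
\item Take generators in degrees $\leq\nu$ and Koszul syzygies in degrees $\leq\nu+1$, and lift them to a strict filtered resolution (Proposition~\ref{prop:presentation}).
\item Strictness, together with $\deg q_x=\deg f_1$ and $\deg q_y=\deg f_2$, bounds the right-action matrices (Proposition~\ref{prop:actions}).
\item Coefficientwise ultraproducts of these bounded presentations then give $M_R$.
\end{enumerate}

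\textbf{Smaller points.}
\begin{enumerate}
\item Holonomicity of $M_{\Frac R}$ ``by {\L}o\'s'' would require uniform Gr\"obner bounds over $A_2$, which you have not established.
\item Projectivity is not evidently a first-order property. The paper obtains holonomicity and projectivity from Proposition~\ref{prop:recognition} at the Zariski-dense points $\ker t_i$; projectivity also follows from Lemma~\ref{lem:projectivity}.
\item In (iii)$\Rightarrow$(iv), semicontinuity is not enough, because in characteristic $p$ the characteristic variety can have dimension below two. You need the constant filtered Hilbert function of Lemma~\ref{lem:family-stratification}.
\end{enumerate}
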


\begin{conjecture}[Uniform bound for BW-invariants of graph splitting $\mathcal D$-modules]\label{conj:bound}
For each $d\geq1$ there is an integer $N(d)\geq0$ such that
\[
 \nu(M_{f,k})\leq N(d)
\]
for every finite field $k$ of characteristic $p>2d$ and every endomorphism $f:\A^2_k\to\A^2_k$ with Jacobian one and degree at most $d$.

Equivalently, for every $d\geq1$ there exist a scheme $T_d$ of finite type over $\Z$ and a family $M_{T_d}$ of $\mathcal D$-modules of dimension two over $T_d$ such that, for every such $k$ and $f$, there is a morphism $u:\Spec k\to T_d$ with
\[
 M_{T_d,u}\simeq M_{f,k}
\]
as $\mathcal D_{\A^2_k/k}$-modules. The scheme $T_d$ and the family $M_{T_d}$ depend only on $d$.
\end{conjecture}
The equivalence of these two formulations is proved in Proposition~\ref{prop:geometric-boundedness}.
\begin{corollary}\label{cor:ultra}
Conjecture~\ref{conj:bound} implies $\mathrm{DC}'_1\Rightarrow\mathrm{JC}_2$.
\end{corollary}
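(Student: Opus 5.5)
Assume Conjecture~\ref{conj:bound} and $\mathrm{DC}'_1$, and let $f=(f_1,f_2)$ be an endomorphism of $\A^2_\C$ with Jacobian one and degree $d$. The plan is to show that $f^*$ is an automorphism of $\C[x_1,x_2]$. Choose a finitely generated integral $\Z$-algebra $R_0\subset\C$ containing the coefficients of $f$ and $1/(2d)!$ such that the Jacobian identity holds over $R_0$. Then choose a faithful arithmetic family $s_i:R_0\to k_i$ with $p_i\to\infty$; such families exist because $R_0$ has a Zariski-dense set of closed points of large residue characteristic. For $\UU$-almost all $i$ we have $p_i>2d$, so Conjecture~\ref{conj:bound} gives $\nu(M_i)\le N(d)$ on a set in $\UU$. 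Condition (i) of Theorem~\ref{thm:equivalence} therefore holds, and that theorem produces an arithmetic realization $M_R$. Its characteristic-zero fiber $M$, after an embedding $\Frac R\hookrightarrow\C$ extending $R_0\subset\C$, is a projective rank-one left $A_1$-module carrying a commuting right $A_1$-action. The right action is a homomorphism $\phi:A_1\to E:=\End_{A_1}(M)^{\op}$ (or its opposite), and $E\in\mathcal D_1$ because $M$ is a rank-one projective generator.

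By $\mathrm{DC}'_1$, $\phi$ is an isomorphism. The next step is to transport this back to the fibers. Spread $\phi$ and its inverse out over a localization of $R$. This gives $R$-algebra isomorphisms $A_1(R)\simeq \End(M_R)^{\op}$, after shrinking $\Spec R$, and these specialize for $\UU$-almost all $i$. Since $M_R\otimes k_i\simeq M_i$, and since by \eqref{eq:pullback-end} $\End(M_i)^{\op}\simeq B_{f_{k_i},k_i}$, the right action of $A'_1(k_i)$ on $M_i$ induces a map $A'_1(k_i)\to B_{f_{k_i},k_i}$. On centers this map is $(f^{(1)})^*$, by construction of the graph splitting module and \eqref{eq:pullback-definition}. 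This map now factors as the reduction of $\phi$ followed by an isomorphism, so it is an isomorphism for $\UU$-almost all $i$. Restricting to centers, we conclude that $(f_{k_i}^{(1)})^*:k_i[X',Y']\to k_i[X,Y]$ is surjective, hence an automorphism, for $\UU$-almost all $i$.

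To return to characteristic zero, note that $f_{k_i}$ itself is then invertible, since the Frobenius twist is invertible iff $f_{k_i}$ is. Its inverse has degree bounded by $d$, because for plane automorphisms $\deg f^{-1}=\deg f$. For each fixed degree bound, the existence of a polynomial inverse of degree $\le d$ is a system of polynomial equations over $R_0$, which is a first-order condition. It holds in $k_i$ for $\UU$-almost all $i$, so by {\L}o\'s's theorem it holds in $K$. Since $R_0\to K$ is injective and the equations have coefficients in $R_0$, $f$ has an inverse over $K$, and hence over an algebraically closed field containing both $K$ and $R_0$. By descent of invertibility along field extensions, which holds since the inverse is unique, $f$ is invertible over $\C$.

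\textbf{Main obstacle.} The hard step is the middle one: identifying the reduction of the characteristic-zero right action with the canonical right $A'_1(k_i)$-action on $M_i$, in a way that matches the center map with $(f^{(1)})^*$. Theorem~\ref{thm:equivalence} only gives $M_R\otimes k_i\simeq M_i$ as $A_2$-modules. I would first try to use the fact that the $A_2$-module structure is exactly the bimodule structure under $A_2=A_1\otimes A_1^{\op}$. In that case the isomorphism respects both actions, and the center of the right copy acts through the $p$-support $\Gamma_{f}^{(1)}$, which forces the center map to equal $(f^{(1)})^*$.
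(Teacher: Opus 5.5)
Your proposal is correct and follows essentially the same route as the paper: a faithful arithmetic family, Theorem~\ref{thm:equivalence} to obtain a realization, $\mathrm{DC}'_1$ applied to the characteristic-zero bimodule, spreading invertibility out to $\UU$-almost all fibers, reading off $(f_{k_i}^{(1)})^*$ on the centers, and passing back to $\C$. The differences are only technical. The paper spreads out the inverse bimodule and the two evaluation isomorphisms, which are maps of finitely presented modules and so fall directly under Lemma~\ref{lem:finite-descent}. Spreading out $\phi$ and its inverse as you do also requires that $\End(M_R)$ commute with base change and be finitely generated as an algebra; both follow from the descended projectivity and trace identity of Proposition~\ref{prop:recognition}. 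Finally, the paper returns to characteristic zero via the finite identities \eqref{eq:inverse-identities}, where you use {\L}o\'s's theorem together with the degree bound on the inverse.
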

This gives the second conditional proof. The BW-invariants of rank-one projective modules over a fixed field of positive characteristic can be arbitrarily large; see Example~\ref{ex:fixed-field-BW}.

We also consider two related questions concerning arithmetic support.
Proposition~\ref{prop:arbitrary-genus} constructs holonomic left
$A_1$-modules whose arithmetic support is constant and equal to a smooth affine
hyperelliptic curve of any prescribed positive genus, with multiplicity
one. In contrast, Proposition~\ref{prop:hyperbola-obstruction} shows that
there does not exist a nonzero holonomic $A_1$-module with constant arithmetic support equal to the hyperbola $\{xy=c\}, c\neq0$, even with higher multiplicity.

Theorem~\ref{thm:rigid-semisimple} proves that every holonomic
$A_1$-module $M$ with $\Ext^1_{A_1}(M,M)=0$ is semisimple and describes its
simple summands by Fourier transforms and exponential twists.
Corollary~\ref{cor:rigid-arithmetic} proves Kontsevich's conjecture on
the extended motivic-exponential origin of such modules
(a special case of Conjecture~\ref{conj:kont-six}), which implies that their arithmetic support is constant. Its distinct components are
pairwise disjoint copies of $\A^1$ and become parallel lines under one
symplectic automorphism of $\A^2$.
Proposition~\ref{thm:rigid-other-curves} proves
Conjecture~\ref{conj:kont-six} for arbitrary holonomic modules with
vanishing self-$\Ext^1$ on every smooth connected complex curve other
than $\mathbb P^1$. If the base curve is neither $\A^1$ nor
$\mathbb P^1$, such a module is supported at finitely many points.
On $\mathbb P^1$, Theorem~\ref{thm:rigid-projective-line} shows that an
irreducible module of positive generic rank with vanishing self-$\Ext^1$
has at most one singular point and quasi-unipotent formal monodromy.
Combining this with Sabbah's theorem on rigid irregular connections
proves Conjecture~\ref{conj:kont-six} for irreducible modules on
$\mathbb P^1$ (Corollary~\ref{cor:kont-six-projective-line}).

Finally, Theorem~\ref{thm:kont-seven-counterexample} gives a differential
operator $Q$ on $\A^2$ for which the $p$-support cycles of
$A_2/A_2(Q+\lambda)$ have multiplicity $p$ for generic $\lambda \in \C$, while the
centralizer of $Q$ is $\C[Q]$ on every nonempty open subset of $\A^2$.
This disproves Conjecture~\ref{conj:kont-seven} about quantum integrable systems.

Sections~\ref{sec:arithmetic}--\ref{sec:nonliftability} develop
the arithmetic-support approach, establish projectivity and uniqueness
of lifts, and give the counterexamples. Sections~\ref{sec:matrix-morita}
and~\ref{sec:conditional} prove the stated implications between the
conjectures. Section~\ref{sec:ultra} proves the boundedness criterion
for arithmetic realizations. Appendix~\ref{app:dodd} examines Dodd's
boundary construction, and Appendix~\ref{app:two-conjectures} treats
Kontsevich's two further conjectures.

\textbf{AI usage.}
The authors used ChatGPT (OpenAI) to assist with developing and drafting
mathematical arguments and examples, literature searches, language editing,
and LaTeX preparation. The authors are responsible for the final text,
including its mathematical claims, proofs, and references.

\textbf{Acknowledgements.}
We thank Yuri Berest for helpful e-mail correspondence and for telling us about the generalized Dixmier conjecture. We also
thank Alexander Zheglov and Dmitry Kaledin for useful discussions.

The paper was supported by RSF № 25-21-00884 and Gerbart institute.

The study has been funded within the framework of the HSE University Basic Research Program (HSE-BR-2025-060).

\section{Arithmetic support}\label{sec:arithmetic}

\subsection{Crystalline differential operators}\label{subsec:crystalline}
We recall the framework of \cite{Bitoun,Kont}. For a smooth morphism $X\to S$ of relative dimension $m$, with $S$ noetherian, the sheaf $\mathcal D_{X/S}$ is the enveloping algebra of the relative tangent Lie algebroid. Its formation commutes with base change on $S$. In positive characteristic this is the sheaf of \emph{crystalline} differential operators, rather than the full sheaf of Grothendieck differential operators.

The order filtration has associated graded algebra $\operatorname{Sym}_{\mathcal O_X}\mathcal T_{X/S}$, the algebra of functions on $T^*(X/S)$. Over a field, a good filtration on a coherent $\mathcal D$-module $N$ therefore defines its characteristic variety as the support of the associated graded module in the cotangent bundle. Its dimension is independent of the good filtration. On affine space it can also be computed from a good Bernstein filtration: it is the dimension of the support of the associated graded module, or equivalently the degree of the cumulative filtered Hilbert polynomial. In positive characteristic it also equals the dimension of the $p$-support described below; compare \cite[\S\S4.1--4.2]{Bitoun}. These are the dimension conventions used for the families in Section~\ref{subsec:intro-realizations}.

For the arithmetic setting, let $S=\Spec R$ be integral of finite type over $\Z$, with a nonempty characteristic-zero fiber.

Let $s\in S$ be a point with perfect residue field $k=k(s)$ of characteristic $p$, and write $X_s=X\times_S\Spec k$. The assignments
\[
 a\longmapsto a^p,\qquad
 \partial\longmapsto\partial^p-\partial^{[p]}
\]
identify the center of $\mathcal D_{X_s}$ with $\mathcal O_{T^*X_s^{(1)}}$. Here $\partial^{[p]}$ is the $p$th iterate as a derivation. This represents $\mathcal D_{X_s}$ as an Azumaya algebra on $T^*X_s^{(1)}$ of degree $p^m$. In affine coordinates,
\[
 Z(A_m(k))=k[x_1^p,\ldots,x_m^p,y_1^p,\ldots,y_m^p],
 \qquad\rank_Z A_m(k)=p^{2m}.
\]
These facts follow locally from the action of the central fibers on
$k[t_1,\ldots,t_m]/(t_1^p,\ldots,t_m^p)$; see \cite[Theorem~2.2.3]{BMR}.

Let $M_R$ be a coherent $\mathcal D_{X/S}$-module, and regard its fiber $M_s=M_R\otimes_R k(s)$ as a coherent module over the center on $Y_s=(T^*X_s)^{(1)}$. We always consider $p$-support as a \emph{scheme-theoretic} support:
\[
 \operatorname{Supp}_p(M_s)
 =V\bigl(\operatorname{Ann}_{\mathcal O_{Y_s}}M_s\bigr)\subset Y_s.
\]
The annihilator is an ideal sheaf; it need not be radical. We write $\operatorname{Supp}_p(M_s)_{\mathrm{red}}$ when only the associated reduced subscheme is intended. For an irreducible component with generic point $\eta$, its normalized multiplicity is
\begin{equation}\label{eq:multiplicity}
 p^{-m}\operatorname{length}_{\mathcal O_{Y_s,\eta}}(M_s)_\eta.
\end{equation}
The length is taken in the ambient central local ring, so it detects transverse nilpotent structure at $\eta$.

If the characteristic-zero fiber is holonomic, Bitoun's theorems provide a dense open subset of $S$ on which the reduced $p$-supports are Lagrangian, and the Azumaya algebra splits over their regular loci \cite[Theorems~2.2.1 and 5.1.4]{Bitoun}. Alternative proofs of the Lagrangianity assertion are given in \cite{VdB,Etingof}. Bitoun's Theorem~4.3.3 also bounds their degrees and generic ranks in terms of the Bernstein multiplicity of the original module. These results concern reductions of a fixed module; they do not construct a module with a prescribed support.

In the constant-support condition of Conjecture~\ref{conj:kont}, the equality
\[
 \operatorname{Supp}_p(M_s)=L_s^{(1)}
\]
is an equality of closed subschemes, after choosing a common model of $X,L,M$ and restricting the arithmetic base to a dense open subset. Multiplicity one refers to \eqref{eq:multiplicity}. We use this scheme-theoretic convention throughout, in particular when referring to the existence statement claimed in \cite[Theorem~1.7]{Dodd}.

\begin{remark}[Multiplicity one and reduced support]\label{rem:purity}
For reductions of a holonomic characteristic-zero module, multiplicity one on every irreducible component implies that the scheme-theoretic $p$-support is reduced, after restricting to a dense open subset of the arithmetic base. Here one must use purity, not just generic multiplicity.

Indeed, the proof of \cite[Theorem~3.1.1 and Remark~3.3.6]{Bitoun} shows that, after this restriction, $M_s$ is pure of dimension $m$ as a coherent $\mathcal O_{Y_s}$-module: every nonzero coherent subsheaf has support of dimension $m$. Let $\mathcal I$ be the ideal of $\operatorname{Supp}_p(M_s)_{\mathrm{red}}$. At a generic point of any component, split the Azumaya algebra by an \'etale extension. Morita equivalence then writes the module as $p^m$ copies of a finite-length module. By \eqref{eq:multiplicity}, that module has length one, so the maximal ideal annihilates it. Faithfully flat descent therefore gives $(\mathcal I M_s)_\eta=0$ at each generic point $\eta$ of the support. Consequently $\mathcal I M_s$, if nonzero, would be a coherent subsheaf of dimension less than $m$. Purity forces $\mathcal I M_s=0$. Since $\mathcal I$ is the radical of the central annihilator, we obtain
\[
 \operatorname{Ann}_{\mathcal O_{Y_s}}M_s=\mathcal I.
\]
Thus, for these fibers, reduced support $L_s^{(1)}$ together with multiplicity one yields the scheme-theoretic equality used above. Without purity, generic multiplicity alone would not exclude nilpotents.
\end{remark}

\subsection{Graph splitting \texorpdfstring{$\mathcal D$}{D}-modules}\label{subsec:graph-p}
Let $S$ be a scheme and let $\mathcal A$ be an Azumaya algebra of degree $q$ on $S$, so that $\mathcal A$ is locally isomorphic, for the \'etale topology, to $\Mat_q(\mathcal O_S)$. A \emph{splitting bundle} for $\mathcal A$ is a locally free $\mathcal O_S$-module $E$ of rank $q$, equipped with a left $\mathcal A$-action for which the induced homomorphism
\[
 \mathcal A\longrightarrow\mathcal End_{\mathcal O_S}(E)
\]
is an isomorphism. Such a bundle exists precisely when the class of $\mathcal A$ in the Brauer group $\Br(S)$ is zero. If $E$ is one splitting bundle, all the others have the form $E\otimes_{\mathcal O_S}\mathcal L$, where $\mathcal L$ is a line bundle. Indeed, for another splitting bundle $E'$, the sheaf $\mathcal Hom_{\mathcal A}(E,E')$ is a line bundle and the evaluation map is an isomorphism. Thus the isomorphism classes of splitting bundles, when nonempty, form a torsor under $\Pic(S)$.

In the setting of Section~\ref{subsec:crystalline}, take $S=L\subset T^*X_k^{(1)}$ and $\mathcal A=\mathcal D_{X_k}|_L$. A splitting bundle has rank $p^{\dim X_k}$ over $\mathcal O_L$. Viewed as a $\mathcal D_{X_k}$-module, it has scheme-theoretic $p$-support $L$ and multiplicity one; we call it a \emph{splitting $\mathcal D$-module on $L$}. For a smooth exact Lagrangian $L_{R_0} \subset T^*X_{R_0}$, \cite[Lemma~2.21]{Dodd} supplies such modules on $L_s^{(1)}$ for sufficiently large residue characteristics. Their isomorphism classes form a torsor under $\Pic(L_s^{(1)})$.

We also recall the construction of splitting $\mathcal D$-modules on graphs from \cite[\S8]{BKKAut}. For a noetherian scheme $S$ of characteristic $p$, put
\[
 \Omega^1_{\mathrm{abs}}(S)=\Gamma(S,\Omega^1_{S/\mathbb F_p}).
\]
There is a natural homomorphism of additive groups
\[
 \beta_S:\Omega^1_{\mathrm{abs}}(S)/d\Gamma(S,\mathcal O_S)
 \longrightarrow\Br(S).
\]
For affine $S$, its value on a form written as $\sum_i a_i\,db_i$ is the Brauer class of the tensor product of the algebras
\[
 \mathcal A_{a_i,b_i}
 =\mathcal O_S\langle\xi,\eta\rangle/
 \bigl([\xi,\eta]-1,\ \xi^p-a_i,\ \eta^p-b_i\bigr).
\]
Each factor is an Azumaya algebra of degree $p$. The assignment is independent of the expression for the one-form, is additive, and commutes with pullback; the Morita equivalences establishing these identities are given in \cite[\S8.1]{BKKAut}. The assignment vanishes on exact one-forms and therefore factors through the displayed quotient. If $S$ is a scheme over a perfect field $k$, absolute differentials over $\mathbb F_p$ agree with relative differentials over $k$.

Fix $n\geq1$. Let $k$ be a perfect field of characteristic $p$,
and let $f:\A_k^{2n}\to\A_k^{2n}$ be an endomorphism satisfying $f^*\omega_n=\omega_n$.
For $n=1$, the latter identity is precisely the Jacobian-one condition. Put $d=\max(1,\deg f)$ and assume $p>2d$. Write $W_k=A_n(k)$, $Z_k=Z(W_k)$, and $W'_k$ for a second copy of $W_k$. On
\[
 S=\Spec Z_k=\A_k^{2n},\qquad
 X_i=x_i^p,\quad Y_i=y_i^p,
\]
let $\alpha=\sum_iY_i\,dX_i$; use primed symbols on the second copy $S'$. The above presentation of the symbol algebras gives
\[
 [W_k]=\beta_S(\alpha),\qquad
 \bigl[(f^{(1)})^*W'_k\bigr]
 =\beta_S\bigl((f^{(1)})^*\alpha'\bigr).
\]
Here $f^{(1)}:S\to S'$ is the Frobenius twist of $f$. The form $\alpha-(f^{(1)})^*\alpha'$ is closed because $f^*\omega_n=\omega_n$. Its coefficients have degree at most $2d-1$, and it is exact under our assumption $p>2d$.
Consequently the pullback algebra
\[
 B_{f,k}:=(f^{(1)})^*W'_k
\]
has the same Brauer class as $W_k$, and $W_k\otimes_{Z_k}B_{f,k}^{\op}$ admits a splitting bundle.

To identify its support, equip the product of the source and target symplectic spaces with the difference of their symplectic forms and use the identification
\[
 (x,y,x',y')\longmapsto(x,x';y,-y')\in T^*\A_k^{2n}.
\]
The corresponding algebra of differential operators is $W_k\otimes_k(W'_k)^{\op}$. Write
\[
 z=(x_1,\ldots,x_n,y_1,\ldots,y_n),
\]
and denote the left and right generators of this tensor product by $z_{j,L}$ and $z_{j,R}$, respectively. If $f=(f_1,\ldots,f_{2n})$ and $f_j^{[p]}$ denotes the polynomial obtained by raising its coefficients to their $p$th powers, the ideal of the twisted graph is
\begin{equation}\label{eq:graphideal}
 I_{f,k}=(z_{j,R}^p-f_j^{[p]}(z_{1,L}^p,\ldots,z_{2n,L}^p))_{j=1}^{2n}.
\end{equation}
The quotient by this central ideal is precisely $W_k\otimes_{Z_k}B_{f,k}^{\op}$. Choose a splitting bundle for this algebra and let $M_{f,k}$ be its module of global sections. It is a splitting $\mathcal D$-module on $L_f^{(1)}$. Since $L_f^{(1)}\simeq\A_k^{2n}$ has trivial Picard group, $M_{f,k}$ is unique up to isomorphism.

\begin{proposition}\label{prop:graphsplit}
The module $M_{f,k}$ is a rank-one projective left $W_k$-module and a generator for the category of left $W_k$-modules, and
\[
 \End_{W_k}(M_{f,k})^{\op}\simeq (f^{(1)})^*W'_k.
\]
Its central annihilator is $I_{f,k}$, and its rank over the center of $W_k$ is $p^{2n}$. The pullback algebra is a domain. If $f$ is invertible, $M_{f,k}$ is an invertible $W_k$-bimodule.
\end{proposition}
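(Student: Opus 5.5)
The plan is to work with the central quotient $C:=W_k\otimes_{Z_k}B^{\op}$, where $B=B_{f,k}=(f^{(1)})^*W'_k$. Note that $C$ is an Azumaya algebra of degree $p^{2n}$ over $Z_k$. Indeed, $W_k$ and $B$ are each Azumaya of degree $p^n$ over $Z_k$: the first by the facts recalled in Section~\ref{subsec:crystalline}, the second as a pullback of an Azumaya algebra along $f^{(1)}$. By construction $M=M_{f,k}$ is a splitting bundle, that is, a locally free $Z_k$-module of rank $p^{2n}$ with $C\xrightarrow{\sim}\End_{Z_k}(M)$. This immediately gives the rank over the center. It also gives that the central annihilator in $Z_k\otimes_k Z'_k$ is exactly $I_{f,k}$. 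The quotient of $Z_k\otimes Z'_k$ by $I_{f,k}$ is $Z_k$, since the graph is identified with the source. Moreover, $M$ is faithful over $Z_k$ because it is locally free of positive rank.

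\textbf{Projectivity and the generator property.} I would use the standard Azumaya/Morita dictionary over $Z_k$. If $A$ and $B$ are Azumaya of the same degree over a commutative ring $Z$ and $M$ splits $A\otimes_Z B^{\op}$, then $M$ is a progenerator as a left $A$-module and $B^{\op}\simeq\End_A(M)$. The argument goes étale-locally on $Z_k$, where $A\simeq\Mat_q(Z)$, $B\simeq\Mat_q(Z)$ and $M\simeq Z^q\otimes Z^q$. There $M$ is a free $\Mat_q(Z)$-module of rank $q$, and one reads off $\End_A(M)=\Mat_q(Z)^{\op}$ together with the double-centralizer property. The properties ``finitely generated projective'', ``generator'' (the trace ideal equals $A$), and ``$B^{\op}\to\End_A(M)$ is an isomorphism'' are all compatible with faithfully flat descent. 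Hence they hold globally, and so $\End_{W_k}(M)^{\op}\simeq B$.

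\textbf{Rank one and the domain property.} Since $M$ is a progenerator, $\End_{W_k}(M)$ is Morita equivalent to $W_k$. To see that $M$ has rank one, I would compare ranks over $Z_k$: $W_k$ has rank $p^{2n}$ over $Z_k$, and so does $M$. Hence $\Frac(W_k)\otimes_{W_k}M$ is a module over the division ring $\Frac(W_k)=W_k\otimes_{Z_k}\Frac Z_k$ of $\Frac Z_k$-dimension $p^{2n}$. It is therefore one-dimensional over that division ring. Next, $M$ is torsion-free as a $W_k$-module because it is projective, so it embeds as a left ideal of $W_k$. Consequently $\End(M)^{\op}$ embeds in the division ring $\Frac(W_k)$. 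This embedding is the standard one for a rank-one projective: an endomorphism is determined by its effect after tensoring with $\Frac(W_k)$, where it acts by right multiplication. It follows that $B$ is a domain.

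\textbf{Invertible case.} If $f$ is invertible, then $f^{(1)}$ is an automorphism of $S$. Hence $B\simeq W_k$ via the induced isomorphism $W'_k\to W_k$ obtained by transport, and $M$ becomes a $W_k$-$W_k$-bimodule which is a progenerator on each side. Its endomorphism ring is exactly the other acting algebra, so $M$ is invertible in the Morita sense. The inverse bimodule is $\Hom_{W_k}(M,W_k)$.

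The main obstacle I expect is the rank-one and domain step, specifically the justification that $\End(M)$ embeds in the Weyl division ring; the rest is formal Azumaya descent. I would prove this embedding by localizing at the generic point of $Z_k$, where everything reduces to modules over a central division algebra.
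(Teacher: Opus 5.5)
Your proposal is correct and follows essentially the same route as the paper: étale-local trivialization to matrix algebras with descent of projectivity, the generator property and the endomorphism identity; faithfulness of the splitting bundle for the annihilator; the comparison $p^{2n}/p^{2n}$ of central ranks for Ore rank one; the embedding of endomorphisms into the opposite of the quotient division ring for the domain property; and the autoequivalence in the invertible case. One small slip: locally $M\simeq Z^q\otimes Z^q\simeq\Mat_q(Z)$ is free of rank one as a left $\Mat_q(Z)$-module, not of rank $q$, and rank one is what your conclusion $\End_A(M)=\Mat_q(Z)^{\op}$ actually uses.
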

\begin{proof}
After a faithfully flat base change on $S$ and a further local trivialization, both Azumaya algebras are matrix algebras of size $p^n$, and their splitting bimodule is the space of $p^n$-by-$p^n$ matrices with left and right multiplication. It is the regular module on either side. Projectivity, the generator property, and the endomorphism identity descend. The splitting bundle is faithful over $\mathcal O_S$, so its central annihilator in the original tensor product is exactly $I_{f,k}$. Its central rank is $p^{2n}$; dividing by the central rank of $W_k$ gives Ore rank one. Endomorphisms of a rank-one torsion-free module embed in the opposite of the quotient division ring, so the endomorphism algebra, and hence its opposite $B_{f,k}$, is a domain. If $f$ is invertible, the pullback of $W'_k$ along its center is isomorphic to $W'_k$ as a $k$-algebra, and the displayed Morita equivalence is an autoequivalence.
\end{proof}

For $n=1$ this proves \eqref{eq:pullback-end}.

\subsection{Good filtrations and finiteness criterion in terms of the arithmetic support}\label{subsec:restriction}
Fix $n\geq1$. We first work with an arbitrary finitely generated bimodule, without imposing a condition on its arithmetic support. Let $R$ be an integral finitely generated $\Z$-algebra of characteristic zero, put $F=\Frac R$, and write
\[
 W_R=A_n(R),\qquad W_R^{\mathrm e}=W_R\otimes_R W_R^{\op}.
\]
The two actions of $R$ on a $W_R$-bimodule are required to coincide, so that such a bimodule is a left $W_R^{\mathrm e}$-module. Fix a finitely generated $W_R^{\mathrm e}$-module $M_R$ and put
\[
 M_F=M_R\otimes_R F,\qquad M_s=M_R\otimes_R k(s)
 \quad(s\in\Spec R).
\]
When $R\subset\C$, a model of a complex bimodule $M$ consists of such an $M_R$ together with an isomorphism $M_R\otimes_R\C\simeq M$.

For this general bimodule setting, let
\[
 (z_1,\ldots,z_{2n})=(x_1,\ldots,x_n,y_1,\ldots,y_n)
\]
be the standard generators of $W_R$, and define elements of $W_R^{\mathrm e}$ by
\[
 z_{j,L}=z_j\otimes1,\qquad z_{j,R}=1\otimes z_j^{\op}.
\]
Their actions are $z_{j,L}m=z_jm$ and $z_{j,R}m=mz_j$. Here the subscript $R$ on $z_{j,R}$ indicates the right action, rather than the coefficient ring. For multi-indices $a,b\in\mathbb Z_{\geq0}^{2n}$, write $z_L^a z_R^b$ for the product in this fixed order. These products form an $R$-basis of $W_R^{\mathrm e}$.

Fix positive integral weights for the left and right generators. The filtered part of $W_R^{\mathrm e}$ of degree at most $\ell$ is the $R$-span of the ordered monomials of weighted degree at most $\ell$. Choose module generators $v_1,\ldots,v_c$ and let $F_\ell M_R$ be the image of this filtered part applied to their $R$-span. This is a good filtration: its associated graded module is finitely generated over the associated graded polynomial algebra. One may also assign fixed integral degrees to the module generators.

After replacing $R$ by $R[r^{-1}]$ for one nonzero $r\in R$, every $F_\ell M_R$ and every graded piece may be assumed finite free over $R$, with
\begin{equation}\label{eq:filtered-base-change}
 (F_\ell M_R)\otimes_R k(s)\simeq F_\ell M_s,\qquad
 (\gr M_R)\otimes_R k(s)\simeq\gr M_s
\end{equation}
for every $s\in\Spec R$. Here $M_s$ has the filtration induced by the chosen generators and weights. To obtain one localization valid in every degree, start with a finite presentation, homogenize for these weights, and compute a finite homogeneous Gr\"obner basis over $F$. Invert the denominators in that basis, in its expressions in the original relations, and in the expressions of the original relations in the basis. Invert also the leading coefficients and the denominators in the reductions of the critical pairs to zero. The resulting monic Gr\"obner basis works over the localized ring and in every residue field. The monomials not divisible by its leading monomials give compatible bases of the filtered parts and graded pieces. In particular, their dimensions over $k(s)$ agree with their dimensions over $F$.

The same construction applies to finitely generated left or right $W_R$-modules. It also shows that a finitely generated module of any of these three types whose fibers vanish on a Zariski-dense set of closed points has zero generic fiber: a nonzero generic filtered part would, after the indicated localization, remain nonzero in every fiber.

\begin{lemma}[Descent of maps and identities]\label{lem:finite-descent}
Let $R$ be an integral noetherian ring, $F=\Frac R$, and $A_R$ an
$R$-algebra in which $R$ acts centrally. Let $P_1,\ldots,P_m$ be
finitely presented left $A_R$-modules. Any finite collection of
$A_F$-linear maps between the $(P_i)_F$, together with finitely many
elements and identities involving these maps, is defined over
$R[r^{-1}]$ for some $0\ne r\in R$. This applies in particular to
sections of surjections, inverse isomorphisms, and commuting actions
of a finitely presented algebra. Finite presentations and homomorphisms
of finitely presented algebras over $F$ likewise descend after one
localization, together with finitely many specified identities.
\end{lemma}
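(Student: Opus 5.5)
The plan is to reduce every statement to finitely many elements and finitely many equations in finitely many unknowns over $F$, and then clear denominators.

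\emph{Presentations.} For each $i$, fix a finite presentation
\[
A_R^{a_i}\xrightarrow{\;\phi_i\;}A_R^{b_i}\longrightarrow P_i\longrightarrow0 .
\]
Because localization is exact and $R$ is central, it base changes to a presentation of $(P_i)_F=F\otimes_R P_i$ for every localization $R[r^{-1}]$, and likewise to $F$.

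\emph{Describing the given data.} An $A_F$-linear map $g\colon(P_i)_F\to(P_j)_F$ is determined by the images of the $b_i$ generators. So $g$ is specified by a matrix $G\in\Mat_{b_i\times b_j}(A_F)$, where we write $A_F=F\otimes_R A_R$. This matrix must satisfy one well-definedness condition: $\phi_i G=\phi_j H$ for some auxiliary matrix $H\in\Mat_{a_i\times a_j}(A_F)$. Elements of $(P_i)_F$ are represented by row vectors over $A_F$. An identity between compositions of such maps, evaluated on generators or elements, means that a certain difference of row vectors lies in the image of $\phi_j$. Each such statement is witnessed by an explicit coefficient vector over $A_F$.

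\emph{Clearing denominators.} The key point is that every element of $A_F$ has the form $r^{-1}a$ with $a\in A_R$ and $0\ne r\in R$. This holds because $A_F=A_R\otimes_R F$ is the central localization of $A_R$ at $R\setminus 0$. The matrices $G$ and $H$ and all the witnesses together involve only finitely many entries. Let $r$ be the product of their denominators. Then all of these data lie in $A_{R[r^{-1}]}$.

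Each required equation holds in $A_F$, which is a localization of $A_{R[r^{-1}]}$. So each equation holds in $A_{R[r^{-1}]}$ after multiplying by some further element $r'$ of $R\setminus0$. Since only finitely many equations are involved, we invert the product of these $r'$ as well. After this single localization, the maps are defined over $R[r^{-1}]$, they are well defined, and the stated identities hold there.

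\emph{The listed special cases.} Sections of surjections, inverse isomorphisms, and commuting actions are each expressed by finitely many identities such as $s\circ\pi=\mathrm{id}$, $g\circ g^{-1}=\mathrm{id}$, or relations among the action maps. For a commuting action of a finitely presented algebra $C=F\langle c_1,\dots,c_t\rangle/(\rho_1,\dots,\rho_u)$, we specify $t$ endomorphisms. We then impose the $u$ relations $\rho$ evaluated on these endomorphisms, together with the commutation with $A$, which is automatic from $A$-linearity.

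\emph{Algebras.} Let $B_F$ be a finitely presented $F$-algebra. Clearing the denominators of its relations gives an $R[r^{-1}]$-algebra $B$ with $B\otimes F\cong B_F$. A homomorphism $B_F\to B'_F$ is given by the images of the generators, which are finitely many noncommutative polynomials. The relations of $B_F$ map to relations, and the witnesses of this are finitely many expressions in the ideal of relations of $B'_F$. Clearing the denominators of all of these gives the descent. Any further specified identities, finite in number, are handled by one more localization as above.

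\emph{Main obstacle.} The step needing care is that equality in $A_F$ only yields equality in $A_{R[r^{-1}]}$ up to $R$-torsion. This is resolved by inverting a further element, and it uses only that $R$ is central and that the data are finite. Noetherianity is not really needed beyond ensuring that the modules in question are finitely presented.
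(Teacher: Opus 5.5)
Your proposal is correct and follows the paper's argument. You specify each map by the images of generators in fixed finite presentations, clear the finitely many denominators, invert the finitely many nonzero elements of $R$ that kill the differences, and combine all localizations into one; the algebra and action cases are handled the same way. Your explicit witness matrices are just an equivalent way of saying that an element of $(P_j)_{R[r^{-1}]}$ that vanishes over $F$ is annihilated by some nonzero element of $R$. One small correction: in your row-vector convention the well-definedness condition should read $\phi_iG=H\phi_j$, not $\phi_iG=\phi_jH$.
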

\begin{proof}
Specify each map by the images of the generators in a finite
presentation of its source. Clear the finitely many denominators in
these images. The images of the defining relations vanish over $F$,
so each is annihilated by a nonzero element of $R$; invert their product.
The maps are now well defined. Test each remaining identity on the
finite generating sets and invert the finitely many nonzero elements
of $R$ that annihilate its differences. For algebra maps and additional
actions, do the same with the images of algebra generators and the
defining algebra relations. Presentations defined initially over $F$
are obtained over a localization by clearing their coefficient
denominators. All these localizations can be combined into one.
\end{proof}

\begin{lemma}[Finiteness criterion]\label{lem:restriction}
Let $M_R$ be a finitely generated $W_R^{\mathrm e}$-module, with $R$, $F$, and the left and right generators as above.
\begin{enumerate}
\item Give $z_{j,L}$ weight one and $z_{j,R}$ weight $d\geq1$, place the module generators $v_1,\ldots,v_c$ in degree zero, and localize $R$ so that \eqref{eq:filtered-base-change} holds. Suppose that for some point $s\in\Spec R$ of residue characteristic $p>0$ there are polynomials
\[
 h_j\in k(s)[T_1,\ldots,T_{2n}],\qquad \deg h_j\leq d
 \quad(1\leq j\leq2n),
\]
and an integer $e\geq1$ such that
\[
 \bigl(z_{j,R}^p-h_j(z_{1,L}^p,\ldots,z_{2n,L}^p)\bigr)^eM_s=0
 \quad(1\leq j\leq2n).
\]
Then $M_F$ is finitely generated as a left $W_F$-module. The same conclusion holds if these annihilation identities are satisfied after an extension of the residue field, with the $h_j$ defined over that extension.

\item If $M_F$ is finitely generated as a left $W_F$-module, then there exists $0\ne r\in R$ such that $M_{R[r^{-1}]}$ has a finite presentation as a left $W_{R[r^{-1}]}$-module. Base change of this presentation gives a finite left presentation of $M_s$ for every $s\in\Spec R[r^{-1}]$, and also after any extension of its residue field.
\end{enumerate}
Both assertions have right-module versions, obtained by exchanging the two Weyl factors.
\end{lemma}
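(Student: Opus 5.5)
Part (i) is a dimension count carried out in the fibre $M_s$ and then transported to $F$ using the base-change isomorphisms \eqref{eq:filtered-base-change}. Let $V\subset M_R$ be the left $W_R$-submodule generated by the finitely many elements
\[
 z_R^b v_i,\qquad 0\le b_j<pe\ \ (1\le j\le 2n),\ 1\le i\le c .
\]
The goal is to show $V_F=M_F$.

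In $M_s$ the annihilation identities let us rewrite any right monomial. Expanding $(z_{j,R}^p-h_j(z_L^p))^e$, the power $z_{j,R}^{pe}$ acts on $M_s$ as a combination of terms $z_{j,R}^{pa}\,h_j(z_L^p)^{e-a}$ with $a<e$. The coefficients $h_j(z_L^p)$ are central in the left factor and commute with every right generator. Induction on the exponents therefore gives
\[
 z_R^bv_i\in\sum_{b'<pe} W_s\, z_R^{b'} v_i .
\]
Track the degrees in this reduction. Each step replaces the right-degree $pde$ of $z_{j,R}^{pe}$ by at most $pa\cdot d+p d(e-a)=pde$ in total weighted degree. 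Here we use $\deg h_j\le d$, the left weight $1$ and the right weight $d$. Hence, for every $\ell$,
\[
 F_\ell M_s\subseteq (F_\ell W_s)\cdot\{z_R^{b}v_i : b<pe\}.
\]
This degree-compatible inclusion is the key point, because it yields a bound independent of $\ell$:
\[
 \dim F_\ell M_s \le \dim F_\ell V_s + \dim(\text{something fixed}) .
\]
More precisely, $\gr M_s$ is generated over $k(s)[z_L]$ by the images of finitely many elements of bounded degree.

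Now pass to $F$. Consider the quotient $Q_R=M_R/V_R$ with the induced good filtration, and localize further so that \eqref{eq:filtered-base-change} also holds for $V_R$ and $Q_R$. The inclusion above says $Q_s=0$. By the closing remark before Lemma~\ref{lem:finite-descent} (vanishing of fibres forces vanishing of the generic fibre once the filtration is flat), we conclude $Q_F=0$. For this, a single point $s$ suffices: every $F_\ell Q_R$ is finite free, so $F_\ell Q_s=0$ forces $F_\ell Q_R=0$. Therefore $M_F=V_F$ is finitely generated over $W_F$.

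If the $h_j$ are only defined over an extension $k'/k(s)$, run the same argument over $M_s\otimes k'$ and use faithful flatness of field extensions for the dimension equality. The main obstacle is checking carefully that the reduction in the first paragraph is compatible with the filtration in the sense above. Weights alone are not enough; one also needs that reordering $z_L^a z_R^b$ costs nothing, which holds because left and right generators commute in $W^{\mathrm e}$.

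For (ii), choose finitely many elements of $M_R$ whose images generate $M_F$ over $W_F$. Since $W_F$ is noetherian, $M_F$ has a finite left presentation. Apply Lemma~\ref{lem:finite-descent}: clear denominators of the relations and invert the finitely many elements needed to express the original bimodule generators $v_i$ in terms of the chosen left generators. Also invert the torsion annihilators of the presentation map, using a Gr\"obner-basis argument as in the paragraph preceding the lemma, to make the kernel flat with generic rank matching. This gives $M_{R[r^{-1}]}$ as the cokernel of a map of finite free left $W_{R[r^{-1}]}$-modules. Cokernels commute with base change, so tensoring gives the fibre presentations, including after residue field extensions. The right-module versions follow by the symmetric argument.
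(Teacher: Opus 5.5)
Your reduction of the right exponents below $pe$ is the same as the paper's. It proves the sharp statement that is needed: $F_\ell M_s$ is spanned by the $z_L^az_R^bv_i$ with $b_j<pe$ and $|a|+d|b|\le\ell$. Your displayed inclusion, with $F_\ell W_s$ in front, is weaker, because it lands in a shifted filtration piece.

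The gap in (i) is the transfer to $F$. You ``localize further so that \eqref{eq:filtered-base-change} also holds for $V_R$ and $Q_R$''. But the hypothesis gives only the single point $s$, and $V_R$ is built from the bound $pe$, that is, from $s$ itself. The new localization may delete $s$. Then $Q_s=0$ says nothing about the localized module, and you cannot switch to another point without changing $V_R$.

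No further localization is needed. Take the map from a finite free $R$-module to the already free $F_\ell M_R$ sending basis vectors to the monomials above. By the sharp spanning statement and \eqref{eq:filtered-base-change}, it becomes surjective after $\otimes_Rk(s)$. Hence some maximal minor is nonzero modulo $s$, so it is nonzero in $R$, so the map is surjective over $F$. Equivalently, apply Nakayama over $R_{\mathfrak p_s}$ to the finitely generated $R$-module $F_\ell M_R$ modulo the span of these monomials. This is how the paper argues, degree by degree, with no localization depending on $\ell$ or on $s$.

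In (ii) you only arrange that the bimodule generators $v_i$ lie in the left span $U$ of the chosen elements $u_a$. That does not give $U=M_{R[r^{-1}]}$: the $v_i$ generate $M_R$ only as a bimodule, so you also need $U$ to be stable under the right action. Over $F$ this stability is the finite set of identities $u_az_j=\sum_b d_{ajb}u_b$ with $d_{ajb}\in W_F$. These must be descended by Lemma~\ref{lem:finite-descent} together with $v_i=\sum_a c_{ia}u_a$. Without them, $M_{R[r^{-1}]}/U$ is neither a bimodule nor known to be finitely generated on the left. So the vanishing of its generic fibre cannot be spread out by inverting finitely many elements.

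Once right stability holds, $U=M_{R[r^{-1}]}$. Left noetherianity of $W_{R[r^{-1}]}$ then gives a finite left presentation directly, and right exactness of $\otimes$ gives the fibre presentations. Your step of descending the $F$-presentation and making its kernel flat becomes unnecessary.
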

\begin{proof}
\emph{(i)} Put $q=pe$. Each displayed relation is monic of degree $q$ in one right generator. Its other terms have strictly smaller degree in that right generator, involve only central left coefficients and powers of the same right generator, and have weighted degree at most $dq$. Successively applying these relations therefore reduces all right exponents to values less than $q$ without increasing weighted degree. It follows that $F_\ell M_s$ is spanned by
\[
 z_L^a z_R^b v_i,\qquad
 1\leq i\leq c,\quad 0\leq b_j<q,\quad |a|+d|b|\leq\ell.
\]
For each fixed $\ell$, take the map from a finite free $R$-module to $F_\ell M_R$ whose images are these same monomials in the chosen generators. It is surjective after base change to $k(s)$. Since the target is finite free, a maximal minor is nonzero modulo $s$ unless the target is zero. That minor is nonzero over $F$, so the map is surjective over $F$. Apply this argument separately in each degree; no further localization depending on $\ell$ is required. The fixed finite set
\[
 \{z_R^b v_i:1\leq i\leq c,\ 0\leq b_j<q\}
\]
therefore generates $M_F$ over $W_F$. If the annihilation identities are known only after a residue-field extension, the same monomials span after that extension. Surjectivity descends along a field extension, and the preceding argument applies unchanged.

\emph{(ii)} Assume that $M_F$ is finitely generated as a left $W_F$-module. Choose left generators $u_1,\ldots,u_b$ of $M_F$ and, after clearing their denominators, regard them as elements of a localization of $M_R$. Keep the bimodule generators $v_1,\ldots,v_c$ fixed above. Over $F$, write
\begin{equation}\label{eq:left-generation-identities}
 v_i=\sum_a c_{ia}u_a,\qquad
 u_a z_j=\sum_b d_{ajb}u_b,\qquad c_{ia},d_{ajb}\in W_F.
\end{equation}
Apply Lemma~\ref{lem:finite-descent} to the chosen elements and
the generator and right-action identities
\eqref{eq:left-generation-identities}; they hold over one localization
of $R$. The left span of the $u_a$ contains the $v_i$ and is stable under every right generator, so it equals $M_R$. Since $W_R$ is noetherian, $M_R$ then has a finite left presentation, which gives a finite left presentation after every residue-field base change.

Exchanging the two Weyl factors proves the right-module versions.
\end{proof}

\begin{remark}\label{rem:graph-finiteness}
Let $R$ and $M_R$ be as in Lemma~\ref{lem:restriction}. Suppose that $f:\A_R^{2n}\to\A_R^{2n}$ is an endomorphism satisfying $f^*\omega_n=\omega_n$, that $\Sigma\subset\Spec R$ is a Zariski-dense set of closed points of positive residue characteristic, and that
\[
 \operatorname{Supp}_p(M_s)_{\mathrm{red}}\subset L_{f_s}^{(1)}
 \qquad(s\in\Sigma).
\]
This includes the situation in which $M_R$ is a model of a characteristic-zero $\mathcal D$-module with constant arithmetic support equal to the graph. Set $d\geq\max(1,\deg f)$ and make the filtration localization required by Lemma~\ref{lem:restriction}(i). The remaining open subset meets $\Sigma$; choose $s$ in this intersection and put $p=\operatorname{char}k(s)$. Since $M_s$ is finite over the center of $W_{k(s)}^{\mathrm e}$, the support inclusion says that the graph ideal $I_{f_s,k(s)}$ lies in the radical of its central annihilator. Some power of this ideal consequently annihilates $M_s$. In particular, the equations in Lemma~\ref{lem:restriction}(i) hold for some $e$, with $h_j=f_{s,j}^{[p]}$. Thus $M_F$ is finitely generated as a left $W_F$-module. If $M_s$ is a graph splitting $\mathcal D$-module, one can take $e=1$.

Lemma~\ref{lem:restriction}(ii) now gives a finite left presentation of $M_R$ after a further localization of $R$. Its base changes give finite left presentations on all fibers over the remaining open subset of $\Spec R$. If $f_F$ is an automorphism, spread out its inverse after another localization. The reversed graph is then the graph of $f^{-1}$, so the same argument with the two Weyl factors exchanged also gives finite right presentations.
\end{remark}

\begin{remark}\label{rem:ultraproduct-finiteness}
Specialize to $n=1$ and the faithful arithmetic family of Definition~\ref{def:realization}. Assume that an arithmetic realization has been given: thus $R\subset K=\prod_\UU k_i$, the maps $t_i:R\to k_i$ satisfy $[t_i(r)]_{\UU}=r$, and the prescribed base changes of $M_R$ are the graph splitting $\mathcal D$-modules $M_i=M_{f_{k_i},k_i}$. The inclusion $R\subset K$ implies
\[
 \{i:t_i(r)\ne0\}\in\UU\qquad(r\in R\setminus\{0\}).
\]
Hence any fixed localization used for \eqref{eq:filtered-base-change} is allowed for $\UU$-almost all $i$. Choose an index for which this localization is allowed and the prescribed isomorphism $M_R\otimes_{R,t_i}k_i\simeq M_i$ holds. Let $s\in\Spec R$ be the point corresponding to $\ker t_i$; the field $k_i$ extends $k(s)$. On this base change, the identities of Lemma~\ref{lem:restriction}(i) hold with $e=1$ and $h_j=f_{k_i,j}^{[p_i]}$. Its residue-field-extension statement therefore gives finite left generation of $M_F$.

By Lemma~\ref{lem:restriction}(ii), $M_R$ has a finite left presentation after a further localization of $R$. The same nonvanishing property ensures that this localization is allowed for $\UU$-almost all $i$. Base change therefore gives finite left presentations of the prescribed modules $M_i$ for $\UU$-almost all $i$. If $f_F$ is an automorphism, the argument for the reversed graph in Remark~\ref{rem:graph-finiteness} also gives finite right presentations after localization and for $\UU$-almost all the prescribed fibers.
\end{remark}

\section{Lifts of graph splitting \texorpdfstring{$\mathcal D$}{D}-modules}\label{sec:graph-modules}

\subsection{Projectivity of $A_n$-bimodules that are finite as left $A_n$-modules}\label{subsec:projectivity}
In this subsection $n\geq1$, the ground field $k$ has characteristic zero, and $W=A_n(k)$. The rank of a finite left $W$-module $N$ is $\dim_{D_W}(D_W\otimes_WN)$, where $D_W$ is the quotient division ring of $W$.

\begin{lemma}\label{lem:projectivity}
Let $N$ be a $W$-bimodule finitely generated as a left $W$-module. Then $N$ is projective and torsion-free on the left. If its left rank is one, it is a projective generator for the category of left $W$-modules, is simple and holonomic as a module over $W\otimes W^{\op}$, and $\End_W(N)^{\op}$ is a domain Morita equivalent to $W$.
\end{lemma}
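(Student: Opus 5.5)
Let $T\subset N$ be the left $W$-torsion submodule. The first step is to show that $T$ is a sub-bimodule. For $t\in T$ and $w\in W$, pick a nonzero $a$ with $at=0$; then $a(tw)=(at)w=0$, so $tw\in T$. Hence $T$ is a sub-bimodule that is finitely generated on the left, since $W$ is noetherian.

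The key claim is that a nonzero bimodule $T$ which is finitely generated and torsion on the left cannot exist in characteristic zero. To see this, note that a finitely generated torsion left $W$-module has GK-dimension at most $2n-1$. The plan is to use the right action to force GK-dimension $2n$, arguing as in Bernstein's inequality for bimodules. Regard $T$ as a left $W\otimes W^{\op}\simeq A_{2n}$-module. Left finite generation gives
\[
\GK_{A_{2n}}T=\GK_W T\le 2n-1.
\]
Bernstein's inequality gives $\GK_{A_{2n}}T\ge 2n$ for nonzero $T$, which is a contradiction. The equality $\GK_{A_{2n}}T=\GK_W T$ for a bimodule finitely generated on the left is the main technical point. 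I would obtain it by choosing left generators $u_a$ and right-action identities $u_az_j=\sum_b d_{ajb}u_b$, as in \eqref{eq:left-generation-identities}. Then a Bernstein-filtration step in the right generators raises the left filtration degree by at most a constant $c$, so $F_\ell^{A_{2n}}T\subset F^W_{c\ell}T$. I expect this comparison to be the main obstacle, though it is routine. The conclusion is that $N$ is torsion-free on the left.

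Next, projectivity on the left. The plan is to show that $\Ext^i_W(N,W)=0$ for $i\ge1$; projectivity then follows because $W$ has finite global dimension. Each $\Ext^i_W(N,W)$ is a $W$-bimodule, with the left structure coming from the right action on $N$ and the right structure from $W$. As a right $W$-module it is finitely generated. For $i\ge1$ it has GK-dimension at most $2n-i<2n$: this comes from the Auslander–Gorenstein property of $W$ together with the grade condition $j(\Ext^i)\ge i$. The right-module version of the previous paragraph then forces it to vanish. Thus $N$ is left projective, and by Stafford it is stably free, or more directly isomorphic to a left ideal when the rank is one.

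Now assume the left rank is one, so that $N\simeq I$ is a nonzero projective left ideal. By Stafford's theorem on projective modules (as in Lemma~\ref{lem:morita-rank}), $I\oplus W\simeq W^2\simeq I\oplus I'$. Since $W$ is simple, every nonzero projective is a generator: its trace ideal is a nonzero two-sided ideal, hence equal to $W$. Morita theory then shows that $E=\End_W(N)^{\op}$ is Morita equivalent to $W$. It is a domain because endomorphisms of a rank-one torsion-free module embed into $D_W^{\op}$.

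For holonomicity, $\GK_{A_{2n}}N=\GK_W N=2n$ by the comparison above, which is half of $4n$. For simplicity, take a nonzero sub-bimodule $N'\subset N$. It is finitely generated on the left and torsion-free, hence of rank one, so $N/N'$ is left torsion and finitely generated. By the first step $N/N'=0$, so $N'=N$.
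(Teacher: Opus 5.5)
Your proof is correct and follows essentially the same route as the paper. The left torsion submodule is a sub-bimodule, and the filtration comparison together with Bernstein's inequality for $W\otimes W^{\op}\simeq A_{2n}(k)$ kills it. The groups $\Ext^i_W(N,W)$ for $i\geq1$ are killed by the same argument with sides exchanged. The rank-one conclusions come from the trace ideal and simplicity of $W$, the embedding into $D_W^{\op}$, and the torsion argument applied to $N/N'$.

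The only real variation is how you see that $\Ext^i_W(N,W)$ is small. You invoke the Auslander--Gorenstein grade condition; turning that into a GK bound also needs the Cohen--Macaulay property, although grade $\geq1$ alone already says the module is torsion. The paper instead uses the more elementary observation $\Ext^i_W(N,W)\otimes_W D_W\simeq\Ext^i_{D_W}(D_W\otimes_W N,D_W)=0$. Your Stafford stable-freeness aside is not needed.
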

\begin{proof}
We may assume $N\ne0$. Let $F_\ell W$ be the Bernstein filtration, and choose left generators $n_1,\ldots,n_c$. They are also bimodule generators. Define
\[
 L_\ell N=\sum_i(F_\ell W)n_i,\qquad
 B_\ell N=\sum_{\substack{a+b\leq\ell\\1\leq i\leq c}}
                (F_aW)n_i(F_bW).
\]
These are good filtrations for the left $W$-module and the left $W\otimes W^{\op}$-module, respectively. For every standard Weyl generator $z_j$, write
\[
 n_i z_j=\sum_h a_{ijh}n_h,\qquad a_{ijh}\in W,
\]
and choose $D\geq1$ with $\deg a_{ijh}\leq D$. Induction on the length of a right word gives $n_i(F_bW)\subset L_{Db}N$. Therefore
\[
 (F_aW)n_i(F_bW)\subset L_{a+Db}N\subset L_{D\ell}N
 \qquad(a+b\leq\ell).
\]
Conversely, $L_\ell N\subset B_\ell N$ because the right factor may be $1\in F_0W$. Thus
\begin{equation}\label{eq:filtration-comparison}
 L_\ell N\subseteq B_\ell N\subseteq L_{D\ell}N.
\end{equation}
Write $\GK$ for Gelfand--Kirillov dimension. A fixed linear rescaling of the filtration index does not change this growth exponent. Hence
\begin{equation}\label{eq:growth}
 \GK_{W\otimes W^{\op}}N=\GK_WN.
\end{equation}

Let $D_W$ be the quotient division ring of the noetherian domain $W$. A left $W$-module is torsion if its tensor product with $D_W$ is zero. A finitely generated torsion left module has Gelfand--Kirillov dimension at most $2n-1$: each of its generators is annihilated by a nonzero operator, so it is a quotient of a finite direct sum of modules $W/Wa$, $a\ne0$, each of dimension at most $2n-1$. The torsion submodule $T$ of $N$ is stable under the right action, since a nonzero operator annihilating $v$ also annihilates $vb$. It is finitely generated on the left because $W$ is noetherian. If $T\ne0$, the filtration comparison just proved, applied to $T$ with its own left generators, gives
\[
 \GK_{W\otimes W^{\op}}T=\GK_WT\leq2n-1.
\]
This contradicts Bernstein's inequality for $W\otimes W^{\op}\simeq A_{2n}(k)$. Thus $N$ is torsion-free.

For $j>0$, set $E^j=\Ext_W^j(N,W)$. Choose a finite resolution of $N$ by finitely generated projective left $W$-modules; this exists because $W$ is noetherian of finite global dimension. Right multiplication on $N$, acting contravariantly on Ext, defines a left $W$-action on $E^j$; right multiplication on the target defines a commuting right action. The module $E^j$ is finitely generated on the right. Localization of the resolution gives
\[
 E^j\otimes_WD_W
 \simeq\Ext_{D_W}^j(D_W\otimes_WN,D_W)=0.
\]
Thus $E^j$ is right torsion. The preceding argument, with left and right exchanged, shows that $E^j=0$.

These vanishings imply projectivity of $N$. Indeed, in a finite projective resolution of positive length, the vanishing of the highest Ext group makes the last differential of the dual complex surjective. It splits because its target is projective. Dualizing back, using biduality for finitely generated projective modules, splits off the last term of the original resolution. Induction shortens the resolution to length zero.

Suppose now that the left rank of $N$ is one. Every nonzero bimodule submodule $N'\subset N$ has left rank one, and $N/N'$ has rank zero. The quotient is finitely generated on the left, so the torsion argument above forces it to be zero. Hence $N$ is simple as a bimodule. A finite torsion-free rank-one left module has Gelfand--Kirillov dimension $2n$, so \eqref{eq:growth} proves holonomicity over $A_{2n}(k)$.

Finally, the trace ideal
\[
 \operatorname{tr}_W(N)=\sum_{\lambda\in\Hom_W(N,W)}\lambda(N)
\]
is a nonzero two-sided ideal of $W$: projectivity makes $N$ a direct summand of a finite free module, whose coordinate maps separate its elements. Simplicity of $W$ gives $\operatorname{tr}_W(N)=W$. Equivalently, finitely many $\lambda_a\in\Hom_W(N,W)$ and $u_a\in N$ satisfy $\sum_a\lambda_a(u_a)=1$. The induced map from a finite direct sum of copies of $N$ onto $W$ shows that $N$ is a generator. Extension of endomorphisms to $D_W\otimes_WN$ is injective and identifies $\End_W(N)$ with a subring of $D_W^{\op}$. It is therefore a domain, and Morita theory gives the asserted equivalence.
\end{proof}

\subsection{Projectivity of modules with graph arithmetic support}\label{subsec:recognition}
\begin{proposition}\label{prop:recognition}
Let $n\geq1$, let $R$ be an integral finitely generated $\Z$-algebra of characteristic zero, let $F=\Frac R$, and let $W_R=A_n(R)$. Let
\[
 f:\A_R^{2n}\longrightarrow\A_R^{2n},\qquad f^*\omega_n=\omega_n,
\]
and let $M_R$ be a finitely generated $W_R\otimes_R W_R^{\op}$-module. Suppose there is a Zariski-dense set $\Sigma$ of closed points of $\Spec R$ such that, for every $s\in\Sigma$,
\[
 \operatorname{Supp}_p(M_s)=L_{f_s}^{(1)}
\]
as closed subschemes, and the normalized multiplicity in \eqref{eq:multiplicity}, with $m=2n$, is one.

Then $M_F=M_R\otimes_R F$ is a rank-one projective left $A_n(F)$-module, generates the category of left $A_n(F)$-modules, and is holonomic over $A_{2n}(F)$. There is a nonzero $r\in R$ such that, for every $s\in\Sigma\cap\Spec R[r^{-1}]$, the fiber $M_s$ is isomorphic to the graph splitting $\mathcal D$-module $M_{f_s,k(s)}$ of Proposition~\ref{prop:graphsplit}. If $f_F$ is an automorphism, then $M_F$ is also finite projective of rank one on the right.
\end{proposition}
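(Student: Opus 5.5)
The plan is to combine the finiteness criterion with the characteristic-zero projectivity lemma, and then to compare ranks and supports fiberwise. First, the hypothesis gives $\operatorname{Supp}_p(M_s)_{\mathrm{red}}\subset L_{f_s}^{(1)}$ for $s\in\Sigma$, so Remark~\ref{rem:graph-finiteness} applies. In fact, because the scheme-theoretic support is exactly the graph, the graph ideal $I_{f_s,k(s)}$ annihilates $M_s$ and we may take $e=1$. Lemma~\ref{lem:restriction}(i) then shows that $M_F$ is finitely generated as a left $A_n(F)$-module. Lemma~\ref{lem:restriction}(ii) gives, after inverting some $r_1$, a finite left presentation of $M_{R[r_1^{-1}]}$ whose base changes present every fiber. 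Lemma~\ref{lem:projectivity}, applied over $F$, now shows that $M_F$ is left projective and torsion-free. If the left rank is one, the same lemma gives the generator property and holonomicity over $A_{2n}(F)$.

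The key step is to show that the left rank is one. I would read it off from central ranks on the fibers. After localizing so that \eqref{eq:filtered-base-change} holds and the finite left presentation is available, I would use Lemma~\ref{lem:finite-descent} to spread out the left projectivity. Concretely, $M_F$ is a direct summand of $A_n(F)^N$, with a section and an idempotent descending to some $R[r_2^{-1}]$. Then each fiber $M_s$ is a projective left $A_n(k(s))$-module. Its rank over the center $Z_s$ equals $p^{2n}\rho$, where $\rho$ is the generic left rank of $M_F$. This uses that the rank of a projective summand is determined by the idempotent, and that the idempotent specializes; I would compare via the quotient division rings, or via the Hilbert-polynomial leading coefficient, which is preserved by \eqref{eq:filtered-base-change}. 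On the other hand, the multiplicity-one hypothesis, together with Remark~\ref{rem:purity} (or directly the scheme-theoretic equality), says that $M_s$ is a coherent sheaf on $L_{f_s}^{(1)}\simeq\A^{2n}$ whose generic length is $p^{2n}$. Since the left $W_{k(s)}$-rank is measured over $Z_s$, and $M_s$ is a finite module over $\mathcal O_{L^{(1)}}$, which is a polynomial ring identified with $Z_s$ via the first projection, the $Z_s$-rank is $p^{2n}$. Comparing the two counts gives $\rho=1$. I expect this bookkeeping to be the main obstacle: one has to make sure that "left rank over $W$" corresponds to "rank over $\mathcal O_L$", using that $L_f\to\A^{2n}$ (source projection) is an isomorphism.

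Next comes identification with the graph splitting module. For $s\in\Sigma$ in the localized set, $M_s$ is a module over $W\otimes_{Z}B_{f_s}^{\op}=\mathcal A|_{L^{(1)}}$. It is projective over $W_{k(s)}$, hence locally free over $Z_s\simeq\mathcal O_{L^{(1)}}$, of rank $p^{2n}$, which equals the degree of this Azumaya algebra. A module over an Azumaya algebra of degree $q$ that is locally free of rank $q$ is a splitting bundle: étale-locally it is a $\Mat_q$-module of rank $q$, hence the standard module. Since $\Pic(\A^{2n})=0$, uniqueness in Section~\ref{subsec:graph-p} gives $M_s\simeq M_{f_s,k(s)}$. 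Finally, if $f_F$ is an automorphism, I would spread out $f^{-1}$ as in Remark~\ref{rem:graph-finiteness} and run the same argument for the reversed graph with the Weyl factors exchanged, using the right-module versions of Lemmas~\ref{lem:restriction} and \ref{lem:projectivity}. This shows that $M_F$ is finite projective of rank one on the right.
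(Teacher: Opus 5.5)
Your proposal is correct and follows essentially the paper's proof. The steps match: finite left generation from Lemma~\ref{lem:restriction}, projectivity from Lemma~\ref{lem:projectivity}, rank one by matching the leading coefficient of the left Bernstein Hilbert polynomial of $M_F$ with that of the fibers (whose generic $Z_s$-rank is $p^{2n}$ because the graph projects isomorphically onto the left center), then the splitting-bundle identification and the reversed-graph argument on the right.

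Of your two suggested rank comparisons, use the Hilbert-polynomial one. It is the paper's argument, and it does not need the fiberwise projectivity you spread out first. The idempotent/division-ring route is not an argument by itself: quotient division rings do not specialize, and $A_n(F)$ equals its commutator subspace, so traces of idempotents carry no rank information.
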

\begin{proof}
Choose $d\geq\max(1,\deg f)$. The first application of Lemma~\ref{lem:restriction} described in Section~\ref{subsec:restriction} shows that $M_F$ is finitely generated on the left. The argument using \eqref{eq:left-generation-identities} then provides one localization of $R$ over which $M_R$ has a finite left presentation. Choose a Bernstein filtration for the left module and localize again so that \eqref{eq:filtered-base-change} holds for this filtration. The set of remaining points of $\Sigma$ is still Zariski dense.

Fix such a point $s$, and write $p=\operatorname{char}k(s)$ and
\[
 Z_s=k(s)[z_{1,L}^p,\ldots,z_{2n,L}^p],\qquad K_s=\Frac Z_s.
\]
Projection of the graph $L_{f_s}^{(1)}$ onto the left central affine space is an isomorphism. Thus its generic point has residue field $K_s$. Since its ideal annihilates $M_s$, formula \eqref{eq:multiplicity} gives
\[
 \dim_{K_s}(K_s\otimes_{Z_s}M_s)=p^{2n}.
\]
Each generator of $Z_s$ has Bernstein degree $p$. Consequently the leading coefficient of the cumulative left filtered Hilbert polynomial of $M_s$ is
\[
 \frac{p^{2n}}{p^{2n}(2n)!}=\frac1{(2n)!}.
\]
Here we use the elementary rank formula for a finite module over a polynomial ring with all variables of weight $p$: its leading coefficient is its generic rank divided by $p^{2n}(2n)!$. Terms supported on proper closed subsets have smaller degree. The left Bernstein filtration is also a good filtration over $Z_s$, since $W_{k(s)}$ is finite over $Z_s$ with its filtered basis of ordered monomials whose individual exponents are less than $p$.

By \eqref{eq:filtered-base-change}, the generic fiber has the same filtered Hilbert polynomial. For a finitely generated $W_F$-module, its leading coefficient in degree $2n$ is its Ore rank divided by $(2n)!$. Indeed, choose elements giving a basis after passage to the quotient division ring; the resulting map from a free module of that rank has torsion kernel or cokernel, whose growth has degree at most $2n-1$. Hence $M_F$ has rank one. Lemma~\ref{lem:projectivity} proves projectivity, the generator property, and holonomicity.

We next explain why the relevant projectivity assertions hold after one further localization of $R$. Choose a finite surjection
\[
 \epsilon_R:W_R^b\longrightarrow M_R.
\]
Projectivity gives a left-linear section $j_F:M_F\to W_F^b$.
Lemma~\ref{lem:finite-descent}, applied to this map and the identity
$\epsilon_Fj_F=\id_{M_F}$, gives after localization
\begin{equation}\label{eq:projective-splitting}
 j_R:M_R\longrightarrow W_R^b,\qquad \epsilon_Rj_R=\id_{M_R}.
\end{equation}
The generator property is obtained in the same way from finitely many left-linear maps and elements satisfying the trace identity
\begin{equation}\label{eq:trace-identity}
 \lambda_a:M_R\longrightarrow W_R,\qquad u_a\in M_R,
 \qquad\sum_a\lambda_a(u_a)=1.
\end{equation}
The corresponding maps and elements exist over $F$ by
Lemma~\ref{lem:projectivity}. Apply Lemma~\ref{lem:finite-descent}
also to these maps, elements, and the trace identity. After localization, \eqref{eq:projective-splitting} and \eqref{eq:trace-identity} survive every residue-field base change. Thus every remaining $M_s$ is projective and generates the category of left $W_{k(s)}$-modules. Its Ore rank is one, by the same filtered Hilbert polynomial calculation.

Now take $s\in\Sigma$ in this open subset and omit the finitely many characteristics with $p\leq2\deg f$. As a module over $Z_s$, the projective $W_{k(s)}$-module $M_s$ is finite locally free of rank $p^{2n}$. The ideal of the graph annihilates $M_s$, so it gives an action of the Azumaya algebra obtained by restricting the algebra of crystalline differential operators to the graph:
\[
 \mathcal A_s=
 W_{k(s)}\otimes_{Z_s}\bigl((f_s^{(1)})^*W'_{k(s)}\bigr)^{\op},
\]
whose degree over $Z_s$ is $p^{2n}$. The resulting homomorphism
\[
 \mathcal A_s\longrightarrow\End_{Z_s}(M_s)
\]
is an isomorphism: on each geometric fiber it is a unital homomorphism between matrix algebras of the same size, and hence is an isomorphism. Therefore $M_s$ is a splitting module, and uniqueness in Proposition~\ref{prop:graphsplit} identifies it with $M_{f_s,k(s)}$.

Finally, if $f_F$ is an automorphism, its inverse is defined over $R[r^{-1}]$ for some nonzero $r$. The two composition identities remain valid there. The reversed graphs are then the graphs of the reductions of $f_F^{-1}$. Apply the same argument with the two Weyl factors exchanged to obtain finite generation and rank one on the right; Lemma~\ref{lem:projectivity}, with sides reversed, gives right projectivity.
\end{proof}

We now take $n=1$ and apply the proposition to an arithmetic realization from Definition~\ref{def:realization}. Choose $J\in\UU$ such that, for every $i\in J$, the map $t_i:R\to k_i$ is defined and $M_R\otimes_{R,t_i}k_i\simeq M_i$. The morphism $\Spec k_i\to\Spec R$ induced by $t_i$ has image
\[
 \sigma_i=\ker t_i\in\Spec R.
\]
This is a closed point because $k_i$ is finite. The map $t_i$ factors through the finite field extension $k(\sigma_i)\subset k_i$, and
\[
 M_R\otimes_{R,t_i}k_i
 \simeq M_{\sigma_i}\otimes_{k(\sigma_i)}k_i.
\]
The graph support and multiplicity-one assertions for this module descend to $M_{\sigma_i}$: annihilators commute with this faithfully flat field extension, and the generic rank over the graph is unchanged. Thus the hypotheses of Proposition~\ref{prop:recognition} hold at every point of $\Sigma_J=\{\sigma_i:i\in J\}$.

The subset $\Sigma_J\subset\Spec R$ is Zariski dense. Indeed, for every nonzero $r\in R$, Definition~\ref{def:realization} gives
\[
 J\cap\{i:t_i(r)\ne0\}\in\UU.
\]
This set is nonempty, and any index in it gives a point $\sigma_i\in\Sigma_J\cap D(r)$; density follows. Proposition~\ref{prop:recognition} therefore applies to $M_R$; any localization used in its proof preserves the prescribed fibers for $\UU$-almost all $i$. After an embedding $F\hookrightarrow\C$, scalar extension gives the corresponding projectivity and rank assertions over $A_1$.

\subsection{Uniqueness of the lift in characteristic zero}\label{subsec:graph-uniqueness}
The passage from isomorphisms of reductions to an isomorphism in characteristic zero requires control of their degrees. For rank-one modules this control follows from the Bernstein degree on the quotient division ring.

\begin{lemma}\label{lem:ideal-isomorphism-degree}
Let $k$ be a field of arbitrary characteristic and let $W=A_n(k)$ for $n\geq1$. Suppose that $I,J\subset W$ are nonzero left ideals, each generated by elements of Bernstein degree at most $H$. Every left $W$-module isomorphism $\psi:I\to J$ satisfies
\[
 \deg\psi(a)\leq\deg a+H\qquad(0\ne a\in I).
\]
In particular, the images of the specified generators of $I$ have degree at most $2H$.
\end{lemma}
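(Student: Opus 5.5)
The plan is to realize $\psi$ as right multiplication by an element $q$ of the quotient division ring $D_W$, and then use additivity of the Bernstein degree on $D_W$. Because the degree is additive, $\deg\psi(a)=\deg a+\deg q$, so the whole task is to show $\deg q\leq H$.

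\textbf{Step 1: $\psi$ is right multiplication by some $q\in D_W$.} Fix a nonzero $b\in I$. For any $a\in I$, the Ore condition gives nonzero $u,v\in W$ with $ua=vb\in I$. Then
\[
 u\psi(a)=\psi(ua)=\psi(vb)=v\psi(b).
\]
Put $q=b^{-1}\psi(b)\in D_W$. We obtain $\psi(a)=u^{-1}vb\,q=aq$, so $\psi$ is right multiplication by $q$. Since $\psi$ is an isomorphism, $q\ne0$.

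\textbf{Step 2: the degree on $D_W$.} Extend the Bernstein degree to $D_W$ by
\[
 \deg(s^{-1}t)=\deg t-\deg s.
\]
This is well defined because $\gr W=k[\xi,\eta]$ is a domain in every characteristic. The degree satisfies $\deg(xy)=\deg x+\deg y$ and the ultrametric inequality. Consequently $\deg\psi(a)=\deg a+\deg q$ for every nonzero $a\in I$.

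\textbf{Step 3: bounding $\deg q$.} Let $g_1,\dots,g_m$ be generators of $J$ with $\deg g_j\leq H$. Since $\psi$ is surjective, $J=Iq$, and $\psi^{-1}$ is right multiplication by $q^{-1}$. Choose a generator $g=g_j\ne0$. Then $gq^{-1}\in I$ is a nonzero element of $W$, so
\[
 \deg g-\deg q=\deg(gq^{-1})\geq0.
\]
This gives $\deg q\leq\deg g\leq H$.

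\textbf{Conclusion.} Combining Steps 2 and 3 yields $\deg\psi(a)=\deg a+\deg q\leq\deg a+H$. In particular, each specified generator $a$ of $I$ has $\deg a\leq H$, so its image has degree at most $2H$.

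Step 3 is the crux, and it only needs the fact that nonzero elements of $W$ have nonnegative degree. The symmetric hypothesis on the generators of $I$ is used only for the "in particular" clause.
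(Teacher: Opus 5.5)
Your proof is correct and follows essentially the same route as the paper: realize $\psi$ as right multiplication by some $q\in D^\times$, use additivity of the extended Bernstein degree, and bound $\deg q$ by comparing a low-degree element of $J$ with its nonzero preimage in $I$. The paper phrases the last step through the minimal degrees, $\mu(J)=\mu(I)+\deg q$. Your explicit Ore-condition derivation of $q$ is accurate, and so is your remark that only the generator bound on $J$ is needed for the main inequality.
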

\begin{proof}
Since $\gr W$ is a polynomial ring over a field, $\deg(ab)=\deg a+\deg b$ for nonzero $a,b\in W$. This degree extends to the quotient division ring $D$ by
\[
 \deg(ab^{-1})=\deg a-\deg b.
\]
Passing to common Ore denominators shows that the definition is independent of the fractional expression and is additive under multiplication. After localization, $I$ and $J$ both become the one-dimensional left $D$-module $D$. Hence $\psi$ is right multiplication by an element $q\in D^\times$, and $Iq=J$. Set
\[
 \mu(I)=\min\{\deg a:0\ne a\in I\},\qquad
 \mu(J)=\min\{\deg b:0\ne b\in J\}.
\]
Degree additivity gives $\mu(J)=\mu(I)+\deg q$. Both minima lie between $0$ and $H$, so $\deg q\leq H$. Therefore $\deg\psi(a)=\deg a+\deg q\leq\deg a+H$.
\end{proof}

\begin{proposition}\label{prop:graph-uniqueness}
Let $R,F,W_R,f$ be as in Proposition~\ref{prop:recognition}, and let $M_R,N_R$ be finitely generated $W_R\otimes_R W_R^{\op}$-modules. Suppose that there is a Zariski-dense set $\Sigma$ of closed points of $\Spec R$ such that
\[
 M_s\simeq M_{f_s,k(s)}\simeq N_s\qquad(s\in\Sigma)
\]
as $A_n(k(s))$-bimodules. Then $M_F\simeq N_F$ as $A_n(F)$-bimodules. After replacing $R$ by a localization, this isomorphism is defined over $R$.

The same conclusion holds if the displayed isomorphisms are given after extensions of the residue fields. In particular, a holonomic $A_{2n}(\C)$-module with constant scheme-theoretic arithmetic support $L_f$ of multiplicity one, if it exists, is unique up to isomorphism.
\end{proposition}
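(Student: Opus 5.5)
Our plan is to realize both modules as left ideals, bound the degree of the isomorphisms between their reductions using Lemma~\ref{lem:ideal-isomorphism-degree}, and then obtain an isomorphism over $F$ as a rational point of a finite-dimensional linear system.

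\emph{Step 1: reduce to left ideals.} Remark~\ref{rem:graph-finiteness}, applied with $e=1$ since the fibers are graph splitting modules, shows that $M_F$ and $N_F$ are finitely generated on the left. The argument of Proposition~\ref{prop:recognition} (Hilbert polynomial comparison, then Lemma~\ref{lem:projectivity}) shows that both are rank-one projective left $W_F$-modules. Choose left embeddings $M_F\hookrightarrow W_F$ and $N_F\hookrightarrow W_F$, with images $I_F,J_F$. Spread them out, together with the finitely many right-action identities
\[
 u_az_j=\sum_b d_{ajb}u_b
\]
on left generators, via Lemma~\ref{lem:finite-descent}. After one localization, $M_R\simeq I_R$ and $N_R\simeq J_R$ are left ideals of $W_R$, generated by elements of Bernstein degree at most $H$. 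By the Gr\"obner localization preceding Lemma~\ref{lem:restriction}, these embeddings and degree bounds persist in every fiber, so $I_s,J_s\subset W_{k(s)}$ are left ideals generated in degree $\le H$, with $H$ independent of $s$.

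\emph{Step 2: bounded linear system.} A bimodule isomorphism $M_s\to N_s$ is in particular a left isomorphism $I_s\to J_s$. By Lemma~\ref{lem:ideal-isomorphism-degree}, it sends each generator $u_a$ to an element of $F_{2H}J_s$. Consider the finite free $R$-module $V_R=\bigoplus_a F_{2H}W_R$ of candidate images $(w_a)_a$. The following conditions are $R$-linear in $(w_a)$: left well-definedness, meaning that the syzygies of the $u_a$ map to zero; $w_a\in J$; and compatibility with the right actions, meaning $w_az_j=\sum_b d'_{ajb}w_b$ expressed through the structure of $N$. The last condition needs a care: the right action of $N$ on $J$ is transported through $N_R\simeq J_R$, so it is given by finitely many fixed elements, and all conditions land in a fixed filtered piece of bounded degree. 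We thus obtain a homomorphism of finite free $R$-modules $\Phi_R:V_R\to U_R$ whose kernel over each fiber is $\Hom_{\text{bimod}}(M_s,N_s)$ restricted to bounded degree. After localization, $\ker\Phi$ commutes with base change. Since the fibers are simple bimodules whose endomorphisms are scalars (they are splitting modules with central annihilator as in Proposition~\ref{prop:graphsplit}), this kernel has rank exactly one at points of $\Sigma$, hence generically. Take a generator $\phi_F$ of $\ker\Phi_F$; it is a nonzero bimodule map $M_F\to N_F$. By Lemma~\ref{lem:projectivity} both modules are simple bimodules, and $\phi_F$ is injective since $M_F$ is simple. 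Surjectivity follows because the image is a nonzero subbimodule of the simple bimodule $N_F$. Spreading $\phi_F$ and its inverse out over a localization via Lemma~\ref{lem:finite-descent} gives the isomorphism over $R$.

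\emph{Residue field extensions and the final claim.} Linear algebra commutes with field extension, so the rank of $\ker\Phi$ can be computed after extending $k(s)$. This handles isomorphisms given only after such extensions. For uniqueness, choose a common model $R$ of two complex modules with constant support $L_f$ of multiplicity one. Proposition~\ref{prop:recognition} identifies their fibers over a dense set with $M_{f_s,k(s)}$, and the first part applies.

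The main obstacle is Step 2's uniformity. One must ensure that the bounded-degree system captures all isomorphisms in each fiber, which is exactly Lemma~\ref{lem:ideal-isomorphism-degree} combined with the uniform $H$. One must also ensure that the ranks of the kernels are locally constant after a single localization. For the latter I would use semicontinuity: the set of points where $\ker\Phi_s\ne0$ is closed, and it contains the dense set $\Sigma$, so the generic kernel is nonzero. That suffices, since any nonzero map between simple bimodules is an isomorphism.
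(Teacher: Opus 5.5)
Your proposal follows the paper's proof essentially step by step: realize both modules as left ideals and spread them out using the Gr\"obner localization, bound the images of the generators by $2H$ via Lemma~\ref{lem:ideal-isomorphism-degree}, note that the bounded linear system has nonzero fibre kernels on $\Sigma$ so its full-column minors vanish in $R$, and conclude over $F$ using simplicity from Lemma~\ref{lem:projectivity}.

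\emph{An incorrect aside.} The positive-characteristic fibres are not simple bimodules with scalar endomorphisms. The bimodule endomorphism ring of $M_{f_s,k(s)}$ is $\mathcal O(L_{f_s}^{(1)})$, a polynomial ring, and $x_1^pM_s$ is a proper sub-bimodule. So your claim that the kernel has ``rank exactly one'' is unjustified as stated. It is also unnecessary: your closing semicontinuity argument, which is the paper's vanishing-of-minors argument, already yields $\ker\Phi_F\neq0$.

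\emph{The domain of the system.} Take the candidate images in $J_R\cap F_{2H}W_R$, as the paper does via \eqref{eq:ideal-truncation-base-change}, rather than in $F_{2H}W_R$ with membership in $J$ imposed as a separate condition. The transported right action of $N$ is only defined on elements of $J$, so the right-action equations are not even defined on the larger space.
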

\begin{proof}
\emph{Ideal realizations and specialization.}
Proposition~\ref{prop:recognition} makes $M_F$ and $N_F$ projective of left rank one. Each can be realized as a nonzero left ideal of $W_F=A_n(F)$: embed the module into its one-dimensional localization over the quotient division ring, and multiply its finitely many generators on the right by a common denominator. After localizing $R$, these realizations give left-module isomorphisms
\[
 M_R\simeq I_R\subset W_R,\qquad
 N_R\simeq J_R\subset W_R.
\]
Indeed, Proposition~\ref{prop:recognition} gives finite left
presentations. Apply Lemma~\ref{lem:finite-descent} to the ideal
realizations and their inverse maps, then transfer the right actions
to $I_R$ and $J_R$.

Choose generators $a_1,\ldots,a_c$ of $I_R$ and generators of $J_R$, and let $H$ bound their Bernstein degrees. The degree-compatible Gr\"obner basis construction of Section~\ref{subsec:restriction}, applied to the ideals and their relation modules, allows a further localization with the following properties. The fibers $I_s,J_s$ are the nonzero left ideals generated by the corresponding reduced operators in $A_n(k(s))$, and
\begin{equation}\label{eq:ideal-truncation-base-change}
 (J_R\cap F_tW_R)\otimes_R k(s)
   \simeq J_s\cap F_tA_n(k(s))
 \qquad(t\geq0).
\end{equation}
Here the intersections are free $R$-modules of finite rank. To obtain these assertions, invert the finitely many leading coefficients and denominators in the Gr\"obner basis computations. Reduction in the resulting monic bases commutes with specialization and preserves degree. The corresponding standard monomials give $R$-bases of the quotient modules, whose $R$-flatness ensures that the fiber ideals embed into the fiber Weyl algebras. For each nonstandard ordered monomial $u$, let $\operatorname{NF}(u)$ be its normal form modulo the monic basis of $J_R$. The elements $u-\operatorname{NF}(u)$ with $\deg u\leq t$ form an $R$-basis of $J_R\cap F_tW_R$ and specialize to the analogous basis in every fiber. Inverting a nonzero leading coefficient in each ideal ensures that no fiber ideal is zero. The remaining points of $\Sigma$ are still Zariski dense.

\emph{A finite system of linear equations.}
Put $V_R=J_R\cap F_{2H}W_R$. Choose generators of the left relations among the $a_i$, written
\[
 \sum_i u_{\ell i}a_i=0\qquad(1\leq\ell\leq r),
\]
and express the right action of the $2n$ standard generators $z_j$ of $W_R$ as
\[
 a_i\mathbin{\cdot}z_j=\sum_h v_{ijh}a_h,
 \qquad u_{\ell i},v_{ijh}\in W_R.
\]
The dot denotes the transferred right bimodule action. A tuple $(b_1,\ldots,b_c)\in V_R^c$ determines a bimodule homomorphism $I_R\to J_R$, with $a_i\mapsto b_i$, exactly when
\begin{equation}\label{eq:bounded-bimodule-hom}
 \sum_i u_{\ell i}b_i=0,
 \qquad
 b_i\mathbin{\cdot}z_j-\sum_h v_{ijh}b_h=0
 \quad\text{for every }\ell,i,j.
\end{equation}
The first equations ensure that the left-linear map is well defined; the second ensure that it commutes with the right generators, and hence with the entire right action. These are homogeneous $R$-linear equations in the coefficients of the $b_i$ in a basis of $V_R$. Their left sides belong to a fixed finite filtered part of $W_R$: it suffices to evaluate the finitely many displayed operations on that basis and take the largest resulting degree. Thus \eqref{eq:bounded-bimodule-hom} is the kernel condition for one map
\[
 T_R:V_R^c\longrightarrow(F_DW_R)^{r+2nc}
\]
between finite free $R$-modules, for some integer $D$.

For every remaining $s\in\Sigma$, choose a bimodule isomorphism $I_s\to J_s$. Lemma~\ref{lem:ideal-isomorphism-degree} bounds the degrees of the images of the $a_i$ by $2H$. By \eqref{eq:ideal-truncation-base-change}, these images give a nonzero element of $\ker(T_R\otimes_R k(s))$. Hence every full-column minor of a matrix for $T_R$ vanishes at every such $s$. Zariski density and integrality of $R$ imply that these minors are zero in $R$. Therefore
\[
 \ker(T_R\otimes_R F)\ne0.
\]

\emph{The resulting isomorphism.}
A nonzero vector in this generic kernel defines a nonzero bimodule homomorphism $M_F\to N_F$. Both modules are simple as bimodules by Lemma~\ref{lem:projectivity}, so this homomorphism is an isomorphism. Apply Lemma~\ref{lem:finite-descent} to this map, its inverse,
the bimodule-compatibility identities, and the two composition
identities. This gives the asserted isomorphism after one further
localization of $R$.

If the prescribed fiber isomorphisms hold after residue-field extensions, Proposition~\ref{prop:recognition} applies by descent of its support and rank hypotheses. The linear system above has a nonzero kernel after each such field extension, and therefore already over the residue field. The rest of the proof is unchanged.

Finally, two modules over $\mathbb C$ with the stated constant arithmetic support admit models over a common finitely generated integral $\Z$-subalgebra of $\C$. Proposition~\ref{prop:recognition} identifies both families of reductions with the graph splitting $\mathcal D$-modules on a common dense open subset. Applying the result just proved and extending scalars to $\C$ gives their isomorphism.
\end{proof}

The proposition also applies to two arithmetic realizations of the same faithful family in Definition~\ref{def:realization}: take a common coefficient ring inside the ultraproduct field and the common representative maps to the residue-field extensions. Faithfulness makes the corresponding points Zariski dense. This uniqueness statement concerns agreement along that family; it does not assert that such a realization has constant arithmetic support at every closed point of an open arithmetic model.

\section{Nonliftability and counterexamples to Kontsevich's conjecture}\label{sec:nonliftability}

\subsection{A noninvertible symplectic endomorphism of $\mathbb A^4$}\label{subsec:long-map}
We use the following polynomials from \cite{Long}. In this section the Weyl
generators are $x,q,p,z$, with $[p,x]=[z,q]=1$; the same letters denote
commuting cotangent coordinates, with $\{p,x\}=\{z,q\}=1$.
Set
\begin{equation}\label{eq:long-source}
 a_0=1-3xq,\qquad
 \beta=3x^2p+2a_0z-9q^2,\qquad
 y=q-\frac{x\beta}{3},\qquad h=xy.
\end{equation}
Define
\begin{align}
 R&=2x-3x^2y-x^3\beta=x(2-3xq),\label{eq:long-R}\\
 S&=y+3x(1+xy)^2\beta+3xy^2(4+3xy),\label{eq:long-S}\\
 T&=-\frac12\bigl((1+xy)^3\beta+y^2(1+xy)(4+3xy)\bigr),\label{eq:long-T}\\
 D_0&=\frac{1+3xq}{2}p-3q^2z,\qquad D=D_0+H,\label{eq:long-D}\\
 H&=\frac{y^4(18h^2+78h+125)}{20}
 +\frac{3\beta y^2(h^3+5h^2+10h-5)}{10}\notag\\
 &\hspace{18mm}-\frac{\beta^2(9h+2)}6-\frac{x^2\beta^3}6.\label{eq:long-H}
\end{align}
All these polynomials are defined over $\Z[1/30]$.

\begin{lemma}\label{lem:long-polynomials}
The map $f=(R,T,D,S):\A^4\to\A^4$ preserves
$dx\wedge dp+dq\wedge dz$ and is not invertible. Moreover,
$(x,y,\beta,D)$ is a coordinate system on its source.
\end{lemma}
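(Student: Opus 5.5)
We need to prove three things: $f=(R,T,D,S)$ preserves $\omega=dx\wedge dp+dq\wedge dz$, it is not invertible, and $(x,y,\beta,D)$ is a coordinate system on the source.

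\emph{Coordinate system.} We start with the coordinate claim, since it organizes everything else. From \eqref{eq:long-source}, $\beta=3x^2p+2a_0z-9q^2$ and $y=q-x\beta/3$. The inverse change of variables is explicit on the triangular chain
\[
 (x,q,p,z)\mapsto(x,y,\beta,D).
\]
\begin{enumerate}
\item From $x,y,\beta$ we recover $q=y+x\beta/3$, a polynomial.
\item $D_0$ is affine in $(p,z)$, with $p$-coefficient $(1+3xq)/2$ and $z$-coefficient $-3q^2$.
\item $\beta$ is affine in $(p,z)$, with coefficients $3x^2$ and $2a_0=2(1-3xq)$.
\item The determinant of the $2\times2$ system for $(p,z)$ in terms of $(\beta,D_0)$, with $x,q$ treated as constants, is
\[
 (1+3xq)(1-3xq)+9x^2q^2=1,
\]
a unit. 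So $(x,q,\beta,D_0)$ is a coordinate system, and $(x,y,\beta,D_0)$ is one as well.
\item $H$ is a polynomial in $x,y,\beta$ only, so $D=D_0+H$ is an elementary (triangular) modification. Hence $(x,y,\beta,D)$ is a coordinate system.
\end{enumerate}
Note that the Jacobian of $(x,q,p,z)\mapsto(x,y,\beta,D)$ is then a nonzero constant.

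\emph{Symplecticity.} We compute everything in the new coordinates $(x,y,\beta,D)$. First we express $\omega$ there: since $(x,q,\beta,D_0)$ arise by a unimodular substitution, we compute $dx\wedge dp+dq\wedge dz$ as a 2-form in $dx,dq,d\beta,dD_0$. We expect the shape
\[
 \omega=c\,(d\beta\wedge dD_0)\text{-type terms}+\dots,
\]
and we get the coefficients by inverting the $2\times2$ system above.

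Next, $R,S,T$ depend only on $(x,y,\beta)$; they are exactly Alpöge's $F_3,F_2,F_1$ with $z$ replaced by $\beta$. So
\[
 dR\wedge dD+dT\wedge dS=dR\wedge dD_0+dR\wedge dH+dT\wedge dS,
\]
where $dT\wedge dS$ and $dR\wedge dH$ involve only $dx,dy,d\beta$. Comparing coefficients of the six basis 2-forms gives polynomial identities. The terms containing $dD_0$ match $\omega$ by a direct calculation. The remaining identity in $dx,dy,d\beta$ is what $H$ is designed for: it is a system $\partial H$-equations of the form $\{R,H\}$-compatibility, which is solvable precisely because the Alpöge map has Jacobian one (closedness). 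We verify it by expanding polynomials; this is routine but lengthy and is best done by computer algebra over $\Z[1/30]$.

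\emph{Noninvertibility.} $R=x(2-3xq)$ is reducible with two nonassociate nonunit factors, so $R$ does not generate a prime ideal. A coordinate of an automorphism of $\A^4$ generates a prime ideal, so $f$ is not invertible. Alternatively, one can transport the two Alpöge points via the coordinates $(x,y,\beta)$ and choose $D$ to give a collision; the factorization argument is simpler.

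\emph{Main obstacle.} The hardest step is the explicit symplectic verification involving $H$. We would organize it by first proving the $dD_0$ part by hand and then checking the residual closed 2-form identity symbolically.
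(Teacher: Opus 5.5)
Your arguments for the coordinate system and for non-invertibility are exactly the paper's. For the coordinate system you use the $2\times2$ system in $(p,z)$ over $\C[x,q]$ with determinant $\pm1$, followed by the triangular changes $q\mapsto y$ and $D_0\mapsto D_0+H$. For non-invertibility you set the factorization $R=x(2-3xq)$ against primality of a coordinate ideal.

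The only divergence is the symplectic identity. The paper does not compute it; it cites Long. You propose a direct verification instead. That would make the proof self-contained, but you have not done the computation, so as written this part is a plan rather than a proof. Two statements in the plan also need fixing.

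First, the $dD_0$-part of $\omega$ contains no $d\beta\wedge dD_0$ term. Inverting the $2\times2$ system gives
\[
 p\,dx+z\,dq=b\,(3q^2\,dx+c_0\,dq)+D_0\,dR,\qquad b=\beta+9q^2,\quad c_0=\tfrac12(1+3xq),
\]
so $\omega=dR\wedge dD_0-d\bigl(b(3q^2dx+c_0dq)\bigr)$. This matches the $dD_0$-terms of $dR\wedge dD+dT\wedge dS$ at once. It reduces the whole claim to closedness of the one-form $T\,dS-H\,dR+b(3q^2dx+c_0dq)$ on $\A^3_{x,y,\beta}$, which is three explicit polynomial identities.

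Second, the Jacobian-one property of Alp\"oge's map does not by itself make the equation $dR\wedge dH=\Theta$ solvable, where $\Theta=-d\bigl(b(3q^2dx+c_0dq)\bigr)-dT\wedge dS$. It only gives the compatibility $\Theta\wedge dR=0$, i.e.\ equality of volume forms. The generic fiber of $R$ on $\A^3$ is $\mathbb G_m\times\A^1$, so a fiberwise primitive can be obstructed by a residue at $x=0$. Polynomiality of $H$ is a further condition. The argument must therefore rest on checking Long's explicit $H$, as you ultimately propose, and not on that heuristic.

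Your alternative non-injectivity argument is correct and illuminating. In the source coordinates $(x,y,\beta,D)$ the map reads $(F_3,F_1,D,F_2)$, which is Alp\"oge's map times the identity. So the two colliding Alp\"oge points, taken with the same value of $D$, have the same image.
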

\begin{proof}
For the coordinate assertion, put
\[
 q=y+x\beta/3,\qquad a_0=1-3xq,\qquad
 c_0=(1+3xq)/2,\qquad b=\beta+9q^2.
\]
The inverse formulas are
\begin{equation}\label{eq:long-inverse-coordinates}
 p=3q^2b+2a_0(D-H),\qquad
 z=c_0b-3x^2(D-H).
\end{equation}
Indeed, the two equations defining $\beta+9q^2$ and $D_0$ form a linear
system in $p,z$ with determinant
$-9x^2q^2-(1-3xq)(1+3xq)=-1$.

The symplectic identity
\begin{equation}\label{eq:long-symplectic}
 dx\wedge dp+dq\wedge dz=dR\wedge dD+dT\wedge dS
\end{equation}
is proved in \cite{Long}. In particular,
$\{D,R\}=\{S,T\}=1$ and the four mixed brackets vanish.
Finally, $R=x(2-3xq)$ is a product of two nonunits. It cannot be a coordinate
of an automorphism, since a coordinate generates a prime ideal.
\end{proof}

For a field $k$ of characteristic $\ell>0$, write
\[
 \iota_\ell:k[x,q,p,z]\xrightarrow{\sim}Z(A_2(k)),\qquad
 (x,q,p,z)\longmapsto(x^\ell,q^\ell,p^\ell,z^\ell).
\]
As before, $J^{[\ell]}$ means coefficientwise Frobenius applied to a
commutative polynomial $J$.

\begin{theorem}\label{thm:matrix-nonlift}
Let $\Lambda$ be an integral finitely generated $\Z[1/30]$-algebra of
characteristic zero, and let $m\geq1$. There is no $\Lambda$-algebra
endomorphism
\[
 u:\Mat_m(A_2(\Lambda))\longrightarrow\Mat_m(A_2(\Lambda))
\]
such that, on a Zariski-dense set $\Sigma$ of closed points
$s\in\Spec\Lambda$, with $\ell=\operatorname{char}k(s)$,
\begin{align}
 u_s(x^\ell I_m)&=\iota_\ell(R_s^{[\ell]})I_m,\notag\\
 u_s(z^\ell I_m)&=\iota_\ell(S_s^{[\ell]})I_m,\label{eq:nonlift-centers}\\
 u_s(q^\ell I_m)&=\iota_\ell(T_s^{[\ell]})I_m.\notag
\end{align}
In particular, no matrix size admits a characteristic-zero lift with the
prescribed Frobenius-twisted center map of $f$.
\end{theorem}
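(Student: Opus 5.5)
The plan is to turn a hypothetical $u$ into an $A_2(\Lambda)$-bimodule and then derive a contradiction from an ordinary differential equation in characteristic zero. Put $e=u(e_{11})$ and
\[
 M_\Lambda=e\,\Mat_m(A_2(\Lambda))\,e_{11}.
\]
The left $A_2(\Lambda)$-action on $M_\Lambda$ comes from the embedding
\[
 A_2(\Lambda)\simeq e_{11}\Mat_m(A_2(\Lambda))e_{11}\xrightarrow{\;u\;}e\,\Mat_m(A_2(\Lambda))\,e .
\]
The right action is right multiplication by $A_2(\Lambda)$, and $M_\Lambda$ is finitely generated and projective on the right. After generic localization, Proposition~\ref{prop:matrix-corner} says that $M_F$ has right rank one.

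The identities \eqref{eq:nonlift-centers} say that, for $s\in\Sigma$, the left actions of $x^\ell,z^\ell,q^\ell$ on $M_s$ equal the right actions of $R_s^{[\ell]},S_s^{[\ell]},T_s^{[\ell]}$ evaluated at the central $\ell$th powers. Thus three of the four graph equations \eqref{eq:graphideal}, taken with sides reversed, annihilate $M_s$. The fourth equation, for $p$, is the one left uncontrolled.

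Next, use Lemma~\ref{lem:long-polynomials} to pass to the right coordinates $(x,y,\beta,D)$, which form a coordinate system on the source. The left operators $x_L,q_L,z_L$ commute with the right action. Since $\{D,R\}=1$ and $\{S,T\}=1$ while the mixed brackets vanish, I expect that a reduction-mod-$\ell$ argument, using Lemma~\ref{lem:restriction} and the density trick of Proposition~\ref{prop:graph-uniqueness}, shows the following in characteristic zero:
\[
 x_L=R(\text{right}),\qquad z_L=S(\text{right}),\qquad q_L=T(\text{right})
\]
as operators on $M_F$. Equivalently, the differences are operators that vanish on all fibers in $\Sigma$ up to nilpotence, and hence vanish. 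What remains is the left action of $p$. It satisfies $[p_L,x_L]=1$ and commutes with $q_L,z_L$. Comparing it with the right action of $D$ gives an operator
\[
 \theta=p_L-D(\text{right}).
\]
This operator commutes with the image of the right $\Mat$-structure generated by $R,S,T$. On the $\ell$-fibers its $\ell$th power is annihilated up to nilpotence, which makes it locally nilpotent.

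Restrict to the transverse direction dual to $D$. Because $M_F$ has rank one over the right Weyl algebra in the coordinates $(x,y,\beta,D)$, local nilpotence together with a finite Taylor expansion in $D$ should force the right actions of $R$ and $T$ to reduce to scalar ordinary differential operators in one variable on a rank-one module. By the shape of $H$ in \eqref{eq:long-H}, which is cubic in $\beta$ and enters $D$, these operators have orders two and three. The bracket relations $\{D,R\}=\{S,T\}=1$ then become a commutator identity between a second-order and a third-order operator. As for commuting or quasi-commuting Schrödinger-type pairs, such an identity forces the potential to satisfy a Painlevé-I-type equation $w''=6w^2+cz$, or a variant of it, on the quadratic cover defined by $x(2-3xq)$.

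The last step is to show that this equation has no rational solution on that cover. I would expand a hypothetical rational solution in Laurent series at a pole and at infinity. Balancing the leading orders of $w''$ and $w^2$ fixes the pole order at $2$ and the residue coefficient. The linear term then forces a growth rate at infinity that no rational function on the cover can have.

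I expect the main obstacle to be the middle reduction: obtaining exact equalities of operators in characteristic zero from the Frobenius-level identities, and extracting scalar operators of the stated orders from the locally nilpotent $\theta$ without knowing the fourth central equation. For that step I would first try to bound $\theta$'s nilpotence index uniformly in $\ell$ using the Bernstein-degree bounds of Lemma~\ref{lem:ideal-isomorphism-degree}.
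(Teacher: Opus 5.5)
Your bimodule $u(e_{11})\Mat_m(A_2(\Lambda))e_{11}$ is the side-reversed form of the paper's $e_{11}C_\Lambda u(e_{11})$. Your translation of \eqref{eq:nonlift-centers} into three central graph equations is correct, and the high-level outline (transverse nilpotence, Taylor expansion, scalar operators of orders two and three, a Painlev\'e-I obstruction) matches the paper.

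\textbf{The gap is the middle reduction, which cannot work as written.}
\begin{enumerate}
\item The identities $x_L=R(\text{right})$, $z_L=S(\text{right})$, $q_L=T(\text{right})$ on $M_F$ are false. The operator $x_L$ commutes with the whole right action, and $[R,z]=3x^2$ in $A_2$. So the commutator of $x_L-R(\text{right})$ with the right action of $z$ is, up to sign, right multiplication by $3x^2$. This is nonzero on the right-torsion-free module $M_F$.
\item Nilpotence on fibers gives only local nilpotence (Lemma~\ref{lem:generic-detection}), never vanishing. The transverse operator necessarily behaves like the coordinate on a delta module.
\item Only the $x$-equation yields any nilpotence. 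Because $R=x(2-3xq)$ involves only the commuting positions, $(x_L-R(\text{right}))^\ell=x_L^\ell-\iota_\ell(R_s^{[\ell]})(\text{right})=0$ on $M_s$. By contrast, $S$ and $T$ involve $p$ and $z$, and there is no reason for a quantization of them to have $\ell$th power $\iota_\ell(S_s^{[\ell]})$ or $\iota_\ell(T_s^{[\ell]})$; already $(xp)^\ell=x^\ell p^\ell+xp$.
\item Your $\theta=p_L-D(\text{right})$ has no hypothesis behind it, since you note yourself that the $p$-equation is not given.
\item $(x,y,\beta,D)$ is not a Darboux system ($\{\beta,x\}=3x^2$), so ``rank one over the right Weyl algebra in these coordinates'' has no meaning.
\item Bounding nilpotence indices is unnecessary.
\end{enumerate}

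\textbf{The missing idea} is to build everything from the single locally nilpotent operator $G=x_L-R(\text{right})$.
\begin{enumerate}
\item Use rational source coordinates $r=R$ and $v=x^{-1}$. After Ore localization, the source derivation $\partial_r=-\tfrac{v^2}{3}z$, acting on the source side, satisfies $[G,\partial_r]=\pm1$, while $G$ commutes with $\mathcal B=F(r,v)\langle\partial_v\rangle$.
\item The finite Taylor expansion (Lemmas~\ref{lem:taylor-expansion} and~\ref{lem:finite-taylor}) makes $\ker G$ free of rank one over $\mathcal B$.
\item The target $z$ and $q$ actions commute with the target $x$ and hence with $G$. They therefore preserve $\ker G$ and become scalar operators $\widehat S,\widehat T\in\mathcal B$ with $[\widehat S,\widehat T]=\pm1$.
\item The $z$- and $q$-equations enter only at this point. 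The exact center changes \eqref{eq:exact-center-change} and the local forms \eqref{eq:local-ST}, which are quadratic and cubic in $a=v+y$, give $\widehat S_s^{\,\ell}$ and $\widehat T_s^{\,\ell}$. Additivity of order then gives orders $2$ and $3$, with leading coefficients $-3r$ and $r/2$. So the order-two and order-three operators are the target $z$ and $q$ actions, not $R$ and $T$; $R$ becomes the constant $r$, and the cubic shape of $H$ plays no role.
\end{enumerate}

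\textbf{Your endgame is also incomplete.} On the cover $K(v)$ with $t=v^2/2$, a function behaving like $cv$ at infinity matches $\lambda t$ once $3c^2=\lambda/2$. So growth at infinity alone proves nothing. The paper first shows that $W$ is even, using the involution $v\mapsto-v$ and the exponent analysis at $v=0$: poles there have order $4$, and odd exponents are excluded by $(j-8)(j+6)\ne0$. Only once $W\in K(t)$ does the argument at $t=\infty$ conclude.
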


We prove a module statement which also applies directly to arithmetic
supports. Write $A_\Lambda=A_2(\Lambda)$ and $F=\Frac\Lambda$.

\begin{proposition}\label{prop:module-nonlift}
There is no $A_\Lambda$-bimodule $M_\Lambda$, finite on the left, whose
generic fiber is projective of left rank one and whose reductions on a
Zariski-dense set of closed points satisfy
\begin{align}
 v\cdot x^\ell&=\iota_\ell(R_s^{[\ell]})v,&
 v\cdot z^\ell&=\iota_\ell(S_s^{[\ell]})v,&
 v\cdot q^\ell&=\iota_\ell(T_s^{[\ell]})v
 \label{eq:nonlift-module-centers}
\end{align}
for every $v\in M_s$.
\end{proposition}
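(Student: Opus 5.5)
The plan is to argue by contradiction with a hypothetical bimodule $M_\Lambda$, and to turn the three Frobenius-twisted central identities \eqref{eq:nonlift-module-centers} into exact characteristic-zero identities for the right action. By Lemma~\ref{lem:finite-descent} and Lemma~\ref{lem:restriction}(ii) we localize $\Lambda$ so that $M_\Lambda$ has a finite left presentation. We realize $M_F$ as a left ideal $I\subset A_2(F)$, as in the proof of Proposition~\ref{prop:graph-uniqueness}. The right action of a generator $w\in\{x,q,p,z\}$ is then a left-linear endomorphism $\rho(w)$ of a rank-one projective module. It is therefore right multiplication by an element $\phi(w)$ of the quotient division ring $D$. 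So $\phi$ is a homomorphism $A_2(F)\to\End(I)^{\op}\subset D$, and Lemma~\ref{lem:ideal-isomorphism-degree} bounds the Bernstein degrees of the $\phi(w)$.

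\emph{Nilpotent transverse operator.} For $w=x$ consider $N_x=\rho(x)-\lambda(R)$ on $M_s$, where $\lambda$ denotes left multiplication. Since $R=x(2-3xq)$ involves only the commuting variables $x,q$, we have $\lambda(R)^\ell=\iota_\ell(R^{[\ell]})$. Because left and right actions commute, $N_x^\ell=\rho(x^\ell)-\lambda(R)^\ell=0$ on $M_s$, and likewise for $q\mapsto T$ and $z\mapsto S$. These operators are only $p$-linear in the Frobenius sense, not left-linear. I would therefore pass to the source coordinates $(x,y,\beta,D)$ of Lemma~\ref{lem:long-polynomials}, in which $R,T,S$ are polynomials in $x,y,\beta$ alone. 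The goal is to show that conjugation by the element $\phi$-data yields an operator on $M_F$ which lowers a bounded degree filtration and is nilpotent on each reduction with nilpotency index independent of $\ell$. The degree bound from Lemma~\ref{lem:ideal-isomorphism-degree} is what should make the index uniform, and then Zariski density transfers local nilpotence to $M_F$, exactly as the minors argument in Proposition~\ref{prop:graph-uniqueness}. Consequently $\phi(x)$, $\phi(q)$, $\phi(z)$ agree with $R,T,S$ up to a finite Taylor expansion in the transverse direction $\partial_D$.

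\emph{Reduction to ODEs.} Truncating that Taylor expansion, the relations $[\phi(z),\phi(q)]=1$ together with the vanishing mixed brackets become identities between scalar differential operators in one transverse variable. These are of orders two (coming from the quadratic dependence of $T$ on $\beta$) and three (from $S$ and the cubic term $x^2\beta^3/6$ of $H$), with coefficients rational in the remaining variables. Solving $[L_3,L_2]=\text{const}$ in normal form forces a coefficient $u$ to satisfy $u''=6u^2+c\,t$ (or a close variant). Because $h=xy$ enters through $a_0=1-3xq$, the coefficient lives on the quadratic cover given by adjoining a square root of the discriminant of the order-two operator.

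\emph{Main obstacle.} The hardest step is the final one: excluding rational, or algebraic on the quadratic cover, solutions of this Painlev\'e-I-type equation. I would expand a putative solution at a pole and at infinity as Laurent series in the local parameter of the cover. Matching leading terms forces poles of order two with residue coefficient $1$; the resonance at order six then forces a nonvanishing coefficient of $t$ that contradicts finiteness of the pole set, or the balance at infinity forces $u\sim\sqrt{t}$ with an incompatible branching. Theorem~\ref{thm:matrix-nonlift} then follows by applying the proposition to $e_{11}$-corners, via Proposition~\ref{prop:matrix-corner}.
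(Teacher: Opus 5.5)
Your outline has the same architecture as the paper's proof: a locally nilpotent transverse operator from the $x$-equation, a finite Taylor expansion reducing to a free rank-one module $\ker G=\mathcal Be$ over $\mathcal B=F(r,v)\langle\partial_v\rangle$, scalar operators $\widehat S,\widehat T$ of orders two and three with $[\widehat S,\widehat T]=1$, and a Painlev\'e-I-type equation on a quadratic cover. However, the step you single out as the main obstacle fails as you propose to carry it out. After normalization the operators lie in $K(v)[\delta;\delta]$ with $K=F(r)$, $\delta=v^{-1}d/dv$ and $t=v^2/2$, and one must exclude $W\in K(v)$ with
\[
 W''+3W^2=\lambda t+\mu=\frac{4v^2}{3r^2}+\mu .
\]
On this cover $\sqrt t$ is rational. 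The balance at $v=\infty$ gives $W\sim\pm\frac{2}{3r}v$, and the recursion even produces a formal Laurent solution in $v^{-1}$, so there is no incompatible branching at infinity. At a pole away from $v=0$, the equation is Painlev\'e~I in the local variable $t$, whose resonance is unobstructed, so local expansions there give no contradiction either.

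The missing idea is the analysis at the branch point $v=0$, where $\delta$ is singular. Since $\delta^2(v^j)=j(j-2)v^{j-4}$, $W$ is either regular at $v=0$ or has leading term $-8v^{-4}$. The involution $\sigma(v)=-v$ commutes with $\delta$ and fixes $t$, and $J=W-\sigma(W)$ satisfies $J''+3(W+\sigma(W))J=0$. An odd lowest exponent $j$ of $J$ would then have to be a root of $j(j-2)$ or of $(j-8)(j+6)$, which is impossible. Hence $W$ is even, so $W\in K(t)$, and only then does the degree count at $t=\infty$ give the contradiction.

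Two earlier steps also need a different mechanism.

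First, no argument can make the nilpotency index of $G_s=R_x-L_R$ independent of $\ell$. A uniform index would, by the minors argument you cite, give $G^N=0$ on $M_F$, and hence on its localization $\widetilde M$ with $F(r,v)$-coefficients. But there $[G,\partial_r]=1$ with $r=R$, so $[G^N,\partial_r]=NG^{N-1}$, and descending induction would force $\widetilde M=0$. The transfer must go through localization instead: the fibers of $M_\Lambda[g^{-1}]$, with $g=x_{\mathrm{right}}-R$, vanish, hence so does its generic fiber (Lemma~\ref{lem:generic-detection}), which is exactly local nilpotence. Only the $x$-equation yields such an operator: $T$ and $S$ involve momenta, and $y,\beta$ are not base coordinates.

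Second, the orders of $\widehat S,\widehat T$ cannot be read off from how $S,T$ depend on $\beta$. Both are linear in $\beta$, and $x^2\beta^3/6$ is a term of $H$, hence of $D$, not of $S$; you also swapped which operator has order two. The paper gets the orders in the fibers. Exact central identities such as $\partial_r^\ell=(-v^2/3)^\ell z^\ell$ and $\delta^\ell=v^{-\ell}\partial_v^\ell$ turn \eqref{eq:nonlift-module-centers}, evaluated on the spread-out basis $e_s$, into
\[
 \widehat S_s^{\,\ell}=2v^\ell+4b_\ell-3r^\ell b_\ell^2,\qquad
 \widehat T_s^{\,\ell}=\tfrac12\bigl(r^\ell b_\ell^3-b_\ell^2-v^\ell b_\ell\bigr),
\]
with $b_\ell=\delta^\ell+c(r,v)^\ell$. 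Additivity of order and multiplicativity of leading coefficients give orders $2$ and $3$ with constant leading coefficients $-3r$ and $r/2$, and these descend to $F$. Those constant leading coefficients are what permit monic normalization without spoiling the constant commutator. They are also why the variable $t=v^2/2$, and hence the quadratic cover, is forced.
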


We need the following localization form of the filtration argument.
\begin{lemma}\label{lem:generic-detection}
Let $m\geq1$, let $U$ be a principal affine open in $\A^m_\Lambda$, and let $V$ be a
finitely generated $\mathcal D_{U/\Lambda}$-module. If $V_s=0$ on a
Zariski-dense set of points of $\Spec\Lambda$, then $V_F=0$.
Consequently, if a regular function $g$ acts nilpotently on every
specified fiber of a finite $\mathcal D_{\A^m_\Lambda/\Lambda}$-module,
then $g$ acts locally nilpotently on its generic fiber. The nilpotence
exponents on the fibers need not be bounded.
\end{lemma}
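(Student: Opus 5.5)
The plan is to reduce the statement to the filtration and Gr\"obner-basis argument of Section~\ref{subsec:restriction}, after replacing $\mathcal D_{U/\Lambda}$ by an algebra over which $V$ is finitely generated and which has a good filtration with free graded pieces. Write $U=D(g_0)\subset\A^m_\Lambda$ with $g_0\in\Lambda[x_1,\ldots,x_m]$. Then $\Gamma(U,\mathcal D_{U/\Lambda})=A_m(\Lambda)[g_0^{-1}]$ is generated over $\Lambda$ by $x_j,\partial_j$ and one extra variable $w=g_0^{-1}$, subject to the Weyl relations, $wg_0=1$, and the commutators $[\partial_j,w]=-w^2\partial_j(g_0)$. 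I will treat this as a finitely presented noncommutative $\Lambda$-algebra $\mathcal B_\Lambda$ whose formation commutes with base change to every $k(s)$ and to $F$.

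For the first assertion, present $V$ as $\mathcal B_\Lambda^b/N$ with $N$ finitely generated, and regard it as a finitely presented module over the free algebra on $x_j,\partial_j,w$. Give the generators positive weights and homogenize, as in Section~\ref{subsec:restriction}. Compute a finite Gr\"obner basis over $F$ of the homogenized relations, which includes the algebra relations. Then invert one nonzero $r\in\Lambda$ clearing all denominators, leading coefficients and critical-pair reductions. After this localization, the standard monomials give $\Lambda[r^{-1}]$-bases of every filtered piece $F_\ell V$, and these bases specialize to bases of $F_\ell V_s$ for every $s\in D(r)$.

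Now suppose $V_F\ne0$. Some filtered piece is nonzero, so it has a standard monomial. That monomial stays nonzero in every $V_s$ with $s\in D(r)$. Because the given set of points is Zariski dense, it meets $D(r)$, which contradicts $V_s=0$. Hence $V_F=0$.

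\emph{Main obstacle.} The delicate point is that termination of the Gr\"obner basis and its compatibility with specialization hold for this localized Weyl algebra, which is not a Weyl algebra. I would first try to avoid this entirely. Take $V_\Lambda$ to be the localization $V'[g_0^{-1}]$ of a finite $A_m(\Lambda)$-module $V'$; the consequence below only needs this case. Then $V'[g_0^{-1}]=0$ exactly when every element of $V'$ is killed by a power of $g_0$.

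Consider the submodule $V'_k$ of elements killed by $g_0^k$. These are $A_m$-submodules, because for an operator $P$ of order $j$, $g_0^{k+j}P=P'g_0^k$ for some operator $P'$. Since $A_m(F)$ is noetherian, the chain of generic submodules stabilizes. So if $V'_F[g_0^{-1}]=0$ fails, no single power of $g_0$ kills $V'_F$. One can then apply the Weyl-algebra version of the Gr\"obner argument to the finite $A_m$-modules $V'/V'_k$, or use their holonomic-type growth bound. Density of the fibers would then force a contradiction.

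Should this route stall, I would instead use the standard presentation $A_{m+1}(\Lambda)/A_{m+1}(\Lambda)(g_0t-1)$ of $\mathcal D_U$ via Kashiwara's embedding. Here $U$ is the closed hypersurface $g_0t=1$ in $\A^{m+1}$. Under this embedding, $\mathcal D_U$-modules correspond to $A_{m+1}$-modules supported on that hypersurface. This turns $V$ into a finitely generated $A_{m+1}(\Lambda)$-module, compatibly with base change, and reduces the first assertion verbatim to the criterion already established in Section~\ref{subsec:restriction}.

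For the consequence, let $Q$ be a finite $\mathcal D_{\A^m_\Lambda/\Lambda}$-module and put $V=Q[g^{-1}]$, a finite $\mathcal D_{D(g)/\Lambda}$-module. Localization commutes with base change, so $V_s=Q_s[g^{-1}]$. This vanishes exactly when $g$ acts locally nilpotently on $Q_s$, which holds when $g$ acts nilpotently on the finitely generated $Q_s$. The first part gives $Q_F[g^{-1}]=0$, so every element of $Q_F$ is killed by a power of $g$; that is, $g$ acts locally nilpotently on $Q_F$. No uniform bound on the fiber nilpotence exponents is needed, since the only input is the vanishing of each localized fiber.
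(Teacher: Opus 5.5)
There is a genuine gap: none of your three routes is carried through, and the step you identify as the main obstacle is left open. The primary route needs a finite Gr\"obner basis, compatible with specialization, for modules over $A_m(\Lambda)[g_0^{-1}]$ presented as a quotient of the free algebra on $x_j,\partial_j,w$. Completion in a free algebra need not terminate, and you do not supply a PBW-type structure that would make it terminate.

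Your first workaround contains an error: the sets $V'_k=\{v:g_0^kv=0\}$ are not $A_m$-submodules. Your own identity $g_0^{k+j}P=P'g_0^k$ only gives $PV'_k\subset V'_{k+j}$. For example, in $A_1/A_1x$ with $g_0=x$, the class of $1$ is killed by $x$, but $x\partial\cdot1=-1$. So $V'/V'_k$ is not a module, and ``density of the fibers would then force a contradiction'' is not an argument. This is precisely where the difficulty lies. The fibers are $g_0$-torsion with unbounded exponents, and the generic torsion need not be killed by any single power of $g_0$: in characteristic zero, $x^k\partial^k\cdot1=(-1)^kk!$ in $A_1/A_1x$.

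Your second workaround can be repaired, but not as stated. Kashiwara's equivalence fails for crystalline $\mathcal D$-modules on the positive-characteristic fibers. For instance, $k[x]/(x^p)$ is supported at $0$ and $\partial$ acts nilpotently on it. So the ``correspondence'' cannot be used there. What suffices is the functor $i_+$ for $x\mapsto(x,g_0(x)^{-1})$. After the coordinate change $t'=t-g_0^{-1}$ on $U\times\A^1$, it is $V\mapsto V\otimes_\Lambda\Lambda[\partial_{t'}]$. This functor commutes with base change and vanishes only on $V=0$. Its output is finitely generated over $A_{m+1}(\Lambda)$, because $g_0$ acts invertibly wherever $g_0t-1$ is locally nilpotent. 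With these three facts checked, the vanishing criterion of Section~\ref{subsec:restriction} applies.

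The paper sidesteps all of this by using the order filtration, with functions in degree zero. Then $\gr V$ is a finitely generated module over the commutative finitely generated $\Lambda$-algebra $\mathcal O(U)[\xi_1,\ldots,\xi_m]$. Generic freeness makes it free over a localization of $\Lambda$, so the filtration commutes with base change. A nonzero generic fiber then forces nonzero fibers on a dense open set. No Gr\"obner bases or finite-rank filtered pieces are needed. Your deduction of the local-nilpotence consequence matches the paper's.
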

\begin{proof}
Choose a good order filtration on $V$, bounded below. Generic freeness
\cite{Stacks} makes $\gr V$ free over a localization of $\Lambda$.
Each graded part is then projective over $\Lambda$. The filtration parts
and the quotients $V/F_jV$ are flat: they are built by extensions and
filtered unions of these projective graded parts. Thus formation of the
associated graded module commutes with residue-field base change. If
$V_F\ne0$, the resulting nonzero free module $\gr V$ has nonzero fibers
at every point of this open subset, contradicting density.

For the last assertion apply this result to the coherent localization
$V[g^{-1}]$ on the principal open $D(g)$. Its generic fiber is zero.
Localization over the coordinate ring then says that each vector of $V_F$
is annihilated by some power of $g$.
\end{proof}

Suppose that the module in Proposition~\ref{prop:module-nonlift} exists.
After localizing $\Lambda$, its finite projective presentation is defined
over $\Lambda$, as in Proposition~\ref{prop:recognition}. Denote left and
right actions by $L_a$ and $R_a$, and put
\[
 G=R_x-L_R.
\]
In the identification $A_2\otimes A_2^{\op}\simeq\mathcal D_{\A^4}$,
this is multiplication by the base function $x_{\rm right}-R(x,q)$.
Since $R$ depends only on position coordinates, the first identity in
\eqref{eq:nonlift-module-centers} gives $G_s^\ell=0$.
Lemma~\ref{lem:generic-detection} shows that $G$ is locally nilpotent on
$M_F$.

Introduce rational source coordinates
\begin{equation}\label{eq:rational-source}
 r=R=x(2-3xq),\quad v=x^{-1},\qquad
 x=v^{-1},\quad q=(2v-rv^2)/3.
\end{equation}
The coordinate derivations in these coordinates are
\begin{equation}\label{eq:coordinate-derivations}
 \partial_r=-\frac{v^2}{3}z,\qquad
 \partial_v=-x^2p-\frac{2(1-3xq)}3z.
\end{equation}
Localize the left coefficient ring and put
\[
 \mathcal D=F(r,v)\langle\partial_r,\partial_v\rangle,
 \qquad \mathcal B=F(r,v)\langle\partial_v\rangle,
 \qquad \widetilde M=\mathcal D\otimes_{A_2(F)}M_F.
\]
These Ore localizations are flat. The module $\widetilde M$ remains
torsion-free, finite projective, and of left rank one. The right action
extends to it. Since $G$ commutes with source coefficient functions,
it remains locally nilpotent, and
\begin{equation}\label{eq:transverse-commutator}
 [G,\mathcal B]=0,\qquad [G,\partial_r]=1.
\end{equation}

\begin{lemma}[Finite Taylor expansion]\label{lem:taylor-expansion}
Let $V$ be a vector space over a field of characteristic zero, and let
$t,\partial\in\End(V)$ satisfy $[t,\partial]=-1$, with $t$ locally
nilpotent. Then
\[
 \Pi(v)=\sum_{j\geq0}\frac{\partial^jt^jv}{j!}
\]
is a projection onto $\ker t$, and every $v\in V$ has the unique finite
expansion
\begin{equation}\label{eq:taylor-general}
 v=\sum_{j\geq0}\frac{(-1)^j}{j!}\partial^j\Pi(t^jv).
\end{equation}
For mutually commuting Weyl pairs $(t_i,\partial_i)$, this statement
iterates, with projection $\prod_i\Pi_i$ onto $\bigcap_i\ker t_i$.
\end{lemma}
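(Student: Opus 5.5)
The plan is a direct computation in the Weyl-type relations. Write the hypothesis as $[\partial,t]=1$, so that $\partial$ behaves like $d/dt$. By induction on $j$,
\[
 [t,\partial^j]=-j\,\partial^{j-1}.
\]
Fix $v\in V$. Local nilpotence of $t$ makes $t^jv=0$ for large $j$. Hence every sum below is finite, and no convergence issue arises.

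\emph{Step 1: $\Pi$ is a projection onto $\ker t$.} Using the commutator formula,
\[
 t\,\Pi(v)=\sum_{j\geq0}\frac{\partial^j t^{j+1}v}{j!}
 -\sum_{j\geq1}\frac{j\,\partial^{j-1}t^jv}{j!}.
\]
After reindexing the second sum by $j\mapsto j+1$, the two sums coincide, so they cancel. Thus $\Pi(V)\subset\ker t$. If $tv=0$, only the $j=0$ term survives and $\Pi(v)=v$. Hence $\Pi$ is idempotent with image $\ker t$.

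\emph{Step 2: existence of the expansion.} Substitute the definition of $\Pi$ into the right side of \eqref{eq:taylor-general}:
\[
 \sum_{j,k\geq0}\frac{(-1)^j}{j!\,k!}\,\partial^{j+k}t^{j+k}v
 =\sum_{N\geq0}\frac{\partial^Nt^Nv}{N!}\sum_{j+k=N}\binom Nj(-1)^j .
\]
The inner binomial sum is $(1-1)^N$. It vanishes unless $N=0$, so the total equals $v$.

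\emph{Step 3: uniqueness.} It suffices to show that $\sum_{j\leq N}\partial^jc_j=0$ with all $c_j\in\ker t$ forces every $c_j=0$. For $c\in\ker t$, the commutator formula gives
\[
 t\,\partial^jc=-j\,\partial^{j-1}c,
 \qquad\text{hence}\qquad
 t^i\partial^jc=(-1)^i\frac{j!}{(j-i)!}\,\partial^{j-i}c
\]
for $i\leq j$, and $t^i\partial^jc=0$ for $i>j$. Applying $t^N$ to the relation leaves only $(-1)^NN!\,c_N=0$. Since the characteristic is zero, $c_N=0$, and downward induction on $N$ finishes the argument.

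This uniqueness step is the only place where characteristic zero is essential: the factorials must be invertible. I expect it to be the main point to state carefully, since the claim is that the coefficients lying in $\ker t$ are forced to be $(-1)^j\Pi(t^jv)$.

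\emph{Step 4: several commuting pairs.} Let $(t_i,\partial_i)$ be mutually commuting Weyl pairs. Each $\Pi_i$ is a polynomial expression in $t_i,\partial_i$, so it commutes with $t_k,\partial_k$ and with $\Pi_k$ for $k\neq i$. In particular $\Pi_i$ preserves $\ker t_k$. Therefore $\prod_i\Pi_i$ is a projection onto $\bigcap_i\ker t_i$.

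For the expansion, apply the one-variable expansion successively in each pair. The coefficients $\Pi_i(t_i^{j}\,\cdot)$ produced at one stage lie in the kernels already achieved at earlier stages, because the operators from different pairs commute. Uniqueness iterates in the same way, one pair at a time.
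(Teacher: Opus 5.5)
Your proof is correct and follows the paper's argument step for step: the identity $t\partial^j=\partial^jt-j\partial^{j-1}$ gives $t\Pi=0$ by cancellation, the binomial sum $(1-1)^N$ gives the expansion, applying $t^N$ and descending induction gives uniqueness, and commuting projections give the iteration over several pairs. One small correction to your closing remark: characteristic zero is also needed to define $\Pi$ and the expansion, since both involve $1/j!$, so it is not used only in the uniqueness step.
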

\begin{proof}
All sums are finite. The identity
$t\partial^j=\partial^jt-j\partial^{j-1}$ gives $t\Pi=0$ by
cancellation, and $\Pi$ is the identity on $\ker t$. Substituting its
formula into \eqref{eq:taylor-general}, the coefficient of
$\partial^nt^nv$ is
$\sum_{j=0}^n(-1)^j/(j!(n-j)!)$, which is zero for $n>0$ and one
for $n=0$. If $\sum_{j=0}^d\partial^jv_j=0$ with $v_j\in\ker t$,
applying $t^d$ gives $(-1)^dd!v_d=0$; descending induction proves
uniqueness. For commuting pairs the projections commute, so the
one-variable assertion can be applied successively.
\end{proof}

\begin{lemma}\label{lem:finite-taylor}
Under these conditions there is an element $e\in\ker G$ such that
$\widetilde M$ is a free left $\mathcal D$-module with basis $e$, and
$\ker G$ is a free left $\mathcal B$-module with the same basis.
Equivalently, the maps
\[
 \mathcal D\longrightarrow\widetilde M,\quad a\longmapsto ae,
 \qquad
 \mathcal B\longrightarrow\ker G,\quad b\longmapsto be
\]
are isomorphisms of left modules over the respective rings.
\end{lemma}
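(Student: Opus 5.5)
We apply Lemma~\ref{lem:taylor-expansion} with $t=G$ and $\partial=\partial_r$ (as left operators). By \eqref{eq:transverse-commutator} we have $[G,\partial_r]=1$, so $[t,-\partial_r]=-1$ with $t$ locally nilpotent; we use the pair $(G,-\partial_r)$. The lemma gives a projection $\Pi\colon\widetilde M\to\ker G$, and every $v\in\widetilde M$ has a unique finite expansion
\[
 v=\sum_{j\ge0}\frac{1}{j!}\partial_r^j\,\Pi(G^jv),\qquad \Pi(G^jv)\in\ker G .
\]
Since $G$ commutes with $\mathcal B$, the kernel $\ker G$ is a left $\mathcal B$-submodule. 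Write $\mathcal D=\bigoplus_j\partial_r^j\mathcal B$, which is a free right (and left) $\mathcal B$-module on the powers of $\partial_r$, since $\partial_r$ commutes with $\partial_v$ and $r$ is central in $\mathcal B$ up to Ore reordering. The expansion then says that the multiplication map
\[
 \mu\colon \mathcal D\otimes_{\mathcal B}\ker G\longrightarrow\widetilde M
\]
is an isomorphism. Surjectivity follows from existence of the expansion. Injectivity follows from uniqueness: $\mathcal D\otimes_{\mathcal B}\ker G=\bigoplus_j\partial_r^j\otimes\ker G$, and the $r$-coefficients can be moved to the left of $\partial_r^j$ modulo lower powers of $\partial_r$. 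The uniqueness argument in the lemma (apply $G^d$) handles this triangular change. Hence $\widetilde M\simeq\mathcal D\otimes_{\mathcal B}\ker G$ as left $\mathcal D$-modules.

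Next we determine $\ker G$ as a $\mathcal B$-module. Note that $\mathcal B=F(r,v)\langle\partial_v\rangle$ is the first Weyl algebra over the field $F(r)$, localized in $v$: a principal left ideal domain, in fact simple. Since $\mathcal D$ is free over $\mathcal B$ (hence faithfully flat), $\ker G$ inherits from $\widetilde M$ the properties of being finitely generated and torsion-free. For finite generation, pick finitely many $\mathcal D$-generators of $\widetilde M$ and take the $\Pi(G^j\cdot)$ of them; these generate $\ker G$ over $\mathcal B$ by the expansion and uniqueness. For torsion-freeness, a $\mathcal B$-torsion element of $\ker G$ would be $\mathcal D$-torsion in $\widetilde M$.

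The ranks also match. Passing to quotient division rings, $\operatorname{rank}_{\mathcal D}(\mathcal D\otimes_{\mathcal B}N)=\operatorname{rank}_{\mathcal B}N$, and $\widetilde M$ has rank one, so $\ker G$ has $\mathcal B$-rank one. Over the principal left ideal domain $\mathcal B$ (an Ore localization of $A_1(F(r))$ in the variable $v$, with $F(r,v)\langle\partial_v\rangle$ a left and right principal ideal ring), a finitely generated torsion-free module of rank one is isomorphic to a left ideal, and every left ideal is principal. Hence $\ker G=\mathcal B e$ for some $e$ with trivial annihilator. Then $\widetilde M=\mathcal D\otimes_{\mathcal B}\mathcal Be=\mathcal D e$ is free with basis $e$, which gives both asserted isomorphisms.

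The main obstacle is the principality claim for $\mathcal B$. The Weyl algebra $A_1$ itself is not a principal ideal ring, but after inverting all nonzero polynomials in $v$, $\mathcal B$ becomes the ring of differential operators over the differential field $F(r,v)$ with derivation $\partial_v$. This ring has a Euclidean division algorithm in $\partial_v$-degree, so it is a left and right principal ideal domain. I will state this explicitly, and also verify that the Ore localization $\mathcal D$ is free over $\mathcal B$ on $\{\partial_r^j\}$ with coefficients commuted appropriately, since $r$ and $\partial_r$ do not commute with $F(r,v)$.
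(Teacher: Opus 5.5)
Your proposal is correct and follows essentially the same route as the paper: the Taylor expansion identifies $\mathcal D\otimes_{\mathcal B}\ker G\simeq\widetilde M$, using the pair $(G,-\partial_r)$ in place of the paper's $(-G,\partial_r)$, which yields the same expansion. Finite generation via the Taylor coefficients of generators, torsion-freeness, rank one via quotient division rings, realization as a left ideal, and principality by the division algorithm in $\mathcal B$ then give $\ker G=\mathcal Be$ and $\widetilde M=\mathcal De$. Your remark about a ``triangular change'' in the injectivity step is unnecessary: once $\mathcal D=\bigoplus_j\partial_r^j\mathcal B$ as a right $\mathcal B$-module, every element of the tensor product is uniquely $\sum_j\partial_r^j\otimes n_j$, so injectivity is exactly the uniqueness clause of Lemma~\ref{lem:taylor-expansion}.
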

\begin{proof}
Put $N=\ker G$. Apply Lemma~\ref{lem:taylor-expansion}
with $t=-G$ and $\partial=\partial_r$, since
$[-G,\partial_r]=-1$. The projection and expansion become
\[
 \Pi(w)=\sum_{j\geq0}\frac{(-1)^j}{j!}\partial_r^jG^j(w),
\]
\begin{equation}\label{eq:finite-taylor}
 w=\sum_{j\geq0}\frac{\partial_r^j}{j!}\Pi(G^j(w)).
\end{equation}
Uniqueness of this expansion identifies the natural map
$\mathcal D\otimes_{\mathcal B}N\to\widetilde M$ as an isomorphism.

Apply \eqref{eq:finite-taylor} to a finite set of $\mathcal D$-generators.
Their finitely many Taylor coefficients generate $N$ over $\mathcal B$:
write every operator in the form $\sum_j\partial_r^j b_j$ and use
uniqueness. Torsion-freeness is inherited from $\widetilde M$. Passing to the
quotient division rings in the displayed isomorphism shows that $N$ has
$\mathcal B$-rank one. The ring $\mathcal B$ of differential operators in one
derivation over $F(r,v)$ admits division with remainder, with respect to order,
on both sides. Realize $N$ inside its quotient division ring and clear a common
right denominator of its finite generators. This identifies $N$ with a
nonzero left ideal, which is principal by the division algorithm.
Thus $N=\mathcal B e$ and $\widetilde M=\mathcal D e$.
\end{proof}

The right actions of $z,q$ preserve $N$ and commute with all left actions.
There are therefore scalar operators $\widehat S,\widehat T\in\mathcal B$
with
\begin{equation}\label{eq:scalar-actions}
 e\cdot z=\widehat S e,\qquad e\cdot q=\widehat T e,
 \qquad[\widehat S,\widehat T]=1.
\end{equation}
The sign follows directly from
$(e\cdot z)\cdot q=\widehat S\widehat T e$.

\begin{lemma}\label{lem:spread-scalar-basis}
After one arithmetic localization and restriction to a principal open in
the source, the basis $e$, its inverse trivialization, and
\eqref{eq:scalar-actions} are defined over $\Lambda$. They remain inverse
trivializations after reduction and rational source localization at every
remaining point.
\end{lemma}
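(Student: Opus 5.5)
The plan is to apply Lemma~\ref{lem:finite-descent} to a finite package of data defined over $F$. We then check that the package survives base change to the residue fields.

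First, make the rational source localization finite-type. The ring $\mathcal D=F(r,v)\langle\partial_r,\partial_v\rangle$ is a directed union of localizations $F[x,q][g^{-1}]\langle\partial_r,\partial_v\rangle$ at single polynomials $g$. The derivations \eqref{eq:coordinate-derivations} have polynomial coefficients in $x,q$ once $x$ and $v=x^{-1}$ are inverted, and $\mathcal B$ is the corresponding subring. The basis element $e\in\widetilde M$, the finitely many generators $v_1,\dots,v_c$ of $M_F$ expressed as $\mathcal D$-multiples of $e$, the element $e$ written as a $\mathcal D$-combination of the images of the $v_i$, and the operators $\widehat S,\widehat T$ together involve finitely many denominators. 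Choose one polynomial $g\in F[x,q]$, divisible by $x$, that clears all of them. Work on the principal open $D(g)\subset\A^2$ and with $\mathcal D_g=\mathcal D_{D(g)}$, the differential operators on $D(g)$. Then $\mathcal D_g\otimes M_F$ is free of rank one with basis $e$. To see this, note that the map $\mathcal D_g\to\mathcal D_g\otimes M_F$, $a\mapsto ae$, has an explicit inverse determined by the expressions of the $v_i$. The identities $e\cdot z=\widehat Se$, $e\cdot q=\widehat Te$, $Ge=0$ and $[\widehat S,\widehat T]=1$ then hold in $\mathcal D_g$ over $F$. Each is a finite collection of equalities in finitely presented left modules.

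Next, spread out over $\Lambda$. By Lemma~\ref{lem:finite-descent}, applied over the $\Lambda$-algebra $\mathcal D_{g,\Lambda}$ to the finitely presented left module $\mathcal D_{g,\Lambda}\otimes_{A_\Lambda}M_\Lambda$, everything descends after inverting one $0\ne r\in\Lambda$. This covers the map $a\mapsto ae$, its inverse (given on generators), the two composition identities, the right-action identities, and the commutator identity. We may also take $g$ with a coefficient that is a unit in $\Lambda[r^{-1}]$, so that $g_s\ne0$ at every remaining point. Base change along $\Lambda[r^{-1}]\to k(s)$ is a ring homomorphism. It therefore carries the two maps and the identities to maps and identities over $\mathcal D_{g_s,k(s)}$, so the specialized maps remain mutually inverse and the fibers are again trivialized by $e_s$. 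Further localization to the rational source ring $k(s)(r,v)\langle\partial_r,\partial_v\rangle$ is a flat Ore localization and preserves the inverse pair.

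The main obstacle is the Ore/flatness bookkeeping. Lemma~\ref{lem:finite-descent} needs finite presentations, and $\mathcal D_\Lambda\otimes M_\Lambda$ is finitely presented because $M_\Lambda$ has a finite left projective presentation after localization (Proposition~\ref{prop:recognition}). One must also check that $\mathcal D_{g,\Lambda}$ specializes correctly, that is, $\mathcal D_{g,\Lambda}\otimes k(s)=\mathcal D_{g_s,k(s)}$. This holds because differential operators on a principal open have a PBW basis over $\Lambda[x,q][g^{-1}]$ compatible with base change. Finally, the new set of points must still be Zariski dense, which holds because we removed only the proper closed set $V(r)$.
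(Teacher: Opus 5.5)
Your proposal is correct and takes essentially the same route as the paper: you clear all denominators (of the trivialization $a\mapsto ae$, its inverse, the coordinate change, and $\widehat S,\widehat T$) into a single principal open, spread the finitely many maps and identities out via Lemma~\ref{lem:finite-descent}, and invert one coefficient of the localizing polynomial so that it stays nonzero in every fiber before passing to the rational source localization. The differences are only cosmetic: you use a finite presentation of $M_\Lambda$ and the coordinates $(x,q)$ with $x\mid g$, whereas the paper uses the idempotent-matrix description of the projective module and a polynomial $h(r,v)$.
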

\begin{proof}
The original projective module is the image of a finite idempotent
matrix. After the coordinate change and localization, this presentation
represents the map $\mathcal D\to\widetilde M$, $1\mapsto e$, and its
inverse by finite matrices over $\mathcal D$. Their entries are
differential operators with coefficients in $F(r,v)$. Clear the
rational-function denominators of these coefficients, including those
in the idempotent matrix, the coordinate changes, and the two scalar
operators. After localizing $\Lambda$, all coefficients lie in
$\Lambda[r,v,h(r,v)^{-1}]$ for a single nonzero polynomial
$h\in\Lambda[r,v]$. This ring is stable under the coordinate
derivations. By Lemma~\ref{lem:finite-descent}, the two inverse
identities and the action identities \eqref{eq:scalar-actions} hold
after a further localization of $\Lambda$. Finally, invert one
nonzero coefficient of $h$, so that its reduction is nonzero in every
fiber. The identities then survive base change and localization to
$k(s)(r,v)$.
\end{proof}

Put
\[
 t=v^2/2,\qquad \delta=v^{-1}\partial_v,\qquad [\delta,t]=1.
\]
We retain the coefficient field $F(r,v)$; it is a quadratic extension of
$F(r,t)$. If $\xi_v$ is the cotangent coordinate for $\partial_v$ and
$\eta=\xi_v/v$, equations~\eqref{eq:long-source} and
\eqref{eq:coordinate-derivations} give
\[
 \beta=-3\xi_v-9q^2,\qquad y=q+(\xi_v+3q^2)/v.
\]
Define
\begin{equation}\label{eq:local-shift}
 c(r,v)=v+q+3q^2/v
       =3v-\frac53rv^2+\frac13r^2v^3,\qquad a=\eta+c(r,v)=v+y.
\end{equation}
Substitution in \eqref{eq:long-S}--\eqref{eq:long-T} gives
\begin{equation}\label{eq:local-ST}
 S=2v+4a-3ra^2,\qquad
 T=\frac12(ra^3-a^2-va).
\end{equation}
For example, setting $w=1+xy$ and $\alpha=2-3xy-x^2\beta$ first gives
$S=x^{-1}(2+4w-3\alpha w^2)$ and
$T=x^{-2}(\alpha w^3-w^2-w)/2$; now $w=a/v$ and $\alpha=rv$.

These cotangent changes give exact center formulas, not merely symbol
identities. In characteristic $\ell>3$,
\begin{align}
 \partial_r^\ell&=(-v^2/3)^\ell z^\ell,\notag\\
 \partial_v^\ell&=(-x^2)^\ell p^\ell
             +\bigl(-2(1-3xq)/3\bigr)^\ell z^\ell,\label{eq:exact-center-change}\\
 \delta^\ell&=v^{-\ell}\partial_v^\ell.\notag
\end{align}
Indeed, $\partial_r,\partial_v$ have zero $\ell$th iterate as derivations
of $k(s)(r,v)$. Their crystalline $\ell$th powers are central. The proposed
right sides are central with the same order-$\ell$ symbols. The differences
are central operators of order less than $\ell$, hence multiplication
operators, and act as zero on the rational function field. They vanish.
For $\delta$, its $\ell$th iterate vanishes on $k(s)(r,t)$, and hence on
the separable quadratic extension $k(s)(r,v)$. The same argument applies.

On the fibers where Lemma~\ref{lem:spread-scalar-basis} applies, set
$b_\ell=\delta^\ell+c(r,v)^\ell$. Evaluation of
\eqref{eq:nonlift-module-centers} on the free basis $e_s$, followed by
\eqref{eq:local-ST} and \eqref{eq:exact-center-change}, gives
\begin{align}
 \widehat S_s^{\,\ell}&=2v^\ell+4b_\ell-3r^\ell b_\ell^2,\label{eq:S-center-order}\\
 \widehat T_s^{\,\ell}&=\frac12(r^\ell b_\ell^3-b_\ell^2-v^\ell b_\ell).
 \label{eq:T-center-order}
\end{align}
For a nonzero scalar differential operator over a differential field,
order is additive in every characteristic, and its leading coefficient is
multiplicative. Consequently
\[
 \ord(\widehat S_s)=2,\quad\operatorname{lc}(\widehat S_s)=-3r,
 \qquad
 \ord(\widehat T_s)=3,\quad\operatorname{lc}(\widehat T_s)=r/2.
\]
These assertions descend to characteristic zero. Each operator has only
finitely many rational coefficients; after localizing the arithmetic base,
its highest nonzero coefficient remains nonzero in every fiber. The orders
are then constant, and the leading-coefficient identities on the dense
set imply the corresponding rational-function identities over $F$.
Thus
\begin{equation}\label{eq:forced-orders}
 \widehat S=-3r\delta^2+\text{lower-order terms},\qquad
 \widehat T=(r/2)\delta^3+\text{lower-order terms}.
\end{equation}

\subsection{The rational differential-operator obstruction}
\begin{lemma}\label{lem:painleve}
Let $K$ have characteristic zero, set $t=v^2/2$, and equip $K(v)$ with
the derivation $\delta=v^{-1}d/dv$. There are no monic operators of
orders two and three in $K(v)[\delta;\delta]$ whose commutator is a
nonzero element of $K$.
\end{lemma}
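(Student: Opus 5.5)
The plan is to reduce to a scalar second-order nonlinear ODE of Painlev\'e~I type for the potential of the order-two operator, and then rule out solutions in $K(v)$ by a Laurent/residue argument on the $v$-line. We may replace $K$ by its algebraic closure, since a solution over $K$ remains one there. Write $u'=\delta u$ throughout. Suppose $P=\delta^2+a\delta+b$ and $Q$ are monic of orders two and three with $[P,Q]=c\in K^\times$.

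\emph{Normalization.} Conjugate formally by a function $g$ with $\delta g/g=-a/2$. Such a $g$ need not lie in $K(v)$, but conjugation preserves commutators, and the resulting operators still have coefficients in $K(v)$. For instance, $P$ becomes $\delta^2+u$ with $u=b-a'/2-a^2/4$. Next write $Q=\delta^3+q_1\delta^2+q_2\delta+q_3$ and replace $Q$ by $Q-q_1P$, which does not change $[P,Q]$. This gives $Q=\delta^3+s\delta+w$ with $s,w\in K(v)$.

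\emph{Reduction to the ODE.} Expand $[\delta^2+u,\delta^3+s\delta+w]$ and equate coefficients, using $[\delta,t]=1$ so that $\delta$ acts as $d/dt$ on $K(v)$.
\begin{enumerate}
\item The $\delta^2$-coefficient gives $s'=\tfrac32u'$, so $s=\tfrac32u+\kappa$ with $\kappa\in K$. Here the constants of $\delta$ in $K(v)$ are exactly $K$.
\item The $\delta^1$-coefficient gives $w=\tfrac34u'+\mu$ with $\mu\in K$.
\item The $\delta^0$-coefficient then gives $\tfrac14u'''+\tfrac32uu'+\kappa u'=-c$, up to the sign conventions.
\end{enumerate}
Integrating once in $t$ and shifting $u$ by a constant multiple of $\kappa$ yields
\[
 u''=-3u^2+\gamma t+\epsilon,\qquad \gamma=\mp4c\neq0,\quad \epsilon\in K.
\]
A further affine rescaling of $t$ and $u$, permitted over $\bar K$, puts this in a standard Painlev\'e~I form. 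The problem is now to show that this equation has no solution $u\in K(v)$ with $t=v^2/2$ and $u''=\delta^2u$.

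\emph{Local analysis.} Regard $u$ as a rational function on the $v$-line and examine its Laurent expansions at three kinds of points.
\begin{itemize}
\item At $v=\infty$, balancing $u^2$ against $\gamma t=\gamma v^2/2$ forces $u=\alpha v+\cdots$ with $\alpha^2=\gamma/6$. Since $\delta$ lowers $v$-degree by two, the subsequent terms lie in $v^{1-5k}$, with coefficients determined recursively.
\item At a pole $v_0\neq0,\infty$, $t-t_0$ is a local parameter. Painlev\'e~I forces a double pole $u=-2(t-t_0)^{-2}+O((t-t_0)^2)$, with vanishing $(t-t_0)^{-1}$ term and one free resonance coefficient.
\item At $v=0$, the parameter is $v=\sqrt{2t}$ and $\delta=v^{-1}d/dv$ is singular. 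The recursion $\delta^2(v^n)=n(n-2)v^{n-4}$ shows that a pole of order $m$ would require $m(m+2)v^{-m-4}$ to balance $u^2\sim v^{-2m}$, i.e.\ $m=4$. So a pole at the origin is either excluded or is of order exactly $4$ with a determined leading coefficient.
\end{itemize}

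\emph{Global contradiction.} I would combine these local data through the residue theorem applied to suitable rational differentials on $\mathbb P^1_v$. The first candidates are $u\,dt=uv\,dv$ and $u^2\,dt$, or the Hamiltonian-type form $\bigl(\tfrac12u'^2+u^3-\gamma tu-\epsilon u\bigr)\,dt$. For the latter, $d/dt$ of the Hamiltonian is $-\gamma u$, which ties its residues to those of $u\,dt$.
\begin{itemize}
\item The residues at the finite poles $v_0\neq0$ are computable from the universal expansion there and vanish.
\item The residue at $\infty$ is read off from the $v^{1-5k}$ expansion; for $u\,v\,dv$ it is zero because $2-5k\ne-1$.
\item The residue at $v=0$ is computed from the constrained expansion there.
\end{itemize}
The contradiction should come from a nonzero total. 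Alternatively, a parity argument may do it: the substitution $v\mapsto-v$ preserves $t$ and $\delta$, so $u(-v)$ is also a solution. One could then compare the odd part of $u$, whose leading term at infinity is $\alpha v\neq0$, against the even part, which is a rational function of $t$.

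I expect the main obstacle to be this last step. Unlike the classical case, where $u\sim\sqrt t$ at infinity already rules out rationality in $t$, the quadratic cover absorbs the square root. The contradiction must therefore come from the interaction between the branch point $v=0$ and the expansion at $\infty$. My first attempt would be the residue of the Hamiltonian differential at $v=0$ versus $\infty$. Failing that, I would run a finer recursion at $\infty$ to locate the first coefficient where consistency fails, in the spirit of the Laurent-series argument mentioned in the introduction.
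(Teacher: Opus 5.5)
\textbf{Verdict: genuine gap.} The reduction is the same as the paper's and is correct, but the step that carries the content of the lemma, excluding a solution in $K(v)$, is not carried out. The route you sketch for it would not work as stated.

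\textbf{What is correct.} Normalizing $P=\delta^2+u$ and removing the $\delta^2$-term of $Q$ gives $u''+3u^2=\gamma t+\epsilon$ with $\gamma\ne0$. Your local analysis is also correct: at $v=0$, $u$ is either regular or has a pole of order exactly four with leading term $-8v^{-4}$; at $v=\infty$, $u=\alpha v+\cdots$ with $6\alpha^2=\gamma$.

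\textbf{Two small repairs.} Before subtracting $q_1P$ you must know $q_1\in K$; this follows because the $\delta^3$-coefficient of $[P,Q]$ is $2q_1'$. Also, $t$ cannot be translated to absorb $\epsilon$, because that moves the branch point. So the expansion at infinity is $\alpha v+\tfrac{\epsilon}{6\alpha}v^{-1}+\cdots$, not one supported only on the exponents $1-5k$.

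\textbf{Why the residue plan fails as stated.} Since $\delta H=-\gamma u$, the form $u\,dt=-\gamma^{-1}dH$ is exact. Likewise $u^2\,dt=\tfrac13(\gamma t+\epsilon)\,dt-\tfrac13\,d(\delta u)$ is exact up to a polynomial form. All residues of these two forms vanish identically, so they cannot yield a contradiction. For $H\,dt$, the residues at the finite poles $v_0\ne0$ do not vanish, contrary to what you expected. Your parity alternative stops exactly where an argument is needed: nothing in the proposal shows that the odd part of $u$ is zero.

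\textbf{The paper's missing idea: evenness at the branch point $v=0$.} The involution $\sigma(v)=-v$ commutes with $\delta$ and fixes $\gamma t+\epsilon$. Hence $J=u-\sigma(u)$ satisfies the linear equation $J''+3\bigl(u+\sigma(u)\bigr)J=0$. If $J\ne0$, write $J=J_jv^j+\cdots$ at $v=0$; the exponent $j$ is odd.
\begin{itemize}
\item If $u$ is regular at $0$, then $j\ge1$ and the lowest coefficient of the left side is $j(j-2)J_j$.
\item If $u=-8v^{-4}+\cdots$, then $j>-4$ and the lowest coefficient is $\bigl(j(j-2)-48\bigr)J_j=(j-8)(j+6)J_j$.
\end{itemize}
Neither vanishes for odd $j$, so $J=0$ and $u\in K(t)$. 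This contradicts your leading term $\alpha v$ at infinity; the paper uses an equivalent degree count in $t$.

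\textbf{Your Hamiltonian route can also be completed,} but with different residues from the ones you predicted. The identity $dH/dv=-\gamma uv$ gives $nH_n=-\gamma u_{n-2}$ for the Laurent coefficients in $v$ at $v=0$ and at $v=\infty$. The residues of $H\,dt$ are then:
\begin{itemize}
\item $-2\gamma$ at each pole $v_0\ne0$ of $u$, where $u=-2(t-t_0)^{-2}+O\bigl((t-t_0)^2\bigr)$ and $dH/dt=-\gamma u$;
\item $0$ or $-4\gamma$ at $v=0$;
\item $-\gamma/12$ at $v=\infty$, where $u_{-4}=\tfrac16$.
\end{itemize}
With $N$ the number of poles $v_0\ne0$ and $e\in\{0,1\}$, the sum is $-\gamma\bigl(2N+4e+\tfrac1{12}\bigr)$. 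This cannot vanish, which also gives the contradiction.
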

\begin{proof}
Write the operators as $L=\delta^2+\ell_1\delta+\ell_0$ and a monic
cubic $Q$. The automorphism fixing $K(v)$ and replacing $\delta$ by
$\delta-\ell_1/2$ removes the first-order term of $L$. We may thus assume
\[
 L=\delta^2+U,\qquad Q=\delta^3+b_2\delta^2+b_1\delta+b_0,
 \qquad [L,Q]=\kappa\in K^\times.
\]
Use primes for $\delta$. The coefficient of $\delta^3$ in the commutator
is $2b_2'$, so $b_2\in K$. Subtract $b_2L$ from $Q$.
The coefficients of $\delta^2$ and $\delta$ then give
\[
 b_1=\tfrac32U+c_1,\qquad b_0=\tfrac34U'+c_2,
 \qquad c_1,c_2\in K.
\]
The constant coefficient is
\[
 -\tfrac14U'''-(\tfrac32U+c_1)U'=\kappa.
\]
Since $\delta t=1$, integration of this identity, followed by
$W=U+2c_1/3$, gives
\begin{equation}\label{eq:painleve-rational}
 W''+3W^2=\lambda t+\mu,\qquad
 \lambda=-4\kappa\ne0,\quad\mu\in K.
\end{equation}
Here integration means that the derivative of the difference is zero;
the constant field of $(K(v),\delta)$ is $K$.

We exclude a rational solution of \eqref{eq:painleve-rational}.
For every integer $j$,
\[
 \delta^2(v^j)=j(j-2)v^{j-4}.
\]
At $v=0$, a pole of $W$ must have order four and leading term
$-8v^{-4}$, by comparing the lowest terms of $W''$ and $3W^2$.
Otherwise $W$ is regular there. The involution $\sigma(v)=-v$ commutes
with $\delta$ and fixes the right side of \eqref{eq:painleve-rational}.
If $J=W-\sigma(W)\ne0$, its first Laurent exponent $j$ is odd, and
\[
 J''+3(W+\sigma(W))J=0.
\]
In the regular case $j\geq1$ and the lowest coefficient is a nonzero
multiple of $j(j-2)$, a contradiction. In the pole case $j>-4$; the
lowest coefficient is a nonzero multiple of
\[
 j(j-2)-48=(j-8)(j+6),
\]
again impossible for odd $j$. Thus $W$ is even and belongs to $K(t)$.
At $t=\infty$, a pole of integral order $d\geq1$ makes $3W^2$ have
degree $2d$, strictly greater than the degree of $W''$, so their sum
cannot have degree one. If $W$ is bounded at infinity, the left side is
bounded there. Both cases contradict \eqref{eq:painleve-rational}.
\end{proof}

\begin{proof}[Proof of Proposition~\ref{prop:module-nonlift}]
Take $K=F(r)$. The operators
\[
 L=-\widehat S/(3r),\qquad Q=2\widehat T/r
\]
are monic of orders two and three by \eqref{eq:forced-orders}.
Since $r$ is a constant for $\delta$, equation~\eqref{eq:scalar-actions}
gives $[L,Q]=-2/(3r^2)$. This contradicts Lemma~\ref{lem:painleve}.
\end{proof}

\begin{proof}[Proof of Theorem~\ref{thm:matrix-nonlift}]
Suppose $u$ exists. Put $C_\Lambda=\Mat_m(A_\Lambda)$ and $e=e_{11}$.
The module
\[
 M_\Lambda=eC_\Lambda u(e),\qquad b\cdot a=b\,u(ae)
\]
is an $A_\Lambda$-bimodule, with its usual left action from
$eC_\Lambda e=A_\Lambda$. It is an idempotent summand of the free left
module $eC_\Lambda$, so it is finite projective and its formation
commutes with base change. The $m$ summands
$eC_Fu(e_{ii})$ are isomorphic via the images of the matrix units and
sum to the free rank-$m$ module $eC_F$. Their common left rank is one.
Equations~\eqref{eq:nonlift-centers} give exactly
\eqref{eq:nonlift-module-centers}. This contradicts Proposition~\ref{prop:module-nonlift}.
\end{proof}

\subsection{The graph counterexample and reduction to dimension three}
\begin{corollary}\label{cor:graph-counterexample}
There is no holonomic $\mathcal D_{\A^4}$-module in characteristic zero
with constant arithmetic support $L_f$ of multiplicity one. More strongly, its graph splitting $\mathcal D$-modules do not occur as
the reductions of a finite characteristic-zero $A_4$-module along a
Zariski-dense arithmetic family.
\end{corollary}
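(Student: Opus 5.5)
The plan is to reduce both assertions of Corollary~\ref{cor:graph-counterexample} to Proposition~\ref{prop:module-nonlift}. Suppose first that a holonomic $\mathcal D_{\A^4}$-module $M$ has constant arithmetic support $L_f$ of multiplicity one. We identify $\mathcal D_{\A^4}$ with $A_2\otimes A_2^{\op}$ through the convention $(x,y,x',y')\mapsto(x,x';y,-y')$ of Section~\ref{subsec:intro-lifting}, so that $M$ becomes an $A_2$-bimodule. We then choose a model $M_\Lambda$ over an integral finitely generated $\Z[1/30]$-algebra $\Lambda\subset\C$ over which $L_f$ and Long's polynomials are defined. By hypothesis there is a dense open subset of $\Spec\Lambda$ on which $\operatorname{Supp}_p(M_s)=L_{f_s}^{(1)}$ scheme-theoretically, with multiplicity one. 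Its closed points form a Zariski-dense set $\Sigma$.

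Proposition~\ref{prop:recognition} applies with $n=2$. It shows that $M_F$ is projective of left rank one, and that after localizing $\Lambda$ the fibers $M_s$ ($s\in\Sigma$) are the graph splitting modules $M_{f_s,k(s)}$. On each such fiber the graph ideal $I_{f_s,k(s)}$ of \eqref{eq:graphideal} annihilates $M_s$. The generators $z_{j,R}^\ell-f_j^{[\ell]}(z_L^\ell)$ corresponding to the coordinates $x,z,q$ are exactly the identities \eqref{eq:nonlift-module-centers} with $R,S,T$. The only care needed is to match which component of $f=(R,T,D,S)$ pairs with which right Weyl generator, and to track the sign in $-y'$. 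The position components $R,T$ are unaffected by that sign. For the momentum components, the sign is absorbed by the Frobenius twist, since $(-1)^\ell=-1$ and the relation is linear in the right generator; the needed relations then read as stated. Since $M_F$ is finite on the left, Lemma~\ref{lem:restriction}(ii) lets us localize so that $M_\Lambda$ is finite on the left. This yields a bimodule forbidden by Proposition~\ref{prop:module-nonlift}, which is the desired contradiction.

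For the stronger assertion, suppose instead that a finitely generated $A_4(\Lambda)$-module $M_\Lambda$ has reductions isomorphic, possibly after residue-field extensions, to $M_{f_s,k(s)}$ on a Zariski-dense set of closed points. We no longer know that $M_F$ is holonomic with constant support. However, the fibers are graph splitting modules, so Remark~\ref{rem:graph-finiteness} with $e=1$ gives left finite generation of $M_F$. The support and multiplicity-one hypotheses of Proposition~\ref{prop:recognition} hold on the dense set, by the descent argument used after that proposition. So the same proposition again gives left rank one and projectivity, and the relations \eqref{eq:nonlift-module-centers} hold on the fibers because they hold on $M_{f_s,k(s)}$ and descend along field extensions. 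Proposition~\ref{prop:module-nonlift} again rules this out.

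The main point requiring attention is the coordinate bookkeeping in the first step. Concretely, we must check that the central graph relations for Long's map, read under the identification of $T^*\A^4$ with the product, are precisely the three identities \eqref{eq:nonlift-module-centers} in the variables $x,z,q$ with $R,S,T$, including the sign convention on the momenta. Everything else is a direct citation of earlier results.
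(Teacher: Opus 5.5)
Your proposal is correct and is essentially the paper's proof: Proposition~\ref{prop:recognition} gives finiteness on the left after localization and a rank-one projective generic fiber, the $x,q,z$ generators of the graph ideal \eqref{eq:graphideal} are exactly the relations \eqref{eq:nonlift-module-centers}, and Proposition~\ref{prop:module-nonlift} then gives the contradiction. The sign bookkeeping you flag is already built into \eqref{eq:graphideal}, which is written directly in the right generators $z_{j,R}$; the paper additionally cites Remark~\ref{rem:purity}, but only to pass from the cycle formulation of constant support to the scheme-theoretic equalities.
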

\begin{proof}
Such a model satisfies Proposition~\ref{prop:recognition}. After one
arithmetic localization it is finite on the left with generic fiber
projective of rank one, and its fibers satisfy the central graph equations.
Their $x,z,q$ equations are \eqref{eq:nonlift-module-centers}, contradicting
Proposition~\ref{prop:module-nonlift}. For the cycle formulation of
constant arithmetic support, Remark~\ref{rem:purity} first supplies the
scheme-theoretic equalities.
\end{proof}

Here is an explicit example over a three-dimensional base. Regard
$x,y,\beta$ as independent coordinates, and use the polynomials
$q=y+x\beta/3$, $a_0,c_0,b,H,S,T$ already defined above. Put
\begin{equation}\label{eq:three-momenta}
 \rho=3q^2b-2a_0H,\qquad
 \sigma=c_0b+3x^2H.
\end{equation}
Define
\begin{equation}\label{eq:L3-parametrization}
 j:\A^3_{x,y,\beta}\longrightarrow T^*\A^3_{x,q,Q},\qquad
 (x,y,\beta)\longmapsto (x,q,T;\rho,\sigma,-S),
 \qquad L_3=j(\A^3).
\end{equation}

\begin{proposition}\label{prop:dimension-three}
The map $j$ is a closed immersion. Its image is a smooth exact Lagrangian
isomorphic to $\A^3$. There is no holonomic $\mathcal D_{\A^3}$-module
in characteristic zero with constant arithmetic support $L_3$ of
multiplicity one.
\end{proposition}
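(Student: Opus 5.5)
Three things have to be shown: that $j$ is a closed immersion, that $L_3$ is an exact Lagrangian, and that $L_3$ admits no multiplicity-one lift. All three come from comparing $L_3$ with the four-dimensional graph $L_f$ through the coordinate system $(x,y,\beta,D)$ of Lemma~\ref{lem:long-polynomials}.

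\emph{Closed immersion.} From the image point $(x,q,T;\rho,\sigma,-S)$ the first two coordinates give $x$ and $q$. Then $y=q-x\beta/3$, so it remains to recover $\beta$ polynomially from the image coordinates. The momenta $\rho,\sigma$ of \eqref{eq:three-momenta} are exactly the values of $p,z$ in \eqref{eq:long-inverse-coordinates} at $D=0$. The unimodular linear system used in the proof of Lemma~\ref{lem:long-polynomials} then recovers $b=\beta+9q^2$ and $H$ polynomially from $(x,q,\rho,\sigma)$:
\[
 b=c_0\rho\cdot(\ldots)+\ldots .
\]
Concretely, $b$ is a $\Z[1/2]$-linear combination of $\rho,\sigma$ with coefficients in $\Z[x,q]$, because the determinant is $-1$. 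Hence $\beta=b-9q^2$ is a polynomial in the image coordinates. This gives a polynomial left inverse $T^*\A^3\to\A^3$ of $j$, so $j$ is a closed immersion with image isomorphic to $\A^3$.

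\emph{Exact Lagrangian.} Let $Q$ denote the third base coordinate. The slice $\{D=0\}$ of the source of $f$ is the smooth hypersurface $\A^3_{x,y,\beta}$. Restricting \eqref{eq:long-symplectic} to it, and using that $R=x(2-3xq)$ is a function of $x$ and $q$, one gets a form in which the $dR\wedge dD$ term drops out. I would arrange the identity on the slice as
\[
 dx\wedge d\rho+dq\wedge d\sigma+dT\wedge d(-S)=0.
\]
A cleaner route is a symplectic change of base coordinates $(x,q)\mapsto$ source coordinates. This exhibits $L_f$ as a product, after a symplectomorphism of $T^*\A^4$ induced by a base automorphism, of $L_3$ with a conormal-type factor; that is the "separation of a conormal factor" announced in the introduction. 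Isotropy of $j^*$ of the canonical form follows from \eqref{eq:long-symplectic}, and $\dim=3$ gives Lagrangian. Exactness is automatic, since $H^1_{dR}(\A^3)=0$ and every closed polynomial $1$-form on $\A^3$ is exact.

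\emph{No lift.} Suppose $M_3$ is a holonomic $\mathcal D_{\A^3}$-module with constant arithmetic support $L_3$ of multiplicity one. I would form $M_3\boxtimes N$ on $\A^4$, where $N$ is the $\mathcal D_{\A^1}$-module whose arithmetic support is the graph-like factor in the $(R,D)$ direction. Then I would transport this product by the pushforward or pullback along the base automorphism (and Fourier transform in one pair if needed) that carries the product Lagrangian to $L_f$. Exterior products multiply $p$-supports and multiplicities, and automorphisms of $\A^4$ and partial Fourier transforms act on $p$-supports by the Frobenius twist of the corresponding symplectomorphism. Hence we obtain a holonomic $\mathcal D_{\A^4}$-module with constant arithmetic support $L_f$ of multiplicity one, contradicting Corollary~\ref{cor:graph-counterexample}.

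The main obstacle is making this transformation explicit. One must find the base change of $\A^4$ (mixing $q$, $Q$ and the $D$ direction) together with the factor $N$ such that the transported product is exactly $L_f$ scheme-theoretically. The fallback is to skip the explicit transform and rerun the proof of Proposition~\ref{prop:module-nonlift} directly. That proof already works in coordinates $(x,y,\beta)$ with $D$ a free transverse coordinate, and the obstruction comes only from the operators realizing $S$ and $T$. So a lift of $L_3$ would produce, after the same Taylor argument, scalar operators $\widehat S,\widehat T$ with $[\widehat S,\widehat T]=1$ and the forced orders \eqref{eq:forced-orders}, which Lemma~\ref{lem:painleve} excludes.
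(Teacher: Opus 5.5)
Your first two assertions are essentially fine. For the closed immersion, the system $\rho=3q^2b-2a_0H$, $\sigma=c_0b+3x^2H$ has determinant $9x^2q^2+2a_0c_0=1$. This gives explicitly $b=3x^2\rho+2(1-3xq)\sigma$, which should replace your placeholder; then $\beta=b-9q^2$ and $y=q-x\beta/3$, as in the paper. For Lagrangianity, your slice argument is a valid alternative to the paper's. Since $T,S$ do not depend on $D$, and $p|_{D=0}=\rho$, $z|_{D=0}=\sigma$, pulling back \eqref{eq:long-symplectic} along $D=0$ gives $dx\wedge d\rho+dq\wedge d\sigma=dT\wedge dS$, so $j^*$ of the symplectic form vanishes. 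The paper instead reads this off from a product decomposition.

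The genuine gap is in the nonexistence assertion, which is the substance of the proposition. You guess the right shape (external product with a one-dimensional factor, then transport by a base automorphism), but you explicitly leave open which base change and which factor work. Your hints also point the wrong way: $D$ is a momentum, not a base direction, $Q$ is not involved, and no Fourier transform is needed.

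What is needed is the shear $(x,q,X,Q)\mapsto(x,q,Q,w)$ with $w=X-R(x,q)$. The momenta of the graph point $(x,q,R,T;p,z,-D,-S)$ become $(p-DR_x,\,z-DR_q,\,-S,\,-D)$. With $R_x=2a_0$, $R_q=-3x^2$, and \eqref{eq:long-inverse-coordinates}, these equal $(\rho,\sigma,-S,-D)$ identically in $D$. Your observation at $D=0$ is only the special case. Since $(x,y,\beta,D)$ is a coordinate system, the transformed graph is exactly $L_3\times T^*_{\{0\}}\A^1_w$. The missing factor is the delta module $A_1/A_1w$, whose central annihilator in characteristic $\ell$ is $(w^\ell)$, with multiplicity one. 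Without this identification you do not know that the transported product has support $L_f$, so Corollary~\ref{cor:graph-counterexample} cannot be invoked.

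Your fallback does not close the gap. Proposition~\ref{prop:module-nonlift} and Lemma~\ref{lem:restriction} concern $A_2$-bimodules that are finite projective of rank one on the left, with graph relations polynomial in the left center. An $A_3$-module on $\A^3_{x,q,Q}$ with support $L_3$ has no such structure; as an $A_2(x,q)$-module it is supported on a hypersurface. To run the Taylor and Painlev\'e steps, you would first have to show two things: its rational localization is free of rank one over $F(r,v)\langle\partial_v\rangle$, and $Q,\partial_Q$ act on it by a commuting Weyl pair. That requires a finiteness argument the paper's lemmas do not supply. The natural way to obtain it is precisely the product with $A_1/A_1w$ and the shear above, which reduces everything to the four-dimensional case.
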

\begin{proof}
\emph{Step 1: closed immersion.}
On the image, the parameters are recovered by the formulas
\begin{equation}\label{eq:L3-inverse}
 \beta=3x^2\rho+2(1-3xq)\sigma-9q^2,\qquad
 y=q-x\beta/3.
\end{equation}
These follow from $9x^2q^2+2a_0c_0=1$ and
\eqref{eq:three-momenta}. Hence the map on coordinate rings is
surjective, proving the assertion.

\emph{Step 2: a conormal factor of the graph.}
Use base coordinates $(x,q,X,Q)$ on $\A^4$. In its cotangent bundle,
the graph is parametrized by
\[
 (x,q,R,T;p,z,-D,-S).
\]
Make the change of base coordinates
\begin{equation}\label{eq:base-change-conormal}
 (x,q,X,Q)\longmapsto(x,q,Q,w),\qquad w=X-R(x,q).
\end{equation}
The cotangent momenta become
\[
 p-D R_x=\rho,\qquad z-D R_q=\sigma,\qquad -S,\qquad -D,
\]
because $R_x=2a_0$, $R_q=-3x^2$, and
\eqref{eq:long-inverse-coordinates} holds. These first three momenta and
the first three base coordinates depend only on $x,y,\beta$. Since
$(x,y,\beta,D)$ is a coordinate system, the transformed graph
is exactly
\begin{equation}\label{eq:conormal-product}
 L_3\times T^*_{\{0\}}\A^1_w.
\end{equation}
The graph is Lagrangian by Lemma~\ref{lem:long-polynomials}. Thus $L_3$
is Lagrangian. The pullback of the tautological one-form to
$\A^3_{x,y,\beta}$ is therefore closed. Every closed regular one-form
on affine space in characteristic zero is exact, so $L_3$ is exact.

\emph{Step 3: transfer of a hypothetical lift.}
Suppose $N$ is a holonomic $\mathcal D_{\A^3}$-module with the prescribed
arithmetic support $L_3$ and multiplicity one. The module
\[
 \Delta_0=A_1(\C)/A_1(\C)w
\]
has constant arithmetic support $T^*_{\{0\}}\A^1_w$ of multiplicity one.
Indeed, over a residue field of characteristic $\ell$, its central
annihilator is $(w^\ell)$ and it is free of rank $\ell$ over
$k[\partial_w^\ell]$. External tensor product therefore gives a
holonomic $A_4$-module $N\boxtimes\Delta_0$ with support
\eqref{eq:conormal-product}, scheme-theoretically and with multiplicity
one. This follows directly by tensoring the central modules; at their
generic points the normalized multiplicities multiply.

The inverse of the change of base coordinates
\eqref{eq:base-change-conormal} transports this module to one on the
original $\A^4$. Pullback by a base isomorphism is defined on crystalline
differential operators by the chain rule; it commutes with reduction and
induces the Frobenius twist of the cotangent change on centers.
Consequently the transported module has constant arithmetic support
$L_f$ of multiplicity one, contradicting
Corollary~\ref{cor:graph-counterexample}.
\end{proof}

\subsection{Counterexamples in every dimension at least three}
\begin{proposition}\label{prop:all-dimensions}
For every $m\geq3$, put $r=m-3$ and
\begin{equation}\label{eq:higher-dimensional-L}
 L_m=L_3\times T^*_{\{0\}}\A^r
 \subset T^*\A^3\times T^*\A^r=T^*\A^m.
\end{equation}
Then $L_m$ is a smooth closed exact Lagrangian isomorphic to $\A^m$.
No holonomic $\mathcal D_{\A^m}$-module in characteristic zero has
constant arithmetic support $L_m$ of multiplicity one.
\end{proposition}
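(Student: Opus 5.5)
The plan is to reduce to Proposition~\ref{prop:dimension-three} using Kashiwara's equivalence for the closed embedding $\A^3\times\{0\}\hookrightarrow\A^3\times\A^r$. The geometric assertions come first. $L_3\simeq\A^3$ is a smooth closed exact Lagrangian, and $T^*_{\{0\}}\A^r\simeq\A^r$ is the conormal fibre. Its tautological form $\sum\xi_i\,dw_i$ restricts to zero because $w=0$ there. A product of closed exact Lagrangians is a closed exact Lagrangian for the sum of the symplectic forms: the primitive is $h\boxtimes1$. Hence $L_m\simeq\A^m$ with the required properties.

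For nonexistence, suppose $N$ is a holonomic $\mathcal D_{\A^m}$-module with constant arithmetic support $L_m$ of multiplicity one. By Remark~\ref{rem:purity} we may use scheme-theoretic equalities on a dense open subset of the arithmetic base.

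\emph{Step 1: the $w$-coordinates act locally nilpotently.} The support ideal contains the central elements $w_i^\ell$. So on each fibre $w_i^\ell$ acts by zero and $w_i$ is nilpotent. By Lemma~\ref{lem:generic-detection}, each $w_i$ acts locally nilpotently on the generic fibre and hence on $N$. Therefore $N$ is supported on $\A^3\times\{0\}$.

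\emph{Step 2: apply Kashiwara's equivalence.} This gives $N\simeq N_0\boxtimes\Delta_0^{\boxtimes r}$, with $N_0=\bigcap_i\ker w_i$ a holonomic $\mathcal D_{\A^3}$-module. Concretely this is the iterated finite Taylor expansion of Lemma~\ref{lem:taylor-expansion}, with $t=w_i$ and $\partial=\partial_{w_i}$, and it shows that $N\simeq\bigoplus_a\partial_w^aN_0$. Holonomicity of $N_0$ follows from that of $N$.

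\emph{Step 3: descend the support statement to $N_0$.} I expect this to be the main obstacle, since Kashiwara's equivalence is a characteristic-zero statement, while the support condition lives on the reductions. First spread out the decomposition $N\simeq N_0\otimes_\C\C[\partial_w]$ over a finitely generated ring. By Lemma~\ref{lem:finite-descent}, it holds after one localization, with models $N_{0,R}$ and $N_R\simeq N_{0,R}\otimes_R R[\partial_w]$ as modules over $A_3(R)\otimes A_r(R)$. Then on each fibre $N_s\simeq N_{0,s}\boxtimes\Delta_{0,s}^{\boxtimes r}$. Since $\Delta_{0,s}^{\boxtimes r}$ is free of rank $\ell^r$ over $k[\partial_w^\ell]$ with central annihilator $(w^\ell)$, the central annihilator of the product is $\operatorname{Ann}(N_{0,s})\otimes1+1\otimes(w^\ell)$. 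Comparing with the ideal of $L_{3,s}^{(1)}\times T^*_{\{0\}}$ gives $\operatorname{Supp}_p(N_{0,s})=L_{3,s}^{(1)}$. Normalized multiplicities multiply and the conormal factor has multiplicity one, so $N_{0,s}$ also has multiplicity one. The delicate point is the identification of annihilators. For that I would use that $N_{0,s}\otimes k[\partial_w]$ is faithfully flat over the second factor. Then a central element $g(x,\xi)$ annihilates $N_{0,s}$ exactly when $g\otimes1$ annihilates the tensor product.

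Thus $N_0$ contradicts Proposition~\ref{prop:dimension-three}, and no such $N$ exists.
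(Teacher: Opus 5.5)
Your proposal is correct and is essentially the paper's proof. Both arguments use Lemma~\ref{lem:generic-detection} to make the conormal coordinates locally nilpotent, then the iterated Taylor expansion of Lemma~\ref{lem:taylor-expansion} (Kashiwara's equivalence) to split off the factor $\Delta_r$, then Lemma~\ref{lem:finite-descent} to descend that splitting, and finally a fibrewise computation of central annihilators and multiplicities that contradicts Proposition~\ref{prop:dimension-three}. The differences are minor. The paper performs the decomposition over $F=\Frac R$ rather than over $\C$, so it does not need to enlarge the coefficient ring before descending. For the annihilator step, the direction you actually justify ($g$ kills $N_{0,s}$ iff $g\otimes1$ kills the product) already suffices, once you note that the ideal of $L_{3,s}^{(1)}\times T^*_{\{0\}}\A^r$ contracts to the ideal of $L_{3,s}^{(1)}$ in the three-dimensional center.
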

\begin{proof}
The case $r=0$ is Proposition~\ref{prop:dimension-three}. Suppose $r>0$.
Write $(t_1,\ldots,t_r;\tau_1,\ldots,\tau_r)$ for the coordinates on
$T^*\A^r$. The second factor in \eqref{eq:higher-dimensional-L} is defined
by $t_1=\cdots=t_r=0$, with the $\tau_i$ arbitrary. Hence $L_m$ is smooth,
closed, and isomorphic to $\A^{3+r}$. The tautological one-form vanishes
on this conormal factor. Exactness and the Lagrangian property therefore
follow from the corresponding assertions for $L_3$.

\emph{Step 1: support on the coordinate subspace in characteristic zero.}
Suppose that a holonomic module $M$ with the stated arithmetic support
exists. Choose a model $M_R$ over an integral finitely generated
$\Z[1/30]$-subalgebra $R\subset\C$ and put $F=\Frac R$.
After localizing $R$, its reductions have the specified support and
multiplicity. If $s$ has residue characteristic $p$, the central ideal
of $L_{m,s}^{(1)}$ contains $t_i^p$, so
\[
 t_i^pM_s=0\qquad(1\leq i\leq r).
\]
Lemma~\ref{lem:generic-detection}, applied on each $D(t_i)$, implies
$M_F[t_i^{-1}]=0$. Thus each $t_i$ acts locally nilpotently on $M_F$.

\emph{Step 2: removal of the conormal factor.}
Set
\[
 N_F=\bigcap_{i=1}^r\ker(t_i:M_F\to M_F),\qquad
 \Delta_r(F)=A_r(F)\Big/\sum_{i=1}^r A_r(F)t_i.
\]
The actions of the first three coordinate and derivative pairs preserve
$N_F$, making it an $A_3(F)$-module. There is an isomorphism
\begin{equation}\label{eq:conormal-module-decomposition}
 N_F\boxtimes\Delta_r(F)\xrightarrow{\sim}M_F,
 \qquad n\otimes\partial_t^\alpha\longmapsto\partial_t^\alpha n.
\end{equation}
This is the coordinate form of Kashiwara's equivalence for
$\A^3\times\{0\}\hookrightarrow\A^m$
\cite[Theorem~1.6.1]{HTT}. Apply Lemma~\ref{lem:taylor-expansion}
to the commuting pairs $(t_i,\partial_{t_i})$, for which
$[t_i,\partial_{t_i}]=-1$. It gives the projection
\[
 \Pi=\prod_{i=1}^r\left(\sum_{a\geq0}
               \frac{\partial_{t_i}^{\,a}t_i^a}{a!}\right)
\]
onto $N_F$ and the unique finite expansion
\[
 v=\sum_{\alpha\in\mathbb Z_{\geq0}^r}
       \frac{(-1)^{|\alpha|}}{\alpha!}
       \partial_t^\alpha\Pi(t^\alpha v).
\]
This proves \eqref{eq:conormal-module-decomposition}.

Apply this expansion to finitely many $A_m(F)$-generators of $M_F$.
Their finitely many coefficients generate $N_F$ over $A_3(F)$: the
$A_m(F)$-span of the expansions is contained in their $A_3(F)$-span
tensored with $F[\partial_{t_1},\ldots,\partial_{t_r}]$, and uniqueness
identifies the coefficients. Thus $N_F$ is finitely generated and
nonzero. A product of good Bernstein filtrations in
\eqref{eq:conormal-module-decomposition} gives
\[
 \dim M_F=\dim N_F+r.
\]
Since $M_F$ is holonomic of dimension $3+r$, the $A_3(F)$-module $N_F$
is holonomic.

\emph{Step 3: reductions of the resulting module.}
Apply Lemma~\ref{lem:finite-descent} to a finite presentation
of $N_F$, the map \eqref{eq:conormal-module-decomposition}, its
inverse, and their two composition identities. After localization,
this gives a model $N_R$ and an isomorphism
\[
 N_R\boxtimes\Delta_r(R)\simeq M_R,\qquad
 \Delta_r(R)=A_r(R)\Big/\sum_iA_r(R)t_i.
\]
Consequently it remains an isomorphism after every residue-field base
change.

Over $k(s)$ of characteristic $p$, the module $\Delta_r(k(s))$ is free
of rank $p^r$ over $k(s)[\partial_{t_1}^p,\ldots,\partial_{t_r}^p]$,
and its central annihilator is $(t_1^p,\ldots,t_r^p)$. Put
$Z_{3,s}=Z(A_3(k(s)))$ and $Z_{m,s}=Z(A_m(k(s)))$. The tensor-product
description therefore gives
\[
 \operatorname{Ann}_{Z_{m,s}}(M_s)
 =\operatorname{Ann}_{Z_{3,s}}(N_s)Z_{m,s}
   +(t_1^p,\ldots,t_r^p).
\]
Indeed, after imposing the $t_i^p$, the second factor is a finite free
faithful module over its polynomial center, so no additional central
annihilators occur. Contracting this ideal to $Z_{3,s}$ proves
\[
 \operatorname{Supp}_p(N_s)=L_{3,s}^{(1)}
\]
as closed subschemes. At the generic point of this support, tensoring
with $\Delta_r(k(s))$ multiplies the central rank by $p^r$.
The normalizing factor changes from $p^3$ to $p^{3+r}$, so the normalized
multiplicity of $N_s$ is one.

It follows that $N_F\otimes_F\C$ is a holonomic
$\mathcal D_{\A^3}$-module with constant arithmetic support $L_3$ of
multiplicity one. This contradicts Proposition~\ref{prop:dimension-three}.
\end{proof}

Theorem~\ref{thm:matrix-nonlift}, Corollary~\ref{cor:graph-counterexample},
and Proposition~\ref{prop:all-dimensions} prove
Theorem~\ref{thm:counterexamples}.

\subsection{Kontsevich's conjecture in dimension one and some examples}\label{subsec:dodd}
\begin{proposition}\label{prop:affine-curve-kontsevich}
Conjecture~\ref{conj:kont} holds when $X$ is a smooth affine complex curve.
\end{proposition}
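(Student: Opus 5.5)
The approach is to reduce both existence and uniqueness to two model cases, the zero section and the conormal line of a point, using the shape of $L$ and symplectic automorphisms of the plane.

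First I classify the possible $L$. Since $\dim X=1$, every curve in the surface $T^*X$ is Lagrangian. $L$ is closed in the affine surface $T^*X$, so it is a smooth connected affine curve. With $H_1(L(\C),\Z)=0$ it must have genus zero and exactly one puncture, so $L\simeq\A^1$. Let $\pi:L\to X$ be the projection. Two cases arise.

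\emph{Case 1: $\pi$ is constant.} Then $L=T^*_{x_0}X$ is the cotangent line at some point $x_0$.

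\emph{Case 2: $\pi$ is nonconstant.} Extend $\pi$ to $\mathbb P^1\to\bar X$. The only point of $\mathbb P^1$ that can map into $\bar X\setminus X$ is $\infty$, and $\bar X\setminus X$ is nonempty because $X$ is affine. Hence $\bar X=\mathbb P^1$ and $X\simeq\A^1$. Now $L\simeq\A^1$ is a smooth closed curve in $T^*\A^1=\A^2$. By the Abhyankar--Moh--Suzuki theorem some automorphism $\phi$ of $\A^2$ carries $L$ to the zero section $\{\xi=0\}$. After composing with a linear scaling, $\phi$ has Jacobian one, so it is a symplectic automorphism. By Jung--van der Kulk it is a product of affine symplectic maps and triangular maps $(x,\xi)\mapsto(x,\xi+g'(x))$, together with their Fourier conjugates.

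\emph{Existence.} In Case 2, each generator lifts to an automorphism of $A_1$ with coefficients in a finitely generated ring. Affine symplectic maps lift to linear changes of $x,\partial$. The map $(x,\xi)\mapsto(x,\xi+g'(x))$ lifts to $x\mapsto x$, $\partial\mapsto\partial+g'(x)$, and the Fourier transform lifts to $x\mapsto\partial$, $\partial\mapsto -x$.

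The key check is that for $p$ large compared with the degrees, the reduction of each lift acts on the center by the Frobenius twist of the generator. For the triangular generator this is Jacobson's formula $(\partial+g')^p=\partial^p+(g')^{[p]}(x^p)$. The correction term $\partial^{p-1}(g')$ vanishes once $p>\deg g$.

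Let $\Phi$ be the composite lifted automorphism. Take $M_L$ to be $A_1/A_1\partial=\C[x]$ twisted by $\Phi$. The module $\C[x]$ has central annihilator $(\partial^p)$ and is free of rank $p$ over $k[x^p]$. Its twist therefore has scheme-theoretic $p$-support equal to the twisted reduction of $L$, with multiplicity one.

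In Case 1, take $M_L=\delta_{x_0}=A_1/A_1(x-x_0)$, or the analogous module for a local coordinate at $x_0$ on a general affine $X$. As in Step~3 of Proposition~\ref{prop:dimension-three}, this module has constant arithmetic support $T^*_{x_0}X$ with multiplicity one.

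\emph{Uniqueness.} In Case 2, twisting by $\Phi^{-1}$ reduces uniqueness to a module $M$ whose support is the zero section. Then $\partial^pM_s=0$ on a dense set of fibers. Lemma~\ref{lem:generic-detection} shows that $\partial$ acts locally nilpotently on $M_F$. Lemma~\ref{lem:taylor-expansion}, applied with $t=\partial$ and the conjugate variable $x$, gives $M_F\simeq\ker\partial\otimes_F F[x]$.

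Holonomicity makes $\ker\partial$ finite dimensional, of some dimension $d$. The module $F[x]^d$ has generic central rank $dp$ over the support, so the normalized multiplicity is $d$. Multiplicity one forces $d=1$, hence $M\simeq\C[x]$.

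Case 1 is handled the same way with the roles of $x-x_0$ and $\partial$ exchanged. For general affine $X$ this is Kashiwara's equivalence, with $x_0$ cut out by a local coordinate $t$ near $x_0$. Local nilpotence of $t$ on $M_F$ again comes from Lemma~\ref{lem:generic-detection}.

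\emph{Ext vanishing.} Twisting by an automorphism preserves $\Ext$, and $\Ext^1_{A_1}(\C[x],\C[x])=H^1_{\mathrm{dR}}(\A^1)=0$. For Case 1, Kashiwara's equivalence gives $\Ext^1(\delta_{x_0},\delta_{x_0})=\Ext^1_{\C}(\C,\C)=0$.

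\emph{Expected main obstacle.} The main obstacle is the compatibility step in Case 2: checking that the reduction of the lifted composite $\Phi$ induces on the center exactly the Frobenius twist of $\phi_s$ at almost all primes. This is needed as an identity of automorphisms, not merely of symbols. I would verify it generator by generator, using Jacobson's formula for the triangular maps and direct computation for the affine maps and the Fourier transform. The large-$p$ restriction is then absorbed by localizing the arithmetic base.
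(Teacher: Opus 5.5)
Your proposal is correct and follows essentially the same route as the paper. You classify $L\simeq\A^1$, use $\delta_{x_0}$ with Kashiwara's equivalence when $\pi|_L$ is constant, and otherwise show $X\simeq\A^1$, straighten $L$ symplectically by Abhyankar--Moh, transport the standard module along the lifted Weyl automorphism, and obtain uniqueness from Lemma~\ref{lem:generic-detection}, the finite Taylor expansion, and the multiplicity count.

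The differences are minor. You straighten to the zero section rather than to $x=0$, so generic detection has to be applied after a Fourier transform, since $\partial$ is not a regular function. You check the Frobenius-twisted center action generator by generator, via Jung--van der Kulk and Jacobson's formula, instead of citing \cite{BKKAut}. Finally, you should descend $M_F\simeq F[x]^{d}$ to the fibers by Lemma~\ref{lem:finite-descent} before reading off the multiplicity $d$.
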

\begin{proof}
We may restrict to the connected component of $X$ met by $L$.
The closed curve $L\subset T^*X$ is affine. If its smooth projective
completion has genus $g$ and its complement has $b$ points, then
$\operatorname{rank}H_1(L(\C),\Z)=2g+b-1$. Since $b\geq1$, the
vanishing hypothesis gives $g=0$, $b=1$, and $L\simeq\A^1$.

Suppose first that $\pi|_L$ is constant, with value $a\in X$.
Then $L=T_a^*X$. The delta module $\delta_a$ has constant schematic
arithmetic support $T_a^*X$ of multiplicity one, as is seen in an
\'etale coordinate at $a$ from the presentation $A_1/A_1x$.
For any other candidate, its restriction to $X\setminus\{a\}$ has
zero reductions on a dense open arithmetic base. The good-order-filtration
argument of Lemma~\ref{lem:generic-detection}, applied on affine
coordinate neighborhoods, makes that restriction zero in characteristic
zero. Kashiwara's equivalence for $\{a\}\hookrightarrow X$
\cite[Theorem~1.6.1]{HTT} therefore identifies the candidate with
$\delta_a\otimes_\C V$ for a finite-dimensional vector space $V$.
Its arithmetic multiplicity is $\dim V$, so multiplicity one gives
$\delta_a$. The same equivalence gives
$\Ext^1(\delta_a,\delta_a)=0$: extensions of modules supported at $a$
are still supported there, and finite-dimensional vector spaces have
no nontrivial extensions.

Now suppose that $\pi|_L$ is nonconstant. The resulting map
$\A^1\to X$ extends to a nonconstant map between the smooth projective
completions, so the completion of $X$ has genus zero. If its complement
contained two distinct points, a rational function with its only zero
and pole at those points would give a nonconstant unit on $X$ and hence
on $\A^1$, which is impossible. Thus $X\simeq\A^1$.
The Abhyankar--Moh theorem \cite{AM} sends $L\subset\A^2$ to a
coordinate line by an automorphism. Rescaling one target coordinate
makes its Jacobian one while preserving that line. A further linear
symplectic change sends the line to $x=0$.

In dimension two this symplectic automorphism lifts to a Weyl-algebra
automorphism; its reductions act on the center by the corresponding
Frobenius twists, after enlarging the coefficient ring and excluding
finitely many characteristics \cite[\S\S1.1 and 4]{BKKAut}.
Transporting $\delta_0=A_1/A_1x$ by the inverse equivalence gives
the required module on $L$, including schematic support and
multiplicity one. For uniqueness, transport any candidate to one with
arithmetic support $x=0$. Lemma~\ref{lem:generic-detection} makes $x$
locally nilpotent in characteristic zero, and
Lemma~\ref{lem:taylor-expansion} identifies this module with
$\C[\partial]\otimes\ker x$. Finite generation makes $\ker x$
finite-dimensional. After descending this isomorphism by
Lemma~\ref{lem:finite-descent}, its arithmetic multiplicity is
$\dim\ker x$, which must be one. Thus the candidate is $\delta_0$.
Finally, $\Ext^1(\delta_0,\delta_0)=0$, and twisting by a Weyl-algebra
automorphism preserves Ext groups. This proves all the assertions of
Conjecture~\ref{conj:kont} in this case.
\end{proof}

The remaining unresolved case in dimension one is that of
$X=\mathbb P^1$, $L\simeq\A^1$, and a surjective projection
$\pi|_L:L\to X$.

There are holonomic left $A_1$-modules with constant arithmetic support equal to a smooth curve of any prescribed positive genus. Here the genus of a smooth affine curve means
the genus of its smooth projective completion.

\begin{proposition}\label{prop:arbitrary-genus}
For every integer $g\geq1$, put $r=2g+1$ and
\begin{equation}\label{eq:genus-operator}
 a_r(x)=x^{r+2}(1+x^r),\qquad
 P_r=a_r(x)\partial^2+\tfrac12a'_r(x)\partial-1
 \in A_1(\Q),\qquad [\partial,x]=1.
\end{equation}
The left $A_1$-module $M_r=A_1/A_1P_r$ is holonomic and has constant
scheme-theoretic arithmetic support
\begin{equation}\label{eq:genus-curve}
 C_r=\{a_r(x)y^2=1\}\subset T^*\A^1
\end{equation}
with multiplicity one. The curve $C_r$ is smooth, geometrically
irreducible, and has genus $g$. Its Liouville form $y\,dx$ is exact.
\end{proposition}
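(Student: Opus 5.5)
The plan is to factor $P_r$ through a first-order operator whose $p$-curvature is computable, using the exactness of the Liouville form on $C_r$. Put $u=\sqrt{a_r}=x^{(r+2)/2}(1+x^r)^{1/2}$ and $D=u\partial$. Then
\[
 D^2=a_r\partial^2+uu'\partial=a_r\partial^2+\tfrac12a_r'\partial,
 \qquad\text{so}\qquad P_r=D^2-1 .
\]
First I will verify the geometric assertions. Smoothness and irreducibility of $C_r=\{a_ry^2=1\}$ come from the Jacobian criterion, since $a_r$ has only the multiple root $0$, which does not meet $C_r$, and $1+x^r$ is squarefree. The genus comes from the model $w^2=x(1+x^r)$ with $w=x^{(r+1)/2}\cdot xy$; this is hyperelliptic of degree $r+1=2g+2$. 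Exactness of $y\,dx=dx/u$ comes from the explicit primitive
\[
 h=-\tfrac2r\,\sqrt{1+x^{-r}},\qquad dh=x^{-(r+2)/2}(1+x^r)^{-1/2}\,dx=y\,dx,
\]
and $h=-\tfrac2r\,x\,y\,(1+x^r)$ (up to the sign choice of $u$) is a regular function on $C_r$. Holonomicity of $M_r$ is automatic for a cyclic module $A_1/A_1P$ with $P\neq0$.

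Next I will compute the $p$-curvature. Work over a residue field $k$ of characteristic $p\gg0$, over the open set $a_r\ne0$, and on the quadratic extension $E=k(x)(u)$, which is the function field of $C_r$. The key identity is $D(h)=u\,h'=1$ with $h\in E$, so $D=d/dh$ on $E$. Hence $D^{[p]}(h)=D^p(h)=0$, and since $D^{[p]}=cD$ on the one-dimensional $E$-space of derivations, we get $D^{[p]}=0$. The $p$-curvature of $E\otimes M_r$ is $\psi(D)=D^p$. This module is an extension of the rank-one modules $Ee$ with $De=\pm e$, on which $D^p$ acts by $\pm1$. By $p$-linearity, $\psi(\partial)=u^{-p}\psi(D)$ has eigenvalues $\pm u^{-p}$. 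Thus the characteristic polynomial of the action of $Y=\partial^p$ over $k(x)$, where $M_r$ has rank $2$ with basis $1,\partial$, is
\[
 Y^2-a_r(x)^{-p}=a_r^{[p]}(X)^{-1}\bigl(a_r(X)Y^2-1\bigr)
\]
in the central coordinates $X=x^p$ (the coefficients lie in $\Q$, so Frobenius on them is trivial). By Cayley--Hamilton, $a_r(X)Y^2-1$ annihilates $M_r[a_r^{-1}]$. The two eigenvalues are distinct because $p$ is odd, so the generic fibre is a cyclic module over $k(X)[Y]/(a_rY^2-1)$ of the correct rank $2p$. This gives normalized multiplicity one on $C_r^{(1)}$, which is geometrically irreducible.

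The main obstacle is the region $X=0$, where $a_r$ vanishes to order $r+2$ and the computation above says nothing. I must exclude a vertical component $\{X=0\}$ of $\operatorname{Supp}_p(M_{r,s})$ and any embedded nilpotent structure there. Note that $C_r^{(1)}$ itself avoids $X=0$. I would first try to show that $x$ acts injectively on $M_r$ and that $M_r[\partial^{-1}]$ is killed near $x=0$. In the microlocal ring with $\partial$ inverted, $P_r=a_r\partial^2\bigl(1+\tfrac12a_r'a_r^{-1}\partial^{-1}-a_r^{-1}\partial^{-2}\bigr)$ fails to be invertible only along $\{a_r=0\}$. So I would instead Fourier transform ($x\mapsto-\partial$, $\partial\mapsto x$). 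This turns the vertical line into a horizontal one, and then I would check directly that the transformed operator has an invertible leading coefficient along $\xi=0$ in the relevant weight, using Bernstein-degree bookkeeping. If this fails, the fallback is a dimension count: Bitoun's bound in \cite[Theorem~4.3.3]{Bitoun} ties the total degree-weighted multiplicity of the $p$-support to the Bernstein multiplicity of $M_r$, and I would compare it with the contribution already supplied by $C_r$. Once no vertical component exists, Remark~\ref{rem:purity} upgrades multiplicity one on the single component $C_r^{(1)}$ to the scheme-theoretic equality $\operatorname{Ann}=(a_r(X)Y^2-1)$ on a dense open subset of $\Spec\Z[1/N]$. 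This gives constant arithmetic support $C_r$ of multiplicity one.
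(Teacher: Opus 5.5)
Your generic analysis is correct. From $D(h)=1$ you get $D^{[p]}=0$. The eigenvalue computation then shows that $a_r(X)Y^2-1$ kills $M_r[a_r^{-1}]$, and the rank count gives multiplicity one. The geometric assertions are also fine, and your primitive $h=-\tfrac2r\,xy(1+x^r)=-2/(rx^{r+1}y)$ is the paper's $-2v/r$. (One slip: as written, $w=x^{(r+3)/2}y$ satisfies $w^2=x/(1+x^r)$; the substitution giving $w^2=x(1+x^r)$ is $w=(x^{(r+1)/2}y)^{-1}$.)

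The genuine gap is the step you call the main obstacle, and it is larger than you state. Localizing at $a_r\neq0$ discards every vertical line $\{X=\zeta\}$ with $a_r(\zeta)=0$. That means $\zeta=0$ and also every $\zeta$ with $\zeta^r=-1$. Your proposal never excludes $p$-support components or embedded structure over any of these lines. The Fourier-transform step is only a hope (``check directly \dots in the relevant weight''). The fallback to \cite[Theorem~4.3.3]{Bitoun} invokes a degree bound whose precise form and applicability you do not verify. As written, the support claim is not proved.

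The missing idea is that your $p$-curvature computation is already a global operator identity. In $E\langle\partial\rangle$, the vanishing $D^{[p]}=0$ gives $D^p=u^p\partial^p$. Moreover, $\partial^p$ commutes with $E$, because $(d/dx)^p$ vanishes on the separable extension $E\supset k(x)$. Hence
\[
 P_r^p=(D^2-1)^p=D^{2p}-1=a_r(x)^p\partial^{2p}-1=a_r(X)Y^2-1=:F.
\]
Since $A_1(k)\hookrightarrow E\langle\partial\rangle$ is injective, this identity holds in $A_1(k)$ itself.

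Consequently the central element $F=P_r^{p-1}P_r$ lies in $A_1(k)P_r$ and annihilates all of $M_{r,s}$, with no localization. Because $F=-1$ wherever $a_r(X)=0$, no vertical component can occur. The ideal $(F)$ is prime, and your generic computation shows that $M_{r,s}$ is nonzero at the generic point of $V(F)$. Therefore the central annihilator is exactly $(F)$. This holds at every positive-characteristic point of $\Spec\Z[1/(2r)]$ and needs neither purity nor Bitoun's bounds.

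The paper argues exactly this way. It then uses $\Nrd(P_r)=F$, together with a rank-$(p-1)$ argument on the smooth curve $F=0$, to show that $M_{r,s}$ is even a splitting module, locally free of rank $p$ on $C_{r,s}^{(1)}$.
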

\begin{proof}
\emph{Step 1: arithmetic model of the curve.}
Set $R=\Z[1/(2r)]$. The curve $C_r$ admits the model
\[
 C_{r,R}=\Spec R[x,y]/(a_r(x)y^2-1)\subset T^*\A^1_R.
\]
Both $x$ and $y$ are invertible on $C_{r,R}$. The substitution
\[
 u=x^{-1},\qquad v=(x^{r+1}y)^{-1}
\]
identifies $C_{r,R}$ with the open subscheme $uv\ne0$ of $v^2=1+u^r$ over $R$. Its inverse is $x=u^{-1}$, $y=u^{r+1}/v$. Since $2r$ is invertible in $R$, the polynomial $1+u^r$ is square-free in every fiber. Each geometric fiber is an open subset of a smooth hyperelliptic curve of genus $(r-1)/2=g$.
Smoothness also follows directly from the fact that
$\partial(a_r(x)y^2-1)/\partial y=2a_r(x)y$ is invertible on $C_{r,R}$.
Moreover,
\begin{equation}\label{eq:genus-exactness}
 y\,dx=-\frac{u^{r-1}}v\,du=d\left(-\frac{2v}{r}\right).
\end{equation}
Thus $C_{r,R}$ is smooth over $R$, with geometrically irreducible fibers, and \eqref{eq:genus-exactness} is an identity of relative one-forms on this model. For a point $s\in\Spec R$, write $C_{r,s}=C_{r,R}\times_R k(s)$.

\emph{Step 2: an exact identity in positive characteristic.}
Let $s\in\Spec R$ have residue characteristic $p>0$, and let $k$ be a field extension of $k(s)$. Write $a$ for the image of $a_r$ in $k[x]$,
and extend $d/dx$ to the separable quadratic extension
$K=k(x,w)$, where $w^2=a(x)$. In the ring $K\langle\partial\rangle$
of differential operators, set $D=w\partial$. Then
\[
 D^2=a\partial^2+\tfrac12a'\partial,
 \qquad P_r=D^2-1.
\]
The derivation $\delta=w\,d/dx$ of $K$ satisfies
\[
 \delta\left(-\frac{2w}{r x^{r+1}}\right)=1.
\]
Thus $S=-2w/(r x^{r+1})$ is a separating parameter, and $K/k(S)$
is finite separable. The derivation $\delta$ extends $d/dS$;
its $p$th iterate vanishes because it is a derivation that is zero
on $k(S)$ and $K/k(S)$ is separable. Likewise $(d/dx)^p=0$ on $K$.
The $p$th-power identity for a vector field and its corresponding
differential operator therefore gives
\[
 D^p=w^p\partial^p.
\]
It follows that
\begin{equation}\label{eq:genus-p-power}
 P_r^p=(D^2-1)^p=a(x)^p\partial^{2p}-1
       =a(x^p)\partial^{2p}-1.
\end{equation}
The injection $A_1(k)\hookrightarrow K\langle\partial\rangle$
shows that this is an identity in $A_1(k)$ itself.

\emph{Step 3: schematic support and multiplicity.}
Put $X=x^p$, $Y=\partial^p$, and $F=a(X)Y^2-1$.
The reduced norm of $A_1(k)$ has degree $p$, so
\[
 \Nrd(P_r)^p=\Nrd(P_r^p)=F^p.
\]
Since $k[X,Y]$ is reduced, $\Nrd(P_r)=F$.
After an \'etale base change splitting the Azumaya algebra,
$P_r$ is represented by a $p\times p$ matrix with determinant $F$.
At a point of the smooth hypersurface $F=0$, this matrix has rank
exactly $p-1$: rank at most $p-2$ would make all its cofactors, and
hence all first derivatives of its determinant, vanish there.
Consequently its cokernel is a line bundle on $F=0$ under the local
Morita equivalence. The corresponding quotient of the Weyl algebra
is a splitting module, locally free of rank $p$ over $F=0$.
Its central annihilator is exactly $(F)$, and its normalized multiplicity is one.

Define $M_{r,R}=A_1(R)/A_1(R)P_r$. The preceding calculation identifies the schematic $p$-support of $M_{r,s}=M_{r,R}\otimes_R k(s)$ with $C_{r,s}^{(1)}$ at every positive-characteristic point $s\in\Spec R$. Finally, the principal symbol $\sigma(P_r)$ for the Bernstein filtration is nonconstant. The associated graded module of $M_r$ is $\C[x,y]/(\sigma(P_r))$, which has dimension one. Hence $M_r$ is holonomic.
\end{proof}

\begin{proposition}\label{prop:hyperbola-obstruction}
Let $c\in\C^\times$ and $H_c=\{xy=c\}\subset T^*\A^1$.
There is no nonzero holonomic left $A_1$-module whose reduced
$p$-support is $H_{c,s}^{(1)}$ at every closed point of a dense open
arithmetic model. In particular, there does not exist a left $A_1$-module with constant arithmetic support $H_c$, even with multiplicity greater than one.
\end{proposition}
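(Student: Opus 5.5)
The plan is to exploit the Euler operator $\theta=x\partial$. In characteristic $p$ it satisfies the exact identity
\[
 \theta^p-\theta=x^p\partial^p=XY
\]
in $A_1(k)$, with $X=x^p$ and $Y=\partial^p$. On the hyperbola this central function is a constant. So the reduced-support hypothesis says that $\theta^p-\theta-c_s^p$ acts nilpotently on each fiber $M_s$ (here $c_s$ is the reduction of $c$, and $c^{[p]}=c_s^p$ up to twist conventions). I will turn this into an equation on the determinant of the generic connection. That equation will then be read on closed points with prime residue fields, where Frobenius is the identity, and it will force $c=0$.

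\emph{Step 1: reduction to a connection on $\mathbb G_m$.} Take a model $M_R$ over an integral finitely generated $R\subset\C$ containing $c^{\pm1}$. On the hyperbola $X$ is a unit. Hence $X$ acts invertibly on every fiber: $XY-c^p$ is nilpotent there and $c^p$ is a unit.

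I apply Lemma~\ref{lem:generic-detection} to the kernel and cokernel of $x$ on $M_R$. These are finite $\mathcal D$-modules on the locus $x=0$ (after Kashiwara-type bookkeeping), and their fibers vanish wherever $x^p$ is invertible. So $x$ acts bijectively on $M_F$, and $M_F=M_F[x^{-1}]$. The same argument with $\partial$ shows $\partial$ is bijective.

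A nonzero holonomic module on $\mathbb G_m$ is generically a connection of some rank $n\ge1$ over $F(x)$. There is then a nonempty open $U\subset\mathbb G_m$ and a principal localization $M_F[h^{-1}]$, free of rank $n$ over $\mathcal O(U)$. I spread this out over $R$ using Lemma~\ref{lem:finite-descent}. On this localization, $\theta^p-\theta$ is the $p$-curvature $\psi_p(\theta)$, an $\mathcal O$-linear endomorphism. Nilpotence of $\psi_p(\theta)-c^p$ gives
\[
 \operatorname{tr}\psi_p(\theta)=n\,c_s^p
\]
on the reduced free module.

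\emph{Step 2: pass to the determinant.} The trace of the $p$-curvature equals the $p$-curvature of $\det$, a rank-one connection $\theta+g$ with $g\in F(x)$, spread out over $R$. For a rank-one connection in Euler form one has
\[
 \psi_p(\theta+g)=g^p+\theta^{p-1}(g)-g.
\]
Expand $g$ in $x$ (Laurent/partial fractions). Then $\theta^{p-1}$ kills the monomials $x^k$ with $p\mid k$ and fixes the others. So
\[
 \psi_p=g^{[p]}(x^p)-g_{(p)},
\]
where $g_{(p)}$ is the part of $g$ supported on exponents divisible by $p$.

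Requiring this to equal the constant $n c_s^p$ on a dense set of fibers forces $g$ to be a constant $a\in R$, after localization. The reason is that the nonconstant part of $g^{[p]}(x^p)$ has degree growing with $p$ and cannot cancel, while poles at points other than $0,\infty$ contribute non-constant terms. We are left with
\[
 a^p-a\equiv n c^p\pmod s
\]
for all closed $s$ in a dense open subset.

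\emph{Step 3: arithmetic contradiction.} Closed points with residue field a prime field $\F_p$ are Zariski dense in $\Spec R$, so infinitely many of them lie in the open set. At such points $a^p=a$ and $c^p=c$. The identity becomes $n c\equiv0$, so $c\in\mathfrak m_s$ for a dense set of $s$, hence $c=0$, contradicting $c\neq0$. The statement about higher multiplicity is included, since only the reduced support was used.

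The main obstacle is Step 1 together with the beginning of Step 2. One must make rigorous that the nilpotence of $\psi_p(\theta)-c^p$ on the full fiber passes to the generic free localization with a uniform rank $n$. One must also justify that the trace identity holds exactly in $\mathcal O(U_s)$ rather than only up to nilpotents. The latter holds because the $\mathcal O(U_s)$ are reduced and the characteristic polynomial of a nilpotent perturbation of a scalar is exact. Finally, the "$g$ must be constant" comparison needs care with the denominators in $g$ after reduction. I would first try to handle this by clearing denominators and comparing $x$-degrees of $g^{[p]}(x^p)$ with those of $g_{(p)}$ for $p$ larger than all degrees in $g$.
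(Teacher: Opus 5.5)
\textbf{There is a genuine gap in your Step~2.}

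You only have a frame of $M_F$ over a principal open $U\subsetneq\mathbb G_m$. So the coefficient $g$ of the determinant connection $\theta+g$ is a rational function, determined only up to adding logarithmic derivatives $\theta(\phi)/\phi$ with $\phi\in\mathcal O(U)^\times$. These terms are invisible to the $p$-curvature, since $\theta+\theta(\phi)/\phi=\phi^{-1}\circ\theta\circ\phi$ has $p$-curvature zero. For example, $g=a+x/(x-1)$ gives $g^p+\theta^{p-1}(g)-g=a^p-a$ in every characteristic. This is a constant, although $g$ is not constant.

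Consequently:
\begin{enumerate}
\item constancy of $\operatorname{tr}\psi_p(\theta)$ does not force $g\in R$;
\item your claim that poles away from $0,\infty$ contribute nonconstant terms is false for poles with integral residue;
\item the planned comparison of $x$-degrees cannot work, because for rational $g$ the series $g_{(p)}$ is infinite.
\end{enumerate}
As written, you never reach the congruence $a^p-a\equiv nc^p$.

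\textbf{The missing idea} is to use a gauge-insensitive quantity: the constant term $g_0$ of the Laurent expansion of $g$ at $x=0$, i.e.\ the residue of $g\,dx/x$ at $0$. A gauge change shifts $g_0$ only by the integer $\operatorname{ord}_0\phi$, and $a\mapsto a^p-a$ does not see integer shifts.

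To carry this out, take $U=D(h)$ with $h=x^mh_1$, and after localizing $R$ arrange $h_1(0)\in R^\times$. Then $R[x,h^{-1}]\subset R((x))$ compatibly with reduction. Since $\theta^{p-1}(x^k)=k^{p-1}x^k$, the constant term of $g_s^p+\theta^{p-1}(g_s)-g_s$ in $k(s)((x))$ is $g_{0,s}^p-g_{0,s}$. Your trace identity then gives $g_{0,s}^p-g_{0,s}=nc_s^p$, and Step~3 goes through with $a=g_0$. One $\F_p$-point of the final localized ring with $p>n$ suffices; density is not needed.

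\textbf{Comparison with the paper.} The paper reaches the same identity differently. It first proves that $M|_{\mathbb G_m}$ is a vector bundle on all of $\mathbb G_m$, using a good filtration over $R[x^{\pm1}]\langle\theta\rangle$ and the relation $\tau^{pe_s}\gr N_s=0$ for the symbol $\tau$ of $\theta$. Hence the bundle is free, $b$ is a Laurent polynomial, and its $x^0$ coefficient is read off directly. With the residue fix, your formal argument at $x=0$ with a generic frame avoids that global step entirely.

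\textbf{Two smaller repairs are also needed.}
\begin{enumerate}
\item The kernel and cokernel of $x$ are not $\mathcal D$-modules, since $[\partial,x]=1$, so your Step~1 argument fails as stated. It is not needed at all once $x\mid h$.
\item You must derive $n\ge1$ from the hypothesis, because a holonomic module can have generic rank zero. Suppose $M_F[h^{-1}]=0$. Then after localizing $R$, $h_s$ acts locally nilpotently on each nonzero $M_s$. So the $p$-support lies in finitely many lines $X=\mathrm{const}$ and cannot be the hyperbola. Without $n\ge1$ and $p>n$, Step~3 gives no contradiction.
\end{enumerate}
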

\begin{proof}
\emph{Step 1: the determinant connection.}
Suppose that $M$ is such a module. Choose an integral finitely generated
ring $R\subset\C$ containing $c,c^{-1}$ and a model $M_R$ for which
the prescribed support equality holds at every closed point of
$\Spec R$. We first show that $N=M|_{\mathbb G_m}$ is a vector bundle
with connection on the whole of $\mathbb G_m$.

Put $N_R=M_R|_{\mathbb G_{m,R}}$ and $\theta=x\partial_x$.
Give $N_R$ a good order filtration over
\[
 D_R=R[x,x^{-1}]\langle\theta\rangle,\qquad [\theta,x]=x,
\]
where functions have degree zero and $\theta$ has degree one. Its
associated graded module $G_R=\gr N_R$ is finite over
$R[x,x^{-1},\tau]$, where $\tau=\sigma(\theta)$. By generic freeness,
after localizing $R$ we may assume that $G_R$ is flat over $R$.
The induced filtrations then commute with reduction:
\[
 G_R\otimes_R k(s)\simeq\gr N_s
 \qquad(s\in\Spec R).
\]
For a closed point $s$ of characteristic $p$, the reduced support
assumption and finiteness over the center give an integer $e_s\geq1$
such that
\[
 (\theta^p-\theta-c_s^p)^{e_s}N_s=0,
\]
since $\theta^p-\theta=x^p\partial_x^p$. Taking principal symbols gives
$\tau^{pe_s}\gr N_s=0$. Consequently
\[
 G_R[\tau^{-1}]\otimes_R k(s)=0
\]
for every closed point $s$. It follows that $G_R[\tau^{-1}]=0$.
Indeed, otherwise its support, as a finite module over
$R[x,x^{-1},\tau,\tau^{-1}]$, would contain a closed point with finite
residue field. Its image in $\Spec R$ would also be a closed point,
and Nakayama's lemma would give a nonzero fiber there, a contradiction.
Thus a fixed power of $\tau$ annihilates $G_R$. Since $G_R$ is finitely
generated as a graded module, its homogeneous components vanish in
sufficiently large degrees. The order filtration on $N_R$ therefore
stabilizes, proving that $N_R$ is coherent over
$\mathcal O_{\mathbb G_{m,R}}$.

In characteristic zero, the $\mathcal O_{\mathbb G_m}$-torsion
submodule of $N$ is stable under differential operators and is
finite-dimensional over $\C$. Taking traces in
$[\partial_x,x]=1$ shows that it is zero. Hence $N$ is torsion-free
and locally free. Its rank $r$ is positive because the prescribed
support dominates the base. Since $\C[x,x^{-1}]$ is a principal ideal
domain, its underlying bundle is free. Choose a global basis and
enlarge and localize $R$ so that this basis and its Laurent-polynomial
connection matrix are defined over $R$. In the induced basis of the
determinant connection, write
\[
 \nabla^{\det}_{\theta}=\theta+b(x),\qquad b(x)\in R[x,x^{-1}].
\]
Let $\alpha\in R$ be the coefficient of $x^0$ in $b(x)$, equivalently
the residue of $b(x)\,dx/x$ at zero.

\emph{Step 2: a residue identity in positive characteristic.}
At a closed point $s$ of characteristic $p$, the identity
\[
 (x\partial_x)^p-x\partial_x=x^p\partial_x^p
\]
shows that the $p$-curvature $\psi_s(\theta)$ of the rank-$r$
connection is the action of the central function $XY$.
The reduced support assumption implies that
$\psi_s(\theta)-c_s^p\id$ is nilpotent. Hence
\[
 \operatorname{tr}\psi_s(\theta)=r c_s^p.
\]
The $p$-curvature of a determinant is the trace of the original
$p$-curvature. Since $\theta^{[p]}=\theta$, the rank-one formula gives
\[
 \psi_s^{\det}(\theta)
 =b_s^p+\theta^{p-1}(b_s)-b_s.
\]
Taking the coefficient of $x^0$ in this Laurent polynomial yields
\begin{equation}\label{eq:hyperbola-residue}
 \alpha_s^p-\alpha_s=r c_s^p.
\end{equation}
This calculation is the determinant version of the logarithmic
rank-one example in \cite[\S2.5]{Kont}.

\emph{Step 3: specialization to a prime field.}
There are homomorphisms $R\to\mathbb F_p$ for arbitrarily large
primes $p$. Choose such a specialization with $p>r$. Since $c$ is a unit in $R$,
we have $c_s\ne0$. But $\alpha_s,c_s\in\mathbb F_p$, so
\eqref{eq:hyperbola-residue} becomes $0=r c_s$, a contradiction.
\end{proof}

\section{Different versions of the generalized Dixmier conjecture}\label{sec:matrix-morita}
In this section, all algebras and Morita equivalences are over $\C$. We prove that the formulations of the generalized Dixmier conjecture in Conjecture~\ref{conj:generalized} are equivalent, without reduction to positive characteristic. Throughout, $A=A_n$, and $Q=\Frac A$.

\begin{proposition}\label{prop:matrix-corner}
Let $u:\Mat_r(A)\to\Mat_r(A)$ be a homomorphism, where $r\geq1$. Set $R=\Mat_r(A)$, $u_{ij}=u(e_{ij})$, and $e=u_{11}$. Then
\[
 B=eRe
\]
is a domain Morita equivalent to $A$. More precisely, $P=eA^r$ is a rank-one projective right $A$-module and a generator for the category of right $A$-modules, $P^{\oplus r}\simeq A^r$, and $B\simeq\End_A(P)$. If
\[
 g:A\longrightarrow B,\qquad g(a)=u(ae_{11}),
\]
then $u$ is an automorphism if and only if $g$ is an isomorphism.
\end{proposition}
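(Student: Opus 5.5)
The plan is to read everything off the fact that $u$ is unital. Because of that, the elements $u_{ij}=u(e_{ij})$ form a full system of $r\times r$ matrix units in $R$: $u_{ij}u_{kl}=\delta_{jk}u_{il}$ and $\sum_i u_{ii}=1$. I will view $R=\Mat_r(A)$ as $\End_A(A^r)$, acting on column vectors $A^r$ (a right $A$-module) from the left. Each $u_{ii}$ is then an idempotent $A$-linear endomorphism, so $A^r=\bigoplus_i u_{ii}A^r$.

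\emph{Step 1: the module $P$.} Left multiplication by $u_{i1}$ maps $P=eA^r=u_{11}A^r$ isomorphically onto $u_{ii}A^r$, with inverse given by $u_{1i}$. Hence $P^{\oplus r}\simeq\bigoplus_iu_{ii}A^r=A^r$ as right $A$-modules. This shows that $P$ is finitely generated projective, being a direct summand of $A^r$. It also shows that $P$ is a generator, since $A$ is a direct summand of $A^r\simeq P^{\oplus r}$.

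\emph{Step 2: rank, endomorphisms and Morita equivalence.} Tensor the isomorphism $P^{\oplus r}\simeq A^r$ with the quotient division ring $Q$. Comparing $Q$-dimensions gives $r\cdot\dim_Q(P\otimes_AQ)=r$, so $P$ has rank one. The standard identification $\End_A(eA^r)=e\,\End_A(A^r)\,e$ gives $B=eRe\simeq\End_A(P)$. Since $P$ is torsion-free of rank one, every endomorphism extends uniquely to $P\otimes_AQ\simeq Q$ and acts there by left multiplication by an element of $Q$. This embeds $\End_A(P)$ into $Q$, so $B$ is a domain. As $P$ is a progenerator, Morita theory shows that $B$ is Morita equivalent to $A$.

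\emph{Step 3: the criterion for $u$ to be an automorphism.} Since $e_{11}Re_{11}=Ae_{11}\simeq A$, the map $g$ is the restriction of $u$ to the corner $e_{11}Re_{11}$, landing in $u(e_{11})Ru(e_{11})=B$. It is a unital ring homomorphism, because it sends $e_{11}$ to $e$.

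If $u$ is an automorphism, it maps $e_{11}Re_{11}$ bijectively onto $u(e_{11})u(R)u(e_{11})=eRe$, so $g$ is an isomorphism.

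Conversely, suppose $g$ is an isomorphism. Consider the two maps
\[
\Phi_0:R\to\Mat_r(A),\quad X\mapsto(e_{1i}Xe_{j1})_{i,j},\qquad
\Phi:R\to\Mat_r(B),\quad Y\mapsto(u_{1i}Yu_{j1})_{i,j}.
\]
Both are ring isomorphisms; this is the usual reconstruction of a ring from a full set of matrix units, where $\Phi_0$ identifies $A$ with the corner $e_{11}Re_{11}$. The key identity is
\[
u_{1i}\,u(X)\,u_{j1}=u(e_{1i}Xe_{j1})=g(X_{ij}).
\]
It says that $\Phi\circ u=\Mat_r(g)\circ\Phi_0$. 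Therefore $u$ is bijective.

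I expect the only delicate point to be bookkeeping: checking that the conventions on sides (right modules, and corners $e_{1i}Xe_{j1}$ as opposed to $e_{i1}Xe_{1j}$) make $\Phi$, $\Phi_0$ and $g$ compatible. The rank computation itself is immediate once $P^{\oplus r}\simeq A^r$ is established.
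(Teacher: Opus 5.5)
Your proof is correct and follows the paper's argument essentially step by step. The matrix units $u_{ij}$ give $P^{\oplus r}\simeq A^r$, which yields projectivity, the generator property and rank one. Your identity $u_{1i}\,u(X)\,u_{j1}=g(X_{ij})$ is exactly the paper's $\Psi\circ u=\Mat_r(g)$. The one cosmetic difference is in the domain property: you embed $\End_A(P)$ into $Q$ through $P\otimes_AQ\simeq Q$, whereas the paper places $B$ inside $e\Mat_r(Q)e\simeq\End_Q(eQ^r)\simeq Q$ directly.
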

\begin{proof}
The $u_{ij}$ form a complete system of matrix units. Inside $Q^r$, the subspaces $u_{ii}Q^r$ form a direct sum and are mutually isomorphic via the $u_{ij}$. Their common dimension is therefore one. Consequently
\[
 B\subset e\Mat_r(Q)e\simeq\End_Q(eQ^r)\simeq Q,
\]
so $B$ is a domain. Also
\[
 1=\sum_i u_{i1}e u_{1i},\qquad
 A^r=\bigoplus_i u_{ii}A^r\simeq P^{\oplus r}.
\]
These identities show that $e$ is full and $P$ is a rank-one projective right $A$-module generating the category of right $A$-modules. Restriction to $P$, with extension by zero on $(1-e)A^r$, identifies $B$ with $\End_A(P)$.

The isomorphism
\[
 \Psi:R\xrightarrow{\sim}\Mat_r(B),\qquad
 \Psi(b)=(u_{1i}b u_{j1})_{i,j},
\]
has inverse $(b_{ij})\mapsto\sum_{i,j}u_{i1}b_{ij}u_{1j}$. The matrix-unit identities give
\[
 \Psi\circ u=\Mat_r(g).
\]
An entrywise matrix map is an isomorphism exactly when the original map is an isomorphism. This proves the last assertion.
\end{proof}

\begin{lemma}\label{lem:morita-rank}
Every $D\in\mathcal D_n$ is isomorphic to $\End_A(P)$ for a rank-one projective right $A$-module $P$ generating the category of right $A$-modules. For every $r\geq2$,
\begin{equation}\label{eq:stable-matrix}
 \Mat_r(D)\simeq\Mat_r(A).
\end{equation}
\end{lemma}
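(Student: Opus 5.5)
The proof has two parts: first identify $D$ with the endomorphism ring of a rank-one projective generator, then use Stafford's cancellation theorem to show $P^{\oplus r}$ is free for $r\geq2$.

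\textbf{First assertion.} By definition $D\in\mathcal D_n$ is a domain Morita equivalent to $A$ over $\C$. Let $\Phi$ be an equivalence from right $A$-modules to right $D$-modules, and put $P=\Phi^{-1}(D_D)$. Standard Morita theory makes $P$ a finitely generated projective generator for right $A$-modules with $D\simeq\End_D(D_D)\simeq\End_A(P)$.

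It remains to show that $P$ has rank one. Since $P$ is a direct summand of some $A^m$, we have $P\otimes_AQ\simeq Q^t$ for some $t\geq1$. Extending endomorphisms to $P\otimes_AQ$ is injective, because $P$ is torsion-free. This embeds $D$ into $\End_Q(Q^t)=\Mat_t(Q)$. Since $D$ is noetherian, $A$ and $D$ are orders in the same quotient ring up to Morita, so $D$ is an order in $\Mat_t(Q)$; that is, $\Mat_t(Q)$ is the classical quotient ring of $D$. Indeed $\Mat_t(Q)\simeq\End_Q(P\otimes_A Q)$, and every element there is $\phi\circ c^{-1}$ with $\phi\in\End_A(P)$ and $c$ a regular element, obtained by clearing denominators of a finite generating set of $P$. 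A domain has a division ring of quotients, and $\Mat_t(Q)$ is a division ring only when $t=1$. Hence $P$ has rank one.

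\textbf{Stable isomorphism \eqref{eq:stable-matrix}.} We show $P^{\oplus r}\simeq A^r$ for $r\geq2$. By Stafford's theorem, a finitely generated projective right $A$-module of rank at least two is determined by its rank and its class in $K_0(A)$. Moreover $K_0(A)=\Z$ by Quillen's theorem, since $\gr A$ is a polynomial ring, and this class is detected by the rank. So $P^{\oplus r}$ and $A^r$ both have rank $r\geq2$ and class $r$ in $K_0(A)\simeq\Z$, and Stafford's cancellation gives $P^{\oplus r}\simeq A^r$. For $n=1$ one may also use the Stafford/Berest--Wilson description: every rank-one projective $P$ satisfies $P\oplus A\simeq A\oplus A$, and $P^{\oplus r}\simeq A^r$ follows by induction. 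Then
\[
\Mat_r(D)\simeq\End_A(P^{\oplus r})\simeq\End_A(A^r)\simeq\Mat_r(A).
\]

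The main obstacle is invoking Stafford's theorem in the correct form: stably free right ideals of $A_n$ are free once the rank is at least two. For $A_n$ this follows from $\operatorname{Kdim}A_n=n$, since the stable range is at most $2$. I would cite \cite{Stafford1987} for this. Everything else is routine Morita theory.
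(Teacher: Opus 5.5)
Your proof is correct. For the stable isomorphism it follows the paper's argument. The paper quotes Stafford's theorem directly in the form ``every projective $A_n$-module of rank at least two is free''. That is exactly your combination of $K_0(A_n)=\Z$ with Stafford's cancellation for stably free modules of rank $\geq2$.

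The real difference is in the rank-one assertion. You go through Goldie theory: $D=\End_A(P)$ is an order in $\End_Q(P\otimes_AQ)\simeq\Mat_t(Q)$. So this matrix ring is the classical quotient ring of the domain $D$, must be a division ring, and forces $t=1$. That is valid, but the substance lies in the step you only sketch. The denominators $c$ must be regular elements of $\End_A(P)$, for instance suitable scalar matrices after embedding $P\subset Q^t$. Regular elements of $A$ will not do, since they do not act $A$-linearly on a right module.

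The paper needs no quotient rings. A nonzero $v\colon P\to A$ must be injective: for $0\ne q\in\ker v$, the map $m\mapsto q\,v(m)$ would be a nonzero square-zero element of the domain $\End_A(P)$. This is shorter and uses only torsion-freeness of $P$ and the domain hypothesis. It also realizes $P$ as a right ideal of $A$.

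Two side remarks in your second part need correcting.
\begin{enumerate}
\item For $n=2$, $\mathrm{sr}(A_2)\leq2$ does not follow from $\operatorname{Kdim}A_2=2$. The general Krull-dimension bound only gives stable range at most $3$, which does not cover rank-two modules. It is a specific theorem of Stafford on Weyl algebras and should be cited as such.
\item The $n=1$ ``induction'' from $P\oplus A\simeq A^2$ only gives $P^{\oplus r}\oplus A\simeq A^{r+1}$. Cancelling the extra copy of $A$ is again Stafford's theorem. This cannot be avoided: every rank-one projective over $A_1$ is stably free, yet those with $\nu(P)>0$ are not free.
\end{enumerate}
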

\begin{proof}
Morita theory gives a finitely generated projective generator $P$ for the category of right $A$-modules with $D\simeq\End_A(P)$. Since $P$ is a direct summand of a finite free module, it admits a nonzero right-linear map $v:P\to A$. If $0\ne q\in\ker v$, the endomorphism $m\mapsto qv(m)$ is nonzero, by torsion-freeness, and its square is zero. This contradicts the domain hypothesis. Thus $v$ embeds $P$ into $A$, and $P$ has rank one. The identical argument with sides exchanged applies to left modules.

Stafford's theorem on projective modules \cite[Theorem~3.6]{Stafford} states that a projective $A$-module of rank at least two is free. Hence $P^{\oplus r}\simeq A^r$ for $r\geq2$. Taking endomorphism rings gives
\[
 \Mat_r(D)\simeq\End_A(P^{\oplus r})
 \simeq\End_A(A^r)\simeq\Mat_r(A).
\]
\end{proof}

\begin{theorem}\label{thm:matrix-morita-equivalence}
For $n=1,2$ and every fixed $r\geq2$, the assertions of Conjecture~\ref{conj:generalized} satisfy
\[
 \mathrm{DC}'_n\Longleftrightarrow\mathrm{DC}''_n
 \Longleftrightarrow\mathrm{MC}_{n,r}
 \Longleftrightarrow\mathrm{MC}_{n,\mathrm{all}}.
\]
\end{theorem}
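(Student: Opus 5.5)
I will prove the cycle $\mathrm{DC}''_n\Rightarrow\mathrm{DC}'_n\Rightarrow\mathrm{MC}_{n,\mathrm{all}}\Rightarrow\mathrm{MC}_{n,r}\Rightarrow\mathrm{DC}''_n$, for a fixed $r\geq2$. The first arrow is immediate: $A_n\in\mathcal D_n$, so $\mathrm{DC}'_n$ is the special case $D=A_n$ of $\mathrm{DC}''_n$. The third arrow is also trivial, since $\mathrm{MC}_{n,r}$ is one instance of $\mathrm{MC}_{n,\mathrm{all}}$.

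For $\mathrm{DC}'_n\Rightarrow\mathrm{MC}_{n,\mathrm{all}}$, I will use Proposition~\ref{prop:matrix-corner}. Let $u$ be an endomorphism of $\Mat_r(A)$ for any $r\geq1$. The proposition shows that $B=u(e_{11})\Mat_r(A)u(e_{11})$ is a domain Morita equivalent to $A$, so $B\in\mathcal D_n$. It also shows that $g\colon A\to B$, $a\mapsto u(ae_{11})$, is a unital homomorphism with identity $u(e_{11})$. By $\mathrm{DC}'_n$, $g$ is an isomorphism, and the last assertion of the proposition then gives that $u$ is an automorphism. For $r=1$ this step is simply $\mathrm{DC}'_n\Rightarrow\mathrm{DC}_n$.

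The substantive arrow is $\mathrm{MC}_{n,r}\Rightarrow\mathrm{DC}''_n$. Let $\phi\colon D\to E$ be a homomorphism with $D,E\in\mathcal D_n$. Lemma~\ref{lem:morita-rank} gives isomorphisms $\alpha\colon\Mat_r(D)\xrightarrow{\sim}\Mat_r(A)$ and $\beta\colon\Mat_r(E)\xrightarrow{\sim}\Mat_r(A)$, and these use $r\geq2$. Consider $u=\beta\circ\Mat_r(\phi)\circ\alpha^{-1}$, an endomorphism of $\Mat_r(A)$. By $\mathrm{MC}_{n,r}$, $u$ is an automorphism, so $\Mat_r(\phi)$ is bijective.

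It remains to deduce that $\phi$ itself is bijective. Injectivity is inherited entrywise. For surjectivity, take $y\in E$; then $y e_{11}$ equals $\Mat_r(\phi)(X)$ for some matrix $X$. Comparing the $(1,1)$ entries gives $y=\phi(X_{11})$. Hence $\phi$ is an isomorphism.

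The only point needing care is unitality: all maps above are unital, and $\Mat_r(\phi)$ is unital because $\phi$ is. I expect no real obstacle, since the depth sits in Stafford's theorem, already absorbed into Lemma~\ref{lem:morita-rank}.
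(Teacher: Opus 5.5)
Your proposal is correct and follows the paper's own proof essentially step for step. The paper likewise gets $\mathrm{DC}'_n\Rightarrow\mathrm{MC}_{n,\mathrm{all}}$ from the corner algebra of Proposition~\ref{prop:matrix-corner}, and $\mathrm{MC}_{n,r}\Rightarrow\mathrm{DC}''_n$ from the identification $\Mat_r(D)\simeq\Mat_r(A)$ in Lemma~\ref{lem:morita-rank}; the other arrows are trivial, and your entrywise argument that bijectivity of $\Mat_r(\phi)$ forces bijectivity of $\phi$ just spells out a step the paper states in one line.
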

\begin{proof}
Assume $\mathrm{DC}'_n$, and let $u$ be an endomorphism of $\Mat_m(A)$ for any $m\geq1$. Proposition~\ref{prop:matrix-corner} constructs a domain $B\in\mathcal D_n$ and a homomorphism $g:A\to B$. The hypothesis makes $g$ an isomorphism, so the same proposition makes $u$ an automorphism. This proves $\mathrm{DC}'_n\Rightarrow\mathrm{MC}_{n,\mathrm{all}}$, which implies $\mathrm{MC}_{n,r}$.

Now assume $\mathrm{MC}_{n,r}$ for one fixed $r\geq2$. For $h:D\to E$ with $D,E\in\mathcal D_n$, use Lemma~\ref{lem:morita-rank} to identify both $\Mat_r(D)$ and $\Mat_r(E)$ with $\Mat_r(A)$. The entrywise map $\Mat_r(h)$ becomes an endomorphism of $\Mat_r(A)$ and is therefore an isomorphism. Thus $h$ is an isomorphism, proving $\mathrm{DC}''_n$. Taking $D=A$ gives $\mathrm{DC}'_n$.
\end{proof}
\section{The two-dimensional Jacobian conjecture}\label{sec:conditional}

\subsection{Recovering an inverse from arithmetic fibers}\label{subsec:inverse}
For an automorphism of the plane over any field,
\begin{equation}\label{eq:inverse-degree}
 \deg f^{-1}\leq\deg f;
\end{equation}
see \cite{BCW}. This estimate gives the following finite test. For a map of the plane with Jacobian one, translate its value at zero and multiply by the inverse Jacobian matrix at zero to obtain $h(z)=z+$ higher-degree terms. The formal inverse $H$ is determined recursively over the coefficient ring, without division by integers. For a map of degree at most $d$, set $D=d$. Invertibility is equivalent to the finite polynomial identities
\begin{equation}\label{eq:inverse-identities}
 h\circ H_{\leq D}=\id,\qquad H_{\leq D}\circ h=\id.
\end{equation}
More explicitly, let $f$ be defined over an integral finitely generated $\Z$-algebra $R$ of characteristic zero. If $f_s$ is an automorphism for a Zariski-dense set of closed points $s\in\Spec R$, each coefficient in the differences in \eqref{eq:inverse-identities} vanishes at all these points. It is therefore zero in $R$. The identities give an inverse over $\Frac R$, and over $\C$ when $R\subset\C$. For a faithful arithmetic family the same conclusion follows because a coefficient vanishing for $\UU$-almost all indices has zero image in $K$, and $R\to K$ is injective.

Under $\mathrm{JC}_2$, all maps of the plane of degree at most $d$ and Jacobian one are invertible in sufficiently large characteristics, uniformly for fixed $d$. Indeed, use the finite-type parameter scheme over $\Z$ for these plane maps. The failure of \eqref{eq:inverse-identities} defines a constructible subset. Its characteristic-zero fiber is empty under $\mathrm{JC}_2$, since every finitely generated field of characteristic zero embeds in $\C$. Its image in $\Spec\Z$ is constructible and omits the generic point, hence consists of finitely many primes.

\begin{lemma}\label{lem:inverse-graph}
Use the notation $R,F,W_R,f$ of Proposition~\ref{prop:recognition}, with $n=1$. Thus $W_R=A_1(R)$ and $f$ is an endomorphism of the plane with Jacobian one. Let $M_R$ be a finitely generated $W_R\otimes_R W_R^{\op}$-module whose generic fiber $M_F$ is an invertible $W_F$-bimodule. Suppose that, on a Zariski-dense set $\Sigma$ of closed points of $\Spec R$,
\[
 \operatorname{Supp}_p(M_s)_{\mathrm{red}}\subset L_{f_s}^{(1)}.
\]
Then $f_F$ is an automorphism. In particular, the same conclusion holds over $\C$ after extension of scalars from $F\subset\C$.
\end{lemma}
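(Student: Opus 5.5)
Since $M_F$ is an invertible bimodule, it is finite projective of rank one on both sides. Its inverse bimodule $M_F^{-1}=\Hom_{W_F}(M_F,W_F)$ satisfies $M_F\otimes_{W_F}M_F^{-1}\simeq W_F\simeq M_F^{-1}\otimes_{W_F}M_F$. The plan is to show that $M_F^{-1}$ has arithmetic support contained in the reversed graph. Composing supports then forces the reductions $f_s$ to be automorphisms on a dense set, and the finite test \eqref{eq:inverse-identities} of Section~\ref{subsec:inverse} concludes that $f_F$ is an automorphism.

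\textbf{Step 1: spreading out.} By Lemma~\ref{lem:finite-descent}, after localizing $R$ we spread out $M_F^{-1}$ to a finitely generated bimodule $N_R$, finite projective on both sides. The evaluation isomorphisms $M_R\otimes_{W_R}N_R\simeq W_R$ and $N_R\otimes_{W_R}M_R\simeq W_R$ and their inverses are also defined over $R$. The remaining points of $\Sigma$ stay Zariski dense. Base change then makes $M_s$ an invertible $W_{k(s)}$-bimodule with inverse $N_s$. Hence $-\otimes M_s$ is a Morita autoequivalence of $W_{k(s)}$.

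\textbf{Step 2: the center map in positive characteristic.} A Morita autoequivalence induces an automorphism $\phi_s$ of the center $Z_s=k(s)[X,Y]$, characterized by $zv=v\,\phi_s(z)$ for $v\in M_s$. The support of $M_s$ as a $Z_s\otimes Z_s$-module is then exactly the graph of $\operatorname{Spec}\phi_s$, with reduced structure. Indeed, $M_s$ is faithful over the left center, and the annihilator contains $z\otimes1-1\otimes\phi_s(z)$. Combined with the hypothesis $\operatorname{Supp}_p(M_s)_{\mathrm{red}}\subset L_{f_s}^{(1)}$, both graphs are graphs of morphisms from the left factor $\A^2$. Both project isomorphically onto it, since the support surjects onto the left factor by faithfulness. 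Hence they coincide, and $\phi_s$ agrees with $(f_s^{(1)})^*$ up to the orientation convention. Thus $f_s^{(1)}$ is an automorphism of $(\A^2_{k(s)})^{(1)}$. Since Frobenius twist is coefficientwise Frobenius on a perfect field, $f_s$ is an automorphism for every remaining $s\in\Sigma$. To compare the two graphs, one can check the equality on points over $\overline{k(s)}$.

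\textbf{Step 3: conclusion.} With $f_s$ invertible on a Zariski-dense set of closed points, apply the argument after \eqref{eq:inverse-degree}. By \eqref{eq:inverse-degree}, $\deg f_s^{-1}\le\deg f$, so the truncated formal inverse identities \eqref{eq:inverse-identities} hold at all these points. Their coefficients therefore vanish in $R$, which gives an inverse of $f_F$. Extension of scalars handles $\C$.

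The main obstacle is Step 2. It requires a careful identification of the Morita-induced center automorphism with the $p$-support of the invertible bimodule. The key point is that the support equals the graph of $\phi_s$ as a set and is the graph of a morphism in both descriptions. Matching orientation, meaning $(f_s^{(1)})^*$ versus its reverse, with the convention $(x,y,x',y')\mapsto(x,x';y,-y')$ needs care. It does not affect invertibility, however, since the inverse of an automorphism is an automorphism.
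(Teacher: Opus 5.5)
Your proposal is correct and follows essentially the same route as the paper: spread out the inverse bimodule and the evaluation isomorphisms so that each $M_s$ is invertible, identify the reduced $p$-support with the graph of the induced center isomorphism to force $(f_s^{(1)})^*$ to be an automorphism on a dense set, and conclude with the finite inverse identities \eqref{eq:inverse-identities}. The only detail the paper makes explicit that you leave implicit is a preliminary localization making $M_R$ finite on both sides, which is what lets Lemma~\ref{lem:finite-descent} apply to the tensor products $M_R\otimes_{W_R}N_R$ and $N_R\otimes_{W_R}M_R$.
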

\begin{proof}
Choose an inverse bimodule $N_F$ and bimodule isomorphisms
\[
 M_F\otimes_{W_F}N_F\simeq W_F,\qquad
 N_F\otimes_{W_F}M_F\simeq W_F.
\]
An invertible bimodule is finitely generated projective on both sides. The argument using \eqref{eq:left-generation-identities}, on both sides, first makes $M_R$ finite as a left and as a right module after localization. Choose a finite presentation of $N_F$ and include its coefficients in a localization of $R$, obtaining a model $N_R$. Do the same for its left and right generators.

Apply Lemma~\ref{lem:finite-descent} to the two evaluation maps,
their inverses, and the bimodule-compatibility and composition identities.
After one further localization these maps are inverse bimodule
isomorphisms over $R$. Tensor products commute with residue-field base change, so every remaining $M_s$ is invertible, with inverse $N_s$.

An invertible bimodule identifies the centers of its two algebras. Write $\beta_s:Z(W_{k(s)})\to Z(W_{k(s)})$ for the isomorphism characterized by
\[
 mz=\beta_s(z)m\qquad(m\in M_s,\ z\in Z(W_{k(s)})).
\]
Since $M_s$ is faithful on the left, its central annihilator is exactly the ideal generated by $z_R-\beta_s(z)_L$. Its scheme-theoretic support is therefore the reduced graph of this center isomorphism. Containment in $L_{f_s}^{(1)}$ forces $\beta_s=(f_s^{(1)})^*$. Thus $f_s^{(1)}$, and hence $f_s$ over the perfect residue field, is an automorphism for every remaining $s\in\Sigma$. The inverse identities \eqref{eq:inverse-identities} and Zariski density now prove that $f_F$ is an automorphism.
\end{proof}

\subsection{The conditional implication \texorpdfstring{$\operatorname{DC}'_1\implies\operatorname{JC}_2$}{generalized DC1 implies JC2}}\label{subsec:conditional-plane}
\begin{proof}[Proof of Theorem~\ref{thm:plane-equivalence}(ii)]
Assume $\mathrm{DC}'_1$ and Conjecture~\ref{conj:plane-graphs}, and let
$f:\A^2_\C\to\A^2_\C$ be an endomorphism with Jacobian one. Take a lift $M$ of the graph splitting $\mathcal D$-modules and
a common model of $f,M$ over an integral finitely generated
$\Z$-subalgebra of $\C$. Proposition~\ref{prop:recognition} shows that $M$
is a rank-one projective left $A_1$-module generating its module category.
The right action gives
\[
 A_1\longrightarrow\End_{A_1}(M)^{\op}\in\mathcal D_1.
\]
By $\mathrm{DC}'_1$ this homomorphism is an isomorphism. Morita evaluation
therefore makes $M$ an invertible bimodule. Lemma~\ref{lem:inverse-graph}
implies that $f$ is an automorphism. This is $\mathrm{JC}_2$.

The first assertion of the theorem is Proposition~\ref{prop:matrix-implications}. The equivalent matrix and Morita-domain formulations for $A_1$ follow from Theorem~\ref{thm:matrix-morita-equivalence}.
\end{proof}

\subsection{Proof of the implication \texorpdfstring{$\mathrm{JC}_2\Rightarrow\mathrm{DC}'_1$}{JC2 implies generalized DC1}}\label{subsec:matrix}
The proof follows the reduction argument of \cite{BKK}, applied here
to the two-by-two matrix algebra over the first Weyl algebra.

\begin{proposition}\label{prop:matrix-implications}
The two-dimensional Jacobian conjecture implies $\mathrm{DC}'_1$.
\end{proposition}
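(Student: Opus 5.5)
By Theorem~\ref{thm:matrix-morita-equivalence} it suffices to prove $\mathrm{MC}_{1,2}$: every endomorphism $u$ of $C=\Mat_2(A_1)$ is an automorphism. By Proposition~\ref{prop:matrix-corner}, this is equivalent to the map $g:A_1\to B=eCe$, with $e=u(e_{11})$, being an isomorphism. I follow the reduction scheme of \cite{T05,BKK}.

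\emph{Step 1: spreading out and reducing to positive characteristic.} I spread $u$ out to $u_\Lambda:\Mat_2(A_1(\Lambda))\to\Mat_2(A_1(\Lambda))$ over an integral finitely generated $\Z$-algebra $\Lambda\subset\C$, using Lemma~\ref{lem:finite-descent}. Let $d$ bound the Bernstein degrees of the images $u(xI_2)$, $u(yI_2)$ and $u(e_{ij})$. At a closed point $s$ of characteristic $p$, I would show that $u_s$ maps the center $k(s)[X,Y]$, with $X=x^pI_2$ and $Y=y^pI_2$, into itself. The image $u_s(C_s)$ is an Azumaya algebra of the same degree $2p$ over its center $u_s(Z)$, and I would argue that its centralizer in $C_s$ is exactly the center of $C_s$. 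As in \cite{BKK}, I then write $u_s(X)=\phi_1(X,Y)$ and $u_s(Y)=\phi_2(X,Y)$. Degree considerations give $\deg\phi_i\le d$, uniformly in $s$. The Belov--Kontsevich Poisson bracket on the center, obtained from the reduction modulo $p^2$ of $[\,\cdot\,,\cdot\,]/p$, is preserved by $u_s$. Hence $\phi_s=(\phi_1,\phi_2)$ has Jacobian one, after possibly passing to the coefficientwise Frobenius root $f_s$ so that $\phi_s=(f_s^{(1)})^*$.

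\emph{Step 2: applying $\mathrm{JC}_2$ in large characteristic.} By the uniformity remark after \eqref{eq:inverse-identities}, $\mathrm{JC}_2$ implies that all Jacobian-one plane maps of degree at most $d$ are invertible for $p$ larger than some $p_0(d)$, with inverses of degree at most $d$. Hence $\phi_s$ is an automorphism for all $s$ in a dense open set. Then $u_s$ is a homomorphism between Azumaya algebras of equal degree over centers identified by the isomorphism $\phi_s$. Such a map is injective by simplicity on fibers, and surjective by a rank comparison on geometric fibers. So $u_s$ is an automorphism on a dense set of closed points.

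\emph{Step 3: returning to characteristic zero.} I consider the bimodule $M_\Lambda=eC_\Lambda u(e)$ from the proof of Theorem~\ref{thm:matrix-nonlift}. It is finite projective of left rank one, and its right action is through $g$. Since $u_s$ is an automorphism, $M_s$ is an invertible bimodule whose central annihilator is the graph of $\phi_s$. The inverse maps $\phi_s^{-1}$ have degree at most $d$, so the right-module version of Lemma~\ref{lem:restriction}(i) applies with the reversed graph equations, as in Remark~\ref{rem:graph-finiteness}. This shows that $M_F$ is finitely generated on the right. The Hilbert polynomial count from Proposition~\ref{prop:recognition} gives right rank one. Lemma~\ref{lem:projectivity} with sides reversed then makes $M_F$ a rank-one projective generator on both sides. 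The next step, which I expect to need care, is to deduce from this that $A_1(F)\to\End_{A_1(F)}(M_F)^{\op}$ is surjective, so that $M_F$ is invertible and $g$ is an isomorphism. One route is a bounded-degree linear-system argument in the style of Proposition~\ref{prop:graph-uniqueness}: lift the inverse isomorphisms that exist on the dense set of fibers, using the degree bound of Lemma~\ref{lem:ideal-isomorphism-degree}.

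The main obstacle is Step~1. One must show that $u_s$ sends center to center, and must identify the induced center map as a Jacobian-one map of degree bounded independently of $p$. This is exactly where the Azumaya structure and the Poisson bracket modulo $p^2$ are needed.
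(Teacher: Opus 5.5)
Steps~1 and~2 of your proposal follow the paper's route. Your use of Lemma~\ref{lem:restriction}, with the inverse center polynomials of degree at most $d$, to get finiteness of the twisted right action is also what the paper does.

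\textbf{The gap is the final step, which you leave open.} You never show that $g\colon A_1(F)\to\End_{A_1(F)}(M_F)^{\op}$ is surjective, and the route you suggest does not work as stated. Lemma~\ref{lem:ideal-isomorphism-degree} bounds the degree of a left-module isomorphism between two left ideals. A linear-system argument for surjectivity of $g$ needs something different: a bound, uniform in $s$, on the degrees of the preimages under $g_s$ (or $u_s$) of a fixed finite generating set of the target. Nothing in your sketch supplies such a bound, and the cited lemma does not give one.

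\textbf{The missing idea is that no degree bound is needed.} Once the twisted right action is finite, surjectivity follows from the vanishing of a cokernel.
\begin{enumerate}
\item The paper gives $C_R$ the commuting actions $a\cdot b\cdot c=(aI_2)\,b\,u(cI_2)$.
\item The reversed graph relations hold on all of $C_s$, so Lemma~\ref{lem:restriction}, with sides exchanged, makes $C_R$ finite on the right after one localization.
\item Since $u(C_R)\,u(cI_2)\subseteq u(C_R)$, the quotient $C_R/u(C_R)$ is a finitely generated right $A_1(R)$-module.
\item Its fibers $C_s/u_s(C_s)$ vanish on a Zariski-dense set. The filtration argument of Section~\ref{subsec:restriction} then forces its generic fiber to vanish, so $u$ is surjective over $\C$.
\item Simplicity of $\Mat_2(A_1)$ gives injectivity.
\end{enumerate}
With this argument, the bimodule $M$, the right-rank computation, and Lemma~\ref{lem:projectivity} are unnecessary.

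You can also finish directly from the data you did establish. Right rank one forces the division ring generated by $g(A_1(F))$ to be all of $\Frac A_1(F)$, so $E=\End(M_F)^{\op}$ lies inside it. Since $E$ is a direct summand of some $M_F^{\oplus n}$ as a right module, it is finitely generated over $g(A_1(F))$. Then $\{a: g(a)E\subseteq g(A_1(F))\}$ is a nonzero two-sided ideal of the simple ring $A_1(F)$, so it contains $1$ and $g$ is surjective.

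\textbf{Step~1 also needs a little more.} Your claim that $u_s(C_s)$ is Azumaya of degree $2p$ over $u_s(Z)$ presupposes injectivity of $u_s$, which you do not establish. The paper proves center preservation without it: composing $u_s$ with a closed-point representation gives a $2p$-dimensional representation. It is therefore simple, so central images act by scalars at every closed point. The bracket $p^{-1}[\tilde a,\tilde b]$ needs center preservation over $R/pR$, not just over residue fields. The paper secures this by localizing so that $R$ is smooth over $\Z$, which makes $R/pR$ reduced.
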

\begin{proof}
\emph{Step 1: reduction to a matrix endomorphism.}
Let $h:A_1\to D$ be a homomorphism, where $D\in\mathcal D_1$.
Lemma~\ref{lem:morita-rank} gives an isomorphism
$\Mat_2(D)\simeq\Mat_2(A_1)$. Under this identification,
$\Mat_2(h)$ becomes an endomorphism $u$ of $C=\Mat_2(A_1)$.
It suffices to show that $u$ is an automorphism: surjectivity and
injectivity of an entrywise matrix map imply the corresponding
properties of $h$.

\emph{Step 2: centers in positive characteristic.}
Let $k$ be algebraically closed of characteristic $p$, and put
\[
 C_k=\Mat_2(A_1(k)),\qquad Z_k=k[x^p,y^p].
\]
This is an Azumaya algebra of degree $2p$. Every endomorphism of $C_k$
preserves $Z_k$. Indeed, compose the endomorphism with the representation
at a closed central point. The resulting representation has dimension
$2p$, and every finite-dimensional simple $C_k$-module has this
dimension. The representation is therefore simple. Central elements
act by scalars at every closed central point. Their commutators with
all elements vanish, because $C_k$ is free over the reduced Jacobson
ring $Z_k$.

If the center map is invertible, the endomorphism becomes center-linear
after twisting the source by this center automorphism. Each geometric
fiber is then a unital map between matrix algebras of the same size,
hence an isomorphism. Thus the endomorphism is an isomorphism;
compare \cite[Theorem~4.1 and Corollary~5.4]{ACvE}. Both assertions hold over an arbitrary field $k$
by extension to an algebraic closure and faithfully flat descent.

\emph{Step 3: the arithmetic model and its plane center maps.}
Choose an integral finitely generated $\Z$-subalgebra $R_0\subset\C$
over which $u$ is defined. After localization we may suppose that
$R=R_0[a^{-1}]$ is smooth over $\Z$, since its generic point lies in
the smooth locus. Write $C_R=\Mat_2(A_1(R))$.
For a prime $p$, the ring $S=R/pR$ is smooth over $\F_p$ and hence
reduced. Step~2 applied to every residue field shows that the reduction
$u_S$ preserves the center: the coefficients of each commutator
$[u_S(z),b]$, for $z$ central, lie in every prime ideal of $S$ and
therefore vanish.

The center carries the bracket
\[
 \{a,b\}=p^{-1}[\widetilde a,\widetilde b]\pmod p,
\]
where the tildes denote lifts to $C_R$. The map induced by $u$ preserves
this bracket. With the central coordinates $X=x^p$, $Y=y^p$, it satisfies
$\{X,Y\}=1$: the constant term of $[y^p,x^p]$ is $p!$, and all other
terms are divisible by $p^2$; use Wilson's theorem after division by $p$.
Thus every closed-fiber center map has Jacobian one.

Choose $d\geq1$ bounding the degrees of the entries of $u(xI_2)$ and
$u(yI_2)$. Since $u(z^pI_2)=u(zI_2)^p$ for $z=x,y$, the Bernstein
degrees of these central images are at most $pd$. Each central variable
has Bernstein degree $p$, so the plane center maps have degree at most
$d$.

\emph{Step 4: invertibility of the reductions.}
Assume $\mathrm{JC}_2$. The finite inverse test following
\eqref{eq:inverse-identities} shows that all plane maps of degree at
most $d$ and Jacobian one are invertible in sufficiently
large characteristics. After inverting finitely many primes in $R$,
every closed-fiber center map is therefore an automorphism. Its inverse
has degree at most $d$, by \eqref{eq:inverse-degree}. Step~2 makes
every remaining reduction $u_s$ an automorphism of $C_{k(s)}$.

\emph{Step 5: surjectivity in characteristic zero.}
Give $C_R$ the commuting left and right $A_1(R)$-actions
\[
 a\cdot b\cdot c=(aI_2)b\,u(cI_2).
\]
It is finite under the ordinary left action, generated by the four
matrix units. Give the twisted right generators weight one and the
ordinary left generators weight $d$. On each remaining closed fiber,
the inverse center polynomials $g_{s,1},g_{s,2}$ satisfy
\[
 L_{x^p}=g_{s,1}(R_{x^p},R_{y^p}),\qquad
 L_{y^p}=g_{s,2}(R_{x^p},R_{y^p}),\qquad \deg g_{s,j}\leq d.
\]
Lemma~\ref{lem:restriction}, with $n=1$ and sides exchanged, makes
$C_{\Frac R}$ finite under the twisted right action. Its second part
then gives finite right generation over one localization of $R$.
The image $u(C_R)$ is stable under this action, so
\[
 Q_R=C_R/u(C_R)
\]
is a finite right $A_1(R)$-module. Its remaining closed fibers vanish.
The filtration argument in Section~\ref{subsec:restriction} implies
$Q_R\otimes_R\Frac R=0$. Consequently $u$ is surjective over $\C$.
Simplicity of $C$ makes it injective. By Step~1, $h$ is an isomorphism.
\end{proof}

\section{Lifting to characteristic zero via ultraproducts}\label{sec:ultra}

\subsection{BW-invariants and LM-invariants}\label{subsec:invariants}
We justify \eqref{eq:nu-definition}--\eqref{eq:norm-filtration}. Throughout this subsection $k$ is any field, $A=A_1(k)$, $D$ its quotient division ring, and $S=\gr A=k[\xi,\eta]$. Principal symbols extend to $D$ by
\[
 \deg(ab^{-1})=\deg a-\deg b,\qquad
 \sigma(ab^{-1})=\sigma(a)/\sigma(b).
\]
Ore common denominators show that these formulas are well defined, and degree is additive on products.

Every finitely generated torsion-free rank-one module embeds in $D$ and, after clearing a common denominator, is isomorphic to a left ideal. Two ideal realizations satisfy $I'=Iq$ for some $q\in D^\times$. Therefore
\[
 \gr I'=(\gr I)\sigma(q)
\]
as fractional graded ideals. Dividing by their reflexive hulls, which are their greatest-common-divisor ideals, gives $J_{I'}=J_I$. The normalized ideal has no common factor, so its quotient in the two-variable polynomial ring has finite length. This proves that the BW-invariant $\nu(P)$ is a well-defined nonnegative integer. Moreover, $\nu(P)=0$ precisely when $P$ is free: if $\gr I=gS$, a lift of $g$ generates $I$ by induction on degree. In characteristic zero, this is the invariant described by Berest--Wilson as $\dim H^1(\overline P(-1))$ on the homogenized quantum projective plane \cite{BW,Berest}.

For a filtered module $Q$, our shift convention is
$F_tQ(a)=F_{t+a}Q$; for a graded module it is $Q(a)_t=Q_{t+a}$.
Thus the standard generator of $A(-b)$ has filtered degree $b$.

\begin{proposition}[Presentation bound]\label{prop:presentation}
If $\nu=\nu(P)>0$, there is a strict filtered resolution
\begin{equation}\label{eq:resolution}
 0\longrightarrow\bigoplus_{j=1}^{r}A(-b_j)
 \longrightarrow\bigoplus_{i=1}^{c}A(-a_i)
 \longrightarrow P\longrightarrow0
\end{equation}
with
\[
 c\leq\nu+1,\quad r=c-1\leq\nu,\quad
 1\leq a_i\leq\nu,\quad b_j\leq\nu+1.
\]
In particular, every relation entry has Bernstein degree at most $\nu$.
\end{proposition}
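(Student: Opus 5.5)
Realize $P$ as a left ideal $I\subset A$ and normalize it so that its symbol ideal has no common factor. Replace $I$ by $Iq$ with $q\in D^\times$ and $\sigma(q)=g_I^{-1}$. We may need to clear a denominator to stay inside $A$; if so, we choose $q$ so that $\gr(Iq)=J_P$ exactly. Then $\gr I=J:=J_P$ is a homogeneous ideal of $S=k[\xi,\eta]$ of finite colength $\nu=\dim_k S/J$. Since $\nu>0$, $J\subsetneq S$ and $J\subset\mathfrak m=(\xi,\eta)$. The strategy is to build a minimal graded free resolution of $J$ over $S$, bound its numerical data by $\nu$, and then lift it to a strict filtered resolution of $I$ by the standard filtered-lifting argument. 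Strictness of the filtration on $I$ induced from $A$ is automatic, because $\gr I=J$ by construction.

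\emph{Step 1: graded bounds.} Write $J_t$ for the degree-$t$ part. Since $\dim_k(S/J)_t\le\nu$ and the sum over all $t$ equals $\nu$, we have $(S/J)_t\ne0$ for $t=0$. Let $a_{\min}\ge1$ be the initial degree of $J$. The initial segment gives $\dim(S/J)_t=t+1$ for $t<a_{\min}$, so $\nu\ge a_{\min}(a_{\min}+1)/2$. For the generator degrees, use Hilbert--Burch. $S/J$ is Cohen--Macaulay of dimension zero in a two-variable ring, so the minimal resolution is $0\to\bigoplus_{j=1}^{c-1}S(-b_j)\to\bigoplus_{i=1}^c S(-a_i)\to J\to0$. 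Minimal generators live in degrees $t$ where $J_t\not\subset\mathfrak m J_{t-1}$. Every degree $t\ge$ the regularity of $S/J$ plus one gives $J_t=S_t$. The socle/regularity degree of $S/J$ is at most $\nu-1$, because each degree from $0$ up to the top nonzero degree contributes at least one to $\nu$. Hence $J$ is generated in degrees at most $\nu$, and this gives $a_i\le\nu$. Syzygies have degree at most $\operatorname{reg}(J)+1\le\nu+1$, so $b_j\le\nu+1$. For the count, $c\le a_{\min}+1$, since the minimal number of generators of a homogeneous $\mathfrak m$-primary ideal in two variables is at most its initial degree plus one ($J_{a_{\min}}$ sits in $S_{a_{\min}}$ and the Hilbert--Burch minors have degree $a_{\min}$). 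Combined with $a_{\min}\le\nu$, this gives $c\le\nu+1$.

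\emph{Step 2: lifting.} Choose $f_i\in I$ of degree $a_i$ whose symbols are the minimal generators of $J$. The map $\bigoplus A(-a_i)\to I$ is then strict filtered and surjective, by induction on degree, as in the freeness argument above. Each graded syzygy lifts to an element of the filtered kernel of the same degree $b_j$. This uses the standard correction step: the lifted relation $\sum u_{ji}f_i$ has lower degree, and we subtract a lower-degree combination, using strictness. The associated graded of the kernel is the graded syzygy module, which is free on the $c-1$ syzygies; so the lifts generate the kernel strictly and are independent. This gives \eqref{eq:resolution}. The relation entries have degree $b_j-a_i\le\nu+1-1=\nu$.

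\emph{Expected obstacle.} The delicate part is the sharp numerical bounds in Step 1, specifically $b_j\le\nu+1$ and $c\le\nu+1$. I would derive them from the Hilbert--Burch degree identity $\sum b_j-\sum a_i=\deg$ of the determinant generator, together with the regularity bound $\operatorname{reg}(S/J)\le\nu-1$. I would check this bound by noting that the Hilbert function of $S/J$ is positive in every degree up to the top nonzero degree. If that bound falls short at the endpoint, I would fall back on a direct count of the standard monomials of $J$ for the lex Gröbner basis, which has at most $\nu+1$ elements in degrees at most $\nu$.
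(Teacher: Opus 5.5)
Your proposal is correct and follows essentially the paper's proof: normalize so that $\gr P\cong J_P$, bound the minimal graded resolution of $J_P$ using $(S/J_P)_t=0$ for $t\geq\nu$, and lift it to a strict filtered resolution by leading-symbol correction. The only local variation is how you bound $c$: you use $c\leq a_{\min}+1$, which holds because each minimal generator is, up to a unit, a $(c-1)$-minor of the Hilbert--Burch matrix, whose entries lie in $(\xi,\eta)$; the paper instead notes that $r$ equals the socle dimension of $S/J_P$, so $r\leq\nu$.

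One aside should be dropped. You cannot in general clear denominators and still have both $Iq\subset A$ and $\gr(Iq)=J_P$. For example, for $I=Ay^2+A(xy-1)$ one has $J_P=(\xi,\eta)$, and a left ideal of $A$ with this symbol ideal would contain some $x+\gamma$ and $y+\gamma'$, hence $[y+\gamma',x+\gamma]=1$. Nothing in your argument needs this, though: your Step~2 works verbatim for the left $A$-submodule $Iq\subset D$ with its degree filtration, which is just the paper's shift by $\deg g_I$.
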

\begin{proof}
Normalize the induced filtration of an ideal realization by shifting by $\deg g_I$, so that $\gr P=J_P$. For $T=S/J_P$, a nonzero degree-$t$ part forces nonzero parts in degrees $0,\ldots,t$. Thus $T_t=0$ for $t\geq\nu$. The minimal graded resolution of the ideal $J_P$ has length one. The Koszul complex on $\xi,\eta$ bounds the generator degrees by $\nu$ and the syzygy degrees by $\nu+1$. The number of syzygies is the dimension of the socle of $T$, hence at most $\nu$; comparison of ranks gives $c-r=1$.

Lift the minimal homogeneous generators to $P$. Leading-symbol subtraction makes the resulting filtered surjection strict. Its kernel has associated graded equal to the free syzygy module just described. Lifts of a homogeneous basis generate that kernel by induction on degree. They are linearly independent, as one sees by taking the leading term of a putative relation. This proves \eqref{eq:resolution}; its entries have degrees at most $b_j-a_i\leq\nu$.
\end{proof}

The Hilbert series also gives
\begin{equation}\label{eq:moments}
 \sum_i a_i=\sum_j b_j,\qquad
 \nu=\frac12\left(\sum_jb_j^2-\sum_i a_i^2\right),\qquad
 \dim_kF_tP=\binom{t+2}{2}-\nu\quad(t\gg0).
\end{equation}

\begin{proposition}\label{prop:lambda}
For projective $P$, the LM-invariant in \eqref{eq:lambda-definition} is finite and satisfies \eqref{eq:lambda-nu}. It is independent of the ideal realization with its induced degree filtration. For the pullback algebra it is computed by \eqref{eq:norm-filtration}.
\end{proposition}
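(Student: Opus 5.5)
Realize $P$ as a left ideal $I\subset A$ and identify $E=\End_A(P)^{\op}$ with the subring $\{q\in D: Iq\subset I\}$ of $D$; the identification is right multiplication. Give $E$ the degree filtration restricted from $D$. Under a change of realization $I'=Iq_0$, the ring $E$ is replaced by $q_0^{-1}Eq_0$. Conjugation preserves degree, and symbols conjugate in the commutative ring $\Frac S$, so $\gr E\subset\Frac S$ is unchanged. Hence $\lambda(E)$ does not depend on the realization.

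Next I will show that $\gr E\subset S$ and that $\gr E=\{s\in S: sJ_P\subset J_P\}$ up to finite codimension. The inclusion $\gr(E)\cdot\gr I\subset\gr I$ places $\gr E$ inside the multiplier ring of the reflexive hull, and that multiplier ring is $S$. For finiteness of $\dim_k(S/\gr E)$, I use the trace and generator property. Projectivity gives dual maps $\lambda_a$ with $\sum\lambda_a(u_a)=1$. This produces elements of $E$ of the form $u\,\lambda(\cdot)$, realized as products $I^*I$ with $I^*=\{q:qI\subset A\}$. The product $I^{*}I$ lies in $E$ and contains $F_tA$-type material in high degrees. Concretely, $E\supset I^*I$ and $\gr(I^*I)\supset\gr(I^*)\gr(I)$, and $\gr I^*\cdot\gr I$ has finite colength in $S$, since both are fractional ideals with reflexive hulls inverse to each other ($g_I^{-1}$ and $g_I$). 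So the colength is finite.

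For the identity $\lambda=2\nu$, I follow the Hilbert-polynomial argument of \cite{Berest}. Filter $P$ and $E$ so that $\gr P=J_P$. From \eqref{eq:moments}, $\dim F_tP=\binom{t+2}{2}-\nu$ for large $t$. I then compute $\dim F_tE$ by the Morita-type identity $E\simeq P^*\otimes_AP$: the dual module $P^*=\Hom_A(P,A)$ is a right ideal with $\gr P^*=J_{P^*}$ of the same colength $\nu$. This follows by dualizing the length-one resolution \eqref{eq:resolution}, where the moment identities give $\nu(P^*)=\nu(P)$. Then I show $\gr E=J_{P^*}J_P$ exactly, using a strict filtered version of the evaluation isomorphism $P\otimes P^*\to E$, obtained by tensoring strict resolutions. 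The colength of a product of two finite-colength homogeneous ideals of this type in $k[\xi,\eta]$ is generally not additive. This is the main obstacle. Instead, I will compare Hilbert polynomials directly: the tensor product of the two filtered resolutions gives
\[
\dim F_tE=\binom{t+2}{2}-2\nu\quad(t\gg0),
\]
from the degree data $a_i,b_j$ and the second-moment formula in \eqref{eq:moments}. The characteristic-free Euler characteristic of $\mathrm{Tor}$ vanishes because $P$ is projective, and this yields $\lambda(E)=2\nu$.

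Finally, for $B_{f,k}$ in \eqref{eq:norm-filtration}, I note that on $D$ the reduced norm over $\Frac Z$ satisfies $\deg_{X,Y}\Nrd(b)=\deg b$. This holds because $\Nrd$ has degree $p$, each central variable has Bernstein degree $p$, and the top symbol of $\Nrd(b)$ is $\sigma(b)^{p}$ read in $X,Y$, by Frobenius on $S$. Hence the norm filtration coincides with the degree filtration induced from $D$, for the realization given by \eqref{eq:pullback-end}.
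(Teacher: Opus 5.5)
Your independence-of-realization argument, the inclusion $\gr E\subset S$, and the reduced-norm computation for \eqref{eq:norm-filtration} are fine and agree with the paper. The gap is in the step meant to give $\lambda(E)=2\nu$.

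The Euler characteristic of the tensor product of the two strict resolutions computes the eventual Hilbert function of the filtration on $E\cong P^*\otimes_AP$ \emph{induced by that complex}. Call it $F^{\mathrm{ind}}_tE$; it consists of the elements $\sum_iq_iw_i$, with $q_i$ the generators of $P^*$ and $\deg q_i+\deg w_i\le t$ for each $i$. That count is correct: the graded homology $\operatorname{Tor}^S_1(J_{P^*},J_P)$ has finite length and does not affect large degrees. But $F^{\mathrm{ind}}_tE$ is only \emph{contained} in the degree filtration $F_tE=E\cap F_tD$ that defines $\lambda(E)$. What you actually obtain is therefore $\lambda(E)\le2\nu$.

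Equality needs the evaluation map $P^*\otimes_AP\to E$ to be strict in all large degrees. Nothing in the proposal proves this, and it carries essentially the whole content of the statement. The stronger claim you make, that $\gr E=J_{P^*}J_P$ exactly via a strict evaluation map, is false for every non-free $P$. Indeed $1\in E$ gives $1\in\gr E$, while $J_{P^*}J_P\subset(\xi,\eta)$ once $\nu>0$. For $\nu=1$ both ideals are $(\xi,\eta)$, and $\dim_kS/(\xi,\eta)^2=3\ne2$. This is exactly the nonadditivity you noticed, and it shows that strictness genuinely fails in low degrees. The dual-basis identity $e=\sum_aq_a(u_ae)$ only gives $F_tE\subset F^{\mathrm{ind}}_{t+C}E$, which does not control the constant term.

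The paper sidesteps this by applying $\Hom_A(-,P)$ to the strict resolution rather than passing through $P^*$. Projectivity gives $0\to E\to\bigoplus_iP(a_i)\xrightarrow{u}\bigoplus_jP(b_j)\to0$. Since $\deg(e_iq)=a_i+\deg q$, the subspace filtration on this \emph{kernel} is exactly the degree filtration of $E$, so no strictness on $E$ is required. The only input is that $F_tu$ is onto for large $t$, which holds because $\operatorname{coker}(\gr u)=\Ext^1_S(J_P,J_P)$ is finite-dimensional. The moment identities then give $\dim F_tE=\binom{t+2}{2}-2\nu$. Together with $\gr E\subset S$, this yields finiteness and $\lambda=2\nu$ at once.

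The same eventual-surjectivity mechanism, now with $\Ext^1_S(J_P,S)$, is what you would need, and omit, in two further places. It is needed to justify $\nu(P^*)=\nu(P)$ "by dualizing the resolution". It is also needed for the claim in your finiteness step that the reflexive hull of $\gr I^*$ is $g_I^{-1}S$. A priori, $I^*I\subset E$ and $\gr E\subset S$ only show that $g_I$ times the homogeneous gcd of $\gr I^*$ lies in $S$. Note also that $\Hom_A(I,A)$ corresponds to $\{q\in D:Iq\subset A\}$, not $\{q:qI\subset A\}$.
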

\begin{proof}
Identify $E=\End_A(P)^{\op}$ with the ring of right multipliers preserving $I$,
\[
 E=\{q\in D:Iq\subset I\},
\]
also called the \emph{right idealizer} of $I$ in $D$. A principal symbol preserving $I$ preserves $J_P$. Localizing at the height-one primes of the normal ring $S$ gives $(J_P:J_P)=S$. It follows that $\gr E\subset S$.

Apply $\Hom_A(-,P)$ to \eqref{eq:resolution}. Projectivity gives an exact sequence
\[
 0\longrightarrow E\longrightarrow\bigoplus_iP(a_i)
 \xrightarrow{u}\bigoplus_jP(b_j)\longrightarrow0.
\]
The kernel filtration is precisely the division-ring degree filtration. The cokernel of $\gr u$ is $\Ext_S^1(J_P,J_P)$, which is finite-dimensional because $J_P$ is locally free away from the origin. Hence $\gr u$ is surjective in sufficiently high degrees. Degree reduction lifts elements of the target up to a fixed finite-dimensional remainder. Lift a basis of that remainder using surjectivity of $u$. It follows that $F_tu$ is surjective for all large $t$. Using \eqref{eq:moments}, we obtain
\[
 \dim F_tE=
 \sum_i\dim F_{t+a_i}P-\sum_j\dim F_{t+b_j}P
 =\binom{t+2}{2}-2\nu.
\]
Since $\gr E\subset S$, the difference of the two cumulative dimensions is the cumulative dimension of $S/\gr E$, proving the claim. A change $I\mapsto Iq$ conjugates the idealizer by $q$; conjugation preserves degree and principal symbols. This proves independence of the realization. For $\nu=0$ the ideal is principal and the result follows directly.

In characteristic $p$, the reduced norm on $D$ satisfies
\begin{equation}\label{eq:norm-degree}
 \deg_{X,Y}\Nrd(q)=\deg q\qquad(q\ne0),
\end{equation}
where rational-function degree is numerator degree minus denominator degree. For $q\in A$, use the center basis $x^iy^j$, $0\leq i,j<p$, with weights $i+j$. The leading matrix of multiplication by $q$ is multiplication by its nonzero symbol on $\Frac S$ over $\Frac k[\xi^p,\eta^p]$, hence is invertible. Its determinant has Bernstein degree $p^2\deg q$. This determinant is $\Nrd(q)^p$, and a central polynomial has Bernstein degree $p$ times its central degree. This proves \eqref{eq:norm-degree}; multiplicativity extends it to $D$. Under \eqref{eq:pullback-end}, reduced norms agree over the specified center, proving \eqref{eq:norm-filtration}.
\end{proof}

This proof follows Berest's Hilbert-polynomial calculation of $\lambda=2\nu$ \cite[\S3]{Berest}. His argument uses the canonical extension to the noncommutative projective plane; here the strict resolution and the eventual surjectivity of the filtered Hom map give the corresponding calculation directly.

\subsection{Uniform bounds for presentations and module computations}\label{subsec:uniform}
Let $k$ be a field and let $A=A_1(k)$. We use the Bernstein degree on $A$; the degree of a vector in $A^c$ is the maximum degree of its coordinates in the specified basis. Fix a degree-compatible admissible order on the ordered monomials of each free module. The next lemma concerns a single presentation $P=A^c/N$ over this fixed field.

\begin{lemma}\label{lem:uniformGB}
Let $c\geq1$ and $r,d\geq0$ be integers. Let $g_1,\ldots,g_r\in A^c$ have degree at most $d$, and put $N=\sum_jAg_j$. There is an integer $G=G(c,r,d)$, depending only on the displayed integers, such that:
\begin{enumerate}
\item $N$ has a Gr\"obner basis of at most $G$ vectors of degree at most $G$, each expressible in the $g_j$ with coefficient degrees at most $G$.
\item The kernel of the left-linear map
\[
 F_g:A^r\longrightarrow A^c,\qquad
 (a_j)\longmapsto\sum_j a_jg_j,
\]
has at most $G$ generators of degree at most $G$.
\item If $v\in N$ has degree at most $t$, there is an expression $v=\sum_j a_jg_j$ with $\deg a_j\leq t+G$.
\end{enumerate}
The same assertions hold for right submodules, with coefficients on the right.
\end{lemma}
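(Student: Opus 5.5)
The plan is to prove the lemma uniformly over all fields at once by a noetherian-induction argument on a parameter scheme, in the spirit of the Gr\"obner localization argument of Section~\ref{subsec:restriction}. The data $(g_1,\ldots,g_r)$ with $g_j\in A^c$ of degree at most $d$ are determined by finitely many coefficients. So they are the points of an affine space $\mathcal P=\A^{m}_{\Z}$, with $m=m(c,r,d)$, over $\Spec\Z$. A tuple over a field $k$ is a $k$-point of $\mathcal P$. The Weyl relations $[y,x]=1$ have integer structure constants, so the multiplication of ordered monomials is defined over $\Z$ and commutes with every base change. It therefore suffices to cover $\mathcal P$ by finitely many locally closed subschemes $\mathcal P_1,\ldots,\mathcal P_N$ on each of which (i)--(iii) hold with a single bound $G_\alpha$, and to put $G=\max_\alpha G_\alpha$.

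\emph{Step 1: generic Gr\"obner data on an irreducible closed subset.} Let $Z\subset\mathcal P$ be irreducible and closed, with generic point $\zeta$ and residue field $\kappa(\zeta)$. This field may have characteristic zero or $p$. Over $\kappa(\zeta)$ run Buchberger's algorithm for the fixed degree-compatible order on ordered monomials of $A_1(\kappa(\zeta))^c$. It terminates because $\gr A$ is a polynomial ring and Dickson's lemma applies to the leading monomials. The output is a finite Gr\"obner basis $h_1,\ldots,h_e$, together with the following finite data:
\begin{enumerate}
\item expressions $h_i=\sum_j c_{ij}g_j$;
\item the standard representations of all $S$-vectors reducing to zero;
\item the Schreyer syzygies obtained from these representations, translated back to syzygies of the $g_j$ via the $c_{ij}$ together with the vectors $e_j-\sum_i(\text{coefficients of }g_j\text{ in the }h_i)$.
\end{enumerate}
These data involve finitely many elements of $\kappa(\zeta)$, namely rational functions in the coordinates of $Z$. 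As in Section~\ref{subsec:restriction} and Lemma~\ref{lem:finite-descent}, I would pass to a dense open $U\subset Z$ on which all denominators and all leading coefficients of the $h_i$ are invertible. On $U$ the monic basis, the division steps and the reductions to zero specialize verbatim to every point. Hence they form a Gr\"obner basis of the specialized $N$ by Buchberger's criterion, and the specialized Schreyer syzygies generate the kernel of $F_g$ at every point of $U$. Take $G_U$ to be the maximum of the number of elements, the degrees, and the coefficient degrees occurring in this data.

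\emph{Step 2: noetherian induction.} The complement $Z\setminus U$ is a proper closed subset of a noetherian scheme. I would apply Step~1 to each of its irreducible components and iterate. Noetherianity of $\mathcal P$ forces termination after finitely many steps, which gives the finite stratification and the bound $G$. This proves (i) and (ii). For (iii), I would divide $v$ by the monic Gr\"obner basis. Because the order is degree compatible, $v=\sum_i q_ih_i$ with $\deg q_i+\deg h_i\le\deg v\le t$. Substituting $h_i=\sum_j c_{ij}g_j$ then gives coefficients of degree at most $t+\max\deg c_{ij}\le t+G$, after enlarging $G$. For the right-module version I would apply the anti-automorphism $x\mapsto x$, $y\mapsto -y$ of $A$. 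It preserves Bernstein degree and converts right submodules into left ones, and it is defined over $\Z$, so the same $G$ works.

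\emph{Main obstacle.} The delicate point is Step~1: the Gr\"obner computation performed at a generic point must remain valid at every point of the open set $U$, including points of every positive characteristic. The reason it does is that, after inverting the leading coefficients, each reduction step is an identity of polynomials with coefficients in $\mathcal O(U)$. Likewise, Buchberger's criterion, that all $S$-vectors reduce to zero, is a finite set of such identities. I expect the care to lie in checking that the Weyl $S$-vector construction uses only the integer structure constants. Any division by $p$ would otherwise break uniformity, so this has to be verified. Because the commutation formula $y^jx^i=\sum_\ell \ell!\binom{i}{\ell}\binom{j}{\ell}x^{i-\ell}y^{j-\ell}$ has integer coefficients, I expect this check to go through.
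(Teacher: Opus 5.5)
Your proposal is correct, but it obtains the uniformity in $k$ differently from the paper.

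\textbf{The paper's route.} The paper works over one field and bounds Buchberger's algorithm purely combinatorially. The $j$th retained leading monomial $z^{\alpha_j}e_{a_j}$ satisfies $|\alpha_j|\le 2^j\max(1,d)$, and no earlier retained monomial divides it. The tree of such sequences is finitely branching and, by Dickson's lemma, has no infinite branch, so König's lemma bounds their length in terms of $c,d$ alone. This bounds the basis, the critical pairs, the division steps and hence all coefficient degrees. The bound is independent of $k$ because the coefficients never enter the argument.

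\textbf{Your route.} You parametrize all inputs by $\A^m_\Z$ and run Buchberger at the generic point of each irreducible closed subset. You then spread the finitely many resulting identities over a dense open set and finish by noetherian induction. Your treatment of (ii), (iii) and right modules is the same as the paper's: Schreyer syzygies transferred through the expressions in both directions, division for a degree-compatible order, and the anti-automorphism fixing $x$ and negating $y$.

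\textbf{What each buys.} The paper's argument is shorter and needs no specialization bookkeeping. Yours parallels the spreading-out arguments of Section~\ref{subsec:restriction} and Lemma~\ref{lem:family-stratification}. It applies to any finite computation certified by finitely many identities, but it gives no explicit bound.

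\textbf{Points to tighten.}
\begin{enumerate}
\item The division steps do not specialize ``verbatim''. What survives is each identity $S(h_i,h_j)=\sum_\ell q_\ell h_\ell$ together with the strict leading-monomial inequalities of a standard representation. These persist because, once the leading coefficients of the $h_\ell$ are units, leading monomials can only drop under specialization. This is exactly what Buchberger's criterion and Schreyer's theorem require.
\item List the expressions $g_j=\sum_i d_{ji}h_i$ explicitly among the spread-out data. You use them implicitly, and they are what shows that the specialized $h_i$ generate the specialized $N$.
\item The ``main obstacle'' you identify is not one. Any integer denominators arising at a characteristic-zero generic point are simply inverted when you shrink to $U$, and the excluded characteristics are absorbed by the noetherian induction. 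All you need is that ordered-monomial multiplication is defined over $\Z$ with leading coefficient $1$.
\end{enumerate}
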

\begin{proof}
The leading monomial of a product of ordered Weyl monomials is their commutative product, with coefficient one; all correction terms have smaller degree. Thus the division and critical-pair algorithms for submodules apply over $k$ in every characteristic.

\emph{Step 1: the Gr\"obner basis and its expressions.}
Start by reducing the $g_j$ successively, retaining each nonzero remainder, and then apply Buchberger's algorithm. Whenever the algorithm adds a vector, its leading monomial is not divisible by any previously retained leading monomial. If the current vectors have degree at most $D$, each critical pair and its remainder have degree at most $2D$. Consequently, the $j$th retained leading monomial $z^{\alpha_j}e_{a_j}$ satisfies
\[
 |\alpha_j|\leq 2^j\max(1,d),\qquad 1\leq a_j\leq c,
\]
and no earlier monomial in the sequence divides it.

There is a bound, depending only on $c,d$, on the length of every such sequence. Indeed, the tree of these finite sequences is finitely branching, since there are only finitely many choices at each position. An infinite branch would contradict Dickson's lemma: in an infinite sequence in $\mathbb N^2\times\{1,\ldots,c\}$, some earlier pair has the same last coordinate and componentwise smaller exponents than a later one. A finitely branching tree of unbounded height has an infinite branch, so this tree has finite height. This bounds both the number and the degrees of the retained basis vectors.

The number of critical pairs is consequently bounded. Each division has only a bounded number of steps, since its leading monomial strictly decreases among the finitely many module monomials of bounded degree. Track every operation as an expression in the original $g_j$. Each step uses a scalar or a monomial of bounded degree, and there are only a bounded number of steps. Hence the degrees of all coefficients in these expressions are bounded in terms of $c,r,d$. Enlarging $G$ gives (i).

\emph{Step 2: the kernel of the presentation map.}
Let $b_1,\ldots,b_s$ be the basis obtained in Step~1, and let $F_b:A^s\to A^c$ send the standard basis to these vectors. Reducing each critical pair to zero gives its syzygy among the $b_\ell$. These syzygies generate $\ker F_b$: in a relation among the $b_\ell$, the largest image monomial must cancel between two terms, and subtracting the corresponding critical-pair syzygy lowers the leading term in the induced Schreyer order. Iteration terminates. There are at most $s^2$ such syzygies, and their entries have bounded degree by Step~1.

The expressions of the $b_\ell$ in the $g_j$, and division of each $g_j$ by the $b_\ell$, give maps
\[
 u:A^s\longrightarrow A^r,\qquad v:A^r\longrightarrow A^s,
 \qquad F_gu=F_b,\quad F_bv=F_g,
\]
whose entries have bounded degree. One has
\begin{equation}\label{eq:syzygy-transfer}
 \ker F_g=u(\ker F_b)+\operatorname{im}(1-uv).
\end{equation}
Both summands on the right lie in the kernel. Conversely, if $w\in\ker F_g$, then $vw\in\ker F_b$ and $w=u(vw)+(1-uv)w$. The images of the above syzygies under $u$, together with the images of the $r$ standard basis vectors under $1-uv$, therefore generate $\ker F_g$. Their number and degrees are bounded, proving (ii) after increasing $G$.

\emph{Step 3: membership expressions.}
Divide $v\in N$ by the basis $b_\ell$. Its remainder is zero, and degree compatibility bounds all quotient degrees by $t$. Substitution of the expressions from (i) gives coefficients of degree at most $t+G$ in the original $g_j$, proving (iii). Finally, the degree-preserving isomorphism $A_1(k)^{\op}\simeq A_1(k)$ obtained by fixing $x$ and negating $y$ gives the right-module assertions. Explicit numerical degree bounds can also be obtained from \cite{AL}.
\end{proof}

For the applications to reductions, we use the following terminology. A family $(P_i)_{i\in I}$ of finitely generated left $A_1(k_i)$-modules, whose base fields may vary, \emph{admits bounded presentations} if there are integers $c,r,d$ and presentations
\[
 P_i\simeq A_1(k_i)^{c_i}/\sum_{j=1}^{r_i}A_1(k_i)g_{ij},
 \qquad c_i\leq c,\quad r_i\leq r,\quad \deg g_{ij}\leq d
 \quad(i\in I).
\]
The right-module convention is analogous. Lemma~\ref{lem:uniformGB} applies separately to each presentation. Taking the maximum of $G(c',r',d)$ over $c'\leq c$ and $r'\leq r$ gives a common bound for these computations.

\begin{lemma}\label{lem:bounded-ideals}
For integers $c\geq1$ and $r,d\geq0$, there is an integer $H_0=H_0(c,r,d)$ such that, for every field $k$, a torsion-free rank-one left $A_1(k)$-module with a presentation having at most $c$ generators, at most $r$ relations, and relation entries of degree at most $d$ is isomorphic to a nonzero left ideal generated by at most $c$ operators of degree at most $H_0$.

For a family of torsion-free rank-one left $A_1(k_i)$-modules $P_i$, the following are equivalent: the family admits bounded presentations; the integers $\nu(P_i)$ are bounded.
\end{lemma}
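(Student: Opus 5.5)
For the first assertion I would start from a bounded presentation $P\simeq A^c/N$ with $N=\sum_jAg_j$ and $\deg g_j\leq d$, where $A=A_1(k)$. Torsion-freeness of rank one means that after tensoring with $D$ the module becomes $D$. So there are $q_1,\ldots,q_c\in D$, the images of the standard generators, such that $\sum_i a_iq_i=0$ exactly when $(a_i)\in N$. The relations $\sum_i g_{ji}q_i=0$ form a $D$-linear system of rank $c-1$.

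The plan is to solve this system by noncommutative Cramer-type elimination over the division ring. The Weyl algebra is an Ore domain, and elimination of one unknown uses left common multiples. By Lemma~\ref{lem:uniformGB}(ii) applied to $c=1$ with two generators, a left common multiple of $a,b$ with $\deg a,\deg b\leq e$ exists with degree at most $G(1,2,e)$; in fact the Ore condition gives degree at most $2e$ by a dimension count on $F_tA$. Eliminating successively, in at most $c-1$ steps, expresses every $q_i$ as $u_i^{-1}w_i q_1$ with degrees bounded in terms of $c,d$. Set $q_1=1$ and clear a common left denominator, again by a bounded Ore step. This produces $h_i\in A$ of bounded degree $H_0$ with $P\simeq\sum_i Ah_i$, via $e_i\mapsto h_i$ up to a common right factor.

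The step I expect to need care is verifying that this map is injective on $A^c/N$ rather than merely on its image in $D$. That follows because the kernel of $A^c\to D$ is $\{a:\sum a_iq_i=0\}$, which equals $N$ by torsion-freeness. The degree bound uses only the degree of the solved system. The bound is independent of $k$ because all the degree estimates depend only on dimension counts in $F_tA$, which are characteristic-free.

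For the equivalence, I would first assume bounded presentations. The ideal realization has generators of degree at most $H_0$. Then $\gr I$ is generated in degrees at most $H_0$, so $g_I$ has degree at most $H_0$. The normalized ideal $J_P$ is generated in degrees at most $H_0$ by at most $c$ forms with no common factor. Its colength $\nu$ is then bounded by a Bezout-type argument: two coprime members of degrees at most $H_0$ exist after a generic linear combination over an infinite extension, and base extension preserves $\nu$, so $\nu\leq H_0^2$. Conversely, if the $\nu(P_i)$ are bounded, Proposition~\ref{prop:presentation} gives directly presentations with $c\leq\nu+1$ generators, $r\leq\nu$ relations and entries of degree at most $\nu$; free modules with $\nu=0$ are handled trivially.
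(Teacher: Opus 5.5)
Your argument for the first assertion works, with one repair noted below, but the forward half of the equivalence has a gap. The faulty step is the claim that $\gr I$ is generated in degrees at most $H_0$, together with the claim that $J_P$ is generated by at most $c$ forms with no common factor; the latter only makes sense if those forms are the $\sigma(h_i)/g_I$. The symbols of generators of a left ideal need not generate $\gr I$. In a combination $\sum_i a_ih_i$ the leading terms can cancel, and the symbol of the result need not lie in $(\sigma(h_1),\ldots,\sigma(h_c))$. Nothing bounds the degrees of these new symbols by $H_0$ a priori.

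Here is a concrete instance. Take $I=Ax^2+Axy$ in characteristic $\neq2$. Then $y\cdot x^2-x\cdot xy=2x$, so $x\in I$ and hence $1=yx-xy\in I$. Thus $I=A$ and $J_P=S$. Yet $\sigma(x^2)=\xi^2$ and $\sigma(xy)=\xi\eta$ share the factor $\xi$, so no combination of your forms is coprime and the Bezout step cannot start. Such an ideal can arise as the output of your first part, since it is a valid bounded-degree realization of $A$.

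The missing ingredient is a uniform degree bound for a generating set of $\gr I$, which Lemma~\ref{lem:uniformGB}(i) supplies. A Gr\"obner basis of $I$ for a degree-compatible order has degree at most $G=G(1,c,H_0)$, and its symbols generate $\gr I$. Hence $J_P$ is generated in degrees $\le G$ with no common factor, so its degree-$G$ part has no common factor either. Two generic members of that degree-$G$ part are then coprime after passing to an infinite field, which gives $\nu\le G^2$. This is the paper's argument. Your converse direction via Proposition~\ref{prop:presentation} matches the paper.

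Your first part takes a different route from the paper. The paper obtains a nonzero left-linear map $P\to A$ of bounded degree directly, as a nonzero right syzygy of the columns of the relation matrix, via Lemma~\ref{lem:uniformGB}(ii) on the right. You build the same object, a nonzero $h\in A^c$ of bounded degree with $Gh=0$, by explicit Ore elimination over $D$ followed by clearing denominators. This is more elementary but longer, and it needs two repairs.

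First, the last step must clear a common \emph{right} denominator. Rewrite each $u_i^{-1}w_i$ as a right fraction using the right Ore condition, with degree bounds from the same dimension count. Left multiplication by a common left denominator does not preserve the left module $\sum_iAq_i$.

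Second, the free variable of the elimination should be a generator with nonzero image; the remaining columns then have rank $c-1$ and can serve as pivots.

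Separately, the naive dimension count gives Ore multipliers of degree about $(1+\sqrt2)e$ rather than $2e$, which is harmless.
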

\begin{proof}
The assertion is immediate for a presentation without relations, since a free module of rank one is isomorphic to $A_1(k)$. Otherwise write
\[
 P=A_1(k)^{c'}/\sum_{i=1}^{r'}A_1(k)(t_{i1},\ldots,t_{ic'}),
 \qquad c'\leq c,\quad r'\leq r,\quad \deg t_{ij}\leq d,
\]
and put $T=(t_{ij})$. A left-linear map $P\to A_1(k)$ is specified by a column $a=(a_j)$ with $Ta=0$. Thus the maps form the right syzygy module of the columns of $T$. This module is nonzero because $P$ has an ideal realization. Lemma~\ref{lem:uniformGB}(ii), applied on the right with ambient rank $r'$ and $c'$ input columns, supplies a nonzero solution of bounded degree. The resulting map $P\to A_1(k)$ is injective: after localization it is a nonzero map between one-dimensional vector spaces over the quotient division ring, and its kernel is torsion in the torsion-free module $P$. Its coordinates generate the required ideal. Taking the maximum over $c'\leq c$ and $r'\leq r$ gives $H_0$.

Apply Lemma~\ref{lem:uniformGB}(i) to these ideal generators. It bounds the degrees of generators of $\gr I$ and hence of $J_P$. After extending to an infinite field, choose two homogeneous elements of one bounded degree in $J_P$ with no common factor. Their complete intersection bounds $\dim_k(S/J_P)$ by the square of that degree. This proves that bounded presentations give bounded BW-invariants. Conversely, Proposition~\ref{prop:presentation} gives bounded presentations when $\nu(P)>0$; use a free generator when $\nu(P)=0$.
\end{proof}

\begin{proposition}\label{prop:actions}
Let $k$ be a perfect field of characteristic $p>0$, and let $f:\A_k^2\to\A_k^2$ be an endomorphism with Jacobian one and degree at most $d$, where $d\geq1$. Suppose $p>2d$, so that Proposition~\ref{prop:graphsplit} defines the graph splitting $\mathcal D$-module $M_{f,k}$. Assume
\[
 \nu(M_{f,k})\leq N
\]
for an integer $N\geq0$. Then there is an integer $B=B(d,N)$, depending only on $d,N$, such that $M_{f,k}$ has a left $A_1(k)$-presentation with at most $B$ generators and $B$ relations, relation entries of degree at most $B$, and matrices for the right actions of $x',y'$ whose entries have degree at most $B$.

More precisely, if $\nu=\nu(M_{f,k})>0$, the corresponding $A_2(k)$-module has a presentation with at most $\nu+1$ generators, $3\nu+2$ relations, and relation entries of degree at most $\nu+d$. If $\nu=0$, one generator and two relations of degree at most $d$ suffice.
\end{proposition}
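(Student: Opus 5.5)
The plan is to combine the strict filtered resolution of Proposition~\ref{prop:presentation} for $M=M_{f,k}$ with the graph equations \eqref{eq:graphideal}, and then bound the right actions using Lemma~\ref{lem:uniformGB}.

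\emph{Left presentation.} If $\nu>0$, Proposition~\ref{prop:presentation} gives a left presentation with $c\leq\nu+1$ generators $m_1,\ldots,m_c$ of filtered degrees $a_i\leq\nu$, $c-1\leq\nu$ relations, and relation entries of degree at most $\nu$. If $\nu=0$, the module is free on one generator $m$.

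\emph{Right actions.} Next we bound the right actions of $x',y'$. Realize $M$ as a left ideal $I=\sum_iAm_i$ with the filtration normalized so that $\gr I=J_P$. By Lemma~\ref{lem:bounded-ideals}, the generators have degree at most $H_0(\nu+1,\nu,\nu)$, up to the shift by $\deg g_I$. The right action is right multiplication inside $D$ by elements of the idealizer $E\simeq B_{f,k}$. Let $\theta_{x'},\theta_{y'}\in E$ be the images of $x',y'$; we want to bound their Bernstein degrees. Each $\theta$ satisfies $\theta^p=f_j^{[p]}(X,Y)$ in $E$, because the central annihilator is $I_{f,k}$. Taking reduced norms via \eqref{eq:norm-degree} gives $p\deg\theta=\deg_{X,Y}\Nrd(\theta^p)$. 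Since $\theta^p$ is central of central degree at most $d$, its reduced norm is $(f_j^{[p]})^{p}$ up to the normalization of degree $2$, so $\deg\theta\leq d$. This is the key point: the degree of the right multiplier is at most $d$, uniformly in $p$.

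\emph{Coefficients of the right actions.} Then $m_i\theta$ lies in $I$ and has degree at most $a_i+d\leq\nu+d$. Lemma~\ref{lem:uniformGB}(iii), applied to the bounded generating set, expresses $m_i\theta=\sum_h c_{ih}m_h$ with $\deg c_{ih}\leq\nu+d+G$. This gives the matrices of the right action. For the sharper count, I would instead use strictness of the filtration in \eqref{eq:resolution}. An element of $F_{a_i+d}P$ lifts to $\sum_h c_{ih}m_h$ with $\deg c_{ih}\leq a_i+d-a_h\leq\nu+d$, because the surjection is strict. The $2c$ right-action relations then take the form $m_i\cdot z'-\sum_h c_{ih}m_h$.

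\emph{Presentation over $A_2(k)$.} Adding these $2c\leq 2\nu+2$ relations to the $\nu$ left relations gives $3\nu+2$ relations of degree at most $\nu+d$ for the $A_2(k)$-module. For $\nu=0$, we get $m\cdot x'=\theta_{x'}m$ and $m\cdot y'=\theta_{y'}m$ with $\deg\theta\leq d$, that is, two relations. Setting $B=\max(\nu+1,3\nu+2,\nu+d)$ with $\nu\leq N$ gives $B(d,N)$.

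\emph{Main obstacle.} The main obstacle is the degree bound $\deg\theta\leq d$. It requires that the division-ring degree of the right multiplier equals the central degree of its reduced norm, and that the center map of the pullback algebra sends $X',Y'$ to $f^{[p]}_j$ of degree at most $d$. If the norm argument has a gap, I would fall back on a direct argument: $\sigma(\theta)^p=\sigma(f_j^{[p]}(X,Y))$ in $\Frac S$. Additivity of degree then gives $p\deg\theta=p\deg f_j$.
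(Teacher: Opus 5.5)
Your proposal is correct and follows the paper's proof. The paper gets $\deg q_x=\deg f_1$ and $\deg q_y=\deg f_2$ directly from $q^p=f_j^{[p]}(x^p,y^p)$ in $D$ and additivity of degree, which is your fallback; this makes the reduced-norm detour and the appeals to Lemma~\ref{lem:bounded-ideals} and Lemma~\ref{lem:uniformGB}(iii) unnecessary. It then uses strictness of \eqref{eq:resolution} to bound the coefficient degrees by $a_i+d-a_h\leq\nu+d-1$ and adjoins the $2c$ right-action relations to the $r\leq\nu$ left relations, exactly as you do. One point you leave implicit is that these relations actually present the bimodule. This holds because the right-action relations rewrite every element as a left combination of the generators, so the presented module is a quotient of $A^c/(\text{left relations})\simeq M_{f,k}$ that still surjects onto $M_{f,k}$.
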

\begin{proof}
Realize $M_{f,k}$ as a nonzero left ideal $I\subset A_1(k)$, and put $D=\Frac A_1(k)$. Write $x',y'$ for the generators of the right copy of $A_1(k)$. Their actions on $I$ are right multiplication by elements $q_x,q_y\in D$ satisfying $Iq_x,Iq_y\subset I$. The graph ideal \eqref{eq:graphideal} identifies the right action of $(x')^p$ with the left action of $f_1^{[p]}(x^p,y^p)$, and similarly for $y'$ and $f_2$. These left-acting elements are central. Cancelling any nonzero element of $I$ in $D$ therefore gives
\[
 q_x^p=f_1^{[p]}(x^p,y^p),\qquad
 q_y^p=f_2^{[p]}(x^p,y^p).
\]
Additivity of degree yields $\deg q_x=\deg f_1$ and $\deg q_y=\deg f_2$. Suppose first that $\nu>0$, and use the normalized strict presentation from Proposition~\ref{prop:presentation}. A generator $e_i$ has degree $a_i$, so $e_iq_x$ and $e_iq_y$ have degree at most $a_i+d$. Strictness expresses these elements in the generators $e_j$ with coefficient degrees at most
\[
 a_i+d-a_j\leq\nu+d-1.
\]
Adjoin the resulting $2c$ right-action relations to the $r$ left relations. These present the bimodule: every word in the right generators reduces to a left linear combination, and the given right action identifies the resulting module with $M_{f,k}$. The bounds $c\leq\nu+1$ and $r\leq\nu$ give the assertion. If $\nu=0$, choose a free left generator; its two right-action coefficients have degree at most $d$.

\end{proof}

\subsection{Proof of the arithmetic realization criterion}\label{subsec:realization}
The following consequence of the filtration argument in Section~\ref{subsec:restriction} will also be used for parameter schemes whose fibers have different characteristics.

\begin{lemma}\label{lem:family-stratification}
Let $S$ be a scheme of finite type over $\Z$, and let $N$ be a coherent $\mathcal D_{\A^2_S/S}$-module. There is a finite partition of $S$ into reduced, integral, affine, locally closed subschemes $S_a$ such that $N_{S_a}$ is flat over $S_a$ and the dimensions of its geometric fibers are constant on $S_a$. More precisely, on each $S_a$ one can choose a good Bernstein filtration whose filtered pieces and graded pieces commute with base change and are free of finite rank over $S_a$.
\end{lemma}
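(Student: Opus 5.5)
We argue by noetherian induction on $S$. It suffices to prove the following: for every integral closed subscheme $S'\subset S$, there is a nonempty affine open $U\subset S'$, which we take reduced and integral, such that the restriction $N_U$ has a good Bernstein filtration whose filtered and graded pieces are free of finite rank over $U$ and commute with arbitrary base change. The complement $S'\setminus U$ is then a proper closed subset. Its finitely many irreducible components, with their reduced structures, are treated by the same argument. Noetherianity of $S$ makes this process terminate and produces the finite partition. Affineness can be arranged by shrinking to basic opens.

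To treat a fixed integral affine $U=\Spec R$, pull $N$ back to $\A^2_U$. Here $R$ is an integral finitely generated $\Z$-algebra; it may have characteristic $0$ or $p$. Choose a finite presentation of $\Gamma(N_U)$ over $A_2(R)$, or more generally over the crystalline operators on $\A^2_U$. Then run the Gr\"obner-basis construction of Section~\ref{subsec:restriction} over $\Frac R$. First homogenize for the Bernstein weights and compute a finite homogeneous Gr\"obner basis. Next invert the leading coefficients and the denominators appearing in the basis and in the critical-pair reductions. This produces a monic basis over a localization $R[r^{-1}]$. The standard monomials then give compatible $R[r^{-1}]$-bases of every $F_\ell N$ and every $\gr_\ell N$. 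These bases specialize to the analogous bases in every fiber, giving the analogue of \eqref{eq:filtered-base-change} for all base changes, not only residue fields.

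The argument of Section~\ref{subsec:restriction} was written for characteristic zero, but it uses only two facts: ordered monomials multiply with leading coefficient one, as noted in the proof of Lemma~\ref{lem:uniformGB}, and $R$ is a domain. It therefore works verbatim in positive characteristic. Alternatively, one can obtain the same result from generic freeness \cite{Stacks} applied to the finitely generated graded module $\gr N$ over $R[\xi_1,\xi_2,\eta_1,\eta_2]$, as in Lemma~\ref{lem:generic-detection}. Generic freeness yields one localization over which every graded piece is free, and so every filtered piece is free as an iterated extension. This in turn gives flatness of $N_U$ and base-change compatibility.

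Constancy of fiber dimension then follows. For every geometric point $\bar t$ of $U$, the cumulative Hilbert function $\ell\mapsto\dim F_\ell N_{\bar t}$ equals the rank of the free module $F_\ell N_U$. This rank is independent of $\bar t$. The dimension of the characteristic variety is the degree of this Hilbert polynomial, as recalled in Section~\ref{subsec:crystalline}. It is therefore constant on $U$.

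The main obstacle is the uniform choice of one localization that works in every degree $\ell$ simultaneously, because there are infinitely many filtered pieces. The finite Gr\"obner basis handles this, and so does generic freeness of the single finitely generated graded module $\gr N$. I would use generic freeness for this step, since it avoids tracking the critical pairs explicitly.
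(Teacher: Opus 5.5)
Your proposal is correct and follows essentially the same route as the paper. On an integral affine piece, a Gr\"obner basis over the fraction field is made monic after one localization, which gives free, base-change-compatible filtered and graded pieces; constancy of the Hilbert function then gives constant fiber dimension, and noetherian induction assembles the partition. The generic-freeness variant you prefer also works, provided you use the graded form that makes each graded piece free rather than merely projective.

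One bookkeeping point needs tightening. In the induction, the open chosen in each irreducible component must avoid the other components, as the paper arranges. Otherwise the strata coming from intersecting components need not be disjoint.
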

\begin{proof}
First restrict to an integral affine subscheme $\Spec B$ and write $E=\Frac B$. The module of sections of $N_{\Spec B}$ is finitely generated over $A_2(B)$. Compute a homogeneous Gr\"obner basis of its homogenized relation module over $E$, and invert the finitely many coefficients needed for this basis, its expressions in the original relations, and its critical-pair reductions. As in \eqref{eq:filtered-base-change}, the resulting standard monomials give compatible finite free bases for every filtered piece and every graded piece over one principal open subset of $\Spec B$. This argument applies in every characteristic. In particular, $N$ is flat over that open subset, since its module of sections is the filtered union of free modules. The filtered Hilbert function is the same at every point and after every residue-field extension, so the dimensions of the geometric fibers are constant. This is also the argument of \cite[Lemma~4.1.5]{Bitoun}.

For general $S$, pass first to its reduction; this does not change any field-valued fiber. On each irreducible component choose a nonempty affine open disjoint from the other components, and shrink it as in the preceding paragraph. These opens form disjoint strata containing all generic points. Apply the same construction to the reduced closed complement. Noetherian induction yields a finite partition into strata with the asserted properties.
\end{proof}

\begin{proof}[Proof of Theorem~\ref{thm:equivalence}]
The equivalence of (i) and (ii) is Proposition~\ref{prop:lambda}.
Assume (i). By Proposition~\ref{prop:actions}, the $A_2(k_i)$-modules
$M_i$ have presentations with bounded numbers of generators and
relations and bounded relation degrees on a set belonging to $\UU$.
Restricting to such a set, fix the numbers $c,r$ and write
\[
 M_i\simeq A_2(k_i)^c\Big/\sum_{j=1}^r A_2(k_i)g_{ji}.
\]
Taking coefficientwise ultraproducts of these bounded-degree vectors
gives $g_j\in A_2(K)^c$. Let $R\subset K$ be the $R_0$-algebra
generated by their finitely many coefficients, and put
\[
 M_R=A_2(R)^c\Big/\sum_{j=1}^r A_2(R)g_j.
\]
The defining ideal of $R$ in a polynomial ring over $R_0$ is finitely
generated. Its generators vanish under the chosen coefficient
representatives for $\UU$-almost every $i$. Thus these representatives
induce homomorphisms $t_i:R\to k_i$ extending $s_i$, with
$[t_i(a)]_\UU=a$ for every $a\in R$, and
\[
 M_R\otimes_{R,t_i}k_i\simeq M_i
 \qquad\text{for $\UU$-almost every $i$}.
\]
The points $\sigma_i=\ker t_i$ are closed because $k_i$ is finite.
They are Zariski dense: for every nonzero $a\in R\subset K$, the set
of indices with $t_i(a)\ne0$ belongs to $\UU$. The graph-support and
multiplicity-one conditions descend from $k_i$ to $k(\sigma_i)$ by
faithful flatness, as explained after Proposition~\ref{prop:recognition}.
That proposition now shows that $M_{\Frac R}$ is holonomic and
projective of left rank one. Hence $M_R$ is an arithmetic realization,
proving (iii).

Conversely, assume (iii). Use the weighted filtration of Lemma~\ref{lem:restriction}; faithfulness ensures that the prescribed fibers remain defined for $\UU$-almost all $i$ after each localization. Remark~\ref{rem:ultraproduct-finiteness} proves finite generation of $M_{\Frac R}$ as a left module, and Lemma~\ref{lem:restriction}(ii) gives one finite left presentation over a localization of $R$. Its numbers of generators and relations and its relation degrees bound the presentations of $M_i$ for $\UU$-almost all $i$. Lemma~\ref{lem:bounded-ideals} gives (i). Proposition~\ref{prop:recognition} also proves that this generic module is rank-one projective on the left.

\emph{(iii)$\Rightarrow$(iv).}
Regard an arithmetic realization $M_R$ as a coherent $\mathcal D_{\A^2_R/R}$-module. Apply Lemma~\ref{lem:family-stratification} on a principal open $T=\Spec R[r^{-1}]$ containing the generic point. The family is flat over $T$, and all its geometric fibers have the same dimension as the holonomic generic fiber, namely two. For $\UU$-almost every $i$, the maps $t_i$ of Definition~\ref{def:realization} satisfy $t_i(r)\ne0$ and hence define morphisms $u_i:\Spec k_i\to T$. The prescribed fiber isomorphisms give (iv).

\emph{(iv)$\Rightarrow$(iii).}
Choose a finite affine open cover of $T$. There is an affine open $\Spec B$ such that $u_i$ has image in $\Spec B$ for a set of indices belonging to $\UU$. Restrict to that set and let $b_i:B\to k_i$ be the corresponding $R_0$-algebra maps. They induce
\[
 b:B\longrightarrow K,\qquad b(a)=[b_i(a)]_{\UU}.
\]
Put $\mathfrak q=\ker b$ and identify $R=B/\mathfrak q$ with its image in $K$. This is an integral finitely generated $R_0$-algebra. Indeed, the restriction of $b$ to $R_0$ is the fixed injective map supplied by faithfulness.

The ideal $\mathfrak q$ has finitely many generators. For each generator, the set of indices at which its image under $b_i$ vanishes belongs to $\UU$. Intersecting these finitely many sets shows that $b_i$ factors through $R$ for $\UU$-almost every $i$. Write $t_i:R\to k_i$ for these factorizations. If $N_B$ is the module of sections of $M_T$ over $\A^2_B$, set
\[
 M_R=N_B\otimes_B R.
\]
It is finitely generated over $A_2(R)$ and satisfies $M_R\otimes_{R,t_i}k_i\simeq M_i$, with $[t_i(r)]_{\UU}=r$ for every $r\in R$. Its generic fiber is the fiber of $M_T$ at the point $\mathfrak q\in\Spec B$, whose residue field is $\Frac R$. This field has characteristic zero, and the fiber has dimension two by (iv); hence it is holonomic. Thus $M_R$ is an arithmetic realization.

Finally, $R$ is countable and its fraction field embeds in $\C$ extending the given embedding of $\Frac R_0$: extend a transcendence basis and then the algebraic extension. The embedding is possible because $\C$ has infinite transcendence degree over the countable field $\Frac R_0$. This proves the last assertion.
\end{proof}

\begin{proposition}\label{prop:geometric-boundedness}
The numerical and geometric formulations of Conjecture~\ref{conj:bound} are equivalent.
\end{proposition}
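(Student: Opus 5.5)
Both directions reduce to Proposition~\ref{prop:actions}, Lemma~\ref{lem:bounded-ideals} and Lemma~\ref{lem:family-stratification}, now applied uniformly over all finite fields and all maps of degree at most $d$ instead of along one ultrafilter.

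\emph{Numerical $\Rightarrow$ geometric.} Fix $d$ and assume $\nu(M_{f,k})\leq N(d)$. Proposition~\ref{prop:actions} gives an integer $B=B(d,N(d))$. Every $M_{f,k}$, viewed as an $A_2(k)$-module, then has a presentation with at most $B$ generators, at most $B$ relations, and relation entries of degree at most $B$. Let $P_B$ be the affine space over $\Z$ parametrizing the coefficients of such a presentation; it is a finite disjoint union over the numbers $c,r\leq B$. Let $N_{P_B}$ be the tautological quotient $A_2(\mathcal O)^c/\sum_j A_2(\mathcal O)g_j$, a coherent $\mathcal D_{\A^2_{P_B}/P_B}$-module. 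Apply Lemma~\ref{lem:family-stratification} to obtain a finite partition into integral affine strata $S_a$ on which $N$ is flat with constant geometric fiber dimension. Define $T_d$ as the disjoint union of those strata whose fiber dimension is two, with $M_{T_d}$ the restricted family. Each $M_{f,k}$ is holonomic-type in characteristic $p$: its $p$-support is the two-dimensional graph. So the $k$-point of $P_B$ given by its presentation lies in a stratum of dimension two. This yields the required $u:\Spec k\to T_d$ with $M_{T_d,u}\simeq M_{f,k}$. Flatness and the dimension condition are exactly the definition of a family of $\mathcal D$-modules of dimension two over $T_d$.

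\emph{Geometric $\Rightarrow$ numerical.} Given $T_d$ and $M_{T_d}$, choose a finite affine cover $\Spec B_a$ of $T_d$. On each $\Spec B_a$ take finite presentations of the module of sections over $A_2(B_a)$. These specialize to presentations of every fiber $M_{T_d,u}$ with the same number of generators and relations and the same degree bound, uniform in $u$. So all $M_{f,k}$ admit bounded presentations as $A_2(k)$-modules. The remaining task is to turn this into bounded left $A_1(k)$-presentations, so that Lemma~\ref{lem:bounded-ideals} bounds $\nu$.

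\textbf{Main obstacle: passing from bimodule presentations to left presentations uniformly.} Lemma~\ref{lem:restriction}(i) does this, but only generically on an arithmetic base. I would therefore argue by contradiction. Suppose $\nu(M_{f_i,k_i})$ is unbounded along a sequence. By compactness of the bounded degree data, pass to a subsequence along which the characteristics tend to infinity; over a fixed finite field there are only finitely many $f$ of degree at most $d$. Choose a nonprincipal ultrafilter and set $K=\prod_\UU k_i$. Let $R_0$ be the finitely generated ring generated by the ultraproduct coefficients of the $f_i$. Define $R$ as the image in $K$ of the ring generated by $R_0$ and the coordinates of $u_i$ inside one $\Spec B_a$. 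This is a faithful arithmetic family for the ultraproduct map $f$ over $R_0$, and the pullback of $M_{T_d}$ satisfies condition (iv) of Theorem~\ref{thm:equivalence}. The theorem then gives (i), boundedness of $\nu$ on a set in $\UU$, a contradiction. A second technical point is that $f$ is defined over $R_0\subset K$ rather than over $\C$; this is harmless, since $\Frac R_0$ embeds in $\C$ as in the last part of that proof. Residue fields of the points $\sigma_i$ may be smaller than $k_i$, which the descent remarks already handle.
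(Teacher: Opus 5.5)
Your numerical $\Rightarrow$ geometric direction matches the paper's Step~1 and is fine. The closing ultraproduct contradiction via Theorem~\ref{thm:equivalence}, (iv)$\Rightarrow$(i), is also how the paper finishes.

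\textbf{The gap.} The step that fails is the reduction to characteristics tending to infinity. Finiteness of the set of maps of degree at most $d$ over a fixed finite field bounds $\nu$ only one field at a time. It does not exclude an unbounded sequence in which every $k_i$ has the same characteristic $p$ while $[k_i:\F_p]\to\infty$. Along such a sequence $\prod_\UU k_i$ has characteristic $p$. So there is no faithful arithmetic family, and Theorem~\ref{thm:equivalence} cannot be invoked. What is missing is a bound in each fixed characteristic that is uniform in both $k$ and $f$.

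\textbf{How to close it.} Fixed characteristic is exactly where the obstacle you flagged disappears, because the central relations are explicit. This is the paper's Step~2, which precedes the ultraproduct step.
\begin{enumerate}
\item Fix $p>2d$. Write $u,v$ for the right generators and put $h_j=f_j^{[p]}(x^p,y^p)$.
\item The elements $u^p-h_1$ and $v^p-h_2$ are central and annihilate $M_{f,k}$. Hence any $A_2(k)$-presentation with $c$ generators and $r$ relations of degree at most $D$, taken from a finite affine cover of $T_d$, is also a presentation over
\[
C_f=(A_1(k)\otimes_k A_1(k)^{\op})/(u^p-h_1,v^p-h_2).
\]
\item The ring $C_f$ is free as a left $A_1(k)$-module on $u^av^b$, $0\le a,b<p$. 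Multiply each relation on the left by these basis elements and reduce the right exponents modulo $p$. This gives a left $A_1(k)$-presentation with $cp^2$ generators and $rp^2$ relations.
\item Give $x,y$ weight one and $u,v$ weight $d$. The substitutions $u^p=h_1$, $v^p=h_2$ do not raise weighted degree, since $\deg h_j\le pd$. So the entries have degree at most $dD+2d(p-1)$.
\item Lemma~\ref{lem:bounded-ideals} then bounds $\nu(M_{f,k})$ in terms of $d$, $p$ and the family alone, independently of $k$ and $f$.
\end{enumerate}
With these per-characteristic bounds, any unbounded sequence must have $p_i\to\infty$. Your ultraproduct argument then completes the proof.
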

\begin{proof}
\emph{Step 1: a numerical bound gives a family of finite type.}
Fix $d$ and assume the numerical formulation. Proposition~\ref{prop:actions}
gives integers $c,r,D$, depending only on $d$ and $N(d)$, such that every
module under consideration has an $A_2(k)$-presentation with at most $c$
generators and $r$ relations, whose entries have Bernstein degree at most $D$.

For each $1\leq c'\leq c$ and $0\leq r'\leq r$, introduce independent
coefficients for the ordered monomials of degree at most $D$ in the entries
of $r'$ relations on $c'$ generators. These coefficients define an affine
space over $\Z$ and a universal presentation over the sheaf of relative
differential operators on the relative affine plane. Let $S$ be the finite
disjoint union of these parameter spaces, and let $N_S$ be the resulting
coherent $\mathcal D_{\A^2_S/S}$-module. Every specified graph splitting
$\mathcal D$-module is a fiber of $N_S$ at a point defined over its original
field $k$.

Apply Lemma~\ref{lem:family-stratification} and retain the strata whose
fibers have dimension two. Their disjoint union $T_d$, with the restricted
module $M_{T_d}$, satisfies the geometric formulation. In particular, this
construction includes every characteristic $p>2d$.

\emph{Step 2: a family of finite type gives a bound in each fixed
characteristic.}
Conversely, assume the geometric formulation and fix $d$. A finite affine
open cover of $T_d$ and finite presentations of $M_{T_d}$ on its members
give integers $c,r,D$ bounding the numbers of generators and relations and
the Bernstein degrees of their entries in all fibers. Fix a prime $p>2d$.
We show that these presentations give bounded left $A_1(k)$-presentations
of the specified fibers over all finite fields $k$ of characteristic $p$.

Write $A=A_1(k)$ for the left Weyl factor and $u,v$ for the right-acting
generators; thus $[u,v]=1$. For a specified endomorphism $f$, put
\[
 h_j=f_j^{[p]}(x^p,y^p)\in Z(A)\qquad(j=1,2).
\]
The relations $u^p=h_1$ and $v^p=h_2$ annihilate $M_{f,k}$. Consequently,
its given bimodule presentation is also a presentation over
\[
 C_f=(A\otimes_k A_1(k)^{\op})/(u^p-h_1,v^p-h_2).
\]
As a left $A$-module, $C_f$ is free with basis
\[
 u^a v^b,\qquad 0\leq a,b<p.
\]
Indeed, the right Weyl factor is free over its center with precisely this
basis, and $C_f$ is obtained by the indicated central base change.

A presentation with $c'\leq c$ generators and $r'\leq r$ relations over
$C_f$ therefore gives a left $A$-presentation with $c'p^2$ generators and
$r'p^2$ relations: multiply each original relation on the left by every
$u^a v^b$ in this basis and reduce its right exponents modulo $p$.
Give $x,y$ weight one and $u,v$ weight $d$. The substitutions
$u^p=h_1$, $v^p=h_2$ do not increase weighted degree, since
$\deg h_j\leq pd$. Ordering the monomials also does not increase it.
Thus the left coefficients in this presentation have degree at most
\[
 dD+2d(p-1).
\]
Lemma~\ref{lem:bounded-ideals} now gives a bound for $\nu(M_{f,k})$
depending on $d,p$ and the chosen family, but not on $k$ or $f$.

\emph{Step 3: the bounds are uniform over all $p>2d$.}
Suppose that no single bound works for these primes. Step~2 bounds the
invariants in any finite set of characteristics. We can therefore choose
finite fields $k_i$ and endomorphisms $f_i:\A^2_{k_i}\to\A^2_{k_i}$ with
Jacobian one such that
\[
 \deg f_i\leq d,\qquad
 p_i:=\operatorname{char}k_i>\max\{i,2d\},\qquad
 \nu(M_{f_i,k_i})>i.
\]
Choose points $u_i:\Spec k_i\to T_d$ realizing these modules and a
nonprincipal ultrafilter $\UU$. The field $K=\prod_{\UU}k_i$ has
characteristic zero. Taking the ultraproducts of the finitely many
coefficients gives an endomorphism $f$ of $\A^2_K$ of degree at most $d$
with Jacobian one.

Let $R_0\subset K$ be the $\Z$-algebra generated by these coefficients.
It is an integral finitely generated $\Z$-algebra of characteristic zero.
Its finitely many defining relations vanish under the corresponding
coefficient specializations for $\UU$-almost all $i$. They therefore give
maps $s_i:R_0\to k_i$ with
\[
 [s_i(a)]_{\UU}=a\quad(a\in R_0),\qquad f\otimes_{s_i}k_i=f_i.
\]
This is a faithful arithmetic family. Choose an embedding $R_0\hookrightarrow
\C$ to apply Theorem~\ref{thm:equivalence}; this does not change the
specializations or their modules.

Base change $T_d$ and $M_{T_d}$ to $\Spec R_0$. The pairs $(u_i,s_i)$
give the points required by condition~(iv) of that theorem. Its implication
(iv)$\Rightarrow$(i) bounds $\nu(M_{f_i,k_i})$ on a set belonging to
$\UU$, contradicting $\nu(M_{f_i,k_i})>i$. This proves the numerical
formulation for every $p>2d$.
\end{proof}

\subsection{The second conditional implication}\label{subsec:ultra-implication}
\begin{proof}[Proof of Corollary~\ref{cor:ultra}]
Let $f$ be a complex endomorphism of the plane with Jacobian one, and let $R_0$ be the integral $\Z$-algebra generated by its coefficients. Enumerate the nonzero elements of $R_0$. For each $i$, choose a finite-field point of characteristic greater than $i$ and $2\deg f$ at which the first $i$ elements remain nonzero. These points exist because a nonempty open subset of $\Spec R_0$ has closed points in arbitrarily large characteristics. Any nonprincipal ultrafilter then gives a faithful arithmetic family.

Under Conjecture~\ref{conj:bound}, Theorem~\ref{thm:equivalence} produces an arithmetic realization. Embed its characteristic-zero coefficient field into $\C$ as in that theorem. Its generic bimodule $M$ is a rank-one projective left $A_1$-module and a generator for the category of left $A_1$-modules, so its right action maps $A_1$ into the domain $\End_{A_1}(M)^{\op}\in\mathcal D_1$. Under $\mathrm{DC}'_1$ this map is an isomorphism. Thus $M$ is invertible.

Finite projectivity and invertibility descend along the faithfully flat coefficient-field extension. Choose the finite splitting maps and inverse bimodule over $\Frac R$, and apply the explicit localization arguments in Proposition~\ref{prop:recognition} and Lemma~\ref{lem:inverse-graph}. After inverting one nonzero element of $R$, their defining identities and the evaluation isomorphisms hold over $R$. The prescribed graph splitting $\mathcal D$-modules $M_i$ are consequently invertible bimodules for $\UU$-almost all $i$. Their center isomorphisms are $(f_{k_i}^{(1)})^*$, so $f_{k_i}$ is invertible. The finite identities \eqref{eq:inverse-identities} pass to $R_0\subset K$ and then to $\C$, proving that $f$ is invertible. The proof uses only one faithful arithmetic family with bounded BW-invariants for each $f$.
\end{proof}

\subsection{Remarks on this approach}\label{subsec:limitations}
The BW-invariants of stably free rank-one projective modules are unbounded
even over one fixed finite field.

\begin{example}\label{ex:fixed-field-BW}
Fix a field $k$ of characteristic $p>0$ and put $A=A_1(k)$, with
$[y,x]=1$. For every $N\geq1$, set
\[
 g_N(y)=y^{p^N}-y,\qquad
 I_N=A\bigl(xg_N(y)+1\bigr)+A g_N(y)^2.
\]
Then
\[
 I_N\oplus A\simeq A^2,\qquad \nu(I_N)=p^N.
\]
In particular, one may take $k=\F_p$ throughout.
\end{example}
\begin{proof}
Write $g=g_N$, $m=p^N$, $a=xg+1$, and $b=g^2$. Since $g'=-1$,
\[
 gx=xg-1,\qquad ga=xb.
\]
These identities give the complex
\begin{equation}\label{eq:fixed-field-resolution}
 0\longrightarrow A
 \xrightarrow{\alpha} A^2
 \xrightarrow{\beta} I_N\longrightarrow0,
 \qquad
 \alpha(w)=(-wg,wx),\quad \beta(u,v)=ua+vb.
\end{equation}
We verify exactness at $A^2$ and compute the associated graded ideal at
the same time. The principal symbols of $a,b$ are
$\xi\eta^m,\eta^{2m}$, respectively. Their graded syzygy module is
generated by $(-\eta^m,\xi)$, which is the principal symbol of
$(-g,x)$ when the two summands of $A^2$ are placed in degrees $m+1,2m$.
Whenever the leading terms in an expression $ua+vb$ cancel, the leading
coefficient pair is a multiple of this graded syzygy. Subtracting a lift
of that multiple of $(-g,x)$ lowers its filtered degree without changing
$ua+vb$. Induction on degree proves both
\[
 \ker\beta=\operatorname{im}\alpha,
 \qquad
 \gr I_N=(\xi\eta^m,\eta^{2m}).
\]
The first arrow is injective because $A$ is a domain. Thus
\eqref{eq:fixed-field-resolution} is exact.

The left $A$-linear map
\[
 s:A^2\longrightarrow A,\qquad s(u,v)=ux+vg,
\]
satisfies $s\alpha(w)=w(-gx+xg)=w$. It splits the first arrow, proving
$I_N\oplus A\simeq A^2$. The normalized graded ideal is
\[
 J_{I_N}=(\xi,\eta^m),
\]
so its colength, and hence the BW-invariant, is $m=p^N$.
\end{proof}

Lemma~\ref{lem:bounded-ideals} shows that the modules in
Example~\ref{ex:fixed-field-BW} do not admit bounded presentations.
This does not contradict Conjecture~\ref{conj:bound}: no realization of
these ideals as graph splitting $\mathcal D$-modules associated with
endomorphisms of uniformly bounded degree is asserted.

The BW-invariant in \eqref{eq:nu-definition} uses the fact that the associated graded ring has dimension two. For $A_n$, $n>1$, removal of the common divisor leaves a quotient supported in codimension at least two, but it need not have finite length. Hence the argument does not extend by replacing $A_1$ with $A_n$ in the same formula.

\begin{corollary}[Failure of bounded presentations for the four-dimensional graph]\label{cor:no-bounded-graph-presentations}
Let $f$ be the map of Section~\ref{subsec:long-map}. For any sequence
of finite fields $k_i$ with $p_i=\operatorname{char}k_i\to\infty$, the
graph splitting $\mathcal D$-modules $M_{f_{k_i},k_i}$ do not admit uniformly bounded
presentations as $A_4(k_i)$-modules, even after restriction to an infinite
subsequence. Here boundedness means that fixed integers bound the numbers of generators and relations, and the Bernstein degrees of all relation entries.
\end{corollary}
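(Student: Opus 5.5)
We argue by contradiction, following the (iv)$\Rightarrow$(iii) route of Theorem~\ref{thm:equivalence}, now with $n=2$ and the ultraproduct construction used in its proof of (i)$\Rightarrow$(iii). Suppose that along an infinite subsequence the modules $M_{f_{k_i},k_i}$ have $A_4(k_i)$-presentations with at most $c$ generators, at most $r$ relations, and relation entries of Bernstein degree at most $D$. Passing to a further subsequence, we may fix $c$ and $r$. Choose a nonprincipal ultrafilter $\UU$ on this subsequence. Since the coefficients of $f$ lie in $\Z[1/30]$, the specializations $s_i:\Z[1/30]\to k_i$ form a faithful arithmetic family, because $p_i\to\infty$. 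The ultraproducts of the finitely many coefficients of the relation entries give vectors $g_j\in A_4(K)^c$. Let $R\subset K$ be the $\Z[1/30]$-algebra generated by these coefficients, and put $M_R=A_4(R)^c/\sum_jA_4(R)g_j$. Exactly as in the proof of Theorem~\ref{thm:equivalence}, we obtain maps $t_i:R\to k_i$ with $[t_i(a)]_\UU=a$ and $M_R\otimes_{R,t_i}k_i\simeq M_{f_{k_i},k_i}$ for $\UU$-almost all $i$. The closed points $\sigma_i=\ker t_i$ are Zariski dense in $\Spec R$.

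Next we transfer the hypotheses to these points. Each $M_{f_{k_i},k_i}$ is a graph splitting $\mathcal D$-module, so its scheme-theoretic $p$-support is $L_{f}^{(1)}$ with multiplicity one. By faithfully flat descent along $k(\sigma_i)\subset k_i$, as explained after Proposition~\ref{prop:recognition}, the same holds for $M_{\sigma_i}$. The hypotheses of Proposition~\ref{prop:recognition} therefore hold on the dense set $\{\sigma_i\}$, with $n=2$. We do not even need an embedding into $\C$: the module $M_R$ is a finitely generated $A_2(R)$-bimodule whose generic fiber is projective of left rank one, and after one localization it is finite on the left.

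Finally we apply the obstruction. On the graph, the central relations include $z_{1,R}^{\ell}=\iota_\ell(R^{[\ell]})$, and similarly for the $z$ and $q$ coordinates. These are exactly the identities \eqref{eq:nonlift-module-centers} for every $v\in M_{\sigma_i}$, holding on a Zariski-dense set of closed points of $\Spec R$. This contradicts Proposition~\ref{prop:module-nonlift}, so no bounded subsequence exists.

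The main obstacle is bookkeeping. Proposition~\ref{prop:module-nonlift} is stated over an integral finitely generated $\Z[1/30]$-algebra of characteristic zero with a dense set of closed points, whereas here the points come from an ultrafilter. We must therefore check two things. First, localizations of $R$ made in Propositions~\ref{prop:recognition} and~\ref{prop:module-nonlift} remove only a $\UU$-small set of indices, which holds by the nonvanishing property $\{i:t_i(a)\ne0\}\in\UU$. Second, the identities \eqref{eq:nonlift-module-centers} at $\sigma_i$ follow from those over $k_i$, which holds by injectivity of $M_{\sigma_i}\to M_{\sigma_i}\otimes k_i$. Once these checks are made, the rest is a direct reuse of earlier results.
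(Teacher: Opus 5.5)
Your proposal is correct and is essentially the paper's own argument. You take coefficientwise ultraproducts of the bounded presentation matrices to get $M_R$ over a finitely generated $R\subset K=\prod_{\UU}k_i$ with Zariski-dense points $\ker t_i$, descend the graph-support and multiplicity-one conditions to $k(\ker t_i)$, apply Proposition~\ref{prop:recognition} with $n=2$ (after a localization that discards only a $\UU$-small set of indices), and conclude with Proposition~\ref{prop:module-nonlift}. The only slip is a label: the construction you reuse is the one from the (i)$\Rightarrow$(iii) part of Theorem~\ref{thm:equivalence}, not (iv)$\Rightarrow$(iii), and this does not affect the argument.
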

\begin{proof}
Suppose that such a subsequence exists. Fix a nonprincipal ultrafilter on it
and put $K=\prod_\UU k_i$. The prescribed map $f$ is defined over
$R_0=\Z[1/30]$, which embeds in $K$. Take coefficientwise ultraproducts
of the bounded presentation matrices. They give a finite presentation
over $A_4(K)$.

Let $R\subset K$ be generated over $R_0$ by their finitely many
coefficients. Choosing representatives gives maps $t_i:R\to k_i$ for
$\UU$-almost every $i$: it suffices to impose the finitely many
generators of the relation ideal of $R$. The presented module $M_R$
then has precisely the specified graph splitting $\mathcal D$-modules as these
fibers. For every nonzero $a\in R$, the set where $t_i(a)\ne0$ belongs
to $\UU$. Thus the closed points $\ker t_i$ are Zariski dense.

Proposition~\ref{prop:recognition} applies with $n=2$, after descending
the support and multiplicity assertions along the finite residue-field
extensions. It gives a rank-one projective left $A_2(\Frac R)$-module
and finite left generation over an arithmetic localization.
Proposition~\ref{prop:module-nonlift} rules this out.
\end{proof}

\appendix
\section{Boundary lattices in Dodd's construction}\label{app:dodd}
The counterexamples in Section~\ref{sec:nonliftability} disprove the
existence statement of \cite[Theorem~1.7]{Dodd} under its stated hypotheses. The following independent local example identifies a
difficulty that also has to be addressed in any surface version of the
construction. In this example the affine Lagrangian does have a
characteristic-zero lift.

We specify the connections and boundary conditions used below. A meromorphic $\lambda$-connection on a coherent sheaf $N$ on $Y\times\A^1_\lambda$ is a map, relative to the $\lambda$-line,
\[
 \nabla_\lambda:N\longrightarrow
 N\otimes_{\mathcal O_Y}\Omega_Y^1(*D),\qquad
 \nabla_\lambda(hm)=\lambda\,dh\otimes m+h\nabla_\lambda(m),
\]
where $D$ is a boundary divisor and $\Omega_Y^1(*D)$ allows arbitrary poles along $D$. We require integrability: the operators $D_\xi=\nabla_{\lambda,\xi}$ satisfy $[D_\xi,D_\eta]=\lambda D_{[\xi,\eta]}$. At $\lambda=0$ this gives a Higgs field, namely an $\mathcal O_Y$-linear map whose components commute. For a regular or meromorphic function $g$, the \emph{exponential $\lambda$-connection} is the rank-one module with basis $e$ and
\begin{equation}\label{eq:exponential-lambda}
 \nabla_\lambda(he)=(\lambda\,dh+h\,dg)e,
 \qquad \nabla_\lambda e=dg\,e.
\end{equation}
In characteristic $p$, its $p$-curvature is computed using
\[
 \psi_\lambda(\xi)=D_\xi^p-\lambda^{p-1}D_{\xi^{[p]}},
\]
where $\xi^{[p]}$ is the $p$-fold iterate of the derivation $\xi$. The support determined by these commuting operators is the spectral support of the $\lambda$-connection; at $\lambda=1$ it is its $p$-support.

A \emph{lattice} is a coherent subsheaf of the meromorphic bundle that agrees with it off the boundary. It is reflexive if the natural map to its double dual is an isomorphism. Dodd's boundary conditions are imposed at the generic points of the boundary divisors. After an \'etale base change, the finite normalization of the base in the function field of $L$ decomposes into branches. A branch is of \emph{type I} if it embeds in the cotangent bundle on this neighborhood and its boundary divisor belongs to the original affine $L$; all other branches are of \emph{type II}. In \cite[Definition~3.19]{Dodd}, a reflexive lattice is called \emph{$\theta$-regular at infinity} when its summands satisfy the following conditions:
\begin{enumerate}
\item At a type I branch, the lattice is $z^\alpha V_\lambda$, where $\alpha\in\Z$, $z$ is a local equation of the divisor on the normalization, and $V_\lambda$ is locally free with a $\lambda$-connection regular across the divisor.
\item At a type II branch, take a root cover of order prime to $p$ that separates the branches. If $G$ is its covering group, the lattice is the invariant part $(N'_{\lambda})^G$. The bundle $N'_{\lambda}$ has a basis $e_j$ permuted transitively by $G$, with
\begin{equation}\label{eq:theta-regular-boundary}
 \nabla_\lambda(e_j)=
 \left(\theta_j+\lambda\beta\,\frac{dz}{z}\right)e_j,
 \qquad \beta\in\F_p.
\end{equation}
Here $z$ is a local boundary coordinate on the cover, and $\theta_j$ is the pullback of the tautological one-form along the $j$th branch. For an exact Lagrangian, $\theta_j=dg_j$ for its local primitive $g_j$.
\end{enumerate}
The associated residue is $\alpha\bmod p$ in the first case and $\beta$ in the second. These conditions prescribe the lattices in codimension one; Proposition~\ref{prop:dodd-boundary} concerns their reflexive extension across an intersection of two boundary divisors.

\begin{proposition}[Failure of local freeness at a mixed boundary intersection]\label{prop:dodd-boundary}
For every prime $p>3$, the codimension-one conditions of \cite[Definition~3.19]{Dodd} admit a reflexive $\lambda$-connection which is not locally free. The example occurs in a compactification of a smooth closed exact Lagrangian $L\simeq\A^2$. Nevertheless, this $L$ admits a lift to a holonomic $\mathcal D$-module in characteristic zero with constant scheme-theoretic arithmetic support of multiplicity one.
\end{proposition}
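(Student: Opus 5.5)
I will construct the example from an exact Lagrangian $L\simeq\A^2$ that is a graph of the differential of a function plus a branch at infinity. The simplest plan: take $X=\A^2$ with coordinates $(x_1,x_2)$ and let $L$ be the image of a closed immersion $\A^2\to T^*\A^2$ chosen so that, after compactifying the base $X\subset\mathbb P^1\times\mathbb P^1$, the finite normalization of the base in the function field of $L$ has one type I branch along one boundary divisor $D_1$ and one type II branch along a second divisor $D_2$, meeting at a point $P=D_1\cap D_2$. A concrete candidate is a product-type Lagrangian $L=\Gamma_{dg_1}\times C$, where the first factor is the graph of $dg_1$ for a polynomial $g_1(x_1)$ (type I at $x_1=\infty$ after a suitable coordinate) and the second is a smooth exact curve in $T^*\A^1_{x_2}$ which is a double cover of the base near $x_2=\infty$, e.g. $\{\eta^2=x_2\}$, whose two branches $\eta=\pm x_2^{1/2}$ give a type II branch with primitive $g_2=\pm\frac23x_2^{3/2}$. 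Then $L\simeq\A^1\times\A^1=\A^2$, it is closed, smooth, and exact with primitive $g_1+\eta^3\cdot\frac23$-type function.

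Next, I write down the lattice. Near $P$, with local coordinates $z_1$ (equation of $D_1$) and $z_2$ with $z_2^2$ the equation of $D_2$ on the root cover, the exponential $\lambda$-connections \eqref{eq:exponential-lambda} for $g_1+g_{2,\pm}$ give a rank-two meromorphic bundle. I impose the codimension-one conditions of \cite[Definition~3.19]{Dodd}: along $D_1$ the lattice is $z_1^{\alpha}V_\lambda$, and along $D_2$ it is the $G$-invariants $(N'_\lambda)^G$ with $G=\Z/2$, residue $\beta\in\F_p$ as in \eqref{eq:theta-regular-boundary}. The key choice is to let the integer $\alpha$ of the type I condition differ between the two summands coming from the two sheets over $D_1$ in a way incompatible with the $G$-invariant structure along $D_2$ (equivalently, twist the two eigenlines by different shifts), so that the reflexive hull $j_*$ of the lattice from the complement of $P$ is the module of sections over $\mathcal O_P$ of a rank-two reflexive module which is, locally, the ideal-type module $\mathcal O e_+ + \mathcal O e_-$ with $e_\pm$ related by a nontrivial $z_1,z_2$ gluing. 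I then verify non-local-freeness by computing the fiber dimension at $P$: show the minimal number of generators is three (or that $\Ext^1$/the depth computation fails), using the explicit invariant basis $e_++e_-$, $z_2(e_+-e_-)$ and the shifted factor $z_1^{\alpha}$. Checking that $\nabla_\lambda$ preserves this reflexive lattice is routine from the formulas, and $p>3$ is used so the root cover order and the exponents $3/2$ are prime to $p$.

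Finally, the lift in characteristic zero: since $L$ is a product, the exterior product of the exponential module $\C[x_1]e^{g_1}$ (constant arithmetic support $\Gamma_{dg_1}$, multiplicity one, via the Weyl automorphism $y\mapsto y-g_1'$ and \cite[\S\S1.1, 4]{BKKAut}) with a holonomic $A_1$-module with constant arithmetic support $C=\{\eta^2=x_2\}$ of multiplicity one. For the latter I use the Fourier transform: under $\eta\mapsto x$, $x_2\mapsto -y$ the curve becomes the graph $y=-x^2$, lifted by $\C[x]e^{-x^3/3}$; Fourier transform is a Weyl automorphism, so Proposition~\ref{prop:affine-curve-kontsevich}'s transport argument applies. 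The external tensor product argument from Proposition~\ref{prop:dimension-three}, Step~3 gives scheme-theoretic support and multiplicity one. The main obstacle is the second paragraph: producing a genuinely non-locally-free reflexive lattice while honoring both codimension-one conditions exactly; I would first try the rank-two $G$-invariant construction with mismatched type I shifts, and if that yields a free module, enlarge to a triple cover along $D_2$.
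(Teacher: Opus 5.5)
Your final step works for your own $L$: the exterior product of $\C[x_1]e^{g_1}$ with the Fourier transform of $\C[x]e^{-x^3/3}$ gives a multiplicity-one lift of $\Gamma_{dg_1}\times\{\eta^2=x_2\}$. This is analogous to the paper lifting its $L$ by a partial Fourier transform of an exponential module. The gap is the central step. You never produce a reflexive lattice that fails to be locally free, and the mechanism you sketch cannot produce one.

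First, the type assignment is wrong. A divisor over $x_1=\infty$ lies outside the affine base, so it cannot belong to $L$; it is of type II in any coordinates. For your product, the only type I divisor is the branch locus $x_2=0$ of $(x_1,\eta)\mapsto(x_1,\eta^2)$, which has to be put into the boundary. It does not meet $x_2=\infty$; it meets only the type II divisor $x_1=\infty$. At that crossing everything is an external product: a rank-one lattice in $x_1$ times the rank-two Airy-type lattice spanned over $k[x_2,\lambda]$ by the paper's frame $H(t,\lambda),K(t,\lambda)$ with $x_2=t^2$. That lattice is free, has determinant a unit times a power of $x_2$, and agrees with the direct-image lattice along $x_1=\infty$. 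Hence the reflexive extension is free. You also cannot give two sheets different $\alpha$: a ramified type I divisor carries a single branch, and at a type II divisor the transitive $G$-action forces one $\beta$ for the whole orbit.

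Second, and more fundamentally, your proposed test cannot succeed. A reflexive module over the two-dimensional regular local ring $\mathcal O_{\overline X,P}$ has depth two and is therefore free by Auslander--Buchsbaum, so no reflexive ``ideal-type'' module with three generators exists there. Even if you read $P$ as a point of $\overline X\times\A^1_\lambda$, the data you list ($e_++e_-$, $z_2(e_+-e_-)$, $z_1^\alpha$) are independent of $\lambda$. The lattice would then be pulled back from the surface and again free. Non-freeness can only occur over $\lambda=0$ on the threefold, and only if the codimension-one lattices depend on $\lambda$ in a way coupled to the other boundary coordinate.

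The paper arranges exactly this with a non-product Lagrangian, the partial Fourier transform of the graph of $d(v^2z^3/3)$. Its local primitive is $g=\frac23yt^3$ with $x=t^2$, where $y$ is the type II coordinate. Since $D_x=y\bigl(\frac{\lambda/y}{2t}\partial_t+t\bigr)$, the type I frame is $H(t,\lambda/y),K(t,\lambda/y)$. This frame is singular along $y=0$ for $\lambda\ne0$ and degenerates to $t^a,t^{a+1}$ at $\lambda=0$. The intersection $N_p=M_I\cap V[x^{-1}]$ is reflexive, and $E_p=N_p/\lambda N_p$ has $E_p^{**}/E_p$ of length at least $(p-3)/2>0$. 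Your fallback of passing to a triple cover along $D_2$ would not supply this coupling.
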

\begin{proof}
\emph{Step 1: the boundary chart.}
Let $k$ be algebraically closed of characteristic $p>3$. On $Y=\Spec k[x,y]$, take the double cover $q:Z=\Spec k[t,y]\to Y$, $x=t^2$, and put
\[
 g=\frac23yt^3,\qquad dg=yt\,dx+\frac23t^3\,dy.
\]
On $q^{-1}(U)$, where $U=Y\setminus V(xy)$, take the line bundle with the connection \eqref{eq:exponential-lambda} for this $g$. Since $q$ is finite \'etale there, its direct image is a rank-two bundle on $U\times\A^1_\lambda$. Identifying rational sections with functions in $k(t,y)[\lambda]$ gives
\begin{equation}\label{eq:boundary-operators}
 D_x=\frac{\lambda}{2t}\partial_t+yt,
 \qquad D_y=\lambda\partial_y+\frac23t^3.
\end{equation}
These operators commute. Their intrinsic $p$-curvatures are multiplication by $(yt)^p$ and $(2t^3/3)^p$. Thus their spectral support is the Frobenius twist of the integral Lagrangian parametrized by $(x,y;\xi,\eta)=(t^2,y;yt,2t^3/3)$, with generic multiplicity one.

\emph{Step 2: the reflexive lattice.}
Write $a=(p-1)/2$, $m=\lfloor a/3\rfloor$, and $r=a-3m\in\{0,2\}$. Define
\[
 c_0=1,\qquad
 c_i=\frac{(a-3i+3)(a-3i+1)}{12i}c_{i-1}
 \quad(1\leq i\leq m),
\]
\[
 H(t,s)=\sum_{i=0}^m c_i s^i t^{a-3i},\qquad
 K(t,s)=\left(\frac{s}{2t}\partial_t+t\right)H(t,s).
\]
Both are polynomials. Set $D=s(2t)^{-1}\partial_t+t$. The recursion, together with $a+1/2=0$ in $k$ and $r(r-2)=0$, gives $DH=K$ and $DK=xH$. Write $H=H_0+tH_1$, $K=K_0+tK_1$, and let $P_s$ be the matrix with columns $(H_0,H_1)^t$ and $(K_0,K_1)^t$. Then
\begin{equation}\label{eq:boundary-determinant}
 \det P_s=(-1)^a x^a.
\end{equation}
Indeed, the traces of $D$ in the two frames give $x\partial_x\det P_s=a\det P_s$. Its $x$-degree is at most $a<p$, so it is $x^a C(s)$. Homogeneity for the weights $\deg t=1$, $\deg s=3$ makes $C$ constant, and evaluation at $s=0$ gives the displayed sign.

Put $R=k[x,y,\lambda]$ and $V=R\oplus Rt=k[t,y,\lambda]$. Inside $V[(xy)^{-1}]$ define
\begin{equation}\label{eq:boundary-lattice}
 M_I=R[y^{-1}]H(t,\lambda/y)\oplus R[y^{-1}]K(t,\lambda/y),
 \qquad N_p=M_I\cap V[x^{-1}].
\end{equation}
The determinant calculation identifies these two free lattices on $D(xy)$. Moreover, $N_p\subset V$, and membership is equivalent to
\[
 \operatorname{adj}(P_{\lambda/y})v\in x^a R[y^{-1}]^2.
\]
Clearing a fixed power of $y$ makes this a kernel condition between finite $R$-modules; multiplication by $y$ is injective modulo $x^a$. This proves coherence. The lattice is the extension of the glued vector bundle from $D(x)\cup D(y)$ across a subset of codimension two, hence is reflexive. It is also $\lambda$-saturated in $V$. Indeed, if $v\in V$
and $\lambda v\in N_p$, then
$\lambda\operatorname{adj}(P_{\lambda/y})v\in x^aR[y^{-1}]^2$.
Since $\lambda$ is a nonzerodivisor in $R[y^{-1}]/(x^a)$, the same
divisibility holds without $\lambda$, and $v\in N_p$. Thus
$N_p\cap\lambda V=\lambda N_p$, so $N_p/\lambda N_p$ embeds in
$V/\lambda V=k[t,y]$. Torsion-freeness over $k[\lambda]$ gives flatness over the $\lambda$-line.

Direct differentiation gives, after substituting $s=\lambda/y$,
\begin{align*}
 D_xH&=yK,& D_xK&=xyH,\\
 D_yH&=\frac{2x}{3}K-\frac{a\lambda}{3y}H,&
 D_yK&=\frac{2x^2}{3}H-\frac{(a+1)\lambda}{3y}K.
\end{align*}
Thus $M_I$ is stable on $D(y)$, and $V[x^{-1}]$ is stable on $D(x)$, so the connection preserves their intersection $N_p$. At the generic point of $x=0$, the frame $H,K$ gives a locally free $\lambda$-connection with no pole in $x$. It therefore satisfies the type I condition with $\alpha=0$.

At the generic point of $y=0$, one has $x\ne0$ and $N_p=V[x^{-1}]$. After adjoining $t$ with $t^2=x$, the two branches give the exponential lines
\[
 \nabla_\lambda e_+=dg\,e_+,
 \qquad \nabla_\lambda e_-=-dg\,e_-.
\]
The involution $t\mapsto-t$ interchanges them. Their invariant direct sum is the original direct image. In \eqref{eq:theta-regular-boundary} the forms are $\theta_+=dg$ and $\theta_-=-dg$, and $\beta=0$, so the type II condition holds. Any additional root cover in the boundary coordinate $y$ leaves these formulas unchanged. The identification of these two divisors as types I and II follows from the global model in Step~4. Thus both conditions hold for the entire $\lambda$-family.

\emph{Step 3: failure of local freeness.}
Let $E_p=N_p/\lambda N_p$, with the Higgs field obtained by setting $\lambda=0$. In the following formula the double dual is taken over $k[x,y]$:
\[
 E_p^{**}=\Hom_{k[x,y]}\bigl(\Hom_{k[x,y]}(E_p,k[x,y]),k[x,y]\bigr).
\]
The two localizations in \eqref{eq:boundary-lattice} give
\[
 E_p^{**}=t^a k[t,y],\qquad T_p=E_p^{**}/E_p,
 \qquad\Supp T_p\subset\{(0,0)\}.
\]
We compute its image modulo $y$. Put
\[
 V_0=\bigl(k[x,s]H+k[x,s]K\bigr)\cap k[t].
\]
For $f(x)+t h(x)$ to belong to $V_0$, both entries of
\[
 \begin{pmatrix}K_1&-K_0\\-H_1&H_0\end{pmatrix}
 \binom{f}{h}
\]
must be divisible by $x^a$. In each row the terms involving $f$ and $h$ have opposite parities of their $s$-exponents and cannot cancel. The recursion shows that the least even $t$-power appearing in $H,K$ is $0$, and the least odd power is $3$, both with nonzero coefficients. The conditions are therefore $x^{a-1}\mid f$ and $x^a\mid h$, giving
\[
 V_0=x^{a-1}k[x]\oplus tx^a k[x].
\]
This is exactly the image of $E_p$ in
$E_p^{**}/yE_p^{**}=t^a k[t]$. To prove the inclusion, take a section
$v\in N_p\subset k[t,y,\lambda]$ and substitute $\lambda=ys$.
Its coefficient vector in the basis $1,t$ is polynomial in $x,y,s$.
Multiplication by $\operatorname{adj}(P_s)$ gives a vector divisible
by $x^a$ after inverting $y$. Multiplication by $y$ is injective
modulo $x^a$, so this divisibility already holds over $k[x,y,s]$.
Setting $y=0$ gives a vector independent of $s$ in $V_0$.
Conversely, if $w\in V_0$, its expression in the columns of $P_s$
has coefficients in $k[x,s]$. Substituting $s=\lambda/y$ puts $w$
in $M_I$; since $w\in k[t]\subset V[x^{-1}]$, it belongs to $N_p$.
Its image modulo $\lambda$ and $y$ is $w$ itself. Consequently
\[
 \dim_k(T_p/yT_p)=\dim_k(t^a k[t]/V_0)=a-1,
 \qquad \operatorname{length}T_p\geq\frac{p-3}{2}>0.
\]
Thus $E_p$ is not locally free, and neither is $N_p$. For example, when $p=5$ the lattice is generated by $x$, $\lambda+xyt$, $x^2t$, with the single relation $(-\lambda,x,-y)$, so
\[
 N_5\simeq\operatorname{coker}\bigl(R\xrightarrow{(-\lambda,x,-y)^t}R^3\bigr),
 \qquad E_5\simeq k[x,y]\oplus(x,y).
\]

\emph{Step 4: $\mathcal D$-module in characteristic zero with prescribed arithmetic support.}
In $T^*\A^2_{x,v}$ consider
\begin{equation}\label{eq:boundary-global-L}
 L=\{(x=u^2v^2,v;\xi=u,\eta=-\tfrac23u^3v):(u,v)\in\A^2\}.
\end{equation}
It is closed and smooth, since $\xi,v$ recover $u,v$, and
\[
 (\xi\,dx+\eta\,dv)|_L=d\bigl(\tfrac23u^3v^2\bigr).
\]
In the chart $y=v^{-1}$, put $t=uv$. The finite normalization of
the base in $k(L)=k(t,v)$ is given by $x=t^2$, and the primitive
becomes $g=2yt^3/3$. Over the generic point of $x=0$, with $y\ne0$,
the divisor $t=0$ is the divisor $u=0$ of the original $L$; moreover
$\xi=yt$ recovers $t$, so this branch is embedded in the cotangent
bundle and is of type I. The divisor above $y=0$ has $v=\infty$ and
does not belong to $L$, so it is of type II.

Here is a compactification satisfying the separation condition in
\cite[\S3, Assumptions~1 and 2]{Dodd}. Start with
$\mathbb P^1_x\times\mathbb P^1_v$ and the boundary of the open
set $U=\{xv\ne0\}\subset\A^2_{x,v}$. Blow up $(x,v)=(0,0)$.
In the chart $x=vx_1$, blow up $(x_1,v)=(0,0)$; the relevant chart is
\[
 x=v^2x_2,\qquad x_2=u^2.
\]
The second exceptional divisor represents the valuation of $v=0$
on $L$, and the strict transform of $x=0$ represents that of $u=0$.
At the generic point of the second exceptional divisor, $u$ is a unit;
the normalization $u^2=x_2$ is \'etale and its branches, after an
\'etale base change, are sections with regular tautological one-form
\[
 v^2u\,dx_2+\frac43u^3v\,dv.
\]
Thus this divisor is also of type I. These two type I divisors meet
at $(x_2,v)=(0,0)$. Blow up this point once more to separate them.
Denote the resulting surface by $\overline X$. The new exceptional divisor has valuation
$\operatorname{ord}(x)=3$, $\operatorname{ord}(v)=1$ on the base;
above it the center on $L$ is the point $(u,v)=(0,0)$, so it is of
type II. The first exceptional divisor, the remaining strict transform
of $v=0$, and the divisors at $x=\infty$ or $v=\infty$ are also of
type II: their valuations have either $u$ or $v$ with a pole. These
are all boundary components. The blowups are at boundary crossings,
so the base remains smooth with a normal crossings boundary. The
normalization is finite, its branch locus is contained in that
boundary, and the two type I divisors are disjoint. This verifies
the stated separation requirement.

All three blowup centers lie over $v=0$, so the chart at
$v=\infty$, including the crossing $(x,y)=(0,0)$ used above, is
unchanged. The cover has degree two and is tame for $p>3$; taking
square roots of the boundary coordinates separates its branches.
On the other type I divisor use the direct image of the regular
exponential $\lambda$-connection with primitive $2u^3v^2/3$.
On the type II branches use the exponential lines with this
meromorphic primitive, with residue $\beta=0$, and descend their
invariant direct sum. Along the strict transform of $x=0$, retain
the lattice $M_I$ of Step~2. These choices give the prescribed
codimension-one lattices on a common meromorphic connection.

For completeness, their reflexive extension is coherent. Remove
the finitely many boundary crossings and shrink the neighborhoods of
the generic boundary points so that the chosen lattices are bundles.
They glue with the connection on $U\times\A^1_\lambda$; any remaining
omitted subset has codimension at least two in the smooth
threefold $\overline X\times\A^1_\lambda$. Extend the bundle
coherently and take its double dual. This agrees with the prescribed
lattices in codimension one, and the meromorphic connection extends
by reflexivity. Its restriction to the displayed crossing is $N_p$,
because both are reflexive and have the same codimension-one
localizations. Thus the local failure occurs in the compactification
setting of \cite[\S3]{Dodd}.

To construct an explicit lift to a $\mathcal D$-module in characteristic zero, put $R_0=\Z[1/6]$ and $W(z,v)=v^2z^3/3$. On $R_0[z,v]$ define an $A_2(R_0)$-module by
\begin{equation}\label{eq:boundary-fourier-lift}
 \partial_x=z,\quad x=v^2z^2-\partial_z,
 \quad v=v,\quad \partial_v=\partial_v-\tfrac23vz^3,
\end{equation}
where the partial derivatives on the right-hand side are ordinary derivations. The Weyl relations follow by direct calculation. This is the partial Fourier transform of the exponential module with connection $d-dW$, so its characteristic-zero fiber is holonomic. Equivalently, it is cyclic, generated by $1$, with relations
\[
 (x-v^2\partial_x^2)1=0,\qquad
 (\partial_v+\tfrac23v\partial_x^3)1=0.
\]
In every characteristic $p>3$ its central actions are
\[
 x^p=v^{2p}z^{2p},\quad \partial_x^p=z^p,
 \quad v^p=v^p,\quad
 \partial_v^p=-\tfrac23v^p z^{3p}.
\]
The derivative correction terms vanish because their orders exceed the degrees in the differentiated variable. The module is free of rank $p^2$ over $k[z^p,v^p]$. Its central annihilator is therefore exactly the ideal of $L^{(1)}$ from \eqref{eq:boundary-global-L}, and it is a splitting module of multiplicity one there.
\end{proof}

Proposition~\ref{prop:dodd-boundary} disproves the local-freeness assertion of \cite[Theorem~3.25]{Dodd} under its stated boundary conditions. In particular, the vector-bundle comparison in Lemma~3.26 cannot supply the subsequent deformation argument as written.

A further difficulty concerns the integral separatedness assertion of \cite[Lemma~7.12]{Dodd}, used in the comparison in Theorem~5.1. In its stated generality, it fails. For $R=\Z[1/N]$, consider the coherent $\mathcal D_{\A^1_R/R}$-module associated with
\[
 E=R[x,x^{-1}]e,\qquad y(fe)=(f'+x^{-2}f)e.
\]
It is cyclic, with generator $e$ satisfying $(x^2y-1)e=0$; its characteristic-zero fiber is therefore holonomic. Its first de Rham cohomology group is the cokernel of $f\mapsto f'+x^{-2}f$. If $v_j$ is the class of $x^jdx$, the relations are
\[
 nv_{n-1}+v_{n-2}=0\quad(n\in\Z).
\]
They kill the terms with $j\leq-2$ and identify the remaining direct limit with $\bigcup_m(1/m!)R=\Q$. After base change to $W(k)$, for $p\nmid N$, the same computation gives $W(k)[1/p]$, which is not $p$-adically separated. Localizing $R$ further does not fix this example.

\section{Two more conjectures of Kontsevich}\label{app:two-conjectures}

\subsection{Rigidity and semisimplicity over the first Weyl algebra}
In this appendix, rigidity means the vanishing of the full group
$\Ext^1(M,M)$ in the category of algebraic $\mathcal D$-modules.
This is stronger than rigidity with prescribed formal types at the
singular points. We first recall the statement to which it applies.

In \cite[\S4.3]{Kont} a holonomic $\mathcal D$-module is of \emph{extended motivic-exponential} type if it belongs to the smallest class of holonomic $\mathcal D$-modules on smooth
varieties that contains the exponential connections
$(\mathcal O_X,d+dF)$, for regular functions $F$, and is closed under
extensions, subquotients, and taking the cohomology modules of direct and
inverse images. Conjecture~4 from \cite{Kont} predicts that this class consists precisely of the modules with constant arithmetic support. We consider the following related conjecture.
\begin{conjecture}[Kontsevich, {\cite[Conjecture~6]{Kont}}]\label{conj:kont-six}
A holonomic $\mathcal D_X$-module $M$ with $\Ext^1(M,M)=0$ is of extended motivic-exponential type.
\end{conjecture}

We first prove this assertion for $X=\A^1$ without assuming that $M$
is irreducible. The proof uses the index formula and the formal
classification of connections, followed by a rank-reduction argument
of the kind used in \cite{ArinkinRigid}. The rigidity theorems in
\cite{BlochEsnault,ArinkinRigid} concern irreducible connections with
prescribed formal types; \cite{SabbahRigid} proves exponential-geometric
origin under a quasi-unipotence hypothesis. We include the
semisimplicity argument because neither irreducibility nor a
prescribed formal type is assumed in the affine-line result.

Lemma~\ref{lem:curve-trace-injection} then extends the conclusion of
Conjecture~\ref{conj:kont-six} to arbitrary holonomic modules on every
smooth connected complex curve other than $\mathbb P^1$.
For irreducible modules on $\mathbb P^1$,
Theorem~\ref{thm:rigid-projective-line} derives quasi-unipotence and
rigidity with prescribed formal types from full self-Ext vanishing;
Sabbah's theorem then applies.

Write $A=A_1$.
For an automorphism $\sigma$ of $A$, twisting a module means
transporting its action along $\sigma$. We use the Fourier
automorphism and the exponential twists
\begin{equation}\label{eq:elementary-Weyl-automorphisms}
 \mathcal F(x)=-\partial,\quad \mathcal F(\partial)=x,
 \qquad
 T_g(x)=x,\quad T_g(\partial)=\partial+g'(x)
 \quad(g\in\C[x]),
\end{equation}
together with linear changes of $(x,\partial)$ of determinant one. An exponential twist has degree $d$ if $g$ has degree $d$.

\begin{theorem}\label{thm:rigid-semisimple}
Let $M$ be a holonomic left $A$-module with $\Ext_A^1(M,M)=0$.
Then $M$ is semisimple. Each of its simple summands is obtained from
the standard module $\C[x]$ by a finite composition of the
automorphisms in \eqref{eq:elementary-Weyl-automorphisms} and linear
changes of determinant one.
\end{theorem}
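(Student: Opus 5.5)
The plan is to separate the statement into two parts: a homological \emph{semisimplicity} part, and an \emph{Arinkin--Katz type reduction} for the simple summands. The basic tool is the Euler form
\[
 \chi(M,N)=\dim\Hom_A(M,N)-\dim\Ext^1_A(M,N),
\]
defined for holonomic $M,N$. Since $A$ has global dimension one, no $\Ext^2$ term appears, and each $\Ext^i$ is finite-dimensional for holonomic modules. I would use three facts about $\chi$:
\begin{itemize}
\item it is additive in exact sequences;
\item it is invariant under the automorphisms in \eqref{eq:elementary-Weyl-automorphisms};
\item it is symmetric, $\chi(M,N)=\chi(N,M)$, via holonomic duality on the affine line.
\end{itemize}
Rewriting $\chi(M,N)$ as the de Rham Euler characteristic of $\mathbb D M\otimes^{!}N$, the Deligne--Malgrange index formula expresses it through local data only: generic ranks, the formal types (Levelt--Turrittin decompositions) at the finite singular points, and the irregularity at $\infty$. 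The hypothesis gives $\chi(M,M)=\dim\End(M)>0$.

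\emph{Step 1: semisimplicity.} Because $\Ext^1$ is right exact in each variable (global dimension one), the vanishing of $\Ext^1(M,M)$ passes to subquotients in the appropriate slots. For a submodule $N\subset M$ with quotient $Q=M/N$, I get $\Ext^1(N,M)=0$, $\Ext^1(M,Q)=0$ and $\Ext^1(N,Q)=0$. The extension class of $M$ lives in $\Ext^1(Q,N)$, which these vanishings do not control directly, so I would argue as follows.
\begin{itemize}
\item Expand $0=\chi(M,M)-\dim\End M$ using additivity and symmetry of $\chi$.
\item Compare with the long exact sequences for $\Hom(-,-)$ along $0\to N\to M\to Q\to0$.
\item Derive $\dim\Ext^1(Q,N)\leq\dim\Hom(N,Q)-\dim(\text{image of }\Hom(N,Q)\text{ in }\End M)$, and show the right side is zero.
\end{itemize}
I would first run this with $N$ the socle of $M$, then induct on length. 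I expect this bookkeeping to be the main obstacle. If it resists, the fallback is an Arinkin-style argument: run the reduction of Step~2 on $M$ itself, where Fourier transforms and twists preserve both $\Ext^1(M,M)=0$ and the length, until $M$ becomes a module with regular singularities of rank one type. Semisimplicity there is checked directly, since extensions of $\C[x]$ by itself are $\Ext^1(\C[x],\C[x])=H^1_{dR}(\A^1)=0$.

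\emph{Step 2: the simple summands.} Let $S$ be a simple summand; it is again rigid, since $\Ext^1(S,S)$ is a direct summand of $\Ext^1(M,M)$. I would induct on a complexity such as the Bernstein multiplicity $e(S)$, or the pair (rank, irregularity plus number of singular points). The steps are:
\begin{enumerate}
\item Apply a linear symplectic change and an exponential twist $T_g$ to remove the top slope at $\infty$ where possible.
\item Apply $\mathcal F$; the stationary phase formulas describe how the local data transform.
\item Use the index formula with $\chi(S,S)=1>0$ to show that a suitable choice of twist and $\mathcal F$ strictly lowers the complexity. This mimics Katz's middle-convolution algorithm, with positivity of $\chi$ forcing one local invariant to be large relative to the rank.
\end{enumerate}
The induction terminates at a module of complexity zero. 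Such a module is a simple module of minimal multiplicity, hence $\C[x]$ or a delta module $A/A(x-a)$; the latter is $\mathcal F$ followed by $T_{ax}$ applied to $\C[x]$. Inverting the chain of automorphisms then gives the stated description. The main difficulty is ensuring that the complexity actually drops in every case, and not just when the module already has regular singularities. Here I would rely on the irregular local Fourier transform computations together with the positivity $\chi(S,S)=1$.
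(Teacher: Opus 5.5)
There are genuine gaps in both halves of your plan.

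\emph{Semisimplicity.} The inequality you propose is not well defined, because there is no natural map $\Hom(N,Q)\to\End M$. The natural map is $\Hom(Q,N)\to\End M$, $\phi\mapsto(M\to Q\xrightarrow{\phi}N\hookrightarrow M)$. It is injective, so under that reading your inequality just asserts $\Ext^1(Q,N)=0$, which is what had to be proved. The fallback presupposes its conclusion. The twist-and-Fourier step is only controlled for a module with a single irreducible formal type at infinity, and already a direct sum of distinct rigid simples needs different twists on different summands.

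The paper argues differently. After a generic determinant-one linear change, $M$ is finite and torsion-free over $\C[x]$, i.e.\ a connection $E$ on $\A^1$ singular only at $\infty$. One compares $\chi_{\mathrm{dR}}(\A^1,\End E)=r^2-\operatorname{irr}(\End V)$ with $\chi_{\mathrm{dR}}(\mathbb P^1,j_{!*}\End E)=r^2-\operatorname{irr}(\End V)+h$ and uses duality. This gives $\End_A(E)\cong\End(V)$, so formal idempotents lift, and the count $m=m^2(r_0^2-\operatorname{irr}\End W)$ forces $m=1$.

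Your homological idea can be completed by a different route:
\begin{enumerate}
\item Holonomic $A_1$-modules form a hereditary category of finite length. Each indecomposable summand $M_i$ of $M$ is rigid, so by the Happel--Ringel lemma $\End M_i=\C$.
\item For a simple $S\subset M_i$, $\Ext^1(S,M_i)$ is a quotient of $\Ext^1(M_i,M_i)=0$. Hence $\chi(M_i,S)=\chi(S,M_i)=\dim\Hom(S,M_i)\geq1$. The symmetry is easiest to see after one generic linear change makes both modules connections $E,F$, where $\chi(E,F)=\rank E\,\rank F-\operatorname{irr}(E^\vee\otimes F)$.
\item So there is a surjection $M_i\to S$, and the composite $M_i\to S\hookrightarrow M_i$ is a nonzero scalar. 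Thus $M_i=S$.
\end{enumerate}

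\emph{The reduction.} Your Step~2 has the right strategy, since the paper also twists and Fourier-transforms down to rank one. But it omits the mechanism that makes the complexity decrease, which is the substance of the theorem. If you use Bernstein multiplicity, note that $\mathcal F$ and linear symplectic changes preserve the Bernstein filtration, so any decrease must come from the twists, and you give no argument for that.

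The paper relies on three inputs missing from your sketch:
\begin{enumerate}
\item The reduction above to a connection on all of $\A^1$. Then rank is the complexity and no finite singular points need tracking.
\item Irreducibility of the formal type $V$ at $\infty$ of a rigid simple $E$. This holds because $\End V\cong\End E=\C$.
\item A slope window. Let $E'=T_g(E)$ be the twist removing the exponents divisible by $r$. Its formal type $V'$ has one nonintegral slope $s$. Since $H^0_{\mathrm{dR}}(\A^1,E')=0$ and $\chi=r-rs$, we get $s>1$. Next, $\operatorname{irr}(\End V')=r^2-1$, which is $\chi(E',E')=1$ used as an \emph{upper} bound. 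At least $r^2(1-1/q)$ ordered pairs of conjugate exponents have distinct leading terms, so $s<2$.
\end{enumerate}
Stationary phase then makes $\mathcal F(E')$ a connection on $\A^1$ of rank $r(s-1)<r$.

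Finally, $T_g$ fixes $x$, so ``$\mathcal F$ followed by $T_{ax}$'' just returns $\delta_0$. The twist has to come before $\mathcal F$.
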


We separate the formal argument from the Fourier reduction. For a
connection $E$ on $\A^1$, its formal connection at infinity is
$V=E\otimes_{\C[x]}\C((t))$, where $t=x^{-1}$.
The irregularity $\operatorname{irr}(V)$ is the sum of its formal
slopes, counted with their ranks. It is an integer. Endomorphisms
of a connection always mean horizontal endomorphisms.

\begin{lemma}\label{lem:rigid-formal-endomorphisms}
Let $E$ be a connection of rank $r$ on $\A^1$ with
$\Ext_A^1(E,E)=0$. Restriction to the formal punctured neighborhood
of infinity induces an isomorphism
\[
 \End_A(E)\xrightarrow{\sim}\End_{\mathcal D_{\C((t))}}(V).
\]
Moreover, $E$ is a direct sum of connections whose formal connections
at infinity are irreducible. In particular, $E$ is semisimple.
\end{lemma}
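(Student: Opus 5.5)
The plan is to compare horizontal sections of $\mathcal E=\mathcal End_{\C[x]}(E)$ over $\A^1$ with those of its formalization $\widehat{\mathcal E}=\End(V)$ at $t=x^{-1}=0$. This uses the de~Rham localization sequence and duality, followed by the index formula. First I identify
\[
 \Ext^i_A(E,E)\simeq H^i_{\mathrm{dR}}(\A^1,\mathcal E),
\]
so that $\End_A(E)=H^0_{\mathrm{dR}}(\A^1,\mathcal E)$ and the hypothesis reads $H^1_{\mathrm{dR}}(\A^1,\mathcal E)=0$.

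For a connection $F$ on $\A^1$ I use the standard long exact sequence relating compactly supported cohomology, ordinary cohomology, and the formal punctured neighbourhood of infinity:
\[
 0\to H^0_c(F)\to H^0(\A^1,F)\to H^0(\widehat F)\to H^1_c(F)\to H^1(\A^1,F)\to\cdots .
\]
Here $H^0_c=0$. Poincar\'e duality gives $H^1_c(\A^1,F)\simeq H^1(\A^1,F^\vee)^*$. Since $\mathcal E^\vee\simeq\mathcal E$ via the trace pairing, $H^1_c(\A^1,\mathcal E)\simeq H^1(\A^1,\mathcal E)^*=0$. Hence the restriction map
\[
 \End_A(E)=H^0(\A^1,\mathcal E)\longrightarrow H^0(\widehat{\mathcal E})=\End_{\mathcal D_{\C((t))}}(V)
\]
is an isomorphism. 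This proves the first assertion.

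Next I decompose. Levelt--Turrittin writes $V$ as a finite sum of indecomposable formal connections. A complete set of orthogonal idempotents of the finite-dimensional algebra $\End(V)$ lifts, through the algebra isomorphism just proved, to idempotents of $\End_A(E)$. Their images give $E=\bigoplus_iE_i$ with $\widehat{E_i}=V_i$ indecomposable. Since $\Ext^1(E_i,E_i)$ is a direct summand of $\Ext^1(E,E)$, each $E_i$ inherits the hypothesis. By the first part, $\End(E_i)\simeq\End(V_i)$ is local.

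It remains to show that each $V_i$ is irreducible; this rank-reduction step is the main obstacle. By the formal classification, an indecomposable formal connection has the form $V_i\simeq\mathrm{El}\otimes J_m$. Here $\mathrm{El}$ is irreducible of rank $q$ (an elementary module $\mathrm{El}(\rho,\varphi,\text{regular rank one})$), and $J_m$ is the regular connection with a single unipotent Jordan block of size $m$. Then $\End(V_i)\simeq\C[N]/(N^m)$ has dimension $m$, and
\[
 \operatorname{irr}(\End V_i)=m^2\operatorname{irr}(\End\mathrm{El}).
\]
On the other hand, $H^1(\A^1,\End E_i)=0$, so the index formula for $\End E_i$ (rank $(mq)^2$, irregularity only at $\infty$) gives
\[
 \dim\End_A(E_i)=m^2q^2-m^2\operatorname{irr}(\End\mathrm{El}).
\]
Equating the two computations of $\dim\End(E_i)=\dim\End(V_i)$ yields
\[
 1=m\bigl(q^2-\operatorname{irr}(\End\mathrm{El})\bigr),
\]
with the bracket an integer, so $m=1$. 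Hence each $V_i$ is irreducible.

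Semisimplicity follows. A nonzero proper subconnection of $E_i$ would formalize to a nonzero proper formal subconnection of the irreducible $V_i$; formalization at infinity is exact and faithful on connections. So each $E_i$ is irreducible, and $E$ is semisimple.

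The points I would double-check are the duality identification $H^1_c(\mathcal E)\simeq H^1(\mathcal E)^*$, and the exact formula for $\operatorname{irr}(\End(\mathrm{El}\otimes J_m))$. The latter is multiplicative in $m^2$ because $J_m$ is regular.
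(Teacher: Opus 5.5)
Your proof is correct. Its second half coincides with the paper's argument: you lift the idempotents of $\End(V)$, use $V_i\simeq\mathrm{El}\otimes J_m$ with $\dim\End(V_i)=m$ and $\operatorname{irr}(\End V_i)=m^2\operatorname{irr}(\End\mathrm{El})$, and conclude $1=m\bigl(q^2-\operatorname{irr}(\End\mathrm{El})\bigr)$.

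The first step takes a genuinely different route. The paper argues numerically. The ordinary index formula gives $e=r^2-I$, and the index formula for the middle extension gives $\chi_{\mathrm{dR}}(\mathbb P^1,j_{!*}\End E)=e+h$. Duality on $\mathbb P^1$, with the trace self-duality, gives $\dim H^0=\dim H^2=e$, hence $\dim H^1(\mathbb P^1,j_{!*}\End E)=e-h\geq0$. Injectivity of restriction supplies the reverse inequality $e\leq h$.

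You instead read the isomorphism directly off the localization sequence $0\to H^0_c\to H^0(\A^1,\mathcal E)\to H^0(\widehat{\mathcal E})\to H^1_c(\mathcal E)$, where $H^1_c(\mathcal E)\simeq H^1(\A^1,\mathcal E^\vee)^*=H^1(\A^1,\mathcal E)^*=0$. This is more economical. It needs neither the middle extension nor its index formula, nor any dimension count. It also shows more generally that $H^0(\A^1,F)\to H^0(\widehat F)$ is bijective whenever $H^1(\A^1,F^\vee)=0$. The paper's version has the advantage of producing $e=r^2-I$ along the way, which it reuses at once in Step~2 (you invoke the same index formula separately there). It also uses the $j_{!*}$ bookkeeping that reappears in the $\mathbb P^1$ theorem.

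One small point in your final paragraph needs a justification. To see that a proper $\mathcal D$-submodule $N\subset E_i$ has proper formalization, you need $E_i/N$ to have no $\C[x]$-torsion. This holds because such torsion would be a nonzero finite-dimensional $A$-module, which is impossible by taking traces in $[\partial,x]=1$. The paper makes the same remark.
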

\begin{proof}
\emph{Step 1: all formal endomorphisms are algebraic.}
Put
\[
 e=\dim_\C\End_A(E),\qquad
 h=\dim_\C\End_{\mathcal D_{\C((t))}}(V),\qquad
 I=\operatorname{irr}(\End V).
\]
Since $E$ is a vector bundle with connection,
$\Ext_A^1(E,E)=H^1_{\mathrm{dR}}(\A^1,\End E)$.
The ordinary de Rham index formula gives
\begin{equation}\label{eq:rigid-ordinary-index}
 e=r^2-I.
\end{equation}
For $j:\A^1\hookrightarrow\mathbb P^1$, write $j_{!*}$ for the
middle extension functor: it takes a connection to the holonomic
extension having no nonzero submodule or quotient supported on the
complement. Its index formula gives
\begin{equation}\label{eq:rigid-middle-index}
 \chi_{\mathrm{dR}}\bigl(\mathbb P^1,j_{!*}\End E\bigr)
       =r^2-I+h=e+h.
\end{equation}
Here de Rham degrees are normalized so that a connection on a smooth
curve has degrees $0,1,2$. These index formulas are the usual
Euler--Poincar\'e formulas; see
\cite[Proposition~3.1]{ArinkinRigid} and
\cite[\S4.4]{ArinkinFourier}.
The trace pairing identifies $\End E$ with its dual. Duality on
$\mathbb P^1$ therefore makes both $H^0$ and $H^2$ of its middle
extension have dimension $e$. Equation~\eqref{eq:rigid-middle-index}
implies
\[
 \dim H^1_{\mathrm{dR}}\bigl(\mathbb P^1,j_{!*}\End E\bigr)=e-h.
\]
Restriction of horizontal endomorphisms is injective, so $e\leq h$.
Nonnegativity of the displayed dimension gives the reverse inequality.
Thus $e=h$, and the injective restriction map is an isomorphism of
algebras.

\emph{Step 2: indecomposable formal summands are irreducible.}
Decompose $V$ into indecomposable formal connections. Their projection
idempotents lift uniquely by Step~1, giving a corresponding direct-sum
decomposition of $E$. Each summand still has vanishing self-$\Ext^1$.
It suffices to consider one such summand, denoted again by $E$, for
which $V$ is indecomposable.

The formal classification in characteristic zero describes $V$ as
$W\otimes U_m$, where $W$ is an irreducible formal connection of
rank $r_0$ and $U_m$ is the regular singular unipotent connection
with one Jordan block of size $m$. To recall why, after a finite
ramified cover the exponential factors of an indecomposable connection
form a single orbit; the regular singular factor on one branch has
one Jordan block. Its rank-one factor descends by induction to $W$,
and the unipotent block descends to $U_m$.
The centralizer of one Jordan block has dimension $m$, while a regular
singular tensor factor does not change positive slopes. Hence
\begin{equation}\label{eq:formal-Jordan-numbers}
 \rank V=mr_0,\qquad \dim\End(V)=m,
 \qquad \operatorname{irr}(\End V)
            =m^2\operatorname{irr}(\End W).
\end{equation}
These formulas can also be read off after the same ramified cover:
the distinct exponential factors have no horizontal homomorphisms
between them, and the diagonal horizontal endomorphisms are the
centralizer of the single regular singular Jordan block.

Step~1 and equation~\eqref{eq:rigid-ordinary-index} now give
\[
 m=m^2\bigl(r_0^2-\operatorname{irr}(\End W)\bigr).
\]
The expression in parentheses is an integer. Thus $m=1$, and $V=W$
is irreducible. A nonzero proper $A$-submodule of $E$ would give a
nonzero proper formal subconnection: both the submodule and its
quotient are coherent connections on $\A^1$, and a connection has
no nonzero subquotient supported at finitely many points. This is
impossible. Every summand obtained in Step~2 is therefore simple.
\end{proof}

\begin{proof}[Proof of Theorem~\ref{thm:rigid-semisimple}]
Suppose $M\ne0$. After a generic determinant-one linear change of the Weyl generators, $M$ is a connection: one can make a nonzero annihilating operator for each of finitely many generators monic in $\partial$, so that the module is finite over $\C[x]$. Its $\C[x]$-torsion is a finite-dimensional $A$-module and is therefore zero. Twisting preserves self-$\Ext^1$, so Lemma~\ref{lem:rigid-formal-endomorphisms} proves semisimplicity.
It remains to describe a simple summand $E$, whose formal connection
$V$ at infinity is irreducible and whose self-$\Ext^1$ vanishes.

\emph{Step 1: remove the integral part of the exponential factor.}
Suppose $\rank E=r>1$. By the formal classification, after the cover
$t=z^r$ the exponential factors of $V$ are the conjugates of a
minimally ramified Laurent polynomial
$\phi\in z^{-1}\C[z^{-1}]$, with rank-one regular singular factors.
The terms of $\phi$ with exponents divisible by $r$ are a polynomial
in $x=t^{-1}$ with zero constant term. An exponential twist removes
these terms globally. Denote the resulting connection by $E'$ and
its formal connection by $V'$. It remains simple and has vanishing
self-$\Ext^1$. Its single slope $s$ is positive and nonintegral.
There is a nonintegral term because otherwise the irreducible formal
connection would have rank one.

Since $r>1$, simplicity gives $H^0_{\mathrm{dR}}(\A^1,E')=0$.
The ordinary index formula implies
\[
 r-rs=-\dim H^1_{\mathrm{dR}}(\A^1,E')\leq0.
\]
Consequently $s>1$. Let $q\geq2$ be the denominator of $s$ in
lowest terms. Among the $r^2$ ordered pairs of conjugate
exponential factors, exactly $r^2(1-1/q)$ have distinct leading
coefficients. The corresponding factors of $\End(V')$ have slope
$s$. Therefore
\[
 \operatorname{irr}(\End V')\geq r^2(1-1/q)s.
\]
On the other hand, \eqref{eq:rigid-ordinary-index} and simplicity
give $\operatorname{irr}(\End V')=r^2-1$. If $s\geq2$, the preceding
inequality would give $\operatorname{irr}(\End V')\geq r^2$.
We have proved
\begin{equation}\label{eq:rigid-slope-window}
 1<s<2.
\end{equation}

\emph{Step 2: Fourier transformation lowers the rank.}
The formal stationary-phase theorem identifies the finite singularities
of $\mathcal F(E')$ with the slope-at-most-one parts of $V'$, after
the appropriate linear exponential twists. There are no such parts,
because every slope of $V'$ equals $s>1$. Thus $\mathcal F(E')$ is
again a connection on the whole affine line; see
\cite[Corollary~2.3]{ArinkinFourier}. Its rank is
\begin{equation}\label{eq:rigid-Fourier-rank}
 r'=\operatorname{irr}(V')-\rank(V')=r(s-1),
\end{equation}
by the Fourier rank formula
\cite[Proposition~3.5]{ArinkinRigid}. In view of
\eqref{eq:rigid-slope-window}, $0<r'<r$.
Fourier transformation preserves simplicity and self-$\Ext^1$.
We may therefore repeat Steps~1 and~2. The positive integer rank
strictly decreases until it is one.

\emph{Step 3: reverse transformations.}
A rank-one connection on $\A^1$ has trivial underlying line bundle
and is of the form $d+b(x)\,dx$, with $b\in\C[x]$.
Choose $g\in\C[x]$ with $g'=b$. An exponential twist identifies it
with the standard connection. Reversing all the transformations
proves the assertion for every simple summand and then for $M$.
\end{proof}

\begin{corollary}\label{cor:rigid-arithmetic}
Conjecture~\ref{conj:kont-six} holds for all holonomic $A_1$-modules.
More precisely, if $\Ext^1(M,M)=0$, its arithmetic support, in the sense of \cite[\S2.4]{Kont}, is an effective cycle
\[
 \sum_{i=1}^a m_i[L_i],\qquad m_i\in\Z_{>0},
\]
where the distinct curves $L_i\subset T^*\A^1$ are pairwise disjoint smooth closed copies of $\A^1$. There is a symplectic automorphism of $\A^2$ that sends all the $L_i$ to distinct parallel lines. Equivalently, they are distinct fibers of one coordinate function on $\A^2$. After choosing and localizing a model, the schematic
$p$-support is the reduced union of the $L_{i,s}^{(1)}$, and the
normalized multiplicity of $L_{i,s}^{(1)}$ is $m_i$.
\end{corollary}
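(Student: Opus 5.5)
By Theorem~\ref{thm:rigid-semisimple}, $M\simeq\bigoplus_i E_i^{\oplus m_i}$ with the $E_i$ pairwise nonisomorphic simple modules. Each $E_i$ equals $\sigma_i^*\C[x]$, where $\sigma_i$ is a finite composition of Fourier transforms, exponential twists $T_g$ and determinant-one linear changes, that is, an element of the tame group generating $\Aut A_1$. Extended motivic-exponential type then follows. $\C[x]$ is the connection $(\mathcal O,d)$. An exponential twist is tensoring with $(\mathcal O,d+dg)$, which is the inverse image along the diagonal of an external product. Fourier transform is $p_{2*}(p_1^*N\otimes e^{xy})$. Linear symplectic changes are generated by Fourier transforms and twists. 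Hence each $E_i$, and so $M$, lies in Kontsevich's class. This proves Conjecture~\ref{conj:kont-six} for $A_1$.

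For the arithmetic support I would follow the proof of Proposition~\ref{prop:affine-curve-kontsevich}. Let $\Phi_i$ be the polynomial symplectic automorphism of $\A^2$ attached to $\sigma_i$, so that $\sigma_i$ lifts $\Phi_i$. By \cite[\S\S1.1 and 4]{BKKAut}, after choosing a model over a finitely generated ring $R$ and excluding finitely many characteristics, the reduction of $\sigma_i$ acts on the center $k[x^p,\partial^p]$ by the Frobenius twist of $\Phi_i$. This can also be checked directly on generators: $(\partial+g')^p=\partial^p+g'(x)^p$ holds because $g'$ is a polynomial in $x$ whose $p$th iterated derivative correction vanishes, and Fourier is clear. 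The module $\C[x]$ has schematic $p$-support $\{\partial^p=0\}$, the zero section, with multiplicity one, since it is free of rank $p$ over $k[x^p]$. Therefore $E_{i,s}$ has schematic support $L_{i,s}^{(1)}$ with multiplicity one, where $L_i=\Phi_i^{-1}(\{\eta=0\})\simeq\A^1$ is smooth and closed. The multiplicities $m_i$ add over direct sums.

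Disjointness and parallelism is the substantive step. If $L_i$ and $L_j$ are distinct curves, I claim they are disjoint. First apply $\Phi_i$ to make $L_i=\{\eta=0\}$ and $E_i=\C[x]$. Next use $\Ext^1(E_i,E_j)=0$, which is a direct summand of $\Ext^1(M,M)$. Suppose $L_j$ meets the zero section, equivalently $H^1_{\mathrm{dR}}(\A^1,E_j)\ne0$ (or $E_j$ is a delta module). If $L_j$ is not horizontal, Abhyankar--Moh and the index formula show that the de Rham cohomology of $E_j$ is nonzero exactly when the projection meets $\eta=0$. Then $\Ext^1(\C[x],E_j)=H^1_{\mathrm{dR}}(E_j)\neq0$, a contradiction. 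So each $L_j$ with $j\ne i$ avoids $L_i$. An embedded line disjoint from $\{\eta=0\}$ has $\eta|_{L_j}$ a nowhere-vanishing polynomial on $\A^1$, hence a constant $c_j\neq0$. Thus $L_j=\{\eta=c_j\}$, and all the curves are fibers of $\eta$. Distinct $E_j$ with the same line differ by a twist; such pairs would have to be excluded, or handled via the convention that the cycle sums multiplicities.

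Finally, the scheme-theoretic statement follows from the direct sum decomposition. After localizing $R$ so that the decomposition and the $\sigma_i$ are defined, the central annihilator of $M_s$ is the intersection of the ideals of the disjoint smooth curves $L_{i,s}^{(1)}$, which is radical. The normalized multiplicity at $L_{i,s}^{(1)}$ is $m_i$.

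The main obstacle is the step identifying the nonvanishing of $\Ext^1(E_i,E_j)$ with intersection of the supports. I would first try computing $\Ext^1(E_i,E_j)$ by the Euler characteristic $-\chi=(L_i\cdot L_j)$, a local intersection formula for these lines.
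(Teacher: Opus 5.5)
\paragraph{Verdict.} Your outline matches the paper's proof in every step but one. The matching steps are: semisimplicity and the description of simple summands from Theorem~\ref{thm:rigid-semisimple}; membership in the extended motivic-exponential class via Fourier transforms, exponential twists and linear changes; Frobenius-twisted action on the center; the zero section as the support of $\C[x]$; the reduced union for a direct sum; and the final ``nowhere-vanishing polynomial is constant'' step.

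\paragraph{The gap: disjointness.} What is missing is the only step with real content, namely deducing $L_i\cap L_j=\varnothing$ from $\Ext^1_{A_1}(E_i,E_j)=0$. After straightening $L_i$ to the zero section, you assert that $\Ext^1(\C[x],E_j)=H^1_{\mathrm{dR}}(\A^1,E_j)$ is nonzero exactly when $L_j$ meets $\{\eta=0\}$, citing Abhyankar--Moh and ``the index formula''. No argument is given.

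The vertical case is fine, since $\Ext^1(\C[x],\delta_a)\simeq\C$. In the other case the index formula $\dim H^1-\dim H^0=\operatorname{irr}_\infty(E_j)-\rank E_j$ only helps after you know three things, where $(u(t),v(t))$ parametrizes $L_j$:
\begin{enumerate}
\item $E_j$ is a connection on all of $\A^1$;
\item its rank is $\deg u$;
\item its irregularity at infinity is $\deg u+\deg v$.
\end{enumerate}
In other words, you need that the formal type of $E_j$ at infinity is read off from the branch of $L_j$ at infinity. Your proposal establishes none of this, and it is not automatic from $E_j=\sigma_j^*\C[x]$. You would have to track ranks and slopes through each Fourier transform, or extract them from the $p$-curvature. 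You flag this yourself as the main obstacle. As written, the disjointness claim, and with it the parallel-lines conclusion, is unproved.

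\paragraph{How the paper fills it.} The paper invokes Kontsevich's Ext--intersection formula \cite[\S3.4]{Kont},
\[
 \dim_\C\Ext^1_{A_1}(S_i,S_j)=\sum_{q\in L_i\cap L_j}I_q(L_i,L_j),
\]
which is valid when the supports have disjoint truncated Puiseux data at infinity. It checks that hypothesis with Abhyankar--Moh. Once $L_i=\{y=0\}$, a branch of $L_j$ at infinity with the germ of $y=0$ would need $x\to\infty$ and $y=O(1/x)$. That forces $v=0$, hence $L_j=L_i$.

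Your suggested identity $-\chi=(L_i\cdot L_j)$ amounts to this formula, using $\Hom(S_i,S_j)=0$ and $\Ext^2_{A_1}=0$. The condition at infinity is exactly what must be verified before it can be applied.

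\paragraph{Minor point.} Your worry about nonisomorphic summands sharing the same line is harmless. Their multiplicities simply add in the cycle, as in the paper. In fact, Proposition~\ref{prop:affine-curve-kontsevich} shows that two modules with the same multiplicity-one support $L\simeq\A^1$ are isomorphic.
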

\begin{proof}
The class of extended motivic-exponential $\mathcal D$-modules is closed under external
products. This holds for its generating exponential connections,
and extends to the whole class by induction on the defining
operations, using exactness over $\C$ and compatibility of external
products with direct and inverse images. Pullback along the diagonal
then gives closure under tensor products. The Fourier transform is
an integral transform with exponential kernel, and $T_g$ is tensoring
with an exponential connection. Both operations therefore preserve
the class. Linear determinant-one changes are generated by Fourier
transformation, linear coordinate scalings, and quadratic
exponential twists. Theorem~\ref{thm:rigid-semisimple} therefore places
every simple summand, and hence $M$, in that class.

For the arithmetic assertion, choose a finitely generated integral
$\Z$-algebra containing all coefficients of the finitely many
transformations, and localize it so that their inverses and the
module isomorphisms are defined. The action of an exponential twist on
the center is given by
\[
 (\partial+g'(x))^p=\partial^p+(g'(x))^p.
\]
Fourier transformation and linear changes likewise
induce the Frobenius twists of their classical linear symplectic
maps on the center. The standard module has schematic $p$-support
the zero section, with normalized multiplicity one. Each simple
summand therefore has constant schematic support equal to the image
of that section under a fixed composition of symplectic
automorphisms, again with multiplicity one.

These images are smooth closed copies of $\A^1$. The central
annihilator of a direct sum is the intersection of the central
annihilators of its summands, so its support is their reduced union.
The generic multiplicities add, giving the stated cycle.

It remains to prove disjointness. Let $S_i,S_j$ be simple summands
with distinct support curves $L_i,L_j$. The Ext--intersection formula
of \cite[\S3.4, final paragraph]{Kont} gives
\begin{equation}\label{eq:affine-lines-ext-intersection}
 \dim_\C\Ext_A^1(S_i,S_j)
       =\sum_{q\in L_i\cap L_j}I_q(L_i,L_j).
\end{equation}
We verify its hypothesis that the supports have disjoint truncated
Puiseux data at infinity. By the Abhyankar--Moh theorem \cite{AM},
straighten $L_i$ to $y=0$ and rescale one coordinate to make the
automorphism symplectic. In dimension two it lifts to a Weyl-algebra
automorphism, so the transformation preserves the Ext groups and
transports the arithmetic supports; see \cite[\S\S1.1 and 4]{BKKAut}.
Parametrize the transformed $L_j$ by $x=u(t),y=v(t)$, with
$u,v\in\C[t]$. Its unique branch at infinity cannot have the
truncated germ of $y=0$: that would require $x\to\infty$ and
$y=O(1/x)$, forcing $v=0$ and hence $L_j=L_i$.
This proves the required disjointness of the truncated data and
justifies \eqref{eq:affine-lines-ext-intersection}.
The group on the left is a direct summand of $\Ext_A^1(M,M)$, so
it vanishes. Thus $L_i\cap L_j=\varnothing$.

Finally, straighten one component to $y=0$. On every other
$L_i\simeq\A^1$, the coordinate $y$ is a nowhere-vanishing polynomial
in an affine parameter, hence a nonzero constant $a_i$. Therefore
$L_i=\{y=a_i\}$. This proves the simultaneous assertion.
\end{proof}

\begin{remark}\label{rem:several-rigid-components}
The support in Corollary~\ref{cor:rigid-arithmetic} can have arbitrarily
many distinct components. For $a\in\C$, let
\[
 E_a=A/A(\partial-a).
\]
Its arithmetic support is the line $y=a$. Identifying $E_b$
with $\C[x]$, its free resolution gives
\[
 \Ext_A^1(E_a,E_b)
 =\operatorname{coker}\left(\frac d{dx}+b-a:
                  \C[x]\longrightarrow\C[x]\right)=0.
\]
For $a=b$, differentiation is surjective. For $a\ne b$, the operator
$d/dx+b-a$ preserves each space of polynomials of bounded degree and
has a triangular matrix with nonzero diagonal there, so it is invertible.
Thus every finite
direct sum $\bigoplus_i E_{a_i}^{\oplus m_i}$ has vanishing self-$\Ext^1$.
\end{remark}

\subsection{Rigidity on other smooth curves}\label{subsec:rigid-curves}
Throughout this subsection, $X$ is a smooth connected complex algebraic
curve. All Ext groups are taken in the category of algebraic
$\mathcal D_X$-modules, without imposing conditions on the formal types.

\begin{lemma}\label{lem:curve-trace-injection}
Let $M$ be a holonomic $\mathcal D_X$-module of positive generic rank.
Tensoring extensions of the trivial connection with $M$ defines an
injective linear map
\begin{equation}\label{eq:curve-trace-injection}
 H^1_{\mathrm{dR}}(X)\longrightarrow
 \Ext^1_{\mathcal D_X}(M,M).
\end{equation}
\end{lemma}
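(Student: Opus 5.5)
We realize $H^1_{\mathrm{dR}}(X)$ as $\Ext^1_{\mathcal D_X}(\mathcal O_X,\mathcal O_X)$. A class $[\omega]$ is represented by the rank-two extension $0\to\mathcal O_X\to E_\omega\to\mathcal O_X\to0$, where $E_\omega=\mathcal O_X e_1\oplus\mathcal O_X e_2$ carries the connection $\nabla e_2=\omega e_1$. Tensoring over $\mathcal O_X$ with $M$ is exact, since $\mathcal O_X\otimes M=M$ and $\mathcal O_X$ is flat. It therefore gives an extension $0\to M\to E_\omega\otimes M\to M\to0$ of $\mathcal D_X$-modules, and the map \eqref{eq:curve-trace-injection} is linear by the usual Baer-sum compatibility. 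Injectivity is the content of the lemma.

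To prove injectivity, we pass to the generic point, or to a dense open $U\subset X$ on which $M$ is a vector bundle with connection of rank $r>0$. Restriction $H^1_{\mathrm{dR}}(X)\to H^1_{\mathrm{dR}}(U)$ is injective for a curve; this follows from the Gysin sequence, since $H^1$ of a smooth curve injects into $H^1$ of any dense open. The formation of the extension also commutes with restriction to $U$. So if the extension splits on $X$, it splits on $U$, and it suffices to treat the case where $M$ is a connection on $U$. A splitting is a horizontal section $s\colon M\to E_\omega\otimes M$ of the projection, of the form $s(m)=e_2\otimes m+e_1\otimes\phi(m)$ with $\phi\in\End_{\mathcal O}(M)$. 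Horizontality reads $\nabla_{\End}\phi=-\omega\cdot\mathrm{id}_M$, and so $\omega\,\mathrm{id}$ is exact in the de Rham complex of $\End M$. Applying the trace, which is horizontal, gives $r\omega=-d(\operatorname{tr}\phi)$ with $\operatorname{tr}\phi\in\mathcal O(U)$. Since $r$ is invertible in $\C$, $\omega$ is exact on $U$, and the class vanishes on $U$ and hence on $X$.

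The main obstacle is making sure that splitting over $X$ as $\mathcal D$-modules really yields a global $\mathcal O$-linear $\phi$ on $U$. On affine $U$, where $M|_U$ is projective over $\mathcal O(U)$, every $\mathcal D$-splitting is in particular $\mathcal O$-linear, so this is immediate. If $X$ is not affine, I would instead work with Zariski-local $\phi_i$: the differences $\phi_i-\phi_j$ are horizontal endomorphisms, whose traces are constants. Taking traces then gives $r\omega$ as a hypercohomology coboundary in the \v{C}ech--de Rham complex. Alternatively, one can choose an affine dense open $U$ from the start, which suffices because every curve contains one, and the injectivity of restriction on $H^1$ handles the rest. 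I would take this second route.
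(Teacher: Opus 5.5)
Your proof is correct and follows essentially the same route as the paper. You pass to a dense open $U$ on which $M$ is a connection of rank $r>0$, use the trace to show that the induced self-extension class is $r$ times the original class (your identity $r\omega=-d(\operatorname{tr}\phi)$ is the explicit cocycle form of the paper's ``composition with the trace is multiplication by $r$''), and conclude from the injectivity of $H^1_{\mathrm{dR}}(X)\to H^1_{\mathrm{dR}}(U)$.

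One small correction concerns the non-affine case. When $X$ is a projective curve of positive genus, a general extension $E_\alpha$ need not be globally $\mathcal O$-split, so it is not of the form $E_\omega$ for a global form $\omega$. The map should therefore be defined using an arbitrary extension. This, not the $\mathcal O$-linearity of $\phi$ (a $\mathcal D$-linear splitting is automatically $\mathcal O$-linear), is the real obstacle. Your choice of an affine $U$ removes it.
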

\begin{proof}
The Spencer resolution identifies
$H^1_{\mathrm{dR}}(X)$ with
$\Ext^1_{\mathcal D_X}(\mathcal O_X,\mathcal O_X)$.
Represent a class $\alpha$ by an extension
\[
 0\longrightarrow\mathcal O_X\longrightarrow E_\alpha
 \longrightarrow\mathcal O_X\longrightarrow0.
\]
Its underlying sequence of $\mathcal O_X$-modules is locally split,
so $E_\alpha$ is a vector bundle with connection. Tensoring over
$\mathcal O_X$ with $M$, with the tensor-product connection, therefore
gives an exact sequence
\[
 0\longrightarrow M\longrightarrow M\otimes_{\mathcal O_X}E_\alpha
 \longrightarrow M\longrightarrow0.
\]
This construction is compatible with Baer sums and scalar multiplication,
and hence defines \eqref{eq:curve-trace-injection}.

Choose a dense open subset $U\subset X$ on which $M$ is a vector bundle
with connection $E$ of rank $r>0$. The restriction of the resulting
extension class is the image of $\alpha|_U$ under
\[
 H^1_{\mathrm{dR}}(U)\longrightarrow
 H^1_{\mathrm{dR}}(U,\End E)
 =\Ext^1_{\mathcal D_U}(E,E),
\]
induced by the scalar inclusion
$\mathcal O_U\to\mathcal End_{\mathcal O_U}(E)$,
which sends a local regular function $a$ to the endomorphism
$e\mapsto ae$. Composing with the trace map gives
multiplication by $r$ on $H^1_{\mathrm{dR}}(U)$. Thus, if the
self-extension of $M$ splits, then $r\alpha|_U=0$, and therefore
$\alpha|_U=0$.

The restriction map $H^1_{\mathrm{dR}}(X)\to H^1_{\mathrm{dR}}(U)$
is injective: in the localization sequence, the contribution of the
finite set $X\setminus U$ begins in degree two. Consequently $\alpha=0$.
\end{proof}

\begin{proposition}\label{thm:rigid-other-curves}
Let $X$ be a smooth connected complex algebraic curve not isomorphic
to $\mathbb P^1$. Then Conjecture~\ref{conj:kont-six} holds for every
holonomic $\mathcal D_X$-module $M$ with
$\Ext^1_{\mathcal D_X}(M,M)=0$.
Such a module is semisimple. If $X$ is not isomorphic to $\A^1$ either,
then $M$ is supported at finitely many points.
\end{proposition}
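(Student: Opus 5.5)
Using Lemma~\ref{lem:curve-trace-injection}, I split $M$ into its generic-rank part and its punctual part. The case $X\simeq\A^1$ is already settled: it is Theorem~\ref{thm:rigid-semisimple} together with Corollary~\ref{cor:rigid-arithmetic}. So the work concerns $X\not\simeq\A^1,\mathbb P^1$.

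If $X$ is projective of genus $g\geq1$, then $H^1_{\mathrm{dR}}(X)$ has dimension $2g>0$. If $X$ is affine and not $\A^1$, then either its completion has positive genus, or it is $\mathbb P^1$ minus at least two points. In every one of these cases $H^1_{\mathrm{dR}}(X)\neq0$. The lemma therefore shows that any holonomic module of positive generic rank has $\Ext^1(M,M)\neq0$.

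It remains to show that a rigid $M$ has generic rank zero. I will consider the largest submodule $M_0$ supported at points. I would like to apply the lemma to a summand of positive rank, but $M$ need not split. Instead I will work directly with the extension classes $M\otimes E_\alpha$ from the lemma, where $\alpha$ is a class in $H^1_{\mathrm{dR}}(X)$. I will show that such a class is nonzero whenever the generic rank is positive.

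For this, restrict to a dense open $U$ that avoids the singular points. On $U$, the restricted extension is already nonzero by the trace argument in the lemma's proof, and restriction respects Yoneda classes. The lemma's proof never used that $M$ is a vector bundle globally, so the lemma applies verbatim to $M$ itself. This is exactly why it is stated for every holonomic module of positive generic rank. Hence rigidity forces generic rank zero, that is, $M$ is supported on a finite set $\{a_1,\dots,a_n\}$.

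Kashiwara's equivalence (\cite[Theorem~1.6.1]{HTT}) then writes $M\simeq\bigoplus_i\delta_{a_i}\otimes V_i$ with each $V_i$ finite-dimensional. Modules supported at distinct points have no extensions between them. At a single point, extensions among delta modules correspond to extensions of vector spaces, which are trivial. So $M$ is semisimple and rigid, consistent with the hypothesis.

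Each $\delta_a$ is the direct image of the exponential connection $(\mathcal O_{\mathrm{pt}},d)$ along $\{a\}\hookrightarrow X$, so it is of extended motivic-exponential type. That class is closed under extensions, in particular under direct sums. This proves Conjecture~\ref{conj:kont-six} for these curves.

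The step I expect to require care is the claim $H^1_{\mathrm{dR}}(X)\neq0$ for every smooth connected curve other than $\A^1$ and $\mathbb P^1$. For affine $X$, I will use $\dim H^1_{\mathrm{dR}}=2g+b-1$, where $b$ is the number of punctures. This is positive unless $g=0$ and $b=1$, which is the case $X=\A^1$. For projective $X$ the dimension is $2g$, which vanishes only for $\mathbb P^1$.
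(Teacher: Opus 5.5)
Your proposal is correct and follows essentially the same route as the paper: the Betti-number count shows $H^1_{\mathrm{dR}}(X)\neq0$ for curves other than $\A^1$ and $\mathbb P^1$, and Lemma~\ref{lem:curve-trace-injection} then forces generic rank zero (the lemma applies directly to $M$, so your detour about splitting off a punctual part is unnecessary). Kashiwara's equivalence gives semisimplicity and extended motivic-exponential origin in the finite-support case, and the $\A^1$ case is Theorem~\ref{thm:rigid-semisimple} together with Corollary~\ref{cor:rigid-arithmetic}.
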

\begin{proof}
\emph{Step 1: determine which curves can support a rigid module of
positive generic rank.}
Let $\overline X$ be the smooth projective completion of $X$, let $g$
be its genus, and put $b=|\overline X\setminus X|$. The de Rham
cohomology of a smooth curve has dimension
\[
 \dim H^1_{\mathrm{dR}}(X)=
 \begin{cases}
  2g,&b=0,\\
  2g+b-1,&b>0.
 \end{cases}
\]
Consequently $H^1_{\mathrm{dR}}(X)=0$ exactly when
$X\simeq\mathbb P^1$ or $X\simeq\A^1$.
If $X$ is neither of these curves, Lemma~\ref{lem:curve-trace-injection}
and the assumed self-Ext vanishing imply that $M$ has generic rank zero.
Its support is therefore finite.

\emph{Step 2: modules with finite support.}
By Kashiwara's equivalence for the inclusions $i_x:\{x\}\hookrightarrow X$,
a holonomic module with finite support is of the form
\[
 M\simeq\bigoplus_{x\in S}(i_x)_+\C^{m_x}
\]
for a finite set $S$ and nonnegative integers $m_x$. It is semisimple.
The trivial connection on a point belongs to the extended
motivic-exponential class, and that class is closed under direct images
and finite direct sums. Hence $M$ satisfies
Conjecture~\ref{conj:kont-six}.

\emph{Step 3: the affine line.}
For $X\simeq\A^1$, semisimplicity follows from
Theorem~\ref{thm:rigid-semisimple}, and
Corollary~\ref{cor:rigid-arithmetic} proves
Conjecture~\ref{conj:kont-six}.
\end{proof}

\subsection{Irreducible modules on the projective line}\label{subsec:rigid-projective-line}
We now take $X=\mathbb P^1_\C$. For an irreducible module of positive
generic rank, full self-Ext vanishing also constrains the formal types.
We first record the formal calculation needed below. Write $\mathbf1$
for the trivial rank-one connection over $K=\C((t))$.

\begin{lemma}\label{lem:formal-small-endomorphisms}
Let $V$ be a nonzero formal meromorphic connection over $K$, and put
\[
 s=\dim_\C H^0_{\mathrm{dR}}(K,V),\qquad
 h=\dim_\C H^0_{\mathrm{dR}}(K,\End V).
\]
Then $h\geq s^2$, with equality exactly when $V$ is trivial.
If $h-s^2=1$, then either $V$ is regular singular with unipotent
monodromy, or
\begin{equation}\label{eq:formal-one-nontrivial-factor}
 V\simeq\mathbf1^{\oplus s}\oplus W
\end{equation}
for an irreducible nontrivial formal connection $W$.
If, in addition, $\det V$ has trivial formal monodromy, then the
formal monodromy of $V$ is quasi-unipotent. In the second case,
the formal monodromy $T_W$ of $W$ satisfies
$T_W^{2m}=I$, where $m=\rank W$.
\end{lemma}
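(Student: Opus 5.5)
The plan is to decompose $V$ by the formal classification (Levelt--Turrittin) and compute both invariants summand by summand. First, write $V=\bigoplus_\alpha W_\alpha\otimes U_\alpha$, where the $W_\alpha$ are pairwise nonisomorphic irreducible formal connections and $U_\alpha$ is a regular singular unipotent part, a direct sum of Jordan blocks of sizes $m_{\alpha,1},\ldots$. Distinct irreducible factors have no horizontal homomorphisms between their unipotent twists. Hence
\[
 h=\sum_\alpha\dim\End(W_\alpha\otimes U_\alpha)=\sum_\alpha\sum_{i,j}\min(m_{\alpha,i},m_{\alpha,j}),
\]
which is the standard dimension of the centralizer of a nilpotent matrix. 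The only factor with horizontal sections is $W=\mathbf1$, and each Jordan block contributes one section, so $s$ equals the number of Jordan blocks in the $\mathbf1$-isotypic part.

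Next I will bound $h$ from below. The $\mathbf1$-isotypic part alone contributes $\sum_{i,j}\min(m_i,m_j)\geq s^2$, with equality exactly when all those blocks have size one. Each additional isotypic component contributes at least one. Thus $h\geq s^2$, with equality iff $V=\mathbf1^{\oplus s}$.

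For $h-s^2=1$ there are two possibilities. Either there is no other isotypic component and the trivial part has exactly one excess. In that case $\sum\min(m_i,m_j)-s^2=1$ forces a single block of size two and the rest of size one. A small check is needed here, since a block of size two raises the sum by $1+2(s-1)$, so for $s\geq2$ the excess exceeds one. I will do this count carefully: with blocks of sizes $2,1,\ldots,1$ the excess is $2s-1$, which equals one only for $s=1$. Either way $V$ is regular singular with unipotent monodromy. The alternative is that the trivial part is $\mathbf1^{\oplus s}$ and there is exactly one further component with $\dim\End=1$, which forces a single block of size one, namely $W$ irreducible and nontrivial. This gives \eqref{eq:formal-one-nontrivial-factor}.

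For the monodromy claim, the unipotent case is immediate. In the second case, $\det V=\det W$, so $\det W$ has trivial formal monodromy. I will write $W$ as the direct image, under the cover $t=z^m$, of $\mathcal E^{\phi}\otimes L_c$, where $L_c$ has rank one and residue $c$ and the orbit of $\phi$ has size $m$. The formal monodromy of $W$ then cyclically permutes $m$ lines, and its $m$th power acts by the scalar $e^{2\pi i c}$ on each. The determinant of a cyclic-permutation twist has monodromy $(-1)^{m-1}e^{2\pi i c}$. Triviality of this monodromy gives $e^{2\pi i c}=\pm1$, hence $T_W^{2m}=I$ and quasi-unipotence.

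The main obstacle is making the formal-monodromy determinant computation precise. This includes the sign of the cyclic permutation and verifying that the exponential factors contribute nothing to the monodromy of $\det W$, since the sum of conjugates of $\phi$ is an integral power series in $t^{-1}$, an exponential factor with trivial monodromy.
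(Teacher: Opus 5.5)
Your proposal is correct and follows the paper's proof essentially step for step: the decomposition into summands $W_\alpha\otimes U_\alpha$, the centralizer count $\sum_{i,j}\min(m_i,m_j)$, and the weighted cyclic permutation giving $T_W^m=cI$ and $\det T_W=(-1)^{m-1}c$. The only variation is that the paper obtains $h\geq s^2$ (and $h\geq s^2+1$ for nontrivial $V$) differently: it uses the injection $H^0_{\mathrm{dR}}(K,V)\otimes H^0_{\mathrm{dR}}(K,V^\vee)\hookrightarrow H^0_{\mathrm{dR}}(K,\End V)$ and notes that $\id_V$ is not in the span of its image, rather than relying on the full block count.

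There is one slip to fix. For blocks of sizes $2,1,\ldots,1$ the excess $\sum_{i,j}\min(m_i,m_j)-s^2$ equals $1$ for every $s$, not $2s-1$, since $\min(2,1)=1$ and only the diagonal entry changes. This is harmless, because in that branch you only need that $V$ is regular singular with unipotent monodromy.
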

\begin{proof}
\emph{Step 1: the lower bound.}
The formal classification shows that
$\dim H^0_{\mathrm{dR}}(K,V^\vee)=s$: both dimensions count the
Jordan blocks in the regular singular unipotent summand.
Horizontal vectors are linearly independent over $K$ whenever they
are linearly independent over $\C$. Thus the map
\[
 H^0_{\mathrm{dR}}(K,V)\otimes_\C
 H^0_{\mathrm{dR}}(K,V^\vee)
 \longrightarrow H^0_{\mathrm{dR}}(K,\End V),
 \qquad v\otimes\ell\longmapsto(w\mapsto\ell(w)v),
\]
is injective. If $V$ is nontrivial, its maximal trivial submodule has
rank $s<\rank V$. Every endomorphism in the image has its image in
that submodule, so their span does not contain $\id_V$. Hence
$h\geq s^2+1$ in this case. For $V\simeq\mathbf1^{\oplus s}$ one has
$h=s^2$.

\emph{Step 2: the case $h-s^2=1$.}
Decompose $V=V_{\mathrm{uni}}\oplus V_{\mathrm{rest}}$, where
$V_{\mathrm{uni}}$ is the regular singular unipotent summand and
$V_{\mathrm{rest}}$ consists of all other formal summands.
There are no horizontal homomorphisms between these two summands.
If the unipotent Jordan blocks have sizes $m_1,\ldots,m_s$, then
\[
 \dim\End(V_{\mathrm{uni}})
 =\sum_{i,j=1}^s\min(m_i,m_j)\geq s^2,
\]
with equality exactly when all $m_i=1$. If $V_{\mathrm{rest}}=0$,
the asserted unipotent case holds. Otherwise its endomorphism algebra
has dimension at least one, and equality $h-s^2=1$ forces
\[
 V_{\mathrm{uni}}\simeq\mathbf1^{\oplus s},\qquad
 \dim\End(V_{\mathrm{rest}})=1.
\]
As in the proof of Lemma~\ref{lem:rigid-formal-endomorphisms},
every indecomposable formal connection is $W_0\otimes U_a$ with
$W_0$ irreducible, and its horizontal endomorphism algebra has
dimension $a$. More than one direct summand gives independent
projection endomorphisms. It follows that $V_{\mathrm{rest}}$ is
irreducible, proving \eqref{eq:formal-one-nontrivial-factor}.

\emph{Step 3: formal monodromy.}
There is nothing to prove in the unipotent case. In the other case,
$\det W$ has trivial formal monodromy. The Levelt--Turrittin
classification realizes an irreducible formal connection of rank $m$
as the direct image of a rank-one connection under $t=u^m$.
Its formal monodromy cyclically permutes the $m$ rank-one factors.
Thus, for some $c\in\C^\times$,
\[
 T_W^m=cI,\qquad \det T_W=(-1)^{m-1}c.
\]
Since $\det T_W=1$, one has $c=(-1)^{m-1}$ and therefore
$T_W^{2m}=I$.
\end{proof}

\begin{theorem}\label{thm:rigid-projective-line}
Let $M$ be an irreducible holonomic $\mathcal D_{\mathbb P^1}$-module
of positive generic rank such that
$\Ext^1_{\mathcal D_{\mathbb P^1}}(M,M)=0$.
Then $M$ has at most one singular point, its formal local monodromies
are quasi-unipotent, and its underlying meromorphic connection is
cohomologically rigid with prescribed formal types.
\end{theorem}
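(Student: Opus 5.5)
Write $M=j_{!*}E$, where $j:U=\mathbb P^1\setminus\Sigma\hookrightarrow\mathbb P^1$, $\Sigma$ is the (nonempty or empty) set of singular points, and $E$ is an irreducible connection of rank $r>0$ on $U$. For $x\in\Sigma$ let $V_x$ be the formal connection of $E$ at $x$. Put
\[
 s_x=\dim H^0_{\mathrm{dR}}(K_x,V_x),\qquad
 h_x=\dim H^0_{\mathrm{dR}}(K_x,\End V_x).
\]
If $\Sigma=\varnothing$, then $E$ is a connection on $\mathbb P^1$, hence trivial of rank one, and there is nothing to prove. So assume $\Sigma\neq\varnothing$, so that $U$ is affine. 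The plan is to bound $\dim\Ext^1(M,M)$ from below by a sum of local terms $h_x-s_x^2$, corrected by a global term. Then I will use Lemma~\ref{lem:formal-small-endomorphisms} to read off the local consequences of vanishing.

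\emph{Step 1: reduction to de Rham cohomology on $U$.} Apply $\Hom(M,-)$ to $0\to M\to j_*E\to Q\to0$. Here $Q$ is supported on $\Sigma$, and $\Hom(M,Q)=0$ because $M$ is irreducible and not punctual. This gives an injection $\Ext^1(M,M)\hookrightarrow\Ext^1(M,j_*E)$. By adjunction the target is $\Ext^1_{\mathcal D_U}(E,E)=H^1_{\mathrm{dR}}(U,\End E)$. The image is the kernel of the map to $\Ext^1(M,Q)$, which is a sum of local terms at the points of $\Sigma$.

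Rather than describing this kernel exactly, I will compare it with the subspace $H^1_{\mathrm{dR}}(\mathbb P^1,j_{!*}\End E)$, whose classes do give extensions of $M$ by itself. I will also count extra classes: these are extensions that change the formal type at a single point $x$ while keeping it fixed elsewhere. The local count of such deformations should be $h_x-s_x^2$. The dimension $h_x$ counts the formal endomorphisms of $V_x$. From these one removes the $s_x^2$ that extend across $x$ through the horizontal sections of $V_x$ and $V_x^\vee$, as in the injection in Step~1 of the proof of Lemma~\ref{lem:formal-small-endomorphisms}.

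Combining the index formula \eqref{eq:rigid-middle-index}, in its version with several singular points, with $H^0=H^2=\C$ for irreducible $E$, I expect an inequality of the form
\[
 \dim\Ext^1(M,M)\;\geq\;\dim H^1_{\mathrm{dR}}\bigl(\mathbb P^1,j_{!*}\End E\bigr)+\sum_{x\in\Sigma}(h_x-s_x^2)-1 .
\]
The $-1$ accounts for the global scalar endomorphism, which maps diagonally into the local terms.

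\emph{Step 2: consequences of vanishing.} Every $x\in\Sigma$ is a genuine singularity, so $V_x$ is nontrivial. Lemma~\ref{lem:formal-small-endomorphisms} then gives $h_x-s_x^2\geq1$ for each such $x$. The inequality of Step~1 therefore forces the following:
\begin{enumerate}
\item $|\Sigma|\leq1$;
\item $h_x-s_x^2=1$ at the singular point;
\item $H^1_{\mathrm{dR}}(\mathbb P^1,j_{!*}\End E)=0$.
\end{enumerate}
The third condition is precisely cohomological rigidity with prescribed formal types.

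\emph{Step 3: quasi-unipotence.} With only one singular point, $U\simeq\A^1$ is simply connected. Hence $\det E$ has trivial monodromy, and its formal monodromy at $x$ is trivial. The case $h_x-s_x^2=1$ of Lemma~\ref{lem:formal-small-endomorphisms} now yields quasi-unipotent formal monodromy: either $V_x$ is unipotent, or $V_x\simeq\mathbf1^{\oplus s}\oplus W$ with $T_W^{2m}=I$.

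The main obstacle is Step~1, namely the exact local accounting of $\Ext^1(M,Q)$ and of the kernel of the map to it. I would try first to compute everything formally at each $x$ via the triangle $j_{!}\to j_{!*}\to j_*$. The local defect is identified with $\mathrm{coker}$ of the map $H^0(\End V_x)\to$ (invariants coming from horizontal sections), whose dimension is $h_x-s_x^2$. I would then check via duality that only a one-dimensional global correction can occur.
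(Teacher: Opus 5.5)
Your outline has the right architecture, and it is the one the paper uses. You start from $0\to M\to j_*E\to Q\to 0$, note that $\Hom(M,Q)=0$, identify $\Ext^1(M,j_*E)$ with $H^1_{\mathrm{dR}}(U,\End E)$ by adjunction, and then feed the resulting inequality into Lemma~\ref{lem:formal-small-endomorphisms}. The gap is that the inequality in your Step~1, which carries the whole content of the theorem, is only announced (``I expect''), and the route you sketch for it is not carried out. Treating $H^1_{\mathrm{dR}}(\mathbb P^1,j_{!*}\End E)$ as a space of self-extensions of $M$ gives at best $\dim\Ext^1(M,M)\geq b$, where $b=\dim H^1_{\mathrm{dR}}(\mathbb P^1,j_{!*}\End E)$. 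That yields rigidity with prescribed formal types, but neither $|\Sigma|\leq1$ nor $h_x-s_x^2=1$. The ``extra classes changing the formal type at one point'', the local defect $h_x-s_x^2$, and the global $-1$ are exactly what must be proved, and the proposal only asserts them.

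You do not need that detour. You have already shown that $\Ext^1(M,M)$ is the kernel of $H^1_{\mathrm{dR}}(U,\End E)\to\Ext^1(M,Q)$, so it remains to compute two dimensions.

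First, the formal description of $j_*E/j_{!*}E$ gives $Q\simeq\bigoplus_x\delta_x^{\oplus s_x}$, with one $\delta_x$ for each unipotent Jordan block of $V_x$. Dually, $0\to\bigoplus_x\delta_x^{\oplus s_x}\to j_!E\to M\to0$. Since $\Ext^\bullet(j_!E,\delta_x)=\Ext^\bullet_{\mathcal D_U}(E,\delta_x|_U)=0$, applying $\Hom(-,\delta_x)$ gives $\dim\Ext^1(M,\delta_x)=s_x$. Hence $\dim\Ext^1(M,Q)=\sum_x s_x^2$.

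Second, the same local description for $\End E$ gives $0\to F\to j_*\End E\to\bigoplus_x\delta_x^{\oplus h_x}\to0$, with $F=j_{!*}\End E$. Use three facts: $H^0_{\mathrm{dR}}(\mathbb P^1,F)=H^2_{\mathrm{dR}}(\mathbb P^1,F)=\C$ by irreducibility and trace self-duality; $H^2_{\mathrm{dR}}(U,\End E)=0$; and $\delta_x$ has de Rham cohomology $\C$ in degree one only. The long exact sequence then gives $\dim H^1_{\mathrm{dR}}(U,\End E)=b+\sum_x h_x-1$. The $-1$ is the surjection onto $H^2(F)$, which is dual to the diagonal scalar endomorphism you had in mind. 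Subtracting the two dimensions gives your inequality, and your Steps~2 and~3 then go through.

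Two smaller points. In Step~2 you should justify that $V_x$ is nontrivial at each $x\in\Sigma$: the minimal extension of a formally trivial connection is nonsingular. In Step~3, passing from trivial monodromy on $\A^1$ to trivial formal monodromy of $\det V_x$ is valid only because a rank-one connection has no Stokes data. The paper avoids this by writing $\det E\simeq(\mathcal O_{\A^1},d+a(x)\,dx)$, which is an exponential connection.
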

\begin{proof}
\emph{Step 1: minimal extensions at the singular points.}
Let $S$ be the singular locus, let
$j:U=\mathbb P^1\setminus S\hookrightarrow\mathbb P^1$, and put
$E=M|_U$. Since $M$ is irreducible, $M=j_{!*}E$.
If $S=\varnothing$, the irreducible connection on $\mathbb P^1$ is
the trivial rank-one connection, and all the assertions hold.
Assume $S\ne\varnothing$. For $x\in S$, put $K_x=\C((t_x))$,
let $V_x$ be the formal meromorphic connection of $E$ at $x$, and set
\[
 s_x=\dim H^0_{\mathrm{dR}}(K_x,V_x),\qquad
 h_x=\dim H^0_{\mathrm{dR}}(K_x,\End V_x).
\]
The local description of the extension functors gives
\begin{align}
 0\longrightarrow M\longrightarrow j_+E
 &\longrightarrow\bigoplus_{x\in S}\delta_x^{\oplus s_x}
 \longrightarrow0,\label{eq:p1-star-extension}\\
 0\longrightarrow\bigoplus_{x\in S}\delta_x^{\oplus s_x}
 &\longrightarrow j_!E\longrightarrow M\longrightarrow0.
 \label{eq:p1-shriek-extension}
\end{align}
Here $\delta_x=(i_x)_+\C$ for $i_x:\{x\}\hookrightarrow\mathbb P^1$.
To see the multiplicities, complete at $x$ and decompose the formal
connection. A regular singular unipotent Jordan block contributes
one copy of $\delta_x$ to $j_+E/j_{!*}E$; the other formal blocks
contribute none. Their number is $s_x$.
Duality gives the second sequence, since
$\dim H^0_{\mathrm{dR}}(K_x,V_x^\vee)=s_x$.
These local extension functors and their formal descriptions are
reviewed in \cite[\S\S3.1--3.3]{ArinkinFourier}.

\emph{Step 2: bound the de Rham self-extension space on $U$.}
Derived adjunction and $\delta_x|_U=0$ give
$\Ext^i(j_!E,\delta_x)=0$ for every $i$.
Applying $\Hom(-,\delta_x)$ to
\eqref{eq:p1-shriek-extension} therefore gives
\begin{equation}\label{eq:p1-point-extensions}
 \dim\Ext^1(M,\delta_x)=s_x.
\end{equation}
Apply $\Hom(M,-)$ to \eqref{eq:p1-star-extension}.
The assumed self-Ext vanishing and adjunction yield an injection
\[
 H^1_{\mathrm{dR}}(U,\End E)
 =\Ext^1(M,j_+E)
 \lhook\joinrel\longrightarrow
 \bigoplus_{x\in S}\Ext^1(M,\delta_x)^{\oplus s_x}.
\]
Thus
\begin{equation}\label{eq:p1-open-ext-bound}
 \dim H^1_{\mathrm{dR}}(U,\End E)\leq\sum_{x\in S}s_x^2.
\end{equation}

\emph{Step 3: compare with the middle extension of $\End E$.}
Put $F=j_{!*}\End E$ and
$b=\dim H^1_{\mathrm{dR}}(\mathbb P^1,F)$.
Applying the local calculation in Step~1 to $\End E$ gives
\[
 0\longrightarrow F\longrightarrow j_+\End E
 \longrightarrow\bigoplus_{x\in S}\delta_x^{\oplus h_x}
 \longrightarrow0.
\]
Irreducibility of $E$ gives $\End_{\mathcal D_U}(E)=\C$.
Since the trace pairing identifies $\End E$ with its dual, duality
on $\mathbb P^1$ gives
\[
 \dim H^0_{\mathrm{dR}}(\mathbb P^1,F)
 =\dim H^2_{\mathrm{dR}}(\mathbb P^1,F)=1.
\]
Also $H^2_{\mathrm{dR}}(U,\End E)=0$, since $U$ is affine.
With the de Rham normalization used above, $\delta_x$ contributes
one dimension in degree one and nothing in other degrees. The
resulting exact sequence is therefore
\[
 0\longrightarrow H^1_{\mathrm{dR}}(\mathbb P^1,F)
 \longrightarrow H^1_{\mathrm{dR}}(U,\End E)
 \longrightarrow\bigoplus_{x\in S}\C^{h_x}
 \longrightarrow\C\longrightarrow0.
\]
Consequently
\[
 \dim H^1_{\mathrm{dR}}(U,\End E)=b+\sum_{x\in S}h_x-1.
\]
Together with \eqref{eq:p1-open-ext-bound}, this proves
\begin{equation}\label{eq:p1-local-numerical-bound}
 b+\sum_{x\in S}(h_x-s_x^2)\leq1.
\end{equation}

\emph{Step 4: there is at most one singular point.}
At a genuine singular point of a minimal extension, $V_x$ is
nontrivial. Indeed, the minimal extension of a trivial formal
connection is a free module over $\C[[t_x]]$ with nonsingular
connection; faithful flatness of completion then implies that
$M$ is nonsingular at $x$.
Lemma~\ref{lem:formal-small-endomorphisms} gives
$h_x-s_x^2\geq1$ for each $x\in S$. Since $b\geq0$,
\eqref{eq:p1-local-numerical-bound} forces
\[
 |S|=1,\qquad b=0,\qquad h_x-s_x^2=1.
\]
The equality $b=0$ is cohomological rigidity with prescribed formal
types; see \cite[Proposition~3.4]{ArinkinRigid}.

\emph{Step 5: the determinant forces quasi-unipotence.}
Choose the coordinate so that $S=\{\infty\}$. The determinant of the
connection $E$ on $\A^1$ has the form
\[
 \det E\simeq(\mathcal O_{\A^1},d+a(x)\,dx),
 \qquad a(x)\in\C[x].
\]
Choose $G\in\C[x]$ with $G'=a$. This is an exponential connection,
so its formal monodromy at infinity is trivial.
Lemma~\ref{lem:formal-small-endomorphisms}, applied to $V_\infty$,
now proves the required quasi-unipotence.
\end{proof}

\begin{corollary}\label{cor:kont-six-projective-line}
Conjecture~\ref{conj:kont-six} holds for every irreducible holonomic
$\mathcal D_{\mathbb P^1}$-module $M$ with $\Ext^1(M,M)=0$.
\end{corollary}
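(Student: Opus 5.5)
The plan is to combine Theorem~\ref{thm:rigid-projective-line} with Sabbah's theorem on rigid irregular connections \cite{SabbahRigid}, and then use the closure properties of the extended motivic-exponential class established in the proof of Corollary~\ref{cor:rigid-arithmetic}.

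First, dispose of the degenerate cases. If $M$ has generic rank zero, irreducibility forces $M\simeq\delta_x$ for a point $x$. By Kashiwara's equivalence this is the direct image of the trivial connection on a point, so it lies in the class, as in Step~2 of Proposition~\ref{thm:rigid-other-curves}. If $M$ has no singular point, it is $\mathcal O_{\mathbb P^1}$, which is the exponential connection with $F=0$.

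Otherwise $M$ has positive generic rank. By Theorem~\ref{thm:rigid-projective-line}, it has exactly one singular point, which we move to $\infty$. Its formal monodromy there is quasi-unipotent, and the meromorphic connection $E=M|_{\A^1}$ is irreducible and cohomologically rigid with prescribed formal type.

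The next step is the main input. Sabbah's theorem states that an irreducible rigid irregular connection on an open subset of $\mathbb P^1$ with quasi-unipotent formal monodromies is of exponential-motivic origin. It is obtained from a rank-one connection by finitely many operations: middle convolution with Kummer sheaves, tensoring with exponential or regular rank-one connections of finite monodromy, Fourier transforms, and pullback and pushforward along rational maps. The main obstacle is matching Sabbah's notion of ``exponentially of geometric origin'' with Kontsevich's class, whose generators are only the connections $(\mathcal O_X,d+dF)$ with $F$ regular.

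I would handle this in three parts, using closure under direct and inverse images, subquotients, extensions, external and tensor products, and Fourier transform, as recorded in the proof of Corollary~\ref{cor:rigid-arithmetic}.

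Kummer connections. Since the monodromy is quasi-unipotent, the exponents are rational. A Kummer connection $x^{a/n}$ on $\mathbb G_m$ is a direct summand of the pushforward of $\mathcal O$ along $u\mapsto u^n$, so it lies in the class.

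Meromorphic exponentials. The connection $e^{g}$ for a rational function $g$ on an open set $V$ is the pullback of $(\mathcal O_{\A^1},d+dt)$ along $g\colon V\to\A^1$.

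Middle convolution. This is the image of $j_!\to j_*$ applied to a direct image along the addition map of an external product. It therefore lies in the class by closure under direct images and subquotients.

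Once these steps are in place, $E$ lies in the class, and so does $M=j_{!*}E$. The reason is that $M$ is the image of $j_!E\to j_+E$, both of which are direct-image cohomologies of $E$.
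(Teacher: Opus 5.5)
Your proposal is correct and follows the same skeleton as the paper's proof. Both dispose of the point-supported case, use Theorem~\ref{thm:rigid-projective-line} to get rigidity with prescribed formal type and quasi-unipotent formal monodromy, invoke Sabbah's theorem, and conclude with $j_{!*}E\subset j_+E$.

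Where you differ is the step you call the main obstacle. The paper takes \cite[Property~B]{SabbahRigid} in its explicit geometric form: on a dense open $U'\subset\mathbb P^1$, the connection is a submodule of a cohomology module of $f_+(\mathcal O_Y^{\oplus a},d+d\phi)$ with $\phi$ regular on $Y$. That module lies in the extended motivic-exponential class directly by definition (a generator, then a direct image, then a subquotient). So there is nothing to match, and your separate verifications for Kummer connections, meromorphic exponentials and middle convolution are unnecessary.

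Your route instead unpacks Sabbah's theorem into Katz--Arinkin operations and checks closure for each. That works, but it relies on a more specific reading of the citation: every Kummer sheaf and rank-one twist used in the algorithm must have finite monodromy. This does not follow from quasi-unipotence of $E$ at $\infty$ alone, as your sentence ``since the monodromy is quasi-unipotent, the exponents are rational'' suggests. The twists are chosen from the formal types of the intermediate connections. Knowing that these stay quasi-unipotent after each Fourier transform is part of the content of the cited result, not something you can read off from $E$.

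Two smaller points. First, only $j_+E$, respectively $\mathrm{sum}_+(E\boxtimes K)$, needs to be in the class, because the images you take are submodules of it; no $!$-direct image is required. Second, Sabbah's conclusion is obtained only on a dense open $U'$. So apply the submodule argument to $j'\colon U'\hookrightarrow\mathbb P^1$, as the paper does, rather than assuming $E$ on all of $\A^1$ is already in the class.
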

\begin{proof}
If $M$ is supported at a point, it is the direct image of the trivial
connection on that point. Otherwise,
Theorem~\ref{thm:rigid-projective-line} proves both cohomological
rigidity and quasi-unipotence of its formal monodromies.
By \cite[Property~B]{SabbahRigid}, the underlying irreducible
connection has exponential-geometric origin. More explicitly, on
some dense open subset $U'\subset\mathbb P^1$ it is a submodule
of a cohomology module of $f_+(\mathcal O_Y^{\oplus a},d+d\phi)$,
where $Y$ is smooth, $f:Y\to U'$ is a morphism, and $\phi$ is a
regular function on $Y$. This belongs to the extended
motivic-exponential class by its defining closure properties.
For $j':U'\hookrightarrow\mathbb P^1$, the original module is
$j'_{!*}(M|_{U'})$, which is a submodule of $j'_+(M|_{U'})$.
Closure under direct images and subquotients therefore puts $M$
in the same class.
\end{proof}

\subsection{A counterexample concerning quantum integrable systems}
We now turn to a different assertion, concerning modules that need
not be holonomic. Here the support includes its normalized cycle
multiplicities, as in \cite[\S5.1]{Kont}.
\begin{conjecture}[Kontsevich, {\cite[Conjecture~7]{Kont}}]\label{conj:kont-seven}
Let $X$ be a smooth affine variety of dimension $n$ in characteristic
zero and let $Q\in\mathcal D(X)$. For a generic additive constant
$\lambda$, the support cycle of
$\mathcal D(X)/\mathcal D(X)(Q+\lambda)$ in large positive
characteristic is divisible by $p^{n-1}$ if and only if $Q$ belongs
to a finitely generated commutative subalgebra of $\mathcal D(X)$
of Krull dimension $n$.
\end{conjecture}

The following example also excludes a commutative subalgebra of the
required dimension after restriction to any nonempty open subset.
\begin{theorem}\label{thm:kont-seven-counterexample}
In $A_2=\C\langle x,y,\partial_x,\partial_y\rangle$, put
\begin{equation}\label{eq:integrability-operator}
 Q=\partial_x^3+\partial_y^3
    +x\partial_x^2\partial_y+y\partial_x\partial_y^2
    +4\partial_x\partial_y.
\end{equation}
For every nonempty open subset $U\subset\A^2$,
\[
 \operatorname{Cent}_{\mathcal D(U)}(Q)=\C[Q].
\]
The same equality holds in the ring of finite-order differential
operators with coefficients in $\C(x,y)$. Nevertheless, over every
field $k$ of characteristic $p>3$ and for every $\lambda\in k$,
\begin{equation}\label{eq:integrability-norm}
 \Nrd(Q+\lambda)
 =\bigl(\Xi^3+\mathrm H^3+X\Xi^2\mathrm H
                    +Y\Xi\mathrm H^2+\lambda^p\bigr)^p,
\end{equation}
where $X=x^p$, $Y=y^p$, $\Xi=\partial_x^p$, and
$\mathrm H=\partial_y^p$. Thus Conjecture~\ref{conj:kont-seven}
fails for $X=\A^2$, even if restriction to a nonempty open subset
is allowed in its conclusion.
\end{theorem}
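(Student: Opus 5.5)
The plan has two independent parts: the reduced-norm identity \eqref{eq:integrability-norm}, which I expect to follow from a structural feature of $Q$, and the centralizer statement, which I expect to be the hard step.

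\emph{The norm identity.} Rewriting $x\partial_x^2\partial_y=\partial_x x\partial_x\partial_y-\partial_x\partial_y$, and similarly for the $y$-term, suggests applying the Fourier automorphism $\partial_x\mapsto\xi$, $x\mapsto-\partial_\xi$, $\partial_y\mapsto\eta$, $y\mapsto-\partial_\eta$. A direct reordering computation should give
\[
 Q\longmapsto g-\xi\eta\,\mathcal E,\qquad g=\xi^3+\eta^3,\qquad \mathcal E=\xi\partial_\xi+\eta\partial_\eta .
\]
So, up to a Weyl-algebra automorphism whose reductions act on centers by the Frobenius twist of the linear symplectic map, $Q+\lambda$ is a first-order operator $V+g+\lambda$, where $V=-\xi\eta\mathcal E$. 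Over $k$ of characteristic $p>3$, I would use the Jacobson-type identity for first-order operators:
\[
 (V+h)^p=V^p+V^{p-1}(h)+h^p .
\]
Since $V^p-V^{[p]}$ is central, this writes $(V+h)^p$ as the derivation $V^{[p]}$ plus a function, modulo the center. I would compute $V^{[p]}$ from $[\mathcal E,\xi\eta]=2\xi\eta$ and $\mathcal E^{[p]}=\mathcal E$, and compute $V^{p-1}(g+\lambda)$ by homogeneity. The aim is to show that $(Q+\lambda)^p$ is the central element $c=\Xi^3+\mathrm H^3+X\Xi^2\mathrm H+Y\Xi\mathrm H^2+\lambda^p$, after transporting back. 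If a noncentral derivation part survives, I would instead compute the reduced characteristic polynomial on the rank-$p^2$ module $k[\xi,\eta]/(\xi^p,\eta^p)$ after localization. Given $(Q+\lambda)^p=c$, multiplicativity gives $\Nrd(Q+\lambda)^p=c^{p^2}$, since $\Nrd$ has degree $p^2$ on central elements of $A_2$. Reducedness of $k[X,Y,\Xi,\mathrm H]$ then gives $\Nrd(Q+\lambda)=c^p$. As a consistency check, the degree count $\deg\Nrd(q)=p\deg q$, obtained as in \eqref{eq:norm-degree} but for $A_2$, gives central degree $3p$.

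\emph{The centralizer.} It suffices to treat operators with coefficients in $\C(x,y)$, since $\mathcal D(U)$ embeds there and $\C[Q]\subset\mathcal D(U)$. I would filter by order in $\partial_x,\partial_y$. If $P$ commutes with $Q$, then the principal symbol of $P$, which lies in $\C(x,y)[\xi,\eta]$ and is homogeneous in $(\xi,\eta)$, Poisson-commutes with
\[
 \sigma=\xi^3+\eta^3+x\xi^2\eta+y\xi\eta^2 .
\]
The key claim is therefore that the Poisson centralizer of $\sigma$ among fiberwise-homogeneous elements of $\C(x,y)[\xi,\eta]$ consists exactly of the scalar multiples of powers $\sigma^k$. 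Given the claim, induction on order finishes: write the leading symbol as $c\sigma^k$ and replace $P$ by $P-cQ^k$.

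\emph{Main obstacle: the Poisson centralizer.} The symbol $\sigma$ is homogeneous of degree $3$ in $(\xi,\eta)$, and the Hamiltonian flow is $\dot x=\sigma_\xi$, $\dot y=\sigma_\eta$, $\dot\xi=-\xi^2\eta$, $\dot\eta=-\xi\eta^2$. The cotangent part has the conserved ratio $\xi/\eta$, so I would restrict to a level set $\xi=u\eta$ and reparametrize time. This should reduce the existence of a second rational first integral, homogeneous in the fibers and functionally independent of $\sigma$, to a rational first integral of an explicit polynomial vector field in $(x,y)$ depending on the parameter $u$. To exclude such an integral, I would first look for a stationary point whose linearization has eigenvalue ratio irrational, or negative and non-rational, for generic $u$; a Poincaré-type argument then forbids a nonconstant rational integral. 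Failing that, I would expand along a Laurent/Kovalevskaya balance and show that non-integer resonances rule out meromorphic integrals. A first integral that is algebraically dependent on $\sigma$ but homogeneous of fiber degree not divisible by $3$ must also be excluded; I expect this to follow from irreducibility of $\sigma$ over $\C(x,y)$.
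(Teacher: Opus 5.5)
\textbf{The norm identity.} This part of your plan is essentially the paper's argument. After the Fourier change, $Q=g-\xi\eta\mathcal E$. The $p$th power of $\xi\eta\mathcal E$ is $(\xi\eta)^p(\mathcal E^p-\mathcal E)$, so the derivation part $V^{[p]}$ vanishes. Also $V^{p-1}(g)=0$, because the factor $3+2j$ with $j=(p-3)/2$ equals $p$. Multiplicativity of $\Nrd$ and injectivity of Frobenius then give \eqref{eq:integrability-norm}. To conclude that Conjecture~\ref{conj:kont-seven} fails, you still need a step your plan omits: the normalized support cycle of $A_2(k)/A_2(k)(Q+\lambda)$ is the divisor of $\Nrd(Q+\lambda)$.

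\textbf{The centralizer: the strategy fails.} The gap is in the strategy itself, not in a detail. You want to rule out a rational first integral of the Hamiltonian flow of $\sigma$ that is fibre-homogeneous and independent of $\sigma$, using an irrational eigenvalue ratio or Kovalevskaya exponents. Such integrals exist. You noticed that $\rho=\xi/\eta$ is conserved; it has fibre degree zero. A further one is
\[
 K=\xi x-\eta y+3\bigl(\xi^2/\eta-\eta^2/\xi\bigr),\qquad \{\sigma,\rho\}=\{\sigma,K\}=0,
\]
so $\sigma,\rho,K$ are three independent rational first integrals.

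On your slice $\xi=u\eta$, after dividing time by $\eta^2$, the flow is the affine field
\[
 (x,y)'=(3u^2+2ux+y,\ 3+u^2x+2uy).
\]
Its linear part has eigenvalues $u$ and $3u$ for every $u$, so the ratio is $3$, not irrational. Put $\ell_u=ux-y+3u^2-u^{-1}\cdot 3$ and $m_u=ux+y+u^2+u^{-1}$. Then $\ell_u'=u\ell_u$ and $m_u'=3um_u$, so $\ell_u^3/m_u$, which is the restriction of $uK^3/\sigma$, is a rational first integral. Hence neither a Poincar\'e-type nor a Kovalevskaya-type argument can give an obstruction. The only thing that singles out $\C[\sigma]$ is polynomiality in $\xi,\eta$. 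Restricting to $\xi=u\eta$ discards exactly the behaviour along $\xi=0$ and $\eta=0$ where this is decided.

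\textbf{How the paper proves the symbol lemma.} The paper uses these integrals directly. One has $\C[x,y,\xi^{\pm1},\eta^{\pm1}]=\C[\xi^{\pm1},\rho^{\pm1},\sigma,K]$. In these coordinates $\{\sigma,-\}=-(\xi^3/\rho)\,\partial/\partial\xi$, so the Poisson centralizer of $\sigma$ in the localized ring is $\C[\rho^{\pm1},\sigma,K]$. Every element of it has a unique expression $\sum c_{ijv}(\sigma)\rho^vK^iB^j$ with $0\le i\le 2$ and $B=K^3+27\sigma(\rho^{-3}-\rho^3)$. Comparing leading Laurent terms at $\xi=0$ and at $\eta=0$, regularity forces $v\ge i+j$ and $v\le -i-j$, so only $\C[\sigma]$ survives.

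A separate pole argument handles coefficients in $\C(x,y)$. An irreducible $f$ in a denominator would have to divide $\{\sigma,f\}$. The resulting $3\times 2$ linear system for $(f_x,f_y)$ has rank two modulo $f$, which forces $f\mid f_x$ and $f\mid f_y$, impossible for nonconstant $f$. Your passage from operators to principal symbols and the induction on order match the paper. It is the symbol lemma itself that has to be proved algebraically, through polynomiality in $\xi,\eta$, rather than dynamically.
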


We first determine the Poisson centralizer of the principal symbol.
On $\C[x,y,\xi,\eta]$ use $\{\xi,x\}=\{\eta,y\}=1$.
\begin{lemma}\label{lem:Poisson-centralizer}
For
\[
 h=\xi^3+\eta^3+x\xi^2\eta+y\xi\eta^2,
\]
one has
\[
 \{q\in\C(x,y)[\xi,\eta]:\{h,q\}=0\}=\C[h].
\]
\end{lemma}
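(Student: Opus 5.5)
The plan is to reduce to a first-order linear PDE in $(x,y)$, with $\xi,\eta$ as parameters, and then use polynomiality in the momenta to rigidify the solution. With $\{\xi,x\}=\{\eta,y\}=1$ we have $\{f,g\}=f_\xi g_x-f_xg_\xi+f_\eta g_y-f_yg_\eta$. Since $h_x=\xi^2\eta$ and $h_y=\xi\eta^2$, this gives
\[
 \{h,q\}=h_\xi q_x+h_\eta q_y-\xi\eta\,(\xi q_\xi+\eta q_\eta).
\]
The last term is the Euler operator in the momenta, so $\{h,\cdot\}$ raises momentum degree by exactly two, and each homogeneous component $q_d$ of $q$ commutes with $h$ separately. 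Introduce the vector field $V=h_\xi\partial_x+h_\eta\partial_y$ on the $(x,y)$-plane, with coefficients in $\C[\xi,\eta]$. Then $\{h,q\}=0$ for homogeneous $q$ of degree $d$ becomes
\[
 Vq=d\,\xi\eta\,q,\qquad q\in\C(x,y)[\xi,\eta]_d.
\]
We have $Vh=3\xi\eta h$, since $h$ is affine-linear in $(x,y)$ and $\xi h_\xi+\eta h_\eta=3h$. The case $d=0$ is immediate: $q\in\C(x,y)$ with $h_\xi q_x+h_\eta q_y=0$ identically in $\xi,\eta$ forces $q_x=q_y=0$, since $h_\xi,h_\eta$ are independent over $\C(x,y)$ (compare the coefficients of $\xi^2$ and $\eta^2$). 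Hence $q$ is constant.

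For $d>0$ I will solve the equation explicitly over the field $\C(\xi,\eta)$. The field $V$ is affine in $(x,y)$. Its linear part has matrix $\begin{pmatrix}2\xi\eta&\eta^2\\ \xi^2&2\xi\eta\end{pmatrix}$, with eigenvalues $3\xi\eta$ and $\xi\eta$. The $3\xi\eta$ eigencoordinate is $h$ itself. The $\xi\eta$ eigencoordinate is
\[
 g=\xi^2\eta\,x-\xi\eta^2y+3\xi^3-3\eta^3,
\]
which satisfies $Vg=\xi\eta\,g$. Since $h$ and $g$ are affine coordinates on the plane over $\C(\xi,\eta)$, every rational solution of $Vq=d\xi\eta q$ with coefficients in $\C(\xi,\eta)$ has the form $q=h^{a}g^{b}\Phi(\xi,\eta,\,h/g^3)$ with $3a+b=d$. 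In fact one may normalize to $q=g^d\,\Psi(\xi,\eta,h/g^3)$ with $\Psi$ rational: the ratio $q/g^d$ has $V$-weight zero, and in the coordinates $(h,g)$ a rational first integral of the diagonal field $3h\partial_h+g\partial_g$ is a rational function of $h/g^3$.

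It remains to use that $q$ is polynomial in $\xi,\eta$, homogeneous of degree $d$, while $h$ and $g$ have momentum degree $3$ and $4$ respectively and the first integral $h/g^3$ has degree $-9$. I plan to pass to the coordinates $(h,g)$, which depend on $\xi,\eta$ as well as on $x,y$, and to study the behaviour along $\xi=0$ or $\eta=0$, where the change of coordinates $(x,y)\mapsto(h,g)$ degenerates (its Jacobian is $-2\xi^3\eta^3$). Concretely, I would first substitute $\eta=1$, so that $q$ becomes a polynomial in $\xi$ over $\C(x,y)$. I would then expand $\Psi$ in the variable $u=h/g^3$ and show that any nonconstant dependence on $u$, or any nonconstant dependence on $\xi$ through the coefficients, produces a pole in $\xi$ at $\xi=0$, or makes $q$ fail to be polynomial after clearing the denominator $g$. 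This should force $q=c\,h^{d/3}$ with $3\mid d$, which gives the claimed equality with $\C[h]$.

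This last step is the main obstacle. The formal integration is routine, but excluding exotic rational functions $\Psi$ whose denominators cancel against the $\xi,\eta$-dependence of $g$ needs care. If the direct pole analysis becomes messy, I would first try specializing $(\xi,\eta)$ to a generic point of the line $\xi=\eta$ and comparing degrees in $x,y$ on both sides.
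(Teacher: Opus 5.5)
Your reduction is correct as far as it goes. The bracket formula, the splitting into momentum-homogeneous components, the case $d=0$, and the normal form $q=g^d\Psi(\xi,\eta,u)$ with $u=h/g^3$ all check out. They recover the paper's first integrals, since $g=\xi\eta K$ and the dependence on $\rho=\xi/\eta$ sits in the coefficients of $\Psi$. One slip: $g$ is homogeneous of degree $3$ in $\xi,\eta$, not $4$, so $u$ has degree $-6$ and $\Psi$ must have degree $-2d$.

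The real gap is that the whole content of the lemma lies in the step you leave as ``should force'', and the mechanism you propose for it fails. In the chart $\eta=1$, both $g$ and $u$ are regular at $\xi=0$, with $g=-3$ and $u=-1/27$ there. In fact
\[
 g^3+27h\eta^6=9(6x-y^2)\,\xi^2\eta^7+O(\xi^3),
\]
so $1+27\eta^6u$ vanishes to order two along $\xi=0$. Hence
\[
 q=\frac{g^3+27h\eta^6}{\xi^2\eta^4}=g^3\,\frac{1+27\eta^6u}{\xi^2\eta^4}
 =\rho\bigl(K^3+27h\rho^{-3}\bigr)
\]
Poisson-commutes with $h$, is homogeneous of degree $3$, depends nontrivially on $u$, and has $\xi$-dependent coefficients. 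Yet after setting $\eta=1$ it is a polynomial in $\xi$ over $\C[x,y]$. The same happens for $g/\eta^2=\rho K$ (your $d=1$, $\Psi=\eta^{-2}$), which becomes the polynomial $g$.

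These elements are excluded only by their poles along $\eta=0$, equivalently by their $\xi$-degree exceeding $d$ in the chart $\eta=1$. Your plan never uses this, and specializing to a generic point of $\xi=\eta$ would discard exactly this information. So a pole analysis at $\xi=0$ cannot succeed by itself. The actual difficulty is controlling cancellations such as $K^3+27h\rho^{-3}$.

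The paper handles this by working in $\C[x,y,\xi^{\pm1},\eta^{\pm1}]=\C[\xi^{\pm1},\rho^{\pm1},h,K]$. There the kernel is the ring $\C[\rho^{\pm1},h,K]$, so no rational $\Psi$ ever appears. It then writes each invariant uniquely as $\sum c_{ijv}(h)\rho^vK^iB^j$, with $0\le i\le 2$ and $B=K^3+27h(\rho^{-3}-\rho^3)$. The leading coefficients along $\xi=0$ and $\eta=0$ contain $(6x-y^2)^j$ and $(x^2-6y)^j$ respectively. Their transcendence, together with the exponents of $\eta$ (resp.\ $\xi$) modulo three, rules out cancellation of minimal-valuation terms. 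This yields $v\ge i+j$ and $v\le -i-j$, hence $i=j=v=0$.

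You also leave open the exclusion of poles in $(x,y)$, which is what your worry about ``exotic $\Psi$'' points to. The paper treats it separately: an irreducible denominator $f$ would have to divide $\{h,f\}=h_\xi f_x+h_\eta f_y$. The $3\times2$ coefficient system has rank two modulo $f$, which forces $f\mid f_x$ and $f\mid f_y$, impossible in characteristic zero. Until you supply arguments of this kind, the proposal identifies the first integrals but does not prove the lemma.
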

\begin{proof}
\emph{Step 1: polynomial invariants after localization.}
In $\C[x,y,\xi^{\pm1},\eta^{\pm1}]$, set
\[
 \rho=\xi/\eta,\qquad J=\xi x-\eta y,
 \qquad K=J+3(\xi^2/\eta-\eta^2/\xi).
\]
Direct calculation gives $\{h,\rho\}=\{h,K\}=0$. Moreover,
\[
 \C[x,y,\xi^{\pm1},\eta^{\pm1}]
       =\C[\xi^{\pm1},\rho^{\pm1},h,K].
\]
Indeed, $\eta=\xi/\rho$,
$\xi x+\eta y=(h-\xi^3-\eta^3)/(\xi\eta)$, and $J$ is recovered
from $K$; these equations recover $x,y$. In these coordinates
\[
 \{h,-\}=-\frac{\xi^3}{\rho}\frac{\partial}{\partial\xi},
\]
with $\rho,h,K$ fixed. Its kernel is $\C[\rho^{\pm1},h,K]$.

\emph{Step 2: regularity at $\xi=0$.}
Put $B=K^3+27h(\rho^{-3}-\rho^3)$. Every element of this kernel
has a unique finite expression
\begin{equation}\label{eq:centralizer-expansion}
 q=\sum_{\substack{0\leq i\leq2,\ j\geq0\\ v\in\Z}}
       c_{ijv}(h)\rho^vK^iB^j,
 \qquad c_{ijv}\in\C[h],
\end{equation}
because $B$ is monic of degree three in $K$. In Laurent series in
$\xi$ over $\C[\eta^{\pm1},x,y]$,
\begin{align*}
 h&=\eta^3+O(\xi),&
 K&=-3\eta^2\xi^{-1}-\eta y+O(\xi),\\
 B&=9\eta^4(6x-y^2)\xi^{-1}+O(1).
\end{align*}
A nonzero summand in \eqref{eq:centralizer-expansion} has
$\xi$-valuation $m=v-i-j$, with leading coefficient
\[
 (-3)^i9^j c_{ijv}(\eta^3)\eta^{i+3j-m}(6x-y^2)^j.
\]
Terms of minimal valuation cannot cancel: first compare powers of
$6x-y^2$, which is transcendental over $\C[\eta^{\pm1}]$; for fixed
$j$, the three possible values of $i$ give distinct exponents of
$\eta$ modulo three. Thus regularity at $\xi=0$ implies
\begin{equation}\label{eq:centralizer-first-bound}
 v\geq i+j\qquad\text{whenever }c_{ijv}\ne0.
\end{equation}

\emph{Step 3: regularity at $\eta=0$.}
The corresponding expansions in $\eta$ are
\begin{align*}
 h&=\xi^3+O(\eta),&
 K&=3\xi^2\eta^{-1}+\xi x+O(\eta),\\
 B&=9\xi^4(x^2-6y)\eta^{-1}+O(1).
\end{align*}
The valuation of the same summand is $-v-i-j$. Comparing powers of
$x^2-6y$ and exponents of $\xi$ modulo three gives
\begin{equation}\label{eq:centralizer-second-bound}
 v\leq-i-j\qquad\text{whenever }c_{ijv}\ne0.
\end{equation}
The two bounds force $i=j=v=0$. Consequently every polynomial
Poisson-commuting with $h$ belongs to $\C[h]$.

\emph{Step 4: rational coefficients introduce no further invariants.}
Suppose that $q\in\C(x,y)[\xi,\eta]$ Poisson-commutes with $h$ and
has a pole of order $m>0$ along an irreducible polynomial
$f\in\C[x,y]$. Taking the term of highest pole order in
$\{h,q\}=0$ shows that $f$ divides $\{h,f\}$. Equating the three
coefficients of this quadratic polynomial in $\xi,\eta$ gives
\[
 \begin{pmatrix}3&x\\2x&2y\\y&3\end{pmatrix}
 \begin{pmatrix}f_x\\f_y\end{pmatrix}=0\pmod f.
\]
Its maximal minors are
\[
 6y-2x^2,\qquad 9-xy,\qquad 6x-2y^2.
\]
They have no common irreducible factor: already the first two have
only finitely many common zeros. The matrix therefore has rank two
over $\Frac(\C[x,y]/(f))$, so $f$ divides both $f_x$ and $f_y$.
This is impossible for a nonconstant polynomial in characteristic zero.
Thus $q$ has no poles in its coefficients, and Step~3 applies.
\end{proof}

\begin{proof}[Proof of Theorem~\ref{thm:kont-seven-counterexample}]
\emph{Step 1: the centralizer with rational coefficients.}
Use the differential-order filtration on
$\C(x,y)\langle\partial_x,\partial_y\rangle$.
The principal symbol of $Q$ is $h$. If an operator $T$ commutes
with $Q$, its principal symbol Poisson-commutes with $h$ and hence,
by Lemma~\ref{lem:Poisson-centralizer}, belongs to $\C[h]$.
Homogeneity in $\xi,\eta$ makes this symbol $c h^m$ for some
$c\in\C$ and $m\geq0$. Subtracting $cQ^m$ lowers the order.
Induction proves $T\in\C[Q]$. Restriction to the generic point embeds
$\mathcal D(U)$ into this ring for every nonempty open $U\subset\A^2$,
which proves all the centralizer assertions.

\emph{Step 2: the $p$th-power identity.}
Work over a field $k$ of characteristic $p>3$. Introduce Weyl
generators
\[
 u=\partial_x,\quad v=\partial_y,\quad
 d_u=-x,\quad d_v=-y,
\]
so $[d_u,u]=[d_v,v]=1$. Put
\[
 E=ud_u+vd_v,\qquad D=uvE,\qquad g=u^3+v^3.
\]
Expanding products gives $Q=-D+g$. Since $E(uv)=uv(E+2)$,
\[
 D^p=(uv)^p\prod_{j=0}^{p-1}(E+2j)=(uv)^p(E^p-E).
\]
The two Euler operators commute, and
\[
 E^p-E=u^p d_u^p+v^p d_v^p.
\]
The formula for a vector field plus a multiplication operator gives
\[
 (-D+g)^p=-D^p+g^p+(-D)^{p-1}(g).
\]
The last term is zero, since
\[
 (-D)^{p-1}(g)=(uv)^{p-1}g
                 \prod_{j=0}^{p-2}(3+2j)=0.
\]
The factor with $j=(p-3)/2$ vanishes. Returning to the original
central coordinates, we obtain
\begin{equation}\label{eq:integrability-p-power}
 Q^p=\Xi^3+\mathrm H^3+X\Xi^2\mathrm H
                         +Y\Xi\mathrm H^2=:G.
\end{equation}
The reduced norm has degree $p^2$, so
\[
 \Nrd(Q+\lambda)^p
 =\Nrd\bigl((Q+\lambda)^p\bigr)
 =(G+\lambda^p)^{p^2}.
\]
Injectivity of Frobenius on the polynomial center proves
\eqref{eq:integrability-norm}.

\emph{Step 3: the support cycle.}
For a quotient of an Azumaya algebra by a single operator, the
normalized codimension-one support cycle is the divisor of its
reduced norm. After an \'etale splitting, this is the equality between
the length of the cokernel of a square matrix over a discrete valuation
ring and the valuation of its determinant; normalization removes
the matrix-size factor contributed by the regular module. Hence
\[
 \operatorname{cyc}_p\bigl(A_2(k)/A_2(k)(Q+\lambda)\bigr)
       =p\,\operatorname{div}(G+\lambda^p).
\]
This cycle is divisible by $p=p^{2-1}$ for every $\lambda$.
Every commutative subalgebra of $\mathcal D(U)$ containing $Q$,
however, is contained in $\C[Q]$ and has Krull dimension at most one.
This contradicts the asserted conclusion of
Conjecture~\ref{conj:kont-seven}.
\end{proof}

\printbibliography[title={References}]
\end{document}